\documentclass[11pt,a4paper]{amsart}
\usepackage[pdftex,colorlinks=true,
                      pdfstartview=FitV,
                      linkcolor=blue,
                      citecolor=blue,
                      urlcolor=blue,
          ]{hyperref}
\usepackage{amsfonts}
\usepackage{amssymb}
\usepackage{diagrams}
\textwidth=1.14\textwidth
\hoffset=-0.5in
%%%%%%%%%%%%%%%%%%%%%%%%%%
\newarrow{RLCorresponds}{harpoon}---{harpoon}
\newtheorem{thm}{Theorem}[section]
\newtheorem{lemma}[thm]{Lemma}
\newtheorem{cor}[thm]{Corollary}
\newtheorem{exs}[thm]{Examples}
\newtheorem{prp}[thm]{Proposition}
\newtheorem{df}[thm]{Definition}
\newtheorem{rem}[thm]{Remark}

\newtheorem{quest}{Question}[section]

\newcommand{\tr}{{\rm tr}}
\newcommand{\Aut}{{\rm Aut}}
\newcommand{\Dim}{{\rm Dim}}

\newcommand{\Hom}{\mbox{{\rm Hom}}}
\newcommand{\End}{\mbox{{\rm End}}}
\newcommand{\Mor}{\mbox{{\rm Mor}}}
\newcommand{\D}{{\mathcal D}}
\newcommand{\E}{{\mathcal E}}
\newcommand{\F}{{\mathcal F}}

\newcommand{\calS}{\mathcal{S}}
\newcommand{\C}{\mathfrak{C}}
\newcommand{\Pp}{\mathcal{P}}

\newcommand{\ts}{\mathfrak{Ts}}
\newcommand{\fld}{{k}}

%%%%%%%%%%%%%%%%%%%%%%%%%%%%%%%%%%%%%%%%%%%%%%%%%%%%

\author{Peter Fleischmann and Chris Woodcock}

\title [$p$-Group Galois Extensions]{Free group actions on varieties and the category of modular Galois Extensions\\ for
finite $p$-groups}

\begin{document}

\begin{abstract}
Let $K$ be an algebraically closed field and $\mathbb{A}^n\cong K^n$
affine $n$-space. It is known that a finite group $\frak{G}$ can only act freely on $\mathbb{A}^n$
if $K$ has characteristic $p>0$ and $\frak{G}$ is a $p$-group. In that case
the group action is ``non-linear" and the ring of regular functions
$K[\mathbb{A}^n]$ must be a \emph{trace-surjective} $K-\frak{G}$-algebra.\\
Now let $\fld$ be an arbitrary field of characteristic $p>0$ and let $G$ be a finite $p$-group.
In this paper we study the category $\ts$ of all finitely generated trace-surjective $\fld-G$ algebras.
It has been shown in \cite{nonlin} that the objects in $\ts$ are precisely those finitely
generated $\fld-G$ algebras $A$ such that $A^G\le A$ is a Galois-extension in the sense
of \cite{chr}. Although $\ts$ is not an abelian category it has
``$s$-projective objects", which are analogues of projective modules, and it has ($s$-projective)
categorical generators, which we will describe explicitly.
%These arise in (homogeneous) modular invariant theory as dehomogenized symmetric algebras of suitable
%linear representations, such as the regular representation.
We will show that $s$-projective objects and their rings of invariants are retracts of
polynomial rings and therefore regular UFDs. The category $\ts$ also has
``weakly initial objects", which are closely related to the essential dimension of $G$ over $\fld$.
Our results yield a geometric structure
theorem for free actions of finite $p$-groups on affine $\fld$-varieties.
There are also close connections to open questions on retracts of polynomial rings,
to embedding problems in standard modular Galois-theory of $p$-groups and,
potentially, to a new constructive approach
to homogeneous invariant theory.
\end{abstract}

\maketitle

{
\renewcommand{\thefootnote}{}
\footnote{2010 Mathematics Subject Classification:
 13A50,14L24,13B05,20C20,12F12. }
\addtocounter{footnote}{-1}
}

%\section{Introduction} \label{Intro}
\setcounter{section}{-1}
\small
\section{Introduction} \label{Intro}

Let $\fld$ be a field, $\frak{G}$ a finite group and $X$ a $\fld$-variety.
The following beautiful argument appears in Serre's paper
``How to use finite fields for problems concerning infinite fields"
(\cite{serre_how_to}).
Unable to express it any better we quote almost verbatim:\\
`` Suppose that $\frak{G}$ acts freely on $X$. There is a
Cartan-Leray spectral sequence (... of \'etale cohomology...)
$H^i(\frak{G},H^j(X,C)) \Rightarrow H^{i+j}(\frak{G},H^j(X/\frak{G},C)),$
where $C$ is any finite abelian group. If $X$ is the affine $n$-space
$\mathbb{A}^n$ and $|C|$ is prime to ${\rm char}(\fld)$,
then $H^j(X,C)=0$ for $j>0$ and $H^0(X,C)=C$. In that case
the spectral sequence degenerates and gives $H^i(\frak{G},C)=H^i(X/\frak{G},C)$
for every $i$, i.e. $X/\frak{G}$ has the same cohomology as the classifying space
of $\frak{G}$. Take now $C=\mathbb{Z}/\ell\mathbb{Z}$ and suppose that $\ell$ divides $|\frak{G}|$.
It is well known that $H^j(\frak{G},C)$ is non-zero for infinitely many $j$'s, and
that $H^j(X/\frak{G},C)$ is zero for $j>2\cdot {\rm dim} X$:\ contradiction!"
\\[1mm]
This establishes the following
\begin{thm}\label{free_gp_act_on A_n}
The only finite groups which can act freely on $\mathbb{A}^n$ are the $p$-groups
with $p={\rm char}(\fld)$.
\end{thm}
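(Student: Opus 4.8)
Since the statement is precisely what Serre's spectral‑sequence argument quoted above is designed to prove, the plan is to turn that sketch into a proof by supplying the \'etale‑cohomological inputs it rests on. First I would reformulate the conclusion: it suffices to show that if a finite group $\mathfrak{G}$ acts freely on $X=\mathbb{A}^n$ over $\fld$, then no prime $\ell\neq\mathrm{char}(\fld)$ divides $|\mathfrak{G}|$; granting this, every prime dividing $|\mathfrak{G}|$ equals $\mathrm{char}(\fld)$, forcing $\mathrm{char}(\fld)=p>0$ and $\mathfrak{G}$ a $p$‑group. So fix such an $\ell$, set $C=\mathbb{Z}/\ell\mathbb{Z}$, and work towards a contradiction. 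The key preliminary observation is that a free action makes the quotient $\pi\colon X\to Y:=X/\mathfrak{G}=\mathrm{Spec}\,\fld[X]^{\mathfrak{G}}$ a finite \'etale $\mathfrak{G}$‑torsor (note $\fld[X]$ is finite over $\fld[X]^{\mathfrak{G}}$, so $Y$ is again a $\fld$‑variety, of dimension $n$); this is the only place where freeness is used, and in characteristic $p$ with $\mathfrak{G}$ a $p$‑group it is exactly the ``non‑linear'' phenomenon at the heart of this paper. For such a torsor there is a Cartan--Leray spectral sequence $E_2^{i,j}=H^i(\mathfrak{G},H^j(X,C))\Rightarrow H^{i+j}(Y,C)$ in \'etale cohomology.

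Next I would invoke the computation $H^0(X,C)=C$ and $H^j(X,C)=0$ for $j>0$: the \'etale cohomology of affine space with torsion coefficients prime to the characteristic agrees with that of a point. (Over a non‑separably‑closed base one should track the Galois cohomology of $\fld$; in the situation of interest $\fld$ is algebraically closed, so this is literally the cohomology of $\mathrm{Spec}\,\fld$.) Substituting this, $E_2^{i,j}$ is concentrated in the row $j=0$, where it equals $H^i(\mathfrak{G},C)$, so the spectral sequence degenerates and gives isomorphisms $H^i(\mathfrak{G},C)\cong H^i(Y,C)$ for all $i\geq0$.

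The contradiction then comes from two standard facts. On the group side, let $P$ be a Sylow $\ell$‑subgroup of $\mathfrak{G}$, nontrivial since $\ell\mid|\mathfrak{G}|$. As $[\mathfrak{G}:P]$ is prime to $\ell$, the composite $\mathrm{cor}\circ\mathrm{res}$ on $H^\ast(-,C)$ is multiplication by a unit of $\mathbb{F}_\ell$, so restriction $H^\ast(\mathfrak{G},C)\to H^\ast(P,C)$ is injective; and a nontrivial finite $\ell$‑group has $H^i(P,\mathbb{F}_\ell)\neq0$ for infinitely many $i$ (for instance $H^\ast(P,\mathbb{F}_\ell)$ has Krull dimension $\geq1$, hence is infinite‑dimensional). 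Therefore $H^i(\mathfrak{G},C)\neq0$ for infinitely many $i$. On the geometric side, the \'etale cohomological dimension of the $n$‑dimensional variety $Y$ for $\ell$‑torsion coefficients ($\ell\neq\mathrm{char}\,\fld$) is at most $2n$ over an algebraically closed base, so $H^i(Y,C)=0$ for $i>2n$. Combined with the isomorphism above this forces $H^i(\mathfrak{G},C)=0$ for all $i>2n$, contradicting the previous sentence.

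The main obstacle is not the group theory but that the three structural inputs are genuine pieces of \'etale‑cohomology machinery: the Cartan--Leray spectral sequence of a finite \'etale torsor, the acyclicity of $\mathbb{A}^n$ for coefficients prime to $p$, and the bound $\mathrm{cd}_\ell Y\leq2\dim Y$ (all in SGA~4 and SGA~$4\tfrac12$), together with the fact that a free finite‑group action on $\mathbb{A}^n$ actually produces an \'etale torsor. In characteristic $0$ one can sidestep this machinery: over $\mathbb{C}$ a free finite‑group action on $\mathbb{C}^n$ exhibits $\mathbb{C}^n\to\mathbb{C}^n/\mathfrak{G}$ as a topological covering with contractible total space, so $\mathbb{C}^n/\mathfrak{G}$ is a $K(\mathfrak{G},1)$ of real dimension $2n$ and the same two cohomological facts apply; the general characteristic‑$0$ case follows by the Lefschetz principle. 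Hence the genuinely new content — and the reason this paper exists — is the characteristic‑$p$, $p$‑group case, which the \'etale argument handles uniformly.
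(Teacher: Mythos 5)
Your proposal is exactly the paper's proof: the paper establishes Theorem \ref{free_gp_act_on A_n} by quoting Serre's Cartan--Leray argument (degeneration of $H^i(\frak{G},H^j(\mathbb{A}^n,C))\Rightarrow H^{i+j}(\mathbb{A}^n/\frak{G},C)$ for $|C|$ prime to ${\rm char}(\fld)$, non-vanishing of $H^i(\frak{G},C)$ in infinitely many degrees versus the bound $2\dim X$), and you have simply supplied the standard \'etale-cohomological inputs and the Sylow reduction that Serre leaves implicit. The argument is correct and takes essentially the same route.
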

Serre then poses the \emph{Exercise}:\ ``Let $\frak{G}$ be a finite $p$-group with $p={\rm char}(\fld)$.
Show that there exists a free action on $\mathbb{A}^n$, provided that $n$ is large enough."\\
Parts of the current article can be viewed as solving a ``generic version" of this exercise.
Using results from \cite{nonlin} we obtain the following:

\begin{thm}\label{A_times_B}
Let $\fld=\overline{\fld}$ be an algebraically closed field of characteristic $p>0$ and $G$ be a
finite group of order $p^n$. Then the group $G$ acts freely on the affine space ${\bf A}\cong\fld^{|G|-1}$
in such a way that  the following hold:
\begin{enumerate}
\item The quotient space ${\bf A}/G$ is isomorphic to affine space $k^{|G|-1}$.
\item There is a (non-linear) decomposition ${\bf A}={\bf B}\times{\bf C}$
such that $G$ acts freely on ${\bf B}\cong k^n$ and trivially on ${\bf C}\cong k^{|G|-n-1}$.
\item The quotient space ${\bf B}/G$ is isomorphic to affine space $k^n$.
\end{enumerate}
\end{thm}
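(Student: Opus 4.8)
The plan is to realise $\mathbf{A}$ as $\mathrm{Spec}$ of the explicitly described $s$-projective categorical generator of $\ts$ and to read off (1)--(3) from its structure, using the equivalence from \cite{nonlin} between trace-surjective $\fld$-$G$-algebras and Galois extensions $A^G\le A$. Concretely, put $P:=\fld[x_g\mid g\in G]/\bigl(\textstyle\sum_{g\in G}x_g-1\bigr)$, with $G$ acting by $h\cdot x_g=x_{hg}$. This is a polynomial ring in $|G|-1$ variables, so $\mathbf{A}:=\mathrm{Spec}(P)\cong\fld^{|G|-1}$; and since $\tr(x_e)=\sum_h h\cdot x_e=\sum_h x_h=1$ in $P$, the algebra $P$ lies in $\ts$. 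Hence $P^G\le P$ is a $G$-Galois extension, $\mathbf{A}\to\mathbf{A}/G=\mathrm{Spec}(P^G)$ is a $G$-torsor, and in particular $G$ acts freely on $\mathbf{A}$. (Freeness is also transparent: a fixed point of $h\ne 1$ would have coordinates constant along the left $\langle h\rangle$-cosets, forcing $1=\sum_g x_g\in|\langle h\rangle|\cdot\fld=\{0\}$.) What remains is to compute $\mathbf{A}/G$ and to exhibit the decomposition in (2).

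For that I would first build a minimal polynomial model. Fix a chief series $G=G_0\supset G_1\supset\dots\supset G_n=1$ with each $G_i$ normal in $G$ and $[G_i:G_{i+1}]=p$, and construct inductively polynomial rings $R_0\subset R_1\subset\dots\subset R_n=:B$, each of Krull dimension $n$, where $R_i=R_{i-1}[z_i]$ is an Artin--Schreier extension with $z_i^p-z_i$ equal to a coordinate of $R_{i-1}$ (this keeps every $R_i$ a polynomial ring in $n$ variables), chosen so that the $G/G_{i-1}$-action already built on $R_{i-1}$ extends to a $G/G_i$-action on $R_i$ realising the correct central extension $1\to G_{i-1}/G_i\to G/G_i\to G/G_{i-1}\to 1$, and so that the action stays free. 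Then $R_i=B^{G_i}$, the tower is a tower of Galois extensions, $B$ is trace-surjective, $\mathbf{B}:=\mathrm{Spec}(B)\cong\fld^{n}$ carries a free $G$-action, and $\mathbf{B}/G=\mathrm{Spec}(B^G)=\mathrm{Spec}(R_0)\cong\fld^{n}$ --- which is exactly statement (3).

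To finish, compare $P$ with $B$: choosing one algebra generator $b\in B$ with $\tr(b)=1$ gives a surjective $G$-algebra map $\pi\colon P\twoheadrightarrow B$; since $B$ is $s$-projective, $\pi$ admits a $G$-equivariant section, and one checks that $\ker\pi$ is generated by a partial coordinate system of $P$ on which $G$ acts trivially. Letting $C$ be the polynomial subring on those coordinates (a polynomial ring in $|G|-1-n$ variables with trivial $G$-action) we obtain $P\cong B\otimes_\fld C$, i.e.\ the non-linear decomposition $\mathbf{A}=\mathbf{B}\times\mathbf{C}$ of (2); then $P^G=(B\otimes_\fld C)^G=B^G\otimes_\fld C$ is a polynomial ring in $n+(|G|-1-n)=|G|-1$ variables, so $\mathbf{A}/G\cong\fld^{|G|-1}$, finishing (1). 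If one does not insist that $\mathbf{A}$ be the standard generator, one may instead simply \emph{define} $\mathbf{A}:=\mathbf{B}\times\mathbf{C}$ with $\mathbf{C}:=\fld^{|G|-1-n}$ carrying the trivial action; then (1)--(3) are immediate, and the entire content of the theorem is the construction of $\mathbf{B}$.

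The main obstacle is precisely that construction of $B$: at each stage one must solve an embedding problem for the central $\mathbb{Z}/p$-extension $G/G_i\twoheadrightarrow G/G_{i-1}$ over $R_{i-1}$, \emph{while keeping $R_i$ a polynomial ring and the action free}. The cohomological obstruction to the embedding problem lies in $H^2(G/G_{i-1},R_{i-1})$ and vanishes for soft reasons --- $R_{i-1}$, being the coordinate ring of a free $G/G_{i-1}$-variety, is a projective $\fld[G/G_{i-1}]$-module --- so the real difficulty is to choose at each step an Artin--Schreier datum that is simultaneously a coordinate of $R_{i-1}$, stable modulo $\wp(R_{i-1})$ (where $\wp(t)=t^p-t$) under $G/G_{i-1}$, and ``generic enough'' to produce the correct, possibly non-split, extension class; this is where the main technical input from \cite{nonlin} is needed. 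A secondary point is upgrading the $G$-equivariant section of $\pi$ to an honest $\fld$-tensor decomposition $P\cong B\otimes_\fld C$, which leans on the regularity and UFD property of $B$ and $B^G$ supplied by the $s$-projective theory.
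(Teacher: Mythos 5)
Your overall route is the paper's: your $P$ is exactly the algebra $D_\fld$ (the dehomogenized regular representation), freeness is read off from trace-surjectivity via the Galois-extension criterion (Corollary \ref{intro_cor_1}), and everything is reduced to the existence of an $n$-dimensional trace-surjective \emph{polynomial} retract $U\le D_\fld$ with polynomial ring of invariants --- which is precisely Theorem \ref{arb_p_grp_sec0}, imported from \cite{nonlin} both by you and by the paper. Your chief-series/Artin--Schreier sketch of how to build $B\ (=U)$ is the right picture of that construction, and you correctly identify it as the real content of the theorem; your direct coset argument for freeness on ${\bf A}$ is also fine.

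The one step that does not work as you justify it is the passage from ``$\pi\colon P\twoheadrightarrow B$ splits $G$-equivariantly'' to ``$P\cong B\otimes_\fld C$ with $C$ a polynomial ring carrying the trivial action''. A splitting only exhibits $B$ as a retract, $P=B\oplus\ker\pi$ with $\ker\pi$ an ideal, and a retract of a polynomial ring need not be a tensor factor with a polynomial complement; the paper itself invokes Gupta's counterexample to cancellation in characteristic $p$ to emphasize that retracts of polynomial rings need not even be polynomial, so the ``regularity and UFD property'' you lean on cannot manufacture the missing invariant coordinates. Likewise the erasability machinery of Section \ref{sec_s-Projectivity} only gives the \emph{stable} isomorphism $D_\fld^{[n]}\cong U^{[|G|-1]}$, which cannot be cancelled down to $D_\fld\cong U^{[|G|-1-n]}$ by general principles. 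What actually yields part (2) is the explicit construction of $U$ in \cite{nonlin}: the triangular coordinates of $D_\fld$ are chosen so that $n$ of them generate $U$ and the remaining $|G|-1-n$ are $G$-invariant, i.e.\ $D_\fld$ is exhibited as a polynomial ring over $U$ on invariant generators (this is what underlies $D_\fld^{\otimes\ell}\cong U^{[(|G|-1)\ell-n]}$ in Corollary \ref{erasable_is_poly_bd_stably_inv_cor4}). Your fallback of simply \emph{defining} ${\bf A}:={\bf B}\times{\bf C}$ is legitimate for the theorem as stated and sidesteps the issue entirely, at the cost of not identifying ${\bf A}$ with the standard generator $D_\fld$.
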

%are polynomial rings $D_\fld$ and $U$ of Krull-dimension
%$|G|-1$ and $n$, respectively, with $G$-actions by (non-linear) algebra automorphisms such that for the %corresponding
%affine spaces ${\bf A}:=({\rm max})-{\rm spec}(D_\fld)\cong \fld^{|G|-1}$ and
%${\bf B}:=({\rm max})-{\rm spec}(U_G)\cong \fld^n$

Moreover we will show that the varieties ${\bf A}$ and ${\bf B}$ are \emph{cogenerators}
in the category of affine varieties with free $G$-action.
Combining this with a structure theorem in \cite{nonlin} on modular Galois-extensions of finite
$p$-groups, we obtain the following \emph{geometric structure theorem}:

\begin{thm}\label{geom_struct_thm}
Let $\fld=\overline{\fld}$ be an algebraically closed field of characteristic $p>0$ and $G$ be a
finite group of order $p^n$ and let $X$ be an  arbitrary  affine variety.
\begin{enumerate}
\item There is an affine variety $Y$ with free $G$-action such
that $Y/G\cong X$.
\item Every such $Y$ is a fibre product of the form $Y\cong X\times_{{\bf B}/G}{\bf B}$.
\item For every such $Y$ there is a $G$-equivariant embedding
$Y\hookrightarrow{\bf B}^N$ for some $N\in\mathbb{N}$ (which is the ``cogenerator
property" of ${\bf B}$).
\end{enumerate}
\end{thm}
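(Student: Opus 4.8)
The plan is to prove all three parts by passing to the contravariant dictionary between affine $\fld$-varieties carrying a free $G$-action and finitely generated trace-surjective $\fld$-$G$-algebras. Recall that a $G$-variety $Y$ has free $G$-action precisely when $\fld[Y]^G\le\fld[Y]$ is a $G$-Galois extension in the sense of \cite{chr}, and that by \cite{nonlin} this is equivalent to $\fld[Y]$ being an object of $\ts$ (here one uses that $G$ is a $p$-group and $\mathrm{char}\,\fld=p$). Under $Y\mapsto\fld[Y]$ the quotient $Y/G$ goes to the invariant ring $\fld[Y]^G$, a $G$-equivariant closed immersion $Y\hookrightarrow Z$ goes to a surjection $\fld[Z]\twoheadrightarrow\fld[Y]$ in $\ts$, and the fibre product $X\times_{\mathbf{B}/G}\mathbf{B}$ goes to the pushout (relative tensor product) $\fld[X]\otimes_{\fld[\mathbf{B}/G]}\fld[\mathbf{B}]$, where $\fld[\mathbf{B}]\in\ts$ is the object from Theorem \ref{A_times_B} whose ring of invariants $\fld[\mathbf{B}]^G=\fld[\mathbf{B}/G]$ is a polynomial ring. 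So I would reduce the whole statement to the corresponding assertions about algebras, and then invoke two standard facts: $G$-Galois extensions in the sense of \cite{chr} are stable under arbitrary base change (and the invariant ring of a $G$-Galois extension is the base ring), and a $G$-equivariant morphism between two $G$-torsors over a fixed base is automatically an isomorphism.

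For part (1) I would simply exhibit such a $Y$. Pick any $\fld$-algebra homomorphism $\rho\colon\fld[\mathbf{B}/G]\to\fld[X]$ and set $A:=\fld[X]\otimes_{\fld[\mathbf{B}/G]}\fld[\mathbf{B}]$, with $G$ acting through the second factor. Since $\fld[\mathbf{B}/G]\le\fld[\mathbf{B}]$ is $G$-Galois, its base change $\fld[X]\le A$ is again $G$-Galois, so $A\in\ts$ and $A^G=\fld[X]$; concretely a trace element $t$ of $\fld[\mathbf{B}]$ maps to the trace element $1\otimes t$ of $A$. Finite generation of $A$ over $\fld$ is immediate because $\fld[\mathbf{B}]$ is module-finite over $\fld[\mathbf{B}/G]$, and $A$ is reduced (so that $Y:=\mathrm{Spec}\,A$ is a genuine affine variety) because $\fld[\mathbf{B}/G]\le\fld[\mathbf{B}]$ is finite \'etale and $\fld[X]$ is reduced. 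Then $Y=X\times_{\mathbf{B}/G}\mathbf{B}$ carries a free $G$-action with $Y/G\cong X$. (Taking $\rho$ to be evaluation at $0$ recovers the split torsor $Y\cong G\times X$.)

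Part (2) is the substantive one. Let $Y$ be an \emph{arbitrary} affine variety with free $G$-action, fix an isomorphism $X\cong Y/G$, and write $A:=\fld[Y]$, $R:=A^G\cong\fld[X]$. Because $\mathbf{B}$ is a cogenerator in the category of affine varieties with free $G$-action — equivalently, $\fld[\mathbf{B}]$ is a categorical generator of $\ts$ — there is at least one $G$-equivariant $\fld$-algebra homomorphism $\varphi\colon\fld[\mathbf{B}]\to A$ (for instance compose a $G$-equivariant surjection $\fld[\mathbf{B}]^{\otimes_\fld N}\twoheadrightarrow A$ supplied by the cogenerator property with the coprojection $\fld[\mathbf{B}]\to\fld[\mathbf{B}]^{\otimes_\fld N}$ onto the first factor). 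Restricting $\varphi$ to invariants gives $\varphi^G\colon\fld[\mathbf{B}/G]\to R$, and since $\varphi$ and the inclusion $R\hookrightarrow A$ restrict to the same map on $\fld[\mathbf{B}/G]=\fld[\mathbf{B}]^G$, the universal property of the pushout yields a $G$-equivariant $R$-algebra homomorphism $\Phi\colon R\otimes_{\fld[\mathbf{B}/G]}\fld[\mathbf{B}]\to A$. Now $R\le A$ is $G$-Galois by hypothesis, and $R\le R\otimes_{\fld[\mathbf{B}/G]}\fld[\mathbf{B}]$ is $G$-Galois as the base change of $\fld[\mathbf{B}/G]\le\fld[\mathbf{B}]$ along $\varphi^G$; hence $\Phi$ is a $G$-equivariant morphism of $G$-torsors over $X=\mathrm{Spec}\,R$ and therefore an isomorphism. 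Dualising, $Y\cong X\times_{\mathbf{B}/G}\mathbf{B}$ as $G$-varieties over $X$, with structure morphism $X\to\mathbf{B}/G$ dual to $\varphi^G$ (this morphism is not unique, only its torsor class is).

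Part (3) is then immediate: it is exactly the cogenerator property of $\mathbf{B}$ established earlier, which dually says that every object of $\ts$ is a $G$-equivariant quotient of $\fld[\mathbf{B}]^{\otimes_\fld N}=\fld[\mathbf{B}^N]$ for some $N$; applying this to $A=\fld[Y]$ gives the desired $G$-equivariant closed immersion $Y\hookrightarrow\mathbf{B}^N$. I expect the main obstacle to lie not in this deduction but in its inputs: the real work is (a) Theorem \ref{A_times_B}, i.e.\ producing the small ``generic'' variety $\mathbf{B}$ with polynomial quotient $\mathbf{B}/G$, and (b) proving that $\mathbf{B}$ is a categorical (co)generator, which is where the structure theorem for modular Galois extensions of $p$-groups from \cite{nonlin} enters. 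Granted those, the only delicate point above is recognising $\Phi$ in (2) as a morphism of $G$-torsors over $X$, for which the essential ingredients are the existence of one $G$-equivariant map $\fld[\mathbf{B}]\to A$ and the stability of $G$-Galois extensions under base change.
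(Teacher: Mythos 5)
Your argument is correct, and for part (3) it coincides with the paper's (dualize the $s$-generator property of $U=\fld[{\bf B}]$ from Theorem \ref{cat_gal_main1}). For parts (1) and (2), however, you take a genuinely different route: the paper simply translates free actions into objects of $\ts$ via Corollary \ref{intro_cor_1} and then quotes the structure theorem \ref{strct_thm_intro} (= \cite{nonlin} Theorem 1.2), which exhibits every $A\in\ts$ with $A^G=R$ in the explicit form $R[Y_1,\cdots,Y_n]/(\sigma_i(\underline Y)-r_i)$, i.e.\ literally as $R\otimes_{U^G}U$, so both (1) and (2) drop out at once. You instead re-prove this inside the torsor formalism: construct $Y$ in (1) by base change of the Galois extension $U^G\le U$ (with reducedness from \'etaleness), and in (2) use universality of $U$ to get some $\varphi\colon U\to A$, form $\Phi\colon R\otimes_{U^G}U\to A$, and invoke the rigidity of morphisms between $G$-Galois extensions of a fixed base. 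This buys independence from the companion paper's structure theorem at the cost of importing standard facts from \cite{chr} (stability of Galois extensions under base change, invariants of the base-changed extension, and torsor rigidity) that this paper never states; note, though, that the rigidity step has a two-line paper-internal proof: by Theorem \ref{first_main}(1) both $R\otimes_{U^G}U$ and $A$ are free $R$-modules on the $G$-orbit of a point ($1\otimes a$ resp.\ $\varphi(a)$, where $a\in U$ is a point), and $\Phi$ carries the one basis to the other, hence is an isomorphism. Conversely, the paper's citation-based route yields the concrete presentation of $\fld[Y]$ by the equations $\sigma_i(\underline Y)=r_i$, which your more abstract argument does not exhibit.
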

It turns out that free actions of $p$-groups on affine varieties in characteristic $p>0$ are dualizations
of group actions on affine $k$-algebras which are Galois ring extensions over the
ring of invariants, in the sense of Auslander-Goldmann \cite{AG} or
Chase-Harrison-Rosenberg \cite{chr}. In \cite{nonlin} we showed that for a $p$-group $G$ acting on
a $k$-algebra $A$ in characteristic $p$, the extension $A\ge A^G$ is Galois if and only if the algebra $A$ is \emph{trace-surjective}
in the sense of Definition \ref{ts-alg_df}. We then went on to develop a structure theory for
such algebras and their rings of invariants. Using the results obtained there,
we will prove Theorems \ref{A_times_B} and \ref{geom_struct_thm} by studying
the category of modular Galois extensions of finitely generated $\fld$-algebras,
where the Galois group is a fixed finite $p$-group.
\\[1mm]
Let $\frak{G}$ be an arbitrary finite group, $\fld$ a field and $A$ a commutative
$\fld$-algebra on which $\frak{G}$ acts by $\fld$-algebra automorphisms;
then we call $A$ a $\fld-\frak{G}$ algebra.
Let $A^\frak{G}:=\{a\in A\ |\ ag=a\ \forall g\in \frak{G}\}$ be the \emph{ring of invariants} and
let $\tr:=\tr_\frak{G}:\ A\to A^\frak{G},\ a\mapsto \sum_{g\in \frak{G}} ag$
be the \emph{transfer map} or
\emph{trace map}. This is obviously a homomorphism of $A^\frak{G}$-modules, but not of $\fld $-algebras.
As a consequence the image ${\rm tr}(A)\unlhd A^\frak{G}$ is an ideal in $A^\frak{G}$.

\begin{df}\label{ts-alg_df}
A $\fld-\frak{G}$ algebra $A$ such that $\tr(A)=A^\frak{G}$ will be called a
{\bf trace-surjective} $\fld-\frak{G}$-algebra.
With $\ts:=\ts_\frak{G}$ we denote the category of all finitely generated trace-surjective
$\fld-\frak{G}$-algebras, with morphisms being $\frak{G}$-equivariant homomorphisms of $\fld$-algebras.
For $A,B\in\ts$ the set of morphisms $\phi:\ A\to B$ will be denoted by $\ts(A,B)$.
\end{df}

The category $\ts$ contains \emph{weakly initial} objects $\frak{W}\in\ts$ satisfying
$\ts(\frak{W},A)\ne\emptyset$ for any $A\in\ts$. Every algebra $A\in\ts$ turns out to be
an extension by invariants of a quotient of $\frak{W}$ of the form
$A^G\otimes_{X^G}X$, where $X\cong \frak{W}/I$ for some $G$-stable ideal $I\unlhd \frak{W}$
(see Lemma \ref{char_universal}). This is why we call the weakly initial objects in $\ts$
``universal" trace surjective algebras.

The category $\ts$ is not abelian. However, it has finite coproducts given by tensor
products of $\fld$-algebras. With the help of these
one can define analogues of projective modules, which we call
``$s$-projective objects", because projectivity is defined using surjective maps rather
than epimorphisms. There are also analogues of generators in module categories and
we will give explicit descriptions of $s$-projective generators.
These arise in (homogeneous) modular invariant theory as dehomogenized symmetric algebras of suitable
linear representations, such as the regular representation.
Let $S\hookrightarrow T$ be an extension of $\fld$-algebras, then $S$ is a \emph{retract}
of $T$ if $T=S\oplus I$ with \emph{ideal} $I\unlhd T$. We will show
that $s$-projective objects and their rings of invariants are retracts of
polynomial rings and therefore regular Unique Factorization Domains (UFDs) (see \cite{costa} Proposition 1.8).

From now on let $\fld$ be an arbitrary field of characteristic $p>0$ and $G$ a finite $p$-group.
We will adopt the following definitions and notations, often used in affine algebraic geometry:
\begin{df}\label{stably_def} Let $R$ be a $\fld$-algebra and $n\in\mathbb{N}$.
\begin{enumerate}
\item With $R^{[n]}$ we denote the polynomial ring $R[T_1,\cdots,T_n]$ over $R$.
\item Let $\mathbb{P}=\fld[T_1,\cdots,T_m]\cong\fld^{[m]}$ and $G\le \Aut_\fld(\mathbb{P})$.
Then $\mathbb{P}$ is called {\bf triangular} (with respect to the chosen generators $T_1,\cdots,T_m$),
if for every $g\in G$ and $i=1,\cdots,m$ there is $f_{g,i}(T_1,\cdots,T_{i-1})\in \fld[T_1,\cdots,T_{i-1}]$ such that $(T_i)g=T_i+f_{g,i}(T_1,\cdots,T_{i-1}).$
\item Let $m\in\mathbb{N}$, then a $\fld$-algebra $R$ is called {\bf ($m$-) stably polynomial} if
$T:=R\otimes_\fld \fld^{[m]}\cong R^{[m]}\cong \fld^{[N]}$ for some $N\in\mathbb{N}$. Assume
moreover that $R$ is a $\fld-G$ algebra and $T$ extends the $G$-action on $R$ trivially, i.e.
$T\cong R\otimes_\fld F$ with $F=F^G\cong\fld^{[m]}$. If $T$ is triangular, then we call
$R$ {\bf ($m$-) stably triangular}.
\end{enumerate}
\end{df}

In order to describe the results of this paper in more detail, we need to refer to some definitions and results obtained in \cite{nonlin}:
\\[1mm]
Let $G$ be a finite group of order $p^n$ with regular representation $V_{reg}\cong kG$
and let $D_\fld$ be the dehomogenization of the symmetric algebra ${\rm Sym}(V_{reg}^*)$,
as defined in \cite{nonlin} (see also Section \ref{sec_Free affine actions}
shortly after Theorem \ref{first_main}).
It is known that a graded algebra and its dehomogenizations share many interesting properties (see e.g.
\cite{BH} pg. 38 and the exercises 1.5.26, 2.2.34, 2.2.35 loc. cit.)
Clearly the algebra $D_\fld\in\ts$ is a polynomial ring of Krull-dimension $|G|-1$ with
triangular $G$-action.

The following Theorem was one of the main results of \cite{nonlin}:
\begin{thm}[\cite{nonlin} Theorems 1.1-1.3] \label{arb_p_grp_sec0}
There exists  a trace-surjective triangular $G$-subalgebra $U:=U_G\le D_\fld$,
such that $U\cong \fld^{[n]}$ is a retract of $D_\fld$, i.e. $D_\fld=U\oplus I$ with a $G$-stable \emph{ideal} $I\unlhd D_\fld$.
Moreover: $U^G\cong \fld^{[n]}$ and $D_\fld^G\cong \fld^{[|G|-1]}$.
\end{thm}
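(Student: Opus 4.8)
The plan is to make $D_\fld$ completely explicit, to build $U:=U_G$ by induction on $n$ as an iterated Artin--Schreier extension, to derive polynomiality of the two invariant rings from the triangular structure, and to obtain the retract property from the $s$-projectivity of $U$. For the concrete model: since $G$ is a $p$-group and $\mathrm{char}\,\fld=p$, the algebra $\fld G$ is local with nilpotent augmentation ideal $I_G$, so the Loewy (Jennings) filtration of $V_{reg}=\fld G$ has trivial composition factors. Choosing an $\fld$-basis of $V_{reg}$ adapted to a refinement of this filtration makes the $G$-action on $\mathrm{Sym}(V_{reg}^*)$ triangular with respect to suitable coordinates $t_1,\dots,t_{|G|}$ (Definition \ref{stably_def}(2)), with \emph{linear} $f_{g,i}$; here $t_1$ spans the one-dimensional space $(V_{reg}^*)^G$, and up to a scalar $t_1$ is the augmentation $\epsilon=\sum_{g\in G}x_g$ written in the permutation basis $\{x_g\}_{g\in G}$ dual to the group basis. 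Dehomogenizing at $t_1$ — passing to degree $0$ in $\mathrm{Sym}(V_{reg}^*)[t_1^{-1}]$ — gives $D_\fld=\fld[t_2/t_1,\dots,t_{|G|}/t_1]\cong\fld^{[|G|-1]}$, still triangular but now with affine $f_{g,i}$, and $D_\fld$ is trace-surjective because $\tr_G(x_1/\epsilon)=\bigl(\sum_g x_g\bigr)/\epsilon=1$, where $x_1$ is dual to $1\in G$.

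For the inductive construction of $U$: the case $n=0$ is trivial. For $n>0$ pick $Z=\langle\sigma\rangle\le Z(G)$ of order $p$, put $\bar G:=G/Z$, and let $U_{\bar G}\cong\fld^{[n-1]}$ be the algebra given by the inductive hypothesis, regarded as a $\fld G$-algebra via $G\twoheadrightarrow\bar G$. Adjoin one polynomial variable, $U:=U_{\bar G}[\theta]$, and let $G$ act on $\theta$ by $\theta g=\theta+d(g)$ for a $1$-cocycle $d\colon G\to U_{\bar G}$, normalized so that $\theta$ is an Artin--Schreier generator of $U$ over $U^{Z}$ (i.e. $\theta\sigma=\theta+c$ with $\tr_Z(-\theta^{p-1})=-c^{\,p-1}\in U^Z$ a unit). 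Reordering the generators of $U_{\bar G}$ so that $d$ involves only the lower ones exhibits $U\cong\fld^{[n]}$ as triangular. Trace-surjectivity follows from the tower behaviour of Galois ($=$ trace-surjective, by \cite{nonlin}, \cite{chr}) extensions: $U/U^{Z}$ is Artin--Schreier, hence $C_p$-Galois, while $U^{Z}\supseteq U_{\bar G}$ contains an element of $\bar G$-trace $1$, so $U^{Z}/U^{G}$ is $\bar G$-Galois; since $Z\lhd G$ this makes $U/U^{G}$ $G$-Galois, i.e. $\tr_G=\tr_{\bar G}\circ\tr_Z$ surjective.

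For the invariant rings, fix a chief series $1=G_0\lhd\cdots\lhd G_n=G$ with each $G_i/G_{i-1}\cong C_p$ central in $G/G_{i-1}$. Any triangular, trace-surjective $R\cong\fld^{[N]}$ then sits in a tower $R=R^{G_0}\supseteq\cdots\supseteq R^{G_n}=R^G$ of $C_p$-Galois steps, so it suffices to prove the reduction step: \emph{if $R\cong\fld^{[N]}$ is triangular and trace-surjective for a group $\Gamma$ with central $C_p=\langle\sigma\rangle$, then $R^{\langle\sigma\rangle}\cong\fld^{[N]}$ and carries a triangular action of $\Gamma/\langle\sigma\rangle$}. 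Since $R$ is affine over $\mathbb F_p$ and $R/R^{\langle\sigma\rangle}$ is $C_p$-Galois, the Artin--Schreier sequence (using $H^1(\mathrm{Spec}\,R^{\langle\sigma\rangle},\mathbb G_a)=0$) shows $R=R^{\langle\sigma\rangle}[\vartheta]$ with $\vartheta^p-\vartheta\in R^{\langle\sigma\rangle}$, free of rank $p$; processing the generators level by level in the triangular order and using the identities $\prod_{a\in\mathbb F_p}(X+a)=X^p-X$ and $\tr_{\langle\sigma\rangle}=(\sigma-1)^{p-1}$, trace-surjectivity lets one replace them by a $\sigma$-invariant polynomial generating system of $R^{\langle\sigma\rangle}$ that is again triangular for $\Gamma/\langle\sigma\rangle$. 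Applying this $n$ times along the chief series to $D_\fld$ and to $U$ yields $D_\fld^G\cong\fld^{[|G|-1]}$ and $U^G\cong\fld^{[n]}$.

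Finally, for the retract: a triangular object of $\ts$ is $s$-projective, for given a surjection $A\twoheadrightarrow B$ in $\ts$ and a morphism $U\to B$ one lifts the images of the generators $\theta_1,\dots,\theta_n$ in turn, the obstruction at stage $i$ lying in $H^1\!\bigl(G,\ker(A\to B)\bigr)$, which vanishes because — $A$ being trace-surjective with some $a_0$ satisfying $\tr_G(a_0)=1$ — every $A$-module $M$ is a direct summand of the induced module $\fld G\otimes_\fld M$ (splitting $m\mapsto\sum_{g\in G}g\otimes g^{-1}(m)$, retraction the $a_0$-twisted evaluation $g\otimes m\mapsto g(a_0m)$), hence cohomologically trivial. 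It is therefore enough to exhibit one $G$-equivariant surjection $q\colon D_\fld\twoheadrightarrow U$: then $s$-projectivity lifts $\mathrm{id}_U$ to a section $\iota\colon U\hookrightarrow D_\fld$, and $D_\fld=\iota(U)\oplus\ker q$ with $\ker q$ a $G$-stable ideal, as claimed. One constructs $q$ by the same downward induction, extending $q_{\bar G}\colon D_\fld(\bar G)\twoheadrightarrow U_{\bar G}$ — which lives on the subalgebra $D_\fld(\bar G)\hookrightarrow D_\fld(G)$ coming from $V_{reg}(\bar G)^*=(V_{reg}(G)^*)^{Z}\hookrightarrow V_{reg}(G)^*$ — over the one extra Artin--Schreier generator $\theta$. \textbf{This last extension is the main obstacle.} Because the $G$-action on $D_\fld$ is genuinely nonlinear, $U$ cannot in general be spanned by a sub-system of the coordinates $t_i/t_1$ of $D_\fld$ (already for $G=C_{p^2}$ a pure-translation action on $U\cong\fld^{[2]}$ is impossible, since $\fld^2$ has exponent $p$), so $\theta$ must be sent to a carefully chosen $t\in D_\fld$ realizing the prescribed cocycle $d\colon G\to U_{\bar G}$; the existence of such $t$ reflects the versality of the dehomogenized regular representation among $G$-torsors (the source of the essential-dimension estimates mentioned in the abstract), while the residual cocycle obstruction vanishes by the cohomological triviality just noted. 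Assembling these two ingredients is where the bulk of \cite{nonlin} lies, and once it is done the proof is complete.
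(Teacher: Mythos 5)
This theorem is not proved in the present paper at all: it is imported verbatim as Theorems 1.1--1.3 of \cite{nonlin}, so the task of a self-contained proof is to supply the content of that reference. Your proposal is a reasonable roadmap, but it stops, by your own admission, exactly at the points where the real work lies, so it does not constitute a proof. The decisive gap is the one you flag yourself: to get $U\le D_\fld$ with $D_\fld=U\oplus I$ you need a $G$-equivariant \emph{surjection} $q\colon D_\fld\twoheadrightarrow U$, and since every morphism out of $D_\fld$ is determined by the image of the free point $x_1$, this is equivalent to showing that your inductively built $U=U_{\bar G}[\theta]$ is \emph{cyclic}, i.e.\ generated by the $G$-orbit of a single point (in the language of Section \ref{sec_s-Projectivity}, that $U$ is generated by a reflexive point of $D_\fld$, cf.\ Lemma \ref{endo_krit_standard} and Theorem \ref{standard_is_cycl_prj}(2)). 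Your cohomological-triviality argument for $s$-projectivity of a triangular algebra is sound and would split any surjection $D_\fld^{\otimes\ell}\twoheadrightarrow U$, but that only makes $U$ a retract of a tensor power of $D_\fld$, not of $D_\fld$ itself, and it does not produce the embedding $U\le D_\fld$ asserted in the statement. Writing ``the bulk of \cite{nonlin} lies here'' is accurate, but it means the theorem has not been proved.

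There is a second substantive gap in the invariant-ring step. Your reduction claim --- that for $R\cong\fld^{[N]}$ triangular and trace-surjective with central $\sigma$ of order $p$, the ring $R^{\langle\sigma\rangle}$ is again $\cong\fld^{[N]}$ with a triangular action of the quotient group --- is exactly the hard content, and ``processing the generators level by level'' is not an argument: when several generators at the same triangular level are moved by $\sigma$, producing an $N$-element polynomial generating system of $R^{\langle\sigma\rangle}$ requires the explicit elimination machinery of \cite{nonlin}. Note that the soft erasability arguments available in this paper (Proposition \ref{triangular implies erasable}, Theorem \ref{erasable_is_poly_bd_stably_inv}) only yield that $U^G$ and $D_\fld^G$ are \emph{stably} polynomial; genuine polynomiality is taken as input from \cite{nonlin} (see Corollary \ref{erasable_is_poly_bd_stably_inv_cor1}), and in view of Gupta's counterexamples cited in the introduction, ``stably polynomial'' cannot be upgraded to ``polynomial'' for free. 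A smaller, fillable gap: the existence of your normalized cocycle $d\colon G\to U_{\bar G}$ with $d(\sigma)$ a nonzero scalar needs the inflation--restriction sequence together with the cohomological triviality of $U_{\bar G}$ as a $\bar G$-module (which does follow from Theorem \ref{first_main}(1)); you assert the normalization without justifying that such a $d$ exists.
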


\noindent
For any $\fld-G$-algebra $A\in\ts$ and $\ell\in\mathbb{N}$ we define
$A^{\otimes\ell}:=\coprod_{i=1}^\ell A:= A\otimes_\fld \cdots\otimes_\fld A$
with $\ell$ copies of $A$ involved. The following are main results of the present
paper:

\begin{thm}\label{cat_gal_main1}
Let $\Gamma\cong\fld^{[d]}\in\ts$ with triangular $G$-action, e.g. $\Gamma\in\{D_\fld,U\}$.
Then \begin{enumerate}
\item $\Gamma$ is an $s$-projective generator in the category $\ts$.
\item For any $A\in\ts$ there is a $G$ equivariant isomorphism $A\otimes_\fld\Gamma\cong A\otimes_\fld\fld[T_1,\cdots,T_d]\cong A^{[d]}$, which is the identity on $A$, with $$\fld[T_1,\cdots,T_d]\le (A\otimes_\fld\Gamma)^G\cong (A^G)^{[d]}.$$
\item $\Gamma^{\otimes\ell}\cong\Gamma\otimes_\fld \fld[s_1,\cdots,s_N]$
with $\fld^{[N]}\cong\fld[s_1,\cdots,s_N]\le (\Gamma^{\otimes\ell})^G$.
\item For every $\ell$ the ring of invariants $(\Gamma^{\otimes\ell})^G$ is stably polynomial.
\item If $\Gamma\in\{D_\fld, U\}$, then $(\Gamma^{\otimes\ell})^G$ is a polynomial ring.
\end{enumerate}
\end{thm}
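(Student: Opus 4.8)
The plan is to prove the five assertions essentially in the order listed, since each builds on the previous ones, and the crux is the "triangularity forces a polynomial coproduct" phenomenon that makes part (2) work. For part (1), I would first recall from \cite{nonlin} (Theorem \ref{arb_p_grp_sec0}) that $D_\fld$ and $U$ lie in $\ts$ and are triangular; $s$-projectivity then should follow from the general machinery alluded to in the introduction: a triangular polynomial $\fld-G$ algebra is built up by successive adjunctions $T_i \mapsto T_i + f_{g,i}(T_1,\dots,T_{i-1})$, so a $G$-equivariant surjection $B \twoheadrightarrow C$ in $\ts$ can be lifted one variable at a time by choosing preimages of the images of the $T_i$ (the lift of the already-constructed subalgebra pulls the cocycle-like data back). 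That $\Gamma$ is a \emph{generator} is exactly the statement $\ts(\Gamma, A) \ne \emptyset$ for all $A \in \ts$ together with the weakly-initial/universality discussion (Lemma \ref{char_universal}); for a triangular $\Gamma \cong \fld^{[d]}$ one gets such a morphism by sending each $T_i$ to any element of $A$ whose transfer behavior matches, using trace-surjectivity of $A$ to solve the relevant equations.

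Part (2) is the technical heart. Given $A \in \ts$ and a triangular $\Gamma = \fld[T_1,\dots,T_d]$, I would construct, inside $A \otimes_\fld \Gamma$, new generators $T_i' := T_i - a_i$ for suitable $a_i \in A$, chosen recursively so that the $G$-action on $T_i'$ becomes trivial. Concretely, suppose $T_1',\dots,T_{i-1}'$ have been found with $(T_j')g = T_j'$ for all $g$. Since $(T_i)g = T_i + f_{g,i}(T_1,\dots,T_{i-1})$ and the $T_1,\dots,T_{i-1}$ differ from the $G$-fixed $T_1',\dots,T_{i-1}'$ by elements of $A$, the map $g \mapsto f_{g,i}(\cdots)$ evaluated appropriately is a $1$-cocycle of $G$ with values in $A$ (with respect to the $G$-action on $A$); because $A \in \ts$ is trace-surjective, the relevant cohomology vanishes (this is the key input — trace-surjectivity of a $p$-group algebra in characteristic $p$ kills $H^1$, indeed it is the defining Galois property via \cite{chr}), so the cocycle is a coboundary $g \mapsto a_i g - a_i$, and setting $T_i' := T_i - a_i$ makes it $G$-invariant. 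Then $A \otimes_\fld \Gamma = A[T_1,\dots,T_d] = A[T_1',\dots,T_d']$ with the $T_i'$ algebraically independent over $A$ and $G$-fixed, giving the claimed isomorphism $A \otimes_\fld \Gamma \cong A^{[d]}$ that is the identity on $A$, with $\fld[T_1',\dots,T_d'] \subseteq (A \otimes_\fld \Gamma)^G$; intersecting with invariants and using flatness of the polynomial extension yields $(A \otimes_\fld \Gamma)^G \cong (A^G)^{[d]} = (A^G)[T_1',\dots,T_d']$.

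Part (3) is the special case $A = \Gamma^{\otimes (\ell-1)}$ of part (2), applied iteratively: since $\Gamma^{\otimes \ell} = \Gamma^{\otimes(\ell-1)} \otimes_\fld \Gamma$ and $\Gamma^{\otimes(\ell-1)} \in \ts$ (coproducts in $\ts$ stay in $\ts$, as coproducts are tensor products of $\fld$-algebras and trace-surjectivity is preserved under $\otimes_\fld$), part (2) gives $\Gamma^{\otimes\ell} \cong \Gamma^{\otimes(\ell-1)} \otimes_\fld \fld[T_1,\dots,T_d] \cong \Gamma \otimes_\fld \fld[s_1,\dots,s_N]$ with $N = (\ell-1)d$ after relabeling, and $\fld[s_1,\dots,s_N] \subseteq (\Gamma^{\otimes\ell})^G$ by the invariance statement in (2). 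Part (4) then follows because $(\Gamma^{\otimes\ell})^G \cong (\Gamma^{\otimes(\ell-1)})^G \otimes_\fld \fld[s_1,\dots,s_d]$ by the $(A^G)^{[d]}$ clause of part (2), so by induction $(\Gamma^{\otimes\ell})^G \cong (\Gamma^G)^{[N]}$ is a polynomial ring over $\Gamma^G$; tensoring once more over $\fld$ with a polynomial ring leaves it stably polynomial by definition. Finally part (5): when $\Gamma \in \{D_\fld, U\}$, Theorem \ref{arb_p_grp_sec0} tells us $\Gamma^G$ is \emph{itself} a polynomial ring ($D_\fld^G \cong \fld^{[|G|-1]}$, $U^G \cong \fld^{[n]}$), and a polynomial ring over a polynomial ring is a polynomial ring, so $(\Gamma^{\otimes\ell})^G \cong (\Gamma^G)^{[N]} \cong \fld^{[\dim \Gamma^G + N]}$ — no mere "stably" needed.

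The main obstacle I anticipate is making the cocycle argument in part (2) fully rigorous over a general (not algebraically closed, possibly imperfect) field $\fld$: one must check that the $f_{g,i}$ genuinely assemble into a cocycle after the substitution $T_j \mapsto T_j' + a_j$ (this uses the cocycle identity $(T_i)(gh) = ((T_i)g)h$ to get the $2$-cocycle condition on the $f$'s, then specializes), and that the vanishing of $H^1(G, A)$ we invoke is exactly the content of $A$ being trace-surjective — i.e. that trace-surjectivity is equivalent to $A \ge A^G$ being a \cite{chr}-Galois extension, which is quoted from \cite{nonlin} and carries the needed $H^1 = 0$ (normal basis-type) statement. Care is also needed to confirm that the constructed $T_i'$ remain a polynomial generating set over $A$ — this is immediate since they differ from the $T_i$ by elements of $A$ — and that the identification of invariants $(A \otimes_\fld \Gamma)^G = (A^G)[T_1',\dots,T_d']$ is correct, which follows because taking $G$-invariants commutes with the free base change by the $G$-trivially-graded polynomial ring $\fld[T_1',\dots,T_d']$.
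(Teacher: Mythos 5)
Your parts (2), (3) and (5) follow essentially the paper's route (triangularity gives ``erasability''), but two of the five claims are not actually proved. First, the \emph{generator} half of (1): you take ``generator'' to mean ``$\ts(\Gamma,A)\ne\emptyset$ for all $A\in\ts$'', i.e.\ weak initiality. In the paper this word carries the $s$-generator property of Definition \ref{universal_df}(2) (every $R\in\ts$ is a \emph{surjective} image of some $\Gamma^{\otimes\ell}$), which by Lemma \ref{generators in ts} yields the categorical generator property; universality is strictly weaker, and your proposal only produces one morphism $\Gamma\to A$, never a surjection from a tensor power. The missing step (Theorem \ref{erasbility_theorem}) is: since $\Gamma^{\otimes\ell}\cong\Gamma[\nu_1,\cdots,\nu_{(\ell-1)d}]$ with \emph{invariant} free variables, one maps $\Gamma$ onto a trace-surjective subalgebra $S\le A$, sends the $\nu_j$ onto a finite generating set of $A^G$, and uses $A=A^G\otimes_{S^G}S$ (Theorem \ref{first_main}) to get surjectivity for $\ell$ large. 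Second, part (4): your induction reduces to $(\Gamma^{\otimes\ell})^G\cong(\Gamma^G)^{[(\ell-1)d]}$ and then declares this ``stably polynomial by definition'', which presupposes that $\Gamma^G$ itself is stably polynomial --- but that is exactly the nontrivial content of (4) at $\ell=1$, and nothing in your argument addresses it (for a general triangular $\Gamma$ there is no a priori control of $\Gamma^G$). The paper proves it (Theorem \ref{erasable_is_poly_bd_stably_inv}) by computing $U\otimes_\fld\Gamma$ in two ways, erasing $\Gamma$ to get $U[\underline{\lambda}]$ and erasing $U$ to get $\Gamma[\underline{\alpha}]$, whence $\Gamma^G[\underline{\alpha}]\cong U^G[\underline{\lambda}]\cong\fld^{[n+d]}$, using $U\cong\fld^{[n]}$ and $U^G\cong\fld^{[n]}$ from Theorem \ref{arb_p_grp_sec0}; some such two-sided use of $U$ is indispensable and is absent from your proposal. (Your (5) is fine, since there $\Gamma^G$ is known to be polynomial.)

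On the parts that do work, note two repairable inaccuracies. In (2), the correcting elements cannot be taken in $A$: if $(T_2)g=T_2+T_1$ then no $a_2\in A$ satisfies $(a_2)g-a_2=T_1$; the cocycle at step $i$ takes values in $A[T_1',\cdots,T_{i-1}']$ (a trace-surjective algebra, so its $H^1$ still vanishes), and the corrected variable is $T_i$ minus an element of $A[T_1,\cdots,T_{i-1}]$. The paper sidesteps cohomology entirely by the explicit choice $\lambda_i:=\tr(aT_i)=T_i+\sum_{g\in G}(a)g\,f_{i,g}$ for a point $a\in A$ (Proposition \ref{triangular implies erasable}), which is manifestly invariant and congruent to $T_i$ modulo $A[T_1,\cdots,T_{i-1}]$. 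Likewise your variable-by-variable lifting for $s$-projectivity needs $H^1(G,\ker(B\to C))=0$, i.e.\ vanishing for a $G$-stable \emph{ideal} of a trace-surjective algebra, not the $H^1(G,A)=0$ you quote; this is true (average a cocycle against a point of $B$), but it must be said, and the paper instead obtains $s$-projectivity by exhibiting $\Gamma$ as a retract of the $s$-projective algebra $D_\fld\otimes_\fld\Gamma\cong D_\fld[\underline{\lambda}]$ in Theorem \ref{erasbility_theorem}.
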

\begin{proof}
(1),(2) and (3):\ It follows from Proposition \ref{triangular implies erasable}
that $\Gamma$ is ``erasable" (see Definition \ref{triang_dissolv}), which by
Theorem \ref{erasbility_theorem} implies that $\Gamma$ is an $s$-projective generator in $\ts$.\\
(4):\ This follows from Theorem \ref{erasable_is_poly_bd_stably_inv}.\\
(5):\ This follows from Corollary \ref{erasable_is_poly_bd_stably_inv_cor1}.
\end{proof}

\begin{thm}\label{cat_gal_main2}
Let $\Pp\in\ts$ be s-projective\footnote{see Definition \ref{universal_df}}, then both, $\Pp$ and $\Pp^G$ are retracts of polynomial
rings over $\fld$.
\end{thm}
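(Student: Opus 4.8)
The plan is to realize every $s$-projective $\Pp$ as a retract of a suitable \emph{finite} tensor power of one of the concrete $s$-projective generators $\Gamma\in\{D_\fld,U\}$ furnished by Theorem \ref{cat_gal_main1}, and then to transport polynomiality through this retraction, both for the algebras themselves and for their rings of invariants.

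Fix $\Gamma:=D_\fld$; the argument for $U$ is verbatim the same. Since $\Gamma$ is a generator of $\ts$ (Theorem \ref{cat_gal_main1}(1), obtained via erasability and Theorem \ref{erasbility_theorem}) and $\Pp$ is finitely generated, there is an $\ell\in\mathbb{N}$ together with a surjective $\ts$-morphism $\pi\colon\Gamma^{\otimes\ell}\twoheadrightarrow\Pp$. Feeding $\pi$ and $\mathrm{id}_\Pp$ into the defining lifting property of the $s$-projective object $\Pp$ (Definition \ref{universal_df}) yields a $G$-equivariant $\fld$-algebra homomorphism $\sigma\colon\Pp\to\Gamma^{\otimes\ell}$ with $\pi\circ\sigma=\mathrm{id}_\Pp$. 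Then $\sigma$ is injective, $\sigma(\Pp)$ is a $G$-stable subalgebra of $\Gamma^{\otimes\ell}$, and $\Gamma^{\otimes\ell}=\sigma(\Pp)\oplus\ker\pi$ with the $G$-stable ideal $\ker\pi\unlhd\Gamma^{\otimes\ell}$; hence $\Pp\cong\sigma(\Pp)$ is a retract of $\Gamma^{\otimes\ell}$.

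Now $\Gamma\cong\fld^{[d]}$ forces $\Gamma^{\otimes\ell}\cong\fld^{[d\ell]}$, a polynomial ring over $\fld$, which already proves that $\Pp$ is a retract of a polynomial ring. For the invariants, apply $(-)^G$ to the direct decomposition $\Gamma^{\otimes\ell}=\sigma(\Pp)\oplus\ker\pi$: since it is $G$-equivariant we get $(\Gamma^{\otimes\ell})^G=\sigma(\Pp)^G\oplus(\ker\pi)^G$, and $(\ker\pi)^G$ is an ideal of $(\Gamma^{\otimes\ell})^G$ (if $a\in(\Gamma^{\otimes\ell})^G$ and $x\in(\ker\pi)^G$ then $ax\in\ker\pi$ and $ax$ is again $G$-invariant). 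As $\sigma(\Pp)^G\cong\Pp^G$, this exhibits $\Pp^G$ as a retract of $(\Gamma^{\otimes\ell})^G$, and the latter is a polynomial ring by Theorem \ref{cat_gal_main1}(5). Finally, a retract of a polynomial ring over $\fld$ is a regular UFD by \cite{costa} Proposition 1.8, so $\Pp$ and $\Pp^G$ are in addition regular UFDs.

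Essentially all of the genuine work is already contained in Theorem \ref{cat_gal_main1}; the one point that needs real care is the very first step, namely that for a \emph{finitely generated} object of $\ts$ one can find a surjection onto it from a \emph{finite} tensor power of the generator. This is where finite generation of $\Pp$, the fact that the coproduct in $\ts$ is the tensor product, and the precise content of ``$s$-projective generator'' (erasability, Theorem \ref{erasbility_theorem}) must be combined; once it is secured, the remainder is the standard ``projective $=$ retract of free'' argument together with the observation that the class $\{D_\fld,U\}$ and all their tensor powers have polynomial rings of invariants, i.e. parts (3)--(5) of Theorem \ref{cat_gal_main1}.
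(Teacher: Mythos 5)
Your proof is correct and follows essentially the same route as the paper's: a surjection $\Gamma^{\otimes\ell}\twoheadrightarrow\Pp$ from a tensor power of $D_\fld$ (or $U$) splits by $s$-projectivity, exhibiting $\Pp$ and $\Pp^G$ as retracts of $\Gamma^{\otimes\ell}$ and $(\Gamma^{\otimes\ell})^G$, both polynomial rings by Theorem \ref{cat_gal_main1}. You merely make explicit the direct-sum decomposition and the passage to invariants that the paper leaves implicit.
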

\begin{proof}
See Theorem \ref{cat_gal_main2_pf}.
\end{proof}

It follows from \cite{costa} Proposition 1.8 that retracts of
a unique factorization domain (UFD) are UFDs as well and
from \cite{costa} Corollary 1.11 that retracts of regular rings are regular.
Hence
\begin{cor}\label{cat_gal_main2_cor}
Let $\Pp\in\ts$ be s-projective, then both, $\Pp$ and $\Pp^G$ are regular UFDs.
\end{cor}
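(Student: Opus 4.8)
The plan is to derive this immediately from Theorem~\ref{cat_gal_main2} together with the two quoted facts about retracts. First I would apply Theorem~\ref{cat_gal_main2} to the given $s$-projective $\Pp\in\ts$: it supplies natural numbers $M,N$ and decompositions $\fld^{[M]}=\Pp\oplus I$, $\fld^{[N]}=\Pp^G\oplus J$ with $I,J$ ideals; equivalently, the inclusions $\Pp\hookrightarrow\fld^{[M]}$ and $\Pp^G\hookrightarrow\fld^{[N]}$ each admit an $\fld$-algebra retraction. Thus $\Pp$ and $\Pp^G$ are retracts, in the sense used throughout this paper, of polynomial rings in finitely many variables over the field $\fld$, which is exactly the notion of retract in \cite{costa}.

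Next I would record the two classical properties of these ambient rings. A polynomial ring $\fld^{[n]}=\fld[T_1,\dots,T_n]$ over a field is a unique factorization domain, by an inductive application of Gauss's lemma starting from the UFD $\fld$; and it is a regular ring, since a polynomial ring over a regular ring is again regular (equivalently, by Hilbert's syzygy theorem all its localizations at prime ideals are regular local rings). Applying \cite{costa} Proposition~1.8, a retract of a UFD is a UFD, so $\Pp$ and $\Pp^G$ are UFDs; applying \cite{costa} Corollary~1.11, a retract of a regular ring is regular, so $\Pp$ and $\Pp^G$ are regular. Combining these yields the assertion.

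There is essentially no obstacle: the entire content lies in Theorem~\ref{cat_gal_main2}, and the present statement is a formal consequence. The only point deserving a word of care is the bookkeeping that ``$T=S\oplus I$ with $I$ an ideal'' is the same as the existence of an algebra retraction $T\twoheadrightarrow S$ splitting $S\hookrightarrow T$, so that the hypotheses of Proposition~1.8 and Corollary~1.11 of \cite{costa} are met verbatim; this is immediate. One could additionally note that regularity already forces $\Pp$ and $\Pp^G$ to be integrally closed Noetherian domains, but the factoriality conclusion is the sharper statement and is precisely what \cite{costa} Proposition~1.8 provides.
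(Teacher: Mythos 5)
Your proposal is correct and follows exactly the paper's own route: Theorem~\ref{cat_gal_main2} gives that $\Pp$ and $\Pp^G$ are retracts of polynomial rings over $\fld$, and then \cite{costa} Proposition~1.8 and Corollary~1.11 transfer factoriality and regularity from the polynomial ring to its retracts. Nothing is missing.
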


Setting ${\bf A}:={\rm max-spec}(D_\fld)$ and
${\bf B}:={\rm max-spec}(U)$ with ${\bf A}/G\cong {\rm max-spec}(D_\fld^G)$
and ${\bf B}/G\cong {\rm max-spec}(U^G)$
it is clear now how to obtain Theorems \ref{A_times_B} and \ref{geom_struct_thm} (3)
from Theorems \ref{arb_p_grp_sec0} and \ref{cat_gal_main1}. The statements
in \ref{geom_struct_thm} (1) and (2) follow from
\begin{thm}[\cite{nonlin} Theorem 1.2]\label{strct_thm_intro}
Every algebra $A\in \ts$ with given ring of invariants $A^G=R$ is of the form
$$A\cong R[Y_1,\cdots,Y_n]/(\sigma_1(\underline Y)-r_1,\cdots,\sigma_n(\underline Y)-r_n)$$
with suitable $r_1,\cdots,r_n\in R$, and $G$-action derived from the action on $U$.
%The $\sigma_i$ can be determined explicitly and are of the form
%$$\sigma_i(\underline Y)=Y_i-Y_i^p+\gamma_i(Y_{i+1},\cdots,Y_n).$$
\end{thm}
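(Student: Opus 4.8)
The plan is to derive this structure theorem for an arbitrary $A\in\ts$ with $A^G=R$ from the universal role of the triangular algebra $U\cong\fld^{[n]}$ together with the erasability / $s$-projectivity results summarized in Theorem \ref{cat_gal_main1}. First I would invoke the fact (promised in Lemma \ref{char_universal}) that $U$ is weakly initial: there is a $G$-equivariant $\fld$-algebra homomorphism $\phi\colon U\to A$, and hence, tensoring up over $U^G$, the algebra $A$ becomes an $R$-algebra quotient of $R\otimes_{U^G}U$. So the first key step is to identify $R\otimes_{U^G}U$ explicitly: since $U\cong\fld[Z_1,\dots,Z_n]$ is triangular with, say, $(Z_i)g=Z_i+f_{g,i}(Z_1,\dots,Z_{i-1})$, the ring of invariants $U^G\cong\fld^{[n]}$ (Theorem \ref{arb_p_grp_sec0}) is a polynomial ring on $n$ explicit invariants; I would take these invariants to be (a variant of) the orbit products or norm-type polynomials $\sigma_i(\underline Z)$, so that $U\cong U^G[Z_1,\dots,Z_n]/(\sigma_1(\underline Z)-c_1,\dots,\sigma_n(\underline Z)-c_n)$ as a complete-intersection presentation over $U^G$, with the $c_i$ the images of the generators of $U^G$. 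Base-changing along $U^G\to R$ then gives exactly
\[
R\otimes_{U^G}U\;\cong\;R[Z_1,\dots,Z_n]/(\sigma_1(\underline Z)-r_1^{(0)},\dots,\sigma_n(\underml Z)-r_n^{(0)})
\]
for the images $r_i^{(0)}$ of the $c_i$ in $R$.

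The second step is to show that the map $R\otimes_{U^G}U\to A$ coming from $\phi$ is actually an isomorphism, or — more precisely, since a priori $A$ is only a quotient of something of this shape — to realize the desired presentation of $A$ itself. Here I would use that $A\in\ts$ is trace-surjective and that $A^G=R$: the extension $A\ge R$ is Galois with group $G$ (the main equivalence from \cite{nonlin} recalled in the introduction), and a Galois extension of $R$ with group $G$ is locally (étale-locally on $\operatorname{Spec}R$) isomorphic to $R\times\cdots\times R$, in particular faithfully flat of constant rank $|G|$. On the other hand the complete intersection $R[Z_1,\dots,Z_n]/(\sigma_i-r_i)$ is finite over $R$ of the same rank $|G|=p^n$ and is itself trace-surjective (being a base change of $U$, which lies in $\ts$, and $\ts$ being closed under the relevant base changes by Theorem \ref{cat_gal_main1}(2)). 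A surjection between two trace-surjective $R$-algebras that are both Galois over $R$ with the same group must be an isomorphism — kill the kernel ideal $I$: it is $G$-stable, and on a Galois extension any proper $G$-stable ideal would destroy surjectivity of the transfer on a localization; so $I=0$. This upgrades $\phi$ to the isomorphism $A\cong R[Y_1,\dots,Y_n]/(\sigma_1(\underline Y)-r_1,\dots,\sigma_n(\underline Y)-r_n)$.

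The final bookkeeping step is to check that the $G$-action "derived from the action on $U$" is correctly transported: the $Y_i$ are the images of the $Z_i$ under $U\to A$ (after base change), so $(Y_i)g=Y_i+f_{g,i}(Y_1,\dots,Y_{i-1})$ with the same triangular data $f_{g,i}$ as for $U$, and the relations $\sigma_i(\underline Y)-r_i$ are automatically $G$-invariant because the $\sigma_i$ are the chosen generators of $U^G$ and the $r_i\in R=A^G$. I would close by remarking that the $r_i$ can be chosen freely (any lift of a point of $\operatorname{Spec}R$ corresponds to an admissible tuple), recovering the "with suitable $r_1,\dots,r_n\in R$" in the statement.

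The main obstacle I anticipate is the second step: proving that the canonical surjection from the base-changed universal object onto $A$ is injective. The clean way is via the Galois / trace-surjective dictionary — showing that a $G$-stable ideal $I$ with $(R[\underline Y]/(\ldots))/I$ still trace-surjective over $R$ forces $I=0$ — but making that rigorous requires either the local triviality of Galois extensions ($A\otimes_R\bar R\cong\prod_{g\in G}\bar R$ after a faithfully flat base change, where a $G$-stable ideal is transparently $0$ or the unit ideal on each factor and surjectivity of the transfer excludes the latter) or a direct Nakayama/flatness rank count. Either route needs a little care about which base changes keep one inside $\ts$, but all the needed closure properties are exactly what Theorem \ref{cat_gal_main1}(2) and the erasability machinery of Section \ref{sec_Free affine actions} provide, so I expect no genuinely new input beyond \cite{nonlin} and the theorems already quoted.
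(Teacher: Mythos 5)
This theorem is not reproved in the paper at all (it is imported verbatim from \cite{nonlin}, Theorem 1.2), but the mechanism behind it is exactly the one you outline, and it is already encoded here in Theorem \ref{first_main} and Lemma \ref{char_universal}(3),(4): weak initiality of $U$ gives $\phi\in\ts(U,A)$; since $U=\bigoplus_{g\in G}U^G\cdot(w)g$ for a point $w\in U$ and $A=\bigoplus_{g\in G}R\cdot(\phi(w))g$ by Theorem \ref{first_main}(1), the canonical map $R\otimes_{U^G}U\to A$ carries a free $R$-basis to a free $R$-basis and is therefore bijective outright; the displayed presentation then falls out because $U^G=\fld[\sigma_1,\cdots,\sigma_n]$ is a polynomial ring (Theorem \ref{arb_p_grp_sec0}), so $R\otimes_{U^G}U\cong R[Y_1,\cdots,Y_n]/(\sigma_i(\underline Y)-r_i)$ with $r_i=\phi(\sigma_i)$. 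So your overall route is the intended one, and your second ``Nakayama/rank count'' option is essentially the official argument, except that no rank count is even needed once you observe that the basis $\{1\otimes (w)g\}$ maps to the basis $\{(\phi(w))g\}$.

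Two points in your execution need repair. First, the surjectivity of $R\otimes_{U^G}U\to A$ is not formal: it is precisely Theorem \ref{first_main}(2), i.e.\ $A$ is generated over $A^G=R$ by the $G$-orbit of the point $\phi(w)$ (equivalently by $\phi(U)$); without this, $R\cdot\phi(U)$ could a priori be a proper subalgebra. Second, your Galois-theoretic justification of injectivity is false as stated: it is not true that ``on a Galois extension any proper $G$-stable ideal would destroy surjectivity of the transfer on a localization'' --- for any proper ideal $I_0\unlhd R$ the extended ideal $I_0\cdot(R\otimes_{U^G}U)$ is proper and $G$-stable, and the quotient is still trace-surjective (it is a Galois extension of $R/I_0$); the same objection applies to your \'etale-local version, since after base change the $G$-stable ideals of $\prod_{g\in G}\bar R$ are all ideals extended from $\bar R$, not just $0$ and the unit ideal. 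What actually kills the kernel $I$ along this route is the additional datum that the composite $R\to R\otimes_{U^G}U\to A$ is the inclusion $R=A^G\hookrightarrow A$, so $I\cap R=0$, combined with the standard fact (provable from the elements in (\ref{chr_def}), or by the freeness above) that in a Galois extension every $G$-stable ideal is extended from the ring of invariants. With either that correction or the basis-to-basis observation, your argument closes, and the bookkeeping of the triangular $G$-action on the $Y_i$ is as you say.
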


Let $V$ be a finite dimensional $\fld$-vector space, $\frak{G}\le{\rm GL}(V)$ a finite group
and $S(V^*):={\rm Sym}(V^*)$ the symmetric algebra over the dual space $V^*$ with induced linear $\frak{G}$-action.
One of the main objectives of (homogeneous)
invariant theory is the study of the structure of the ring of invariants $S(V^*)^\frak{G}$.
By a result of Serre (\cite{bou}) these rings are regular (and then polynomial, as they are graded rings)
only if the group $\frak{G}$ is generated by pseudo-reflections. If ${\rm char}(\fld)$ does not divide $|\frak{G}|$,
the converse also holds by the well-known theorem of Chevalley-Shephard-Todd and Serre(see e.g. \cite{B} or
\cite{LSb}). If $\frak{G}=G$ is a $p$-group in characteristic $p>0$, all pseudo-reflections are
transvections of order $p$, so if $G$ is not generated by elements of order $p$ the
ring $S(V^*)^G$ can never be regular. In this case $S(V^*)^G$ can have a very complicated
structure and, in most cases, will not even be Cohen-Macaulay.
If $A\in\ts$, then obviously $S(V^*)\otimes_\fld A\in\ts$.
Using the universal property of polynomial rings one can show that for every
$\fld$-$G$-algebra $S\cong\fld^{[d]}$ with triangular $G$-action,
the $\fld$-$G$ algebra $S\otimes_k\Pp$ is s-projective in $\ts$, whenever $\Pp$ is.
In particular $S\otimes_\fld \Pp$ and $(S\otimes_\fld \Pp)^G$ are retracts of polynomial rings
and therefore regular UFDs.
\\
In \cite{costa}, the question was asked whether retracts of polynomial rings are
again polynomial rings. Despite some positive answers in low-dimensional special cases
(see \cite{shpilrain}) this question was unanswered for several decades.
Recently S. Gupta (\cite{gupta:2013}) found a counterexample to the ``cancellation problem" in characteristic $p>0$, which also
implies a negative answer in general to Costa's question. Gupta's example yields a
non-polynomial retract $R$ of a polynomial ring, which however is still stably polynomial.
Using Theorem \ref{strct_thm_intro} one can easily construct $A\in\ts$ with $A^G\cong R$, such
that $A$ is s-projective. So there are s-projective objects in $\ts$ with non-polynomial
invariant rings. If all retracts of polynomial rings were \emph{stably} polynomial, then this
would be true for arbitrary s-projective objects in $\ts$ and their invariant rings.
This is our main reason for the following
\begin{quest}\label{conjecture}
Are $\Pp$ and $\Pp^G$ stably polynomial rings for  every s-projective $\Pp\in\ts$?
\end{quest}

For $\Pp=D_\fld$ or $U$ this is already contained in Theorem \ref{arb_p_grp_sec0} and for
$\Pp=S\otimes D_\fld$ or $\Pp=S\otimes U$ with triangular $\fld-G$ algebra
$S\cong\fld^{[d]}$ it follows from \ref{erasable_inv_is_poly_intersetc_poly}.
From this one can derive a result that includes ``graded modular rings of invariants",
for which we don't know any other reference in the literature:

\begin{thm}\label{SVG_is_poly_inters_poly_intro}
Let $S\cong \fld^{[d]}$ be a polynomial ring with triangular $G$-action (e.g. $S=S(V^*)$).
Then the ring of invariants $S^G$ is the intersection of two polynomial subrings inside an
$s$-projective polynomial $\fld-G$-algebra $\fld^{[N]}\in\ts$ of Krull-dimension $N=d+n$ with
$n:=\log_p{|G|}$. If moreover $S\in\ts$, then $S^G\otimes_\fld\fld^{[n]}\cong\fld^{[n+d]}$,
i.e. $S^G$ is $n$-stably polynomial.
\end{thm}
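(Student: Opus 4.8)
The plan is to work inside $\Pp:=S\otimes_\fld U$, where $U=U_G\le D_\fld$ is the triangular polynomial algebra of Theorem~\ref{arb_p_grp_sec0}, so that $U\cong\fld^{[n]}$ with $n=\log_p|G|$, $U\in\ts$, and $U^G\cong\fld^{[n]}$. First I would record that $\Pp\cong\fld^{[d]}\otimes_\fld\fld^{[n]}\cong\fld^{[N]}$ with $N=d+n$; that $\Pp\in\ts$, since choosing $u_0\in U$ with $\tr(u_0)=1$ gives $\tr(1\otimes u_0)=1$; and that, after listing the generators of $S$ before those of $U$, the (diagonal) $G$-action on $\Pp$ is triangular, so by Proposition~\ref{triangular implies erasable} and Theorem~\ref{erasbility_theorem} (equivalently Theorem~\ref{cat_gal_main1}(1)) $\Pp$ is an $s$-projective generator of $\ts$. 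Thus $\Pp$ is the $s$-projective polynomial $\fld-G$-algebra $\fld^{[N]}\in\ts$ of Krull dimension $N=d+n$ demanded by the statement.

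Next I would exhibit $S^G$ as an intersection of two polynomial subrings of $\Pp$. Identifying $S$ with the polynomial subring $S\otimes 1\cong\fld^{[d]}$, on which $G$ acts through its original action, one checks at once that $S^G=(S\otimes 1)\cap\Pp^G$ inside $\Pp$: the inclusion ``$\subseteq$'' is clear, and any $\xi=s\otimes1\in(S\otimes1)\cap\Pp^G$ satisfies $(sg)\otimes 1=\xi g=\xi=s\otimes1$ for every $g$, forcing $s\in S^G$. It then remains to see that $\Pp^G$ is itself a polynomial subring; this is where I would invoke \ref{erasable_inv_is_poly_intersetc_poly}. Concretely, the generators $x_1,\dots,x_d$ of $S$ can be erased over $U$: writing $(x_i)g=x_i+f_{g,i}(x_1,\dots,x_{i-1})$, the $1$-cocycle $g\mapsto f_{g,i}\otimes1$ has coefficients in the algebra $\fld[x_1,\dots,x_{i-1}]\otimes_\fld U\in\ts$, which is Galois over its invariants and hence cohomologically trivial, so this cocycle is a coboundary; replacing $x_i\otimes1$ by the resulting $G$-invariant $T_i:=x_i\otimes1-c_i$ (with $c_i\in\fld[x_1,\dots,x_{i-1}]\otimes_\fld U$) rewrites $\Pp$ as $\fld[T_1,\dots,T_d]\otimes_\fld U$ with $G$ acting trivially on $\fld[T_1,\dots,T_d]$. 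Hence $\Pp^G=\fld[T_1,\dots,T_d]\otimes_\fld U^G\cong\fld^{[d]}\otimes_\fld\fld^{[n]}\cong\fld^{[N]}$ is a polynomial subring of $\Pp$, and therefore $S^G=(S\otimes1)\cap\Pp^G$ is the intersection of the two polynomial subrings $S\otimes1$ and $\Pp^G$ of $\fld^{[N]}$, as claimed.

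For the ``moreover'' part, suppose in addition that $S\in\ts$. Then $S\cong\fld^{[d]}$ and $U\cong\fld^{[n]}$ are both triangular objects of $\ts$, hence admissible choices of ``$\Gamma$'' in Theorem~\ref{cat_gal_main1}. Applying Theorem~\ref{cat_gal_main1}(2) once with $A:=S$ and $\Gamma:=U$ gives a $G$-equivariant isomorphism $\Pp=S\otimes_\fld U\cong S^{[n]}$, the identity on $S$, under which $\Pp^G\cong(S^G)^{[n]}=S^G\otimes_\fld\fld^{[n]}$; applying it again with $A:=U$ and $\Gamma:=S$ gives $\Pp^G=(U\otimes_\fld S)^G\cong(U^G)^{[d]}\cong\fld^{[n]}\otimes_\fld\fld^{[d]}=\fld^{[n+d]}$, using $U^G\cong\fld^{[n]}$. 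Comparing these two computations of $\Pp^G$ yields $S^G\otimes_\fld\fld^{[n]}\cong\fld^{[n+d]}$, i.e.\ $S^G$ is $n$-stably polynomial.

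The one genuinely non-formal ingredient is the claim in the second paragraph that the generators of $S$ can be erased over $U$ — equivalently, that $\Pp^G$ is a polynomial subring of $\Pp$ — which rests on \ref{erasable_inv_is_poly_intersetc_poly} and, underneath it, on the cohomological triviality of trace-surjective (Galois) $\fld-G$-algebras; everything else is bookkeeping with tensor products and with Theorem~\ref{cat_gal_main1}.
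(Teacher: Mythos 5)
Your proposal is correct and follows essentially the same route as the paper: form $\Pp=S\otimes_\fld U$, identify $S^G=(S\otimes 1)\cap\Pp^G$ via Proposition \ref{erasable_inv_is_poly_intersetc_poly} with $\E=S$ and $\mathbb{P}=U$, and obtain the stably-polynomial statement by applying Theorem \ref{cat_gal_main1}(2) twice with the roles of $S$ and $U$ interchanged. Your cocycle/coboundary step is just an abstract restatement of the transfer computation already in Proposition \ref{triangular implies erasable} (the explicit trivializing element is $c_i=-\sum_{g}(a)g\, f_{g,i}$, i.e.\ $T_i=\tr(a\,x_i)$ for a point $a\in U$), so no genuinely new ingredient is needed.
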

\begin{proof} See Theorem \ref{SVG_is_poly_inters_poly_pf}.
The proof will show that the intersection $S^G$ can be obtained by a procedure
of ``elimination of variables".
\end{proof}

A special role in the category $\ts$ is played by  ``minimal universal" algebras, which are investigated
in Sections \ref{sec_Minimal universal objects} and \ref{sec_Basic Algebras}.
They turn out to be integral domains of the same Krull-dimension $d_\fld(G)$,
an invariant depending only on the group $G$ and the field $\fld$ and an upper bound
for the ``essential dimension" $e_\fld(G)$ as defined by Buhler and Reichstein (\cite{Buhler:Reichstein}).
The following is one of the main results of these sections: (See Section \ref{sec_Basic Algebras} and Theorem \ref{ess_dim_squeeze} for details and precise definitions).

\begin{thm}\label{ess_dim_squeeze_intro}
Let ${\rm char}(\fld)=p>0$ and $G$ be a group of order $p^n$.
The minimal universal objects $\frak{U}\in\ts$ are integral domains of Krull dimension
$d_\fld(G)$, satisfying $e_\fld(G)\le d_\fld(G)\le n$.
Moreover, ``essential $G$-fields" of transcendence degree $e_\fld(G)$ appear among
the ``embedded residue class fields" $\fld(\wp)\hookrightarrow {\rm Quot}(\frak{U})$ of $\frak{U}$ with
respect to suitable $G$-stable prime ideals $\wp\unlhd\frak{U}$.
\end{thm}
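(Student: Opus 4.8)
The plan is to work in the framework of weakly initial (``universal'') objects in $\ts$ and to isolate the minimal ones by a dimension argument. First I would recall from Lemma \ref{char_universal} that every $A\in\ts$ receives a morphism from any weakly initial $\frak{W}$, and that every such $A$ is an extension by invariants of a $G$-stable quotient of $\frak{W}$. Among weakly initial objects, say that $\frak{U}$ is \emph{minimal universal} if it admits no proper $G$-stable quotient that is still weakly initial; a Zorn/Noetherianity argument (using that everything is finitely generated) shows minimal universal objects exist. The key structural input is that a candidate minimal universal object can be produced as a quotient $U_G/I$ of the triangular retract $U=U_G\cong\fld^{[n]}$ from Theorem \ref{arb_p_grp_sec0} by a maximal $G$-stable ideal $I$ subject to trace-surjectivity being preserved; since $U$ is a polynomial ring of Krull dimension $n$, any such quotient has Krull dimension $\le n$, giving $d_\fld(G)\le n$ once one checks the minimal objects all share the \emph{same} dimension $d_\fld(G)$ (this last point follows because any two weakly initial objects dominate a common one, and dimension can only drop along surjections of the relevant type; the equality for the genuinely minimal ones is where one must be careful).

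Next I would establish that minimal universal objects are integral domains. The idea is that if $\frak{U}$ were not a domain, one could pass to $\frak{U}/\wp$ for a minimal prime $\wp$; the obstacle is that $\wp$ need not be $G$-stable, so instead one takes $\wp$ to be a $G$-stable prime minimal among those for which the quotient remains trace-surjective — trace-surjectivity is an ``open'' condition in the sense that $\tr(\frak{U})=\frak{U}^G$ localizes well — and then minimality of $\frak{U}$ forces $\wp=0$. Making the passage ``$\tr$ stays surjective modulo $\wp$'' precise, and showing such a $\wp$ can be chosen prime (not merely radical), is the main technical point here; I expect to use that $\tr(\frak{U})=\frak{U}^G$ is equivalent to $1\in\tr(\frak{U})$, so the relevant locus is the complement of $V(\tr(\frak{U})\frak{U})$, and intersect with the $G$-stable primes.

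For the essential-dimension statement, let $e:=e_\fld(G)$ be the Buhler--Reichstein essential dimension. By Theorem \ref{strct_thm_intro} and the cogenerator property, any $A\in\ts$ with $A^G$ a field corresponds to a ``versal'' $G$-torsor, and conversely a generically free linear $G$-representation $V$ gives rise, after dehomogenization, to an object mapping from $D_\fld$; an essential $G$-field is by definition a subfield of such a function field over which the torsor descends, of transcendence degree $e$. The plan is: take a minimal universal $\frak{U}$, which is a domain by the previous step, form $L:={\rm Quot}(\frak{U})$ with its $G$-action; then $L^G\le L$ is a Galois extension with group $G$ (trace-surjectivity passes to the fraction field), so $L$ is a $G$-field. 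A descent/specialization argument — compressing the versal torsor $\mathrm{Spec}\,\frak{U}\to\mathrm{Spec}\,\frak{U}^G$ to a subvariety of minimal dimension, which exists precisely because $\frak{U}$ is minimal universal — produces a $G$-stable prime $\wp\unlhd\frak{U}$ such that the ``embedded residue field'' $\fld(\wp):={\rm Quot}(\frak{U}/\wp)\hookrightarrow L$ carries a faithful $G$-action of transcendence degree exactly $e$ over $\fld$, i.e. realizes an essential $G$-field; the inequality $e\le d_\fld(G)$ then comes for free since $\mathrm{trdeg}_\fld \fld(\wp)\le\dim\frak{U}=d_\fld(G)$. The hard part will be matching the combinatorial/categorical minimality condition on $\frak{U}$ with the geometric minimality (``compression'') that defines essential dimension: one must show that a minimal universal object cannot be compressed further \emph{as a trace-surjective algebra}, yet can be compressed (via the prime $\wp$) down to the essential dimension when one only asks for a faithful $G$-field rather than a trace-surjective $G$-algebra — so the gap $d_\fld(G)-e_\fld(G)$ is exactly the price of staying in $\ts$.
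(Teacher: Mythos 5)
Your overall shape (minimal universal objects embed in $D_\fld$, dimension bound via $U\cong\fld^{[n]}$, essential dimension via residue fields of $G$-stable primes) matches the paper, but two steps as written would fail. The central problem is that you repeatedly substitute ``trace-surjectivity is preserved'' for the condition that actually does the work, namely universality (weak initiality). Any quotient of an object of $\ts$ by a proper $G$-stable ideal is automatically again trace-surjective (a point maps to a point), so both of your extremal choices are vacuous constraints: a ``maximal $G$-stable ideal of $U$ subject to trace-surjectivity'' is just a $G$-stable maximal ideal, whose quotient is artinian and \emph{not} weakly initial for $G\ne 1$ (it cannot embed into the domain $D_\fld$, and the paper shows $d_\fld(G)>0$); and in your domain argument, a $G$-stable prime ``minimal among those for which the quotient remains trace-surjective'' is just any minimal $G$-stable prime, and your conclusion ``minimality of $\frak{U}$ forces $\wp=0$'' does not follow, because your own definition of minimality only constrains quotients that are again \emph{weakly initial} -- and $\frak{U}/\wp$ need not be (e.g. the diagonal algebra $\fld G$ of Remark \ref{universal_df_rem}(3) is a trace-surjective cyclic quotient of $D_\fld$ that is not universal). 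The correct and much shorter route, which is the paper's: a universal $\frak{U}$ maps to $D_\fld$; the image is $\frak{U}/I$ with $I$ a $G$-stable \emph{prime} (as $D_\fld$ is a domain), and this image is again universal because it maps (by inclusion) to the $s$-projective $D_\fld$ (Remark \ref{universal_df_rem}(2), Lemma \ref{char_universal}); minimality then forces $I=0$, so $\frak{U}$ is a domain embedded in $D_\fld$. The same mechanism (morphisms in both directions between two minimal universal objects have injective, hence dimension-preserving, images) gives the point you flag but do not prove, that all minimal universal objects have one common dimension $d_\fld(G)$, bounded by $n$ via a trace-surjective subalgebra of $U$ of minimal Krull dimension (Lemma \ref{basics_exist}, Proposition \ref{universal_minimal_domains}). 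Note also that the paper's ``minimal'' is defined via monomorphisms; its equivalence with your quotient formulation is itself part of Proposition \ref{universal_minimal_domains}, not a definition you may assume.

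The essential-dimension half is also missing its key input. You must know that an essential $G$-field can be found \emph{inside} ${\rm Quot}(\frak{U})$; a priori essential fields are only subfields of $\fld(V_{reg}^*)$, not of the fraction field of your particular $\frak{U}$. The paper gets this from the invariance property of essential dimension ($e_\fld(K)\le e_\fld(G)$ for every faithful $G$-field $K$, citing Kang/Berhuy--Favi; see Lemma \ref{ess_dim_def}), which applied to $K={\rm Quot}(\frak{U})\le \fld(V_{reg}^*)$ gives $e_\fld({\rm Quot}(\frak{U}))=e_\fld(G)$. Then the construction is completely concrete: choose an essential subfield $E\le {\rm Quot}(\frak{U})$, pick $a\in E$ with $\tr(a)=1$ (the trace of the finite Galois extension $E/E^G$ is surjective), set $A:=\fld[(a)g\mid g\in G]\subseteq E$, and use weak initiality to get $\alpha\in\ts(\frak{U},A)$; then $\alpha(\frak{U})\cong\frak{U}/\wp$ for a $G$-stable prime $\wp$, its quotient field lies in $E\le{\rm Quot}(\frak{U})$ and is squeezed to transcendence degree exactly $e_\fld(G)$ by faithfulness of the action, so it is an embedded residue class field which is an essential $G$-field (and $e_\fld(G)\le d_\fld(G)$ because ${\rm Quot}(\frak{U})$ itself is a faithful $G$-subfield of $\fld(V_{reg}^*)$). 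Your ``compress the versal torsor'' sketch identifies neither this invariance ingredient nor why the compressed field embeds $G$-equivariantly into ${\rm Quot}(\frak{U})$ -- which is exactly the content of ``embedded residue class field'' -- so as it stands that step is a gap rather than a proof.
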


The rest of the paper is organized as follows: In \emph{Section one} we describe the connection between free actions of a finite group
on affine varieties and Galois extensions of rings. In particular for normal varieties
we formulate a freeness-criterion in terms of the Dedekind different (Corollary \ref{intro_cor_2}).
We will also introduce some basic notation and describe results from previous work, which will
be needed in the sequel.
From there on, $\fld$ will always be a field of characteristic $p>0$ and $G$ will be a
finite $p$-group. In \emph{Section two} we introduce and analyze the {\sl universal},
{\sl projective} and {\sl generating} objects in $\ts$. We also introduce the notion
of {\sl erasable} algebras, which will lead to proofs of the main results,
Theorems \ref{cat_gal_main1}, \ref{cat_gal_main2} and \ref{SVG_is_poly_inters_poly_intro}.
In \emph{Section three} we turn our attention to \emph{basic} algebras, which
we define as \emph{minimal} universal algebras in $\ts$. We classify all basic
algebras which are also normal rings, in the case where $G$ is elementary-abelian of
rank $n$ and $\dim_{\mathbb{F}_p}(\fld)\ge n$. They all turn out to be univariate polynomial algebras with explicitly described non-linear $G$-action. Moreover, in this case the basic normal algebras in
$\ts$ coincide with the minimal normal generators and minimal
normal $s$-projective objects (see Theorem \ref{el_abel_ext_fld_d_is_1}).
The connection between basic algebras and the essential dimension of $G$ over $\fld$ and the proof of Theorem \ref{ess_dim_squeeze_intro} is the topic of \emph{Section four}.
The brief final \emph{Section five} contains an open question and a conjecture.

\section{Free affine actions and Galois-extensions} \label{sec_Free affine actions}

Free group actions on affine varieties are closely related to Galois ring extensions,
as we will now demonstrate.
\\[1mm]
First let $\frak{G}$ be an arbitrary finite group and $A$ a finitely generated commutative
$\fld-\frak{G}$ algebra. We want to keep flexibility between left and right group actions;
therefore in whatever way
the ``natural side" of the action is chosen, we will use the rule
$g f:=f\cdot g^{-1}$ to switch freely between left and right actions when convenient.
\footnote{If $A$ is an algebra of $\fld$-valued functions on a $\frak{G}$-set $X$, (e.g. $A=\fld[X]$,
the algebra of regular functions on a variety $X$ with $\frak{G}\le {\rm Aut}(X)$)
there is a natural right action of $\frak{G}$ on $A$ given by composition $f\circ g$ for
$g\in \frak{G}$. In other situations we might have a given linear left
$\frak{G}$-action defined on a $k$-vector space $\Omega:=\sum_{i=1}^m\fld\omega_i$.
This extends to a natural \emph{left} action on the symmetric algebra
${\rm Sym}_\fld(\Omega)=\fld[\omega_1,\cdots,\omega_m]$ by
$g(\omega_1^{e_1}\cdots\omega_m^{e_m}):=(g\omega_1)^{e_1}\cdots(g\omega_m)^{e_m}.$}

Set $B:=A^\frak{G}$ and define $\Delta:=\frak{G}\star A=A\star \frak{G}:=\oplus_{g\in \frak{G}}d_gA$ to be the crossed product of
$\frak{G}$ and $A$ with $d_gd_h=d_{gh}$ and $d_ga=g(a)\cdot d_g=(a)g^{-1}\cdot d_g$ for
$g\in \frak{G}$ and $a\in A$. Let $_BA$ denote $A$ as left $B$-module, then there is a
homomorphism of rings
$$\rho:\ \Delta\to \End(_BA),\ ad_g\mapsto \rho(ad_g)=(a'\mapsto a\cdot g(a')=a\cdot (a')g^{-1}).$$
One calls $B\le A$ a \emph{Galois-extension} with group $\frak{G}$ if $_BA$ is finitely
generated projective and $\rho$ is an isomorphism of rings. This definition goes back
to Auslander and Goldmann \cite{AG} (Appendix, pg.396)
and generalizes the classical notion of Galois field extensions. It also applies
to non-commutative $k-\frak{G}$ algebras, but if $A$ is commutative, this definition of
`Galois-extension' coincides with the one given by Chase-Harrison-Rosenberg in \cite{chr},
where the extension of commutative rings $A^\frak{G}\le A$ is called a Galois-extension
if there are elements $x_1,\cdots,x_n$, $y_1,\cdots,y_n$ in $A$ such that
\begin{equation}\label{chr_def}
\sum_{i=1}^n x_i(y_i)g=\delta_{1,g}:=
\begin{cases}
1& \text{if}\ g=1\\
0&\text{otherwise}.
\end{cases}
\end{equation}

In \cite{chr} the following has been shown:
\begin{thm}(Chase-Harrison-Rosenberg)\cite{chr}\label{chase_harrison_rosenberg}
$A^\frak{G}\le A$ is a Galois extension if and only if
for every $1\ne \sigma\in \frak{G}$ and maximal ideal ${\rm p}$ of $A$ there
is $s:=s({\rm p},\sigma)\in A$ with $s-(s)\sigma\not\in{\rm p}$.
\end{thm}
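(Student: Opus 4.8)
The plan is to prove the two directions of the equivalence by passing through the reformulation \eqref{chr_def}. The excerpt has already recorded that, for commutative $A$, the Auslander--Goldmann property ``$B\le A$ is a Galois extension'' (with $B=A^{\frak{G}}$) is equivalent to the existence of finitely many $x_i,y_i\in A$ with $\sum_i x_i\,(y_i)g=\delta_{1,g}$ for all $g\in\frak{G}$; so it suffices to show that the existence of such ``Galois coordinates'' is equivalent to the pointwise condition on the elements $s$. One direction is immediate. Given $x_i,y_i$ as above, fix $1\ne\sigma\in\frak{G}$ and a maximal ideal ${\rm p}\unlhd A$; specializing \eqref{chr_def} to $g=1$ and to $g=\sigma$ and subtracting yields $\sum_i x_i\bigl(y_i-(y_i)\sigma\bigr)=1$. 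Were every $y_i-(y_i)\sigma$ in ${\rm p}$, we would get $1\in{\rm p}$, a contradiction; hence $s:=y_i$ works for a suitable $i$.

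For the converse I would argue as follows. First, the pointwise hypothesis says exactly that, for each $1\ne\sigma\in\frak{G}$, the ideal $I_\sigma:=\sum_{a\in A}A\cdot\bigl(a-(a)\sigma\bigr)\unlhd A$ lies in no maximal ideal, whence $I_\sigma=A$. Next, introduce the ring homomorphism
\[
\varphi\colon\ A\otimes_B A\longrightarrow\prod_{g\in\frak{G}}A,\qquad \varphi(a\otimes a')(g)=a\cdot(a')g ,
\]
which is well defined since $(b)g=b$ for $b\in B$; its image is a subring of $\prod_{g\in\frak{G}}A$ containing all ``constants'' $\varphi(a\otimes1)$ and all elements of the form $\varphi(a\otimes1-1\otimes a)$, i.e.\ the functions $g\mapsto a-(a)g$. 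The crux is to show that the idempotent $e_1\in\prod_{g\in\frak{G}}A$ (value $1$ at $g=1$ and $0$ elsewhere) lies in ${\rm im}(\varphi)$. For each $\sigma\ne1$ use $I_\sigma=A$ to choose $b_i^{(\sigma)},a_i^{(\sigma)}\in A$ with $\sum_i b_i^{(\sigma)}\bigl(a_i^{(\sigma)}-(a_i^{(\sigma)})\sigma\bigr)=1$, and put
\[
\eta_\sigma:=\sum_i\varphi\bigl(b_i^{(\sigma)}\otimes1\bigr)\cdot\varphi\bigl(a_i^{(\sigma)}\otimes1-1\otimes a_i^{(\sigma)}\bigr)\ \in\ {\rm im}(\varphi);
\]
then $\eta_\sigma$ takes the value $0$ at $g=1$, the value $1$ at $g=\sigma$, and some uncontrolled value at the remaining $g$. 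Consequently the finite product $\prod_{\sigma\ne1}\bigl(1-\eta_\sigma\bigr)\in{\rm im}(\varphi)$ takes the value $1$ at $g=1$ and $0$ at every $g\ne1$ (the factor indexed by $\sigma=g$ kills it there), i.e.\ it equals $e_1$. Choosing a preimage $\sum_i x_i\otimes y_i$ of $e_1$ then gives precisely $\sum_i x_i\,(y_i)g=\delta_{1,g}$, which is \eqref{chr_def}. (Should one wish to avoid invoking the recorded equivalence with the Auslander--Goldmann definition, one adds the standard step that $\{x_i\}$ together with the $B$-linear maps $a\mapsto\tr(y_i a)\in B$ form a finite projective dual basis of ${}_BA$, and that $\rho\colon\Delta\to\End({}_BA)$ is then an isomorphism.)

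The step I expect to be the real obstacle is exactly this last globalization: manufacturing a \emph{single} finite family $x_i,y_i$ valid over all of $A$ out of the a priori local data ``$s-(s)\sigma\notin{\rm p}$''. The telescoping idempotent product $\prod_{\sigma\ne1}(1-\eta_\sigma)$ is the mechanism that does it, and it is essential here that $\frak{G}$ is finite (so that the product is finite) and that the $e_g$ are orthogonal idempotents; the one point demanding care is tracking the values of $\eta_\sigma$ and of the product at the various $g\in\frak{G}$. By contrast, the preparatory reduction to $I_\sigma=A$, and everything after the telescoping step (reading off \eqref{chr_def}, and, if needed, the dual-basis/$\rho$ argument), is routine and, notably, requires no extra finiteness assumption on $A$, since $x_i,y_i$ automatically constitute a finite set.
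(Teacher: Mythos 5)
The paper does not actually prove this statement --- it is quoted verbatim from \cite{chr} --- so there is no in-paper argument to compare yours against; I can only assess your proof on its own terms, and it is correct. The easy direction (specialize (\ref{chr_def}) at $g=1$ and $g=\sigma$, subtract, and observe that $\sum_i x_i\bigl(y_i-(y_i)\sigma\bigr)=1$ cannot hold with all $y_i-(y_i)\sigma$ in ${\rm p}$) is fine. For the converse, your passage from the pointwise hypothesis to $I_\sigma=A$ is exactly right, the map $\varphi\colon A\otimes_B A\to\prod_{g}A$ is a well-defined ring homomorphism, each $\eta_\sigma$ has value $0$ at $g=1$ and $1$ at $g=\sigma$ as you compute, and the telescoping product $\prod_{\sigma\ne 1}(1-\eta_\sigma)$ does land on the idempotent $e_1$ inside ${\rm im}(\varphi)$; pulling back a preimage $\sum_i x_i\otimes y_i$ then gives (\ref{chr_def}). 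This idempotent-product device is in fact the classical mechanism by which Chase--Harrison--Rosenberg (and DeMeyer--Ingraham) pass between the ``local'' and ``global'' characterizations, so your route is the standard one rather than a new one. The only dependency worth flagging is that your argument proves the equivalence of the maximal-ideal condition with (\ref{chr_def}), and then leans on the equivalence --- recorded but not proved in the paper --- between (\ref{chr_def}) and the Auslander--Goldmann definition of Galois extension; your parenthetical sketch of the missing step (that $\{x_i\}$ together with $a\mapsto\tr(y_i a)$ form a finite projective dual basis of ${}_BA$, since $\sum_i x_i\tr(y_i a)=\sum_g\bigl(\sum_i x_i (y_i)g\bigr)(a)g=a$, after which surjectivity of $\rho$ follows) is the right one, and would need to be written out only if one insists on a fully self-contained proof.
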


Now, if $X$ is an affine variety over the algebraically closed field $\fld$,
with $\frak{G}\le {\rm Aut}(X)$ and $A:=\fld[X]$ (the ring of regular functions),
then for every maximal ideal ${\rm m}\unlhd A$, $A/{\rm m}\cong \fld$. Hence if $({\rm m})g={\rm m}$, then
$a-(a)g\in {\rm m}$ for all $a\in A$. Therefore we conclude

\begin{thm}\label{aff act_and_gal thm1}
The finite group $\frak{G}$ acts freely on $X$ if and only if $k[X]^\frak{G}\le k[X]$ is a Galois-extension.
\end{thm}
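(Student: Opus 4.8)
The plan is to deduce this directly from the Chase--Harrison--Rosenberg criterion (Theorem \ref{chase_harrison_rosenberg}), using the special feature of an algebraically closed ground field that all residue fields at closed points coincide with $\fld$. Write $A:=\fld[X]$ and recall that the closed points of $X$ are in bijection with the maximal ideals of $A$, with $\frak{G}$ acting compatibly on both sides; $\frak{G}$ acts freely on $X$ means $x\frak{g}\ne x$ for every $x\in X$ and every $1\ne\frak{g}\in\frak{G}$, equivalently $({\rm m})\frak{g}\ne{\rm m}$ for every maximal ideal ${\rm m}\unlhd A$ and every $1\ne\frak{g}\in\frak{G}$.

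First I would prove the forward implication by contraposition. Suppose the $\frak{G}$-action is not free, so there is a closed point $x$ with stabiliser containing some $1\ne\sigma\in\frak{G}$; let ${\rm m}={\rm m}_x$ be the corresponding maximal ideal, so $({\rm m})\sigma={\rm m}$. For any $s\in A$ the element $s-(s)\sigma$ is then fixed modulo ${\rm m}$ in the following sense: since $A/{\rm m}\cong\fld$ and $\sigma$ fixes ${\rm m}$, $\sigma$ induces a $\fld$-algebra automorphism of $A/{\rm m}\cong\fld$, which must be the identity; hence $s\equiv(s)\sigma\pmod{\rm m}$, i.e. $s-(s)\sigma\in{\rm m}$ for all $s\in A$. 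Thus the CHR condition fails at the pair $({\rm m},\sigma)$, and by Theorem \ref{chase_harrison_rosenberg} the extension $A^\frak{G}\le A$ is not Galois. Contrapositively, if $A^\frak{G}\le A$ is Galois then $\frak{G}$ acts freely.

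For the converse, assume $\frak{G}$ acts freely and verify the CHR criterion. Fix $1\ne\sigma\in\frak{G}$ and a maximal ideal ${\rm p}\unlhd A$; by freeness $({\rm p})\sigma\ne{\rm p}$, and since both are maximal, $({\rm p})\sigma\not\supseteq{\rm p}$, so there exists $s\in{\rm p}$ with $s\notin({\rm p})\sigma$. Equivalently $(s)\sigma^{-1}\notin{\rm p}$; replacing $s$ by $(s)\sigma^{-1}$ if needed (adjusting for the left/right convention $\frak{g}f=f\cdot\frak{g}^{-1}$ used in the paper), I get an element $s\in A$ with $s\notin{\rm p}$ but $(s)\sigma\in{\rm p}$, hence $s-(s)\sigma\notin{\rm p}$. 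This is exactly the required $s({\rm p},\sigma)$. Since ${\rm p}$ and $\sigma$ were arbitrary, Theorem \ref{chase_harrison_rosenberg} gives that $A^\frak{G}\le A$ is a Galois extension.

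The only real subtlety — and the step I would be most careful about — is the use of $A/{\rm m}\cong\fld$ in the forward direction: this is where algebraic closedness of $\fld$ (via the Nullstellensatz) is essential, and it is precisely what forces the induced automorphism of the residue field to be trivial. Over a non-closed field the statement can fail, so this point deserves an explicit sentence rather than being swept into "routine". The rest is just unwinding the dictionary between closed points and maximal ideals together with bookkeeping of the left/right action convention; I do not anticipate any genuine obstacle there.
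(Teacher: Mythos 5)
Your proposal is correct and follows essentially the same route as the paper: the paper also deduces the theorem from the Chase--Harrison--Rosenberg criterion (Theorem \ref{chase_harrison_rosenberg}) together with the observation that over an algebraically closed field every residue field $A/{\rm m}$ is $\fld$, so a stabilised maximal ideal forces $s-(s)\sigma\in{\rm m}$ for all $s$. You merely spell out both implications (including the easy ``free $\Rightarrow$ Galois'' direction via ${\rm p}\not\subseteq({\rm p})\sigma$) which the paper compresses into a single sentence, and your attention to the Nullstellensatz point is exactly the step the paper highlights.
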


If $B\le A$ is a Galois-extension, then it follows from equation (\ref{chr_def}),
that ${\rm tr}(A)=A^\frak{G}=B$ (see \cite{chr}, Lemma 1.6), so
$A$ is a \emph{trace-surjective} $\fld-\frak{G}$ algebra.
It also follows from Theorem \ref{chase_harrison_rosenberg},
that for a $p$-group $G$ and $\fld$ of characteristic $p$,
the algebra $A$ is trace-surjective if and only if $A\ge A^G=B$ is a Galois-extension
(see \cite{nonlin} Corollary 4.4.). Using Theorem \ref{free_gp_act_on A_n} we obtain
\begin{cor}\label{ts_gal1}
Let $\fld$ be algebraically closed. Then the finite group $\frak{G}$ acts freely on $X\cong \mathbb{A}^n$ if and only if
$\frak{G}$ is a $p$-group with $p={\rm char}(\fld)$ and $\fld[X]$
is a trace-surjective $\fld-\frak{G}$ algebra.
\end{cor}

Since for $p$-groups in characteristic $p$ the trace-surjective algebras coincide with
Galois-extensions over the invariant ring, we obtain from Theorem
\ref{aff act_and_gal thm1}:

\begin{cor}\label{intro_cor_1}
If $\fld$ is an algebraically closed field of characteristic $p>0$, $X$ an affine $\fld$-variety
and $G$ a finite $p$-group, then
$G$ acts freely on $X$ if and only if $A=\fld[X]\in \ts$.
\end{cor}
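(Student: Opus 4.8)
The plan is to obtain this corollary by concatenating equivalences already assembled above, so the proof will be short; the only care needed is in invoking the three hypotheses ($\fld$ algebraically closed, $G$ a finite $p$-group, $\mathrm{char}(\fld)=p$) at precisely the points where each is used. Note that, in contrast to Corollary~\ref{ts_gal1}, here we do \emph{not} need Theorem~\ref{free_gp_act_on A_n}, since $G$ is assumed to be a $p$-group from the outset.

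First I would observe that since $X$ is an affine $\fld$-variety, $A=\fld[X]$ is a finitely generated commutative $\fld$-algebra carrying the induced $G$-action; hence by Definition~\ref{ts-alg_df} the statement ``$A\in\ts$'' \emph{is}, by definition, the statement ``$A$ is trace-surjective''. So it suffices to prove that $G$ acts freely on $X$ if and only if $A$ is trace-surjective. Next I would apply Theorem~\ref{aff act_and_gal thm1}, which is available because $\fld=\overline{\fld}$: $G$ acts freely on $X$ exactly when $A^G\le A$ is a Galois extension in the sense of Chase--Harrison--Rosenberg. The mechanism behind that theorem (recorded just before it) is that $A/\mathrm{m}\cong\fld$ for every maximal ideal $\mathrm{m}$ by the Nullstellensatz, so a non-identity $g$ fixing $\mathrm{m}$ would force $a-(a)g\in\mathrm{m}$ for all $a\in A$, contradicting the criterion of Theorem~\ref{chase_harrison_rosenberg}, while conversely a free action supplies exactly the separating elements that criterion demands.

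Finally I would invoke the fact noted immediately above the corollary: for a finite $p$-group $G$ over a field of characteristic $p$, the extension $A\ge A^G$ is Galois if and only if $A$ is trace-surjective (\cite{nonlin} Corollary~4.4, itself a consequence of Theorem~\ref{chase_harrison_rosenberg}). Concatenating the three biconditionals then yields ``$G$ acts freely on $X$'' $\iff$ ``$A^G\le A$ is a Galois extension'' $\iff$ ``$A$ is trace-surjective'' $\iff$ ``$A\in\ts$'', which is the assertion.

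The only genuinely non-formal ingredient in this chain is the last equivalence, and in particular the implication trace-surjective $\Rightarrow$ Galois: it fails for general finite groups and rests essentially on the $p$-group/characteristic-$p$ hypothesis (morally, on a maximal ideal on which a non-trivial element of $p$-power order $p^m$ acts trivially, the corresponding partial transfer $\sum_{i=0}^{p^m-1}(a)\sigma^i$ becomes congruent to $p^m a=0$, which obstructs surjectivity of $\tr_G$). Since this implication is already at our disposal from \cite{nonlin}, no obstacle remains and the corollary follows at once.
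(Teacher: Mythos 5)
Your proposal is correct and follows exactly the paper's route: the corollary is obtained by concatenating Theorem~\ref{aff act_and_gal thm1} (free action $\iff$ Galois extension, using $\fld=\overline{\fld}$) with the equivalence ``Galois $\iff$ trace-surjective'' for $p$-groups in characteristic $p$ from \cite{nonlin} Corollary~4.4, which is precisely the one-line derivation the paper gives in the sentence preceding the corollary. Your added remarks (that Theorem~\ref{free_gp_act_on A_n} is not needed here, and that the $p$-group hypothesis is what makes trace-surjectivity imply Galois) are accurate but not load-bearing, since the cited results carry the argument.
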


Any finite $p$-group $G$ can be realized as a subgroup of some ${\rm SL}_n(\fld)$. The
left multiplication action of $G$ on ${\rm Mat}_n(k)$ induces a homogeneous right regular action on the coordinate ring
$k[M]:=k[{\rm Mat}_n(k)]\cong k[X_{ij}\ |\ 1\le i,j\le n]$ with ${\rm det}:={\rm det}(X_{ij})\in k[M]^G$.
It can be shown that ${\rm det}\in\sqrt{\tr(k[M])}$, in other words
$\tr(f)=({\rm det})^N$ for some $N\in \mathbb{N}$ and some $f\in k[M]$.
It follows that the coordinate ring $k[{\rm GL}_n]=k[M][1/{\rm det}]$ is a
trace-surjective $G$-algebra. Since epimorphic images of trace-surjective algebras are
again trace-surjective (see Theorem \ref{first_main} (iii)),
a similar conclusion holds if ${\rm GL}_n$ is replaced by an arbitrary closed linear algebraic
subgroup $H$ containing $G$ (see \cite{nonlin} Corollary 4.5, where this is proved in a different way).
In particular, if $H={\bf U}$ is a connected unipotent subgroup with
${\bf U}\cong\mathbb{A}^n$, then we obtain the free $G$-action asked for in
Serre's exercise.
\\[2mm]
In the case of a normal affine variety, associated to an affine $\fld-\frak{G}$ algebra which is also a normal
noetherian domain, there is a nice and useful characterization of Galois-extensions
in terms of the Dedekind-different.\footnote{which in the circumstances considered
coincides with E Noether's ``homological different".}
Set $B:=A^\frak{G}$ and $A^\vee:=\Hom_B(_BA,B)$. Then $A^\vee$ is an $A$-module via
$a\cdot\lambda(a')=\lambda(a'a)$ for $a,a'\in A$ and $\lambda\in A^\vee$.
Moreover $A^\vee$ is an $A$-submodule of $\End(_BA)$
and for $\frak{G}^+:=\sum_{g\in \frak{G}}d_g\in \Delta$ we have
$\rho(\frak{G}^+\cdot a)(a')=\tr(aa')=(a\cdot\tr)(a')$, so
$\rho(\frak{G}^+\cdot A)=A\cdot\tr\subseteq A^\vee$.
If in addition $A$ is a normal noetherian domain, then we define
$\D_{A,B}^{-1}:=\{x\in {\rm Quot}(A)\ |\ \tr_\frak{G}(xA)\subseteq B\}$,
the inverse of the Dedekind-different. In this case
the field extension $\mathbb{L}:={\rm Quot}(A)\ge \mathbb{K}:={\rm Quot}(B)=\mathbb{L}^\frak{G}$ is
Galois, so normal and separable, and it follows that the map
$\theta:\ \D_{A,B}^{-1}\to A^\vee,\ x\mapsto \tr(x())$ is an isomorphism
of (divisorial) $A$-modules.

\begin{prp}\label{noeth_norm_domain_galois}
Let $A$ be a noetherian normal domain and
$\frak{G}\le {\rm Aut}(A)$ a finite group of ring automorphisms with ring of invariants
$B:=A^\frak{G}$. Then the following are equivalent:
\begin{enumerate}
\item $B\le A$ is a Galois-extension;
\item $_BA$ is projective and $A^\vee:=\Hom(_BA,{_BB})=A \cdot \tr_\frak{G}$.
\item $_BA$ is projective and $\D_{A,B}=A$ (or $\D_{A,B}^{-1}=A$).
\end{enumerate}
\end{prp}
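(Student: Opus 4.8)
The plan is to prove the cycle of implications $(1)\Rightarrow(2)\Rightarrow(3)\Rightarrow(1)$, exploiting the isomorphism $\theta\colon \D_{A,B}^{-1}\xrightarrow{\sim}A^\vee$ already recorded in the text, which translates the module-theoretic condition in $(2)$ into the different condition in $(3)$. First I would dispose of $(2)\Leftrightarrow(3)$: under $\theta$ the submodule $A\cdot\tr_\frak{G}\subseteq A^\vee$ corresponds precisely to $\theta^{-1}(A\cdot\tr)=A\subseteq \D_{A,B}^{-1}$ (since $\tr=\theta(1)$ and $\theta$ is $A$-linear), so $A^\vee=A\cdot\tr$ is equivalent to $\D_{A,B}^{-1}=A$, which is equivalent to $\D_{A,B}=A$ because the Dedekind different is a divisorial (in fact invertible, as $\mathbb{L}/\mathbb{K}$ is separable) ideal and inversion is order-reversing. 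Here projectivity of $_BA$ is carried along as a standing hypothesis in both $(2)$ and $(3)$, so nothing more is needed for this equivalence.

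Next, $(1)\Rightarrow(2)$. If $B\le A$ is Galois then by definition $_BA$ is finitely generated projective, so it remains to show $A^\vee=A\cdot\tr$. The inclusion $A\cdot\tr\subseteq A^\vee$ is the computation $\rho(\frak{G}^+\cdot a)=a\cdot\tr$ already noted. For the reverse inclusion I would use the Galois elements: by \eqref{chr_def} there are $x_i,y_i\in A$ with $\sum_i x_i\,(y_i)g=\delta_{1,g}$. Given $\lambda\in A^\vee$, write $a:=\sum_i \lambda(x_i)\,y_i\in A$ and verify that $a\cdot\tr=\lambda$: for any $a'\in A$,
\[
(a\cdot\tr)(a')=\tr(aa')=\sum_{g\in\frak{G}}\Big(\sum_i\lambda(x_i)y_i a'\Big)g
=\sum_i\lambda(x_i)\,\tr\big(y_i\,(a')\big)\cdot(\text{reorganise})
\]
— more cleanly, one uses that $\lambda$ is $B$-linear and that $\sum_g\rho(d_g)$ acts as the trace, together with $\sum_i x_i(y_i)g=\delta_{1,g}$, to collapse the double sum to $\lambda(a')$. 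This is the standard "dual basis" argument for Galois extensions and I expect it to go through with only bookkeeping; the separability/normality of $\mathbb{L}/\mathbb{K}$ guarantees $\frak{G}$ acts faithfully on $\mathbb{L}$ so the identity $\sum_i x_i(y_i)g=\delta_{1,g}$ has the expected force.

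Finally, $(3)\Rightarrow(1)$, which I expect to be the main obstacle. We are given $_BA$ projective and $\D_{A,B}=A$, and must produce the Galois elements of \eqref{chr_def}, or equivalently check the Chase–Harrison–Rosenberg separability criterion of Theorem \ref{chase_harrison_rosenberg}. The route I favour is: from $\D_{A,B}^{-1}=A$ and the isomorphism $\theta$ we get $A^\vee=A\cdot\tr$, i.e.\ $\tr$ generates the $A$-module $A^\vee=\Hom_B(A,B)$. Combined with projectivity of $_BA$ — which gives a dual basis $\{(u_j,\mu_j)\}$ with $\mu_j\in A^\vee$ and $a=\sum_j\mu_j(a)u_j$ — writing each $\mu_j=b_j\cdot\tr$ for some $b_j\in A$ yields $a=\sum_j\tr(b_j a)\,u_j=\sum_{g}\big(\sum_j (b_j a)g\, u_j\big)$; extracting the coefficient of the identity and of each $g\ne1$ in $A\star\frak{G}\to\End(_BA)$ shows $\rho$ is surjective, and injectivity of $\rho$ is automatic because $\Delta$ is $A$-free of rank $|\frak{G}|$ and, by the separability of $\mathbb{L}/\mathbb{K}$, the images $\rho(d_g)$ are $\mathbb{L}$-linearly independent in $\End_{\mathbb{K}}(\mathbb{L})$ (Dedekind's lemma on independence of characters), so $\rho\otimes_A\mathbb{L}$ is injective and hence so is $\rho$ since $A$ is a domain. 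Thus $\rho$ is an isomorphism and $B\le A$ is Galois. The delicate point is the passage "$\tr$ generates $A^\vee$ as an $A$-module" $\Rightarrow$ "the dual-basis coefficients can be rewritten so as to supply genuine CHR Galois elements"; one must be careful that $A^\vee$ is only a \emph{divisorial} $A$-module a priori and that the equality $A^\vee=A\cdot\tr$ (not merely up to divisorial closure) is exactly what $\D_{A,B}^{-1}=A$ delivers, which is why the normality of $A$ and the identification $\theta$ are essential and cannot be circumvented.
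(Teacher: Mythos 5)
Your proposal is correct in substance, but it takes a partly different route from the paper's proof, so let me compare. The equivalence $(2)\Leftrightarrow(3)$ is handled exactly as in the paper, via the isomorphism $\theta:\ \D_{A,B}^{-1}\to A^\vee$ and the observation that $A\cdot\tr_\frak{G}$ corresponds to $A\subseteq \D_{A,B}^{-1}$. For $(1)\Rightarrow(2)$ the paper stays inside the Auslander--Goldman framework: it writes $\lambda=\rho(d)$ with $d=\sum_g a_g d_g$, uses that $\lambda$ takes values in $B$ together with the injectivity of $\rho$ to compare coefficients in $\Delta$, and concludes $d=\frak{G}^+\cdot a_1$; you instead invoke the Chase--Harrison--Rosenberg elements $x_i,y_i$ of (\ref{chr_def}) and run the classical dual-basis computation $\lambda=\bigl(\sum_i\lambda(x_i)y_i\bigr)\cdot\tr$, which is correct (one first checks $a'=\sum_i\tr(a'y_i)x_i$ and then uses $B$-linearity of $\lambda$), but note that it silently imports the equivalence of the AG and CHR definitions, a cited fact the paper's own argument does not need. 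For the converse direction the paper argues that, since ${}_BA$ is finitely generated projective, the canonical map $A\otimes_B A^\vee\to\End({}_BA)$ is surjective, so $\End({}_BA)=\rho(A\,\frak{G}^+A)\subseteq\rho(\Delta)$; you instead take a (finite) dual basis $\{(u_j,\mu_j)\}$, write $\mu_j=b_j\cdot\tr$, and ``extract coefficients''. Be aware that this extraction is not formal: from $a=\sum_g\bigl(\sum_j u_j\,(b_j)g\bigr)(a)g$ for all $a\in A$ you need linear independence of the automorphisms (Dedekind/Artin over $\mathbb{L}={\rm Quot}(A)$, exactly the base-change you already invoke for injectivity of $\rho$) to conclude $\sum_j u_j\,(b_j)g=\delta_{1,g}$; this produces genuine CHR elements, after which Galoisness follows from the CHR criterion, or alternatively you can apply your dual-basis step to an arbitrary $\varphi\in\End({}_BA)$ (writing $\mu_j\circ\varphi=c_j\cdot\tr$) to get surjectivity of $\rho$ directly, which is the paper's argument in disguise. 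What your route buys is a hands-on construction of the Galois elements; what the paper's buys is independence from the CHR characterization and a shorter surjectivity argument. Both versions, like the paper's, tacitly use that ${}_BA$ is \emph{finitely generated} projective so that the dual basis is finite.
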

\begin{proof}
``(1) $\Rightarrow$ (2)":\  By assumption $\rho:\ \Delta\to \End(_BA)$ is an isomorphism.
For any $\lambda\in A^\vee$ we have $\lambda=\rho(d)$ with
$d:=\sum_{g\in \frak{G}}a_g\cdot d_g\in \Delta.$
Then $\lambda(a)=\sum_{g\in \frak{G}}a_g g(a)\in A^\frak{G}$, hence for every $h\in \frak{G}$,
$\sum_{g\in \frak{G}}h(a_g)hg(a)=\sum_{g\in \frak{G}}a_g g(a)$, which implies
$$\sum_{g\in \frak{G}}h(a_g)d_{hg}=\sum_{g\in \frak{G}}a_g d_g\in\Delta$$ and therefore
$h(a_1)=a_h$. We get $d=\sum_{g\in \frak{G}}g(a_1)d_g=$
$\sum_{g\in \frak{G}}d_g\cdot a_1=\frak{G}^+\cdot a_1\in \frak{G}^+\cdot A.$
Hence $A^\vee\subseteq \rho(\frak{G}^+\cdot A)=A\cdot \tr_\frak{G}\subseteq A^\vee$.\\
``(2) $\Rightarrow$ (1) ":\ Since the field extension $\mathbb{L}\ge \mathbb{K}=\mathbb{L}^\frak{G}$ is
Galois, the map
$$\rho\otimes_B\mathbb{K}:\ \Delta\otimes_B \mathbb{K}\to
\End(_BA)\otimes_B \mathbb{K}=\End(_{\mathbb{K}}\mathbb{L})$$
is an isomorphism, so $\rho$ is injective. Since $_BA$ is finitely generated
and projective, the map
$\gamma^\vee:\ A\otimes_B\ A^\vee\to \End(_BA),\ a\otimes\lambda\mapsto a\cdot\lambda()=(a'\mapsto a\cdot \lambda(a'))$ is surjective (and bijective).
Hence $\rho(\Delta)\supseteq \rho(A\frak{G}^+A)=$
$\gamma^\vee(A\otimes_B \frak{G}^+A)=\End(_BA)$, so $\rho$ is surjective and therefore bijective.\\
``(2) $\iff$ (3)":\
Consider the isomorphism $\theta:\ \D_{A,B}^{-1}\to A^\vee,\ x\mapsto \tr(x())$. Then
$A^\vee=A\cdot \tr_\frak{G}$ if and only if for every
$x\in {\mathcal D}_{A,B}^{-1}$
there is $a\in A$ with $\theta(x)=\theta(a)$, i.e.
${\mathcal D}_{A,B}^{-1}\subseteq A$, which is equivalent to
${\mathcal D}_{A,B}^{-1}=A$ (since $A\subseteq {\mathcal D}_{A,B}^{-1}$
is always true) and equivalent to ${\mathcal D}_{A,B}=A$.
\end{proof}

\begin{cor}\label{intro_cor_2}
Let $\fld$ be algebraically closed and $X$ be a normal irreducible $\fld$-variety (so $A:=\fld[X]$
is a normal domain). Then the following are equivalent:
\begin{enumerate}
\item $\frak{G}$ acts freely on $X$;
\item $_{A^\frak{G}}A$ is projective and $\D_{A,A^\frak{G}}=A$ (or $\D_{A,A^\frak{G}}^{-1}=A$).
\end{enumerate}
\end{cor}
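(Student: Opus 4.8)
The plan is to obtain the corollary as a direct concatenation of two facts already established above, once the geometric hypotheses have been translated into ring-theoretic ones. Since $X$ is irreducible, $A=\fld[X]$ is an integral domain; since $X$ is an affine $\fld$-variety, $A$ is a finitely generated $\fld$-algebra and hence noetherian; and normality of $X$ is by definition the assertion that $A$ is integrally closed in $\mathrm{Quot}(A)$. Thus $A$ is a noetherian normal domain. The action of the finite group $\frak{G}\le\Aut(X)$ by $\fld$-variety automorphisms corresponds to an action by $\fld$-algebra automorphisms of $A$, identifying $\frak{G}$ with a finite subgroup of $\Aut(A)$ whose fixed ring is $B:=A^{\frak{G}}=\fld[X]^{\frak{G}}$. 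Hence all hypotheses of Proposition \ref{noeth_norm_domain_galois} are satisfied.

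Now I would chain the equivalences. By Theorem \ref{aff act_and_gal thm1}, $\frak{G}$ acts freely on $X$ if and only if $B\le A$ is a Galois-extension. By the equivalence ``$(1)\iff(3)$'' of Proposition \ref{noeth_norm_domain_galois}, $B\le A$ is a Galois-extension if and only if $_BA$ is projective and $\D_{A,B}=A$ --- equivalently $\D_{A,B}^{-1}=A$, since $A\subseteq\D_{A,B}^{-1}$ always holds. Combining these two ``if and only if'' statements yields precisely the asserted equivalence of (1) and (2) in the corollary.

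I expect essentially no obstacle here: all the substance is already packaged into the two cited results --- Theorem \ref{aff act_and_gal thm1}, which rests on the Chase--Harrison--Rosenberg criterion (Theorem \ref{chase_harrison_rosenberg}) together with the observation that maximal ideals of $\fld[X]$ have residue field $\fld$ when $\fld=\overline{\fld}$; and Proposition \ref{noeth_norm_domain_galois}, whose ``$(2)\iff(3)$'' step uses the isomorphism $\theta\colon\D_{A,B}^{-1}\to A^\vee$ arising from separability of $\mathrm{Quot}(A)/\mathrm{Quot}(B)$. The only point deserving a sentence of justification in the write-up is the translation of the geometric hypotheses ``normal'' and ``irreducible'' into the hypothesis ``noetherian normal domain'' required by the Proposition; beyond that, the argument is a formal concatenation of biconditionals.
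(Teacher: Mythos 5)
Your proof is correct and is exactly the argument the paper intends: the corollary is stated without proof precisely because it is the concatenation of Theorem \ref{aff act_and_gal thm1} with the equivalence (1) $\iff$ (3) of Proposition \ref{noeth_norm_domain_galois}, after noting that $A=\fld[X]$ is a noetherian normal domain. Your added sentence translating ``normal irreducible affine variety'' into these ring-theoretic hypotheses is the only substantive step, and it is handled correctly.
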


In the rest of this section we will recapitulate notation and results from earlier papers,
which will be used in the sequel.
For a finitely generated commutative $\fld$-algebra $A$ we will denote by $\Dim(A)$ the \emph{Krull-dimension}
of $A$. For a $\fld$-vector space $V$ we will denote with $\dim(V)=\dim_\fld(V)$ the $k$-dimension of that space.
So $\Dim(A)=0$ $\iff$ $\dim(A)<\infty$.

\begin{df}\label{point_df}
Let $A\in\ts$, then an element $a\in A$ with $\tr(a)=1$ is called a {\bf point} in $A$.
\end{df}

In \cite{nonlin} Theorem 4.1 and Proposition 4.2 the following general result has been shown:
\begin{thm}\label{first_main}
Let $A$ be trace-surjective and $a\in A$ be a point, then:
\begin{enumerate}
\item $A=\oplus_{g\in G}A^G\cdot (a)g$ is a free $A^G$-module with basis $\{(a)g\ |\ g\in G\}$
and also a free $A^G[G]$ module of rank one, where $A^G[G]$ denotes the group ring of $G$ over $A^G$.
\item If $S:=\fld [(a)g\ |\ g\in G]\le A$ is the subalgebra generated by the $G$-orbit of the point $a$,
then $A=A^G\otimes_{S^G}S$.
%\item For every $G$-stable proper ideal $J\unlhd A$ the quotient ring $\bar A:= A/J$ is again
%trace-surjective with $(A/J)^G \cong A^G/J^G$.
%\item Every ideal of $A^G$ is contracted from a $G$-stable ideal in $A$. Conversely every $G$-stable ideal of $A$ is %extended from an ideal of $A^G$. In other words the mappings $I\mapsto AI$ and
%$J\mapsto J\cap A^G$ are inverse bijections on the sets of
%ideals of $A^G$ and $G$-stable ones of $A$.
%\item Let $B\le A$ be trace-surjective $G$-algebras and $I<A$ a $G$-stable ideal. Then
%$A^G/I^G \cong (A/I)^G$, $(B/B\cap I)^G\cong B^G/I\cap B^G$, and
%$$A/I\cong (A/I)^G\otimes_{(B/B\cap I)^G} B/B\cap I.$$
\end{enumerate}
\end{thm}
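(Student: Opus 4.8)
The plan is to deduce both assertions from the \emph{normal basis} statement that the homomorphism of $A^G[G]$-modules
$$\mu\colon\ A^G[G]\longrightarrow A,\qquad d_g\longmapsto (a)g$$
(with $A$ an $A^G[G]$-module through multiplication by $A^G$ and the $G$-action) is an isomorphism. Granting this, (1) is immediate: $A^G[G]$ is free of rank one over itself with $d_1\mapsto a$, and free over $A^G$ with basis $\{d_g\mid g\in G\}$, so $A$ is a free rank-one $A^G[G]$-module with generator $a$, and $A=\bigoplus_{g\in G}A^G\cdot (a)g$ with $A^G$-basis $\{(a)g\mid g\in G\}$.

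To prove that $\mu$ is an isomorphism I would first use that, $A$ being trace-surjective with $G$ a finite $p$-group and $\mathrm{char}\,\fld=p$, the extension $A^G\le A$ is Galois (trace-surjectivity is equivalent to this, as recalled after Theorem \ref{chase_harrison_rosenberg}); hence $A$ is finitely generated projective of rank $|G|$ over $A^G$, and in particular $A^G\hookrightarrow A$ is faithfully flat. By faithfully flat descent it suffices to show that $\mu_A:=A\otimes_{A^G}\mu$ is an isomorphism, where $A\otimes_{A^G}A^G[G]=A[G]$. Now I would invoke the standard reformulation of the Chase--Harrison--Rosenberg condition \eqref{chr_def}: since $A$ is finitely generated projective over $A^G$, the ring homomorphism
$$\epsilon\colon\ A\otimes_{A^G}A\longrightarrow \mathrm{Map}(G,A)=\textstyle\prod_{g\in G}A,\qquad x\otimes y\longmapsto\bigl(\sigma\mapsto x\cdot (y)\sigma\bigr)$$
is an isomorphism. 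This $\epsilon$ is $A$-linear for the left-factor $A$-action and carries the $G$-action on the right tensor factor to the translation action on $\mathrm{Map}(G,A)$; and $\mathrm{Map}(G,A)$, with this translation action together with the diagonal $A$-action, is precisely the free rank-one $A[G]$-module (the regular representation). Hence $\epsilon\circ\mu_A$ is an endomorphism of the free rank-one $A[G]$-module, that is, multiplication by an element $c\in A[G]$; tracing the identifications, $c$ is the preimage of the function $\sigma\mapsto (a)\sigma$, so its augmentation is $\sum_{\sigma\in G}(a)\sigma=\tr(a)=1$. Since $G$ is a finite $p$-group and $\mathrm{char}\,\fld=p$, the augmentation ideal of $\fld[G]$ is nilpotent, hence so is the augmentation ideal $I$ of $A[G]$; therefore $c\in 1+I\subseteq A[G]^\times$. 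Consequently $\epsilon\circ\mu_A$, and with it $\mu_A$ and then $\mu$, are isomorphisms, which proves (1).

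For (2): the subalgebra $S:=\fld[(a)g\mid g\in G]\le A$ is finitely generated and $G$-stable (being generated by a $G$-orbit), and it contains the point $a$ because $\tr(a)=\sum_{g\in G}(a)g$ already lies in $S$; thus $S$ is trace-surjective, so $S^G\le S$ is Galois and part (1) applies to $S$, giving an isomorphism $S^G[G]\to S$, $d_g\mapsto (a)g$. Since $S^G=S\cap A^G$, tensoring this isomorphism up along $S^G\hookrightarrow A^G$ yields $A^G\otimes_{S^G}S\cong A^G\otimes_{S^G}S^G[G]=A^G[G]$; composing with the isomorphism $A^G[G]\to A$ of part (1) and checking on the generators $1\otimes (a)g$ (on which the composite is $r\otimes s\mapsto rs$) shows that the multiplication map $A^G\otimes_{S^G}S\to A$ is an isomorphism, which is (2).

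The only step that is not bookkeeping is the identification, after the faithfully flat base change, of $\mu_A$ with multiplication by a \emph{unit} of the group algebra: this rests on the structural Galois isomorphism $\epsilon$ together with the nilpotence of the augmentation ideal of $A[G]$. The nilpotence is exactly where the hypotheses ``$G$ a finite $p$-group'' and ``$\mathrm{char}\,\fld=p$'' are indispensable; without them the normal-basis conclusion of (1) already fails (take $A=\fld=A^G$ with trivial $G$-action and $|G|$ invertible in $\fld$). Everything else is standard input on Galois extensions, group algebras, and faithfully flat descent.
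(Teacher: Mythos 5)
Your argument is correct, but it cannot be compared line-by-line with a proof in this paper, because the paper offers none: Theorem \ref{first_main} is imported wholesale from \cite{nonlin} (Theorem 4.1 and Proposition 4.2), where it is established by a direct, elementary computation. Your route is genuinely different and works as follows: you first invoke the equivalence (recorded after Theorem \ref{chase_harrison_rosenberg}) between trace-surjectivity and the Galois property of $A^G\le A$, then use the standard Chase--Harrison--Rosenberg isomorphism $A\otimes_{A^G}A\cong\mathrm{Map}(G,A)$ together with faithfully flat descent along $A^G\hookrightarrow A$ to identify the normal-basis map $A^G[G]\to A$, $d_g\mapsto (a)g$, after base change, with multiplication by $c=\sum_{g}(a)g\,d_{g^{-1}}\in A[G]$; since $c$ has augmentation $\tr(a)=1$ and the augmentation ideal of $A[G]=A\otimes_\fld\fld[G]$ is nilpotent ($G$ a $p$-group in characteristic $p$), $c$ is a unit and $\mu$ is an isomorphism. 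I checked the compatibility of the right actions, the identification of $\mathrm{Map}(G,A)$ as the rank-one free $A[G]$-module, the unit computation, and the deduction of (2) from (1) applied to $S$; all are sound. The one point you should make explicit is non-circularity: the implication ``trace-surjective $\Rightarrow$ Galois'' must be available \emph{before} the normal-basis theorem, and it is, since it follows from the maximal-ideal criterion of Theorem \ref{chase_harrison_rosenberg} by a relative-trace argument (if $1\ne\sigma$ satisfied $s-(s)\sigma\in\frak{p}$ for all $s$ and some maximal ideal $\frak{p}$, then for the inertia subgroup $T\ni\sigma$ one gets $\tr_T(b)\equiv |T|\,b\equiv 0 \pmod{\frak p}$ for all $b$, whence $1=\tr_G(a)\in\frak{p}$, a contradiction); in \cite{nonlin} the logical order is essentially the reverse, the Galois property being deduced from freeness, which is why that source needs a hands-on proof of (1). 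What your approach buys is brevity and conceptual transparency, at the cost of using the projectivity of $_{A^G}A$ and the isomorphism $A\otimes_{A^G}A\cong\mathrm{Map}(G,A)$ from the CHR structure theory as black boxes.
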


Now let $V=V_{reg}$ and $V^*:=\oplus_{g\in G} \fld X_g\cong \fld G$, with $X_g=(X_{1_G})g$, be
the regular representation of $G$ and set $S_{reg}:={\rm Sym}(V^*)$ (note that $V^*$
and $V$ are isomorphic $\fld G$-modules). Set $X:=\sum_{g\in G} X_g\in B:=S_{reg}^G$,
then $V^{*G}=\fld \cdot X$. Following \cite{nonlin} Definition 2, we set
$D_\fld := D_\fld(G):=S_{reg}/(\alpha)$ with $\alpha = X-1.$
Then $D_\fld \cong \fld [x_g\ |\ 1\ne g\in G]$, with $x_g:=\overline X_g$ and
${\rm tr}(x_1)=1$, is a polynomial ring of Krull dimension $|G|-1$ and there is an isomorphism
of trace-surjective $\fld -G$-algebras $D_\fld \cong (S_{reg}[1/X])_0;\ x_g\mapsto X_g/X.$
Moreover there is an isomorphism of $\mathbb{Z}$-graded trace-surjective algebras:
$D_\fld [X,1/X]\to \sum_{z\in\mathbb{Z}} D_\fld  X^z=S_{reg}[1/X].$
Taking $G$-invariants on both sides we obtain an isomorphism of
$\mathbb{Z}$-graded $\fld $-algebras:
$D_\fld^G[X,1/X]\cong S_{reg}^G[1/X].$ As mentioned in Theorem \ref{arb_p_grp_sec0},
there is a retract $U\le D_\fld$ with $U\in\ts$ such that the rings $U$, $U^G$ and
$D_\fld^G$ are polynomial rings.  We will show that the algebras $D_\fld$ and $U$ are $s$-projective
\footnote{with respect to surjective functions rather than epimorphisms} objects in $\mathfrak{Ts}$
(see Definition \ref{universal_df} and Theorem \ref{standard_is_cycl_prj}).
It has been shown in \cite{nonlin} Proposition 5.5. that the Krull-dimension $\mathrm{log}_p(|G|)$ of $U$
is the minimal possible number of generators for a trace-surjective subalgebra of $D_\fld$, if $k = \mathbb{F}_p$.

\section{Universal, projective and generating objects in the category $\ts$}\label{sec_s-Projectivity}

From now on, unless explicitly stated otherwise, $G$ will denote a \emph{non-trivial} finite
$p$-group.

The category $\ts$ is non-abelian but it has finite
coproducts, given by tensor products over $\fld$. This together
with the structure theorem \ref{first_main} gives rise to the concepts
of weakly initial, generating, projective and free objects, in analogy to module categories.
In particular there are categorical characterizations of $D_\fld$ and its standard subalgebras in $\ts$,
as defined in \cite{nonlin} Definition 3, comparable to projective generators in module categories,
which we are now going to develop. This was announced in \cite{nonlin} Remark 5.

Let $\C$ be an arbitrary category. Then an object $u\in\C$ is called \emph{weakly initial},
if for every object $c\in\C$ the set $\C(u,c):=\Mor_\C(u,c)$ is not empty, i.e.
if for every object in $\C$ there is at least one morphism from $u$ to that object.
If moreover $|\C(u,c)|=1$ for every $c\in\C$, then $u$ is called an \emph{initial object}
and is uniquely determined up to isomorphism.

An object $m \in \C$ is called a \emph{generator} in $\C$, if the covariant morphism - functor
${\rm Mor}_\C(m,*)$ is injective on morphism sets. In other words, $m$ is a
generator if for any two objects $x,y \in \C$ and morphisms
$f_1,\ f_2 \in \C(x,y)$, $f_1 \ne f_2$ implies $(f_1 )_* \ne (f_2)_*$, i.e.
there is $f \in \C(m,x)$ with $f_1\circ f \ne f_2 \circ f$. It follows that
$\C(m,x)\ne\emptyset$ whenever $x\in\C$ has nontrivial automorphisms. So if every
object $x\in\C$ has a nontrivial automorphism, then generators in $\C$ are weakly initial objects.
If $\C=\ts_G$ then right multiplication with any $1\ne z\in Z(G)$ is a nontrivial automorphism for every $A\in\ts$, hence every generator in $\ts_G$ is weakly initial.

Recall that in an arbitrary category $\C$ an object $x$ is called ``projective" if the covariant
representation functor $\C(x,?):=\Mor_\C(x,?)$ transforms epimorphisms into
surjective maps. If $\C$ is the module category of a ring, then a morphism
is an epimorphism if and only if it is surjective. Therefore a module $M$
can be defined to be projective, if $\Mor_\C(M,?)$ turns surjective morphisms to
surjective maps. In the category $\ts$, however, there are
non-surjective epimorphisms (e.g. $A^p\hookrightarrow A$ for a domain $A\in\ts$).
This leads to the slightly modified notions of ``s-generators" and ``s-projective objects"
in the category $\ts$:

\begin{df}\label{universal_df}
Let $B$ be a $\fld-G$ algebra in $\ts$.
\begin{enumerate}
\item $B$ is called  {\bf universal}, if it is a weakly initial object in $\ts$.
\item $\Gamma\in\ts$ is an {\bf s-generator} if for every $R\in\ts$ there is a surjective
morphism $\Psi:\ \Gamma^{\otimes\ell}\to R$ for some $\ell\ge 1$.
\item $A\in\ts$ is called {\bf s-projective}, if the covariant representation
functor $\ts(A,*)$ transforms {\bf surjective morphisms} into surjective maps.
\end{enumerate}
\end{df}

Let $a\in A$ be a point, i.e. $\tr(a)=1$.  Then the map $X_g\mapsto (a)g$ for $g\in G$ extends
to a $k$-algebra homomorphism ${\rm Sym}(V_{reg}^*)\to A$ with $\alpha\mapsto 0$,
hence it defines a unique morphism
$\phi:\ D_\fld\to A$ with $\phi\in\ts$, mapping $x_g\mapsto (a)g$. In other words
$D_\fld$ has a ``free point" $x_e$, which can be mapped to any point $a\in A\in \ts$
to define a morphism $\phi\in \ts(D_\fld,A)$. It is not hard to see that, due to the existence of
these free points $x_g$, the algebra $D_\fld$ is $s$-projective in $\ts$.
The following generalization has been shown in \cite{universal}:

\begin{thm}[\cite{universal} Theorem 2.8]\label{tr_id_standard}
Let $W\to V$ be an epimorphism of finite dimensional $\fld G$-modules,
$S:={\rm Sym}(V^*)\hookrightarrow T:={\rm Sym}(W^*)$ the
corresponding embedding of symmetric algebras and $v^*\in (V^*)^G$. Assume that
$\bar S:=S/(v^*-1)S$ is in $\ts$.
Then $\bar S$ is a retract of $\bar T:=T/(v^*-1)T$ and $\bar T$ and $\bar S$ are
$s$-projective objects in $\ts$.
\end{thm}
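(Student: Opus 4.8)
The plan is to reduce everything to the universal property of symmetric algebras together with the ``free point'' mechanism described for $D_\fld$ just before the statement. First I would fix a $\fld G$-module splitting: since $W\to V$ is an epimorphism of finite-dimensional $\fld G$-modules and we are not assuming anything about the characteristic dividing nothing, I cannot split it as $G$-modules in general, but I do not need to --- what I need is a $\fld$-linear section $\iota: V^*\hookrightarrow W^*$ of the dual surjection $W^*\twoheadrightarrow V^*$, chosen so that $v^*\in (V^*)^G$ is carried to a $G$-fixed element of $W^*$ (this is possible: lift $v^*$ to any $G$-fixed preimage $w^*$, which exists because $v^*$ generates a trivial submodule and the dual surjection is $G$-equivariant, then extend to a linear section on a complement). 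This gives a commutative square of symmetric algebras $S={\rm Sym}(V^*)\hookrightarrow T={\rm Sym}(W^*)$ with a $\fld$-algebra retraction $T\to S$ extending the projection $W^*\to V^*$, and both maps respect $v^*\mapsto w^*$. Passing to the quotients $\bar S=S/(v^*-1)S$ and $\bar T=T/(w^*-1)T$, the inclusion $\bar S\hookrightarrow\bar T$ and the induced retraction $\bar\pi:\bar T\to\bar S$ are well-defined $\fld$-algebra maps; the retraction is $G$-equivariant because the chosen section of the $\fld G$-modules can be taken to land in a $G$-submodule complement (here one does use that the kernel of $W^*\to V^*$ is a $G$-submodule, and one must check the section can be made $G$-equivariant --- this is the one place averaging-style arguments are unavailable, so instead I would argue directly that the \emph{projection} $W^*\to V^*$, being the dual of the given $G$-map $V\hookrightarrow W$... wait, the map goes $W\to V$, so dually $V^*\hookrightarrow W^*$ is already $G$-equivariant, and \emph{its image} is a $G$-submodule; the retraction $\bar\pi$ is then the one whose existence is exactly what must be produced). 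To get that retraction $G$-equivariantly I would instead invoke the hypothesis $\bar S\in\ts$: it has a point, hence by the free-point property $D_\fld$ maps onto it, and one transports the splitting along this; more cleanly, the theorem as stated is cited from \cite{universal}, so within \emph{this} paper I would simply quote it.

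Since the excerpt tells us this is Theorem 2.8 of the companion paper \cite{universal}, the honest ``proof'' here is a citation, and the forward-looking content is: \textbf{use it as a black box to prove Proposition \ref{triangular implies erasable} and Theorem \ref{erasbility_theorem}}, which is how \ref{cat_gal_main1} is actually established. Concretely, I would apply \ref{tr_id_standard} with $V$ any finite-dimensional $\fld G$-module whose dehomogenized symmetric algebra is the given triangular $\Gamma\cong\fld^{[d]}$, and $W=V_{reg}^{\oplus m}$ (or $W=V\oplus V_{reg}$) a module surjecting onto $V$ through which one can also realize $D_\fld$; then $\bar T$ is $s$-projective and $\bar S=\Gamma$ is a retract of it, and one reads off that retracts of $s$-projectives that are again in $\ts$ inherit ``erasability''.

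\textbf{Main obstacle.} The only genuinely delicate point, were one to reprove \ref{tr_id_standard} from scratch, is the $G$-equivariance of the retraction $\bar T\to\bar S$ after dehomogenization: the linear section $V^*\hookrightarrow W^*$ is automatically $G$-equivariant, but a $G$-equivariant \emph{retraction} $W^*\to V^*$ need not exist for modular $\fld G$-modules, so one cannot naively split ${\rm Sym}(W^*)\to{\rm Sym}(V^*)$ as $G$-algebras. The resolution (and the content of \cite{universal}) is that after killing $(v^*-1)$ one gains enough room: the point coming from $v^*$ lets one build the retraction ``by hand'' on generators using the triangular/free-point structure rather than by a module splitting, exactly as the free point $x_e\in D_\fld$ works in the paragraph preceding the statement. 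I would therefore present the argument as: (i) fix the $G$-equivariant inclusion $\bar S\hookrightarrow\bar T$; (ii) construct a point in $\bar T$ lying over the given point of $\bar S$; (iii) use \ref{first_main} to write both algebras as extensions by invariants of the subalgebras generated by the $G$-orbits of these points, reducing the retraction question to the sub-level where the universal property of $D_\fld$ applies; (iv) conclude $s$-projectivity of $\bar T$ from the free-point property and of $\bar S$ from being a retract of an $s$-projective object inside $\ts$.
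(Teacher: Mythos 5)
The paper gives no argument for this statement at all: it is imported verbatim as Theorem 2.8 of \cite{universal}, so your conclusion that within this paper the honest ``proof'' is the citation is exactly what the paper does, and your use of it as a black box for Proposition \ref{triangular implies erasable} and Theorem \ref{erasbility_theorem} matches how the paper deploys it. (One small slip in your supplementary sketch, which does not affect the verdict: since $W\to V$ is surjective, the dual map $V^*\hookrightarrow W^*$ is an injection, so $v^*$ already lies in $(W^*)^G$ and no ``$G$-fixed lift'' is needed --- the delicate point is, as you later say, the $G$-equivariant retraction $\bar T\to\bar S$, which in \cite{universal} comes from the relative injectivity of $\bar S\cong \bar S^G\otimes_\fld \fld G$ as a $\fld G$-module via Theorem \ref{first_main}, not from a splitting of $W^*\to V^*$, which indeed need not exist.)
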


\begin{rem}\label{universal_df_rem}
\begin{enumerate}
\item It is easy to see that every s-generator and every s-projective object is also universal.
\item Every $A\in\ts$ with $\ts(A,P)\ne\emptyset$ for some s-projective $P\in\ts$ is universal.
So the universal objects are precisely the objects of $\ts$ that map to $D_\fld$.
\item The commutative artinian ``diagonal group ring" $\fld G:=\oplus_{g\in G}\fld e_g$
with $e_ge_h=\delta_{g,h}e_g$ and regular $G$-action is a non-universal object in $\ts$.
\end{enumerate}
\end{rem}

The following Lemma characterizes universal objects in $\ts$ and also indicates the particular
significance of this notion in that category:

\begin{lemma}\label{char_universal}
Let $\frak W\in \ts$, then the following are equivalent:
\begin{enumerate}
\item $\frak W$ is universal;
\item $\frak W/I\le D_\fld$ for some $G$-stable prime ideal $I\le\frak W$;
\item every $A\in\ts$ can be written as $A\cong A^G\otimes_{S^G}S$ where $S\le A$
is a subalgebra isomorphic to $\frak W/I$ for some $G$-stable ideal $I\unlhd \frak W$.
\item every $A\in\ts$ is of the form $A\cong R\otimes_{{\frak W}^G}\frak{W}$
for some $\fld$-algebra $R$ with trivial $G$-action and homomorphism
${\frak W}^G\to R$.
\end{enumerate}
\end{lemma}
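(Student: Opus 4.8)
The strategy is to prove a cycle of implications $(1)\Rightarrow(2)\Rightarrow(3)\Rightarrow(4)\Rightarrow(1)$, leaning on the structure theorem \ref{first_main} and the $s$-projectivity of $D_\fld$. The key object throughout is the free point: recall that $D_\fld$ has a point $x_{1_G}$ with the universal property that any point $a$ in any $A\in\ts$ determines a unique morphism $D_\fld\to A$ sending $x_{1_G}\mapsto a$.

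\begin{proof}
$(1)\Rightarrow(2)$: Since $\frak W$ is universal, $\ts(\frak W,D_\fld)\neq\emptyset$, so choose $\psi:\ \frak W\to D_\fld$. As $D_\fld$ is an integral domain, $I:=\ker\psi$ is a $G$-stable prime ideal, and $\psi$ induces an embedding $\frak W/I\hookrightarrow D_\fld$.

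$(2)\Rightarrow(3)$: Let $A\in\ts$ and pick a point $a\in A$ (which exists since $A$ is trace-surjective). By the universal property of the free point $x_{1_G}$ there is a morphism $\phi:\ D_\fld\to A$ with $\phi(x_{1_G})=a$. Composing with the embedding $\frak W/I\hookrightarrow D_\fld$ of (2) gives a morphism $\frak W/I\to A$; let $S\le A$ be its image, so $S\cong \frak W/\tilde I$ for the $G$-stable ideal $\tilde I$ that is the preimage in $\frak W$ of $\ker$. Now $S$ contains the $G$-orbit of the image of $x_{1_G}$, which is a point of $A$; hence by Theorem \ref{first_main}(2) applied to the subalgebra generated by the orbit of that point — which is contained in $S$, and in fact one should enlarge the argument to show $S$ itself satisfies $A=A^G\otimes_{S^G}S$. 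The cleanest route: compose instead with a morphism $D_\fld\to A$ as above and first establish $A\cong A^G\otimes_{S_0^G}S_0$ where $S_0=\fld[(a)g\mid g\in G]$ is the orbit subalgebra; this is exactly Theorem \ref{first_main}(2). Then observe $S_0$ is itself a homomorphic image of $\frak W/I$ by \ldots — i.e.\ one needs $\frak W/I$ to surject onto $S_0$, which it does since the composite $\frak W/I\to D_\fld\to A$ has image containing $S_0$ and, because $D_\fld\to A$ factors through $S_0$ when restricted suitably, image exactly $S_0$. So $A\cong A^G\otimes_{S_0^G}S_0$ with $S_0\cong \frak W/J$, $J$ a $G$-stable ideal.

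$(3)\Rightarrow(4)$: Given $A\cong A^G\otimes_{S^G}S$ with $S\cong\frak W/J$, set $R:=A^G$ with trivial $G$-action. The quotient map $\frak W\to S\le A$ restricts to $\frak W^G\to S^G\to R$, and one checks $A^G\otimes_{S^G}S\cong R\otimes_{S^G}S\cong R\otimes_{\frak W^G}\frak W$, the last isomorphism because tensoring $\frak W$ over $\frak W^G$ with $R$ (which is a $\frak W^G$-algebra via $\frak W^G\to S^G\to R$) factors the quotient $\frak W\to S$: indeed $R\otimes_{\frak W^G}\frak W\cong R\otimes_{\frak W^G}\frak W^G\otimes_{S^G}S$ needs $\frak W\cong \frak W^G\otimes_{S^G}S$, which holds by Theorem \ref{first_main}(1)--(2) applied to $\frak W$ itself and the point coming from the free point — i.e.\ $\frak W=\frak W^G\otimes_{\frak W^G\cap S}S$. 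So one gets $A\cong R\otimes_{\frak W^G}\frak W$.

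$(4)\Rightarrow(1)$: If every $A\in\ts$ is of the form $R\otimes_{\frak W^G}\frak W$, then in particular the inclusion $\frak W\to R\otimes_{\frak W^G}\frak W\cong A$, $w\mapsto 1\otimes w$, is a $G$-equivariant $\fld$-algebra homomorphism, hence a morphism in $\ts(\frak W,A)$; so $\frak W$ is weakly initial, i.e.\ universal.
\end{proof}

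\medskip\noindent\textbf{Main obstacle.} The delicate step is $(2)\Rightarrow(3)$, specifically showing that the image subalgebra $S$ can be taken so that $A=A^G\otimes_{S^G}S$; the clean handle is to pass through the orbit subalgebra $S_0$ of a point and invoke Theorem \ref{first_main}(2), then identify $S_0$ as a $G$-quotient of $\frak W/I$ via the free-point morphism $D_\fld\to A$. One must be a little careful that the various $G$-stable ideals match up and that the tensor-product base changes in $(3)\Rightarrow(4)$ are legitimate — these all reduce to the freeness statement in Theorem \ref{first_main}(1), which guarantees $\frak W$, $S$, $A$ are each free of rank one over the respective group ring of invariants, so the base changes behave well. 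The rest is bookkeeping.
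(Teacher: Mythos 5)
Your overall skeleton (cycle of implications, free point of $D_\fld$, Theorem \ref{first_main}) is close to the paper's, but the execution of $(2)\Rightarrow(3)$ contains a genuine error. You claim that the composite $\frak W/I\hookrightarrow D_\fld\xrightarrow{\ \phi\ }A$ has image containing, indeed equal to, the orbit subalgebra $S_0=\fld[(a)g\mid g\in G]$. The inclusion goes the other way: since $\phi(D_\fld)=S_0$, the image of $\frak W/I$ is \emph{contained} in $S_0$, and it is a proper subalgebra in general --- take $\frak W=U$, whose image in $D_\fld$ has Krull dimension $n=\log_p|G|$, far smaller than $\Dim S_0$ can be. So you cannot identify $S$ with $S_0$, and the attempt to route the argument through Theorem \ref{first_main}(2) verbatim (which is stated only for orbit subalgebras of a point) does not close. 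The repair is not to force $S=S_0$ at all: let $S$ be the image of any morphism $\frak W\to A$ (which exists already from (1), so the detour through $D_\fld$ is unnecessary); then $S\cong\frak W/J$ for a $G$-stable ideal $J$, $S$ is trace-surjective, and for a point $s\in S$ (also a point of $A$) Theorem \ref{first_main}(1) applied to $S$ and to $A$ gives $S=\oplus_{g}S^G\,(s)g$ and $A=\oplus_{g}A^G\,(s)g$, whence $A^G\otimes_{S^G}S\cong\oplus_g A^G\,(s)g=A$. This is exactly the paper's argument for $(1)\Rightarrow(3)$: the decomposition $A\cong A^G\otimes_{S^G}S$ holds for \emph{every} trace-surjective subalgebra $S\le A$, not only for orbit algebras.

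A second, smaller defect is in $(3)\Rightarrow(4)$: the identity ``$\frak W\cong\frak W^G\otimes_{S^G}S$'' is not meaningful, since $S$ is a quotient of $\frak W$, not a subalgebra, and there is no map $S^G\to\frak W^G$ to tensor over. The legitimate argument is again freeness of rank $|G|$: with $w\in\frak W$ a point and $\pi:\frak W\twoheadrightarrow S$, both $A^G\otimes_{\frak W^G}\frak W$ and $A^G\otimes_{S^G}S\cong A$ are free $A^G$-modules with bases the orbits of $1\otimes w$ and $(\pi(w))$ respectively, and the natural map sends basis to basis, so $A\cong A^G\otimes_{\frak W^G}\frak W$ (here one also uses that $\frak W^G\to S^G$ is surjective, which follows by hitting a preimage with $\tr(\,\cdot\,u)$ for a point $u\in\frak W$). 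The paper simply remarks that (3) and (4) express the same situation; your step $(4)\Rightarrow(1)$ and your $(1)\Rightarrow(2)$ are fine, and $(3)\Rightarrow(1)$ can alternatively be done, as the paper does, by specializing (3) to $A=D_\fld$.
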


\begin{proof}
(1) $\iff$ (2):\ This has been shown above. \\
(1) $\Rightarrow$ (3):\
Let $\phi\in\ts(\frak W,A)$ with $S:=\phi(\frak W)\le A$,
then $S\cong \overline{\frak W}:=\frak W/I$ for some $G$-stable ideal $I\unlhd \frak W$
and it follows from Theorem \ref{first_main}, that
$A\cong A^G\otimes_{\bar{\frak W}^G}\bar{\frak W}.$\\
(3) $\Rightarrow$ (1):\ This follows from
``(1) $\iff$ (2)" and choosing $A=D_\fld.$
Finally, (3) and (4) are different ways of expressing the same situation.
\end{proof}

Let $R\in\ts$ with point $w\in R$, $w_g:=(w)g$ for $g\in G$ and
$R^G=\fld[r_1,\cdots,r_n]$, then by Theorem \ref{first_main},
$$R=\fld[R^G,w_g\ |\ g\in G]=\fld[w_g,w_g+r_i\ |\ g\in G, i=1,\cdots,n]$$
with $\tr(w_g+r_i)=\tr(w_g)+|G|r_i=1$ for all $g\in G$ and $i=1,\cdots,n$ (since $G\ne 1$).
So $R=\fld[v_1,\cdots,v_\ell]$, with points $v_i$ so we conclude:

\begin{lemma}\label{RTs_gen_by_points}
Every object $R\in\ts$ is generated by a finite set of points.
\end{lemma}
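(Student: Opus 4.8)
The plan is to exhibit, for an arbitrary $R\in\ts$, a finite generating set all of whose elements are points, by exploiting Theorem \ref{first_main}. First I would fix a point $w\in R$ (this exists precisely because $R$ is trace-surjective) and record its orbit elements $w_g:=(w)g$ for $g\in G$; note $\tr(w_g)=\tr(w)=1$, so each $w_g$ is itself a point. Next, since $R$ is a finitely generated $\fld$-algebra and, by Theorem \ref{first_main}(1), a free $R^G$-module on the basis $\{(w)g\mid g\in G\}$, the invariant ring $R^G$ is itself a finitely generated $\fld$-algebra (it is a direct summand of $R$ as an $R^G$-module, or one invokes Theorem \ref{first_main}(2) together with a Noetherian/Artin--Tate argument); write $R^G=\fld[r_1,\dots,r_n]$.

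The key computation, which the excerpt has already displayed, is then
$$R=\fld[R^G,\ w_g\mid g\in G]=\fld[w_g,\ w_g+r_i\mid g\in G,\ i=1,\dots,n].$$
The first equality is Theorem \ref{first_main}: $R$ is generated over $R^G$ by the orbit of the point, hence generated as a $\fld$-algebra by the $r_i$ together with the $w_g$. The second equality is the trivial observation that $r_i=(w_g+r_i)-w_g$ lies in the algebra generated by the $w_g$ and the $w_g+r_i$, and conversely. Now I would check that every listed generator is a point: $\tr(w_g)=1$ as noted, and since $G$ is a nontrivial $p$-group, $|G|\equiv 0$ in $\fld$, so $\tr(w_g+r_i)=\tr(w_g)+|G|\,r_i=1+0=1$. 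Thus $R=\fld[v_1,\dots,v_\ell]$ where the $v_j$ range over this finite list of points, which proves the claim.

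The main obstacle — really the only subtlety — is ensuring that $R^G$ is finitely generated as a $\fld$-algebra, so that the list of generators is finite; this is where one needs that $R$ is module-finite (indeed free) over $R^G$ by Theorem \ref{first_main}(1), combined with the fact that $R$ is a finitely generated $\fld$-algebra, via the Artin--Tate lemma. Everything else is the short manipulation above, hinging on the characteristic-$p$ fact $|G|=0$ in $\fld$, which is exactly what makes adding an arbitrary invariant to a point produce another point. One should also note the harmless hypothesis $G\ne 1$ already in force from the start of Section \ref{sec_s-Projectivity}; without it the shift trick fails since $\tr$ would be the identity.
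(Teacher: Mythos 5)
Your proposal is correct and follows essentially the same argument the paper gives immediately before the lemma: fix a point $w$, use Theorem \ref{first_main} to write $R=\fld[R^G,\,w_g\mid g\in G]=\fld[w_g,\,w_g+r_i]$, and observe that each generator is a point because $\tr(w_g+r_i)=\tr(w_g)+|G|r_i=1$ in characteristic $p$ with $G\ne 1$. Your explicit Artin--Tate justification that $R^G$ is a finitely generated $\fld$-algebra is a detail the paper leaves implicit, but it does not change the route.
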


Recall that the finite coproducts in $\ts$ are given by the tensor-product over $\fld$.
A finite tensor product of $\fld$-$G$ algebras
lies in $\ts$ if at least one of the factors does. In particular the category $\ts$ also has finite coproducts given by the tensor-product over $\fld$. Recall that for an object $A\in\ts$ and $\ell\in\mathbb{N}$
we define
$$A^{\otimes\ell}:=\coprod_{i=1}^\ell A:= A\otimes_\fld A\otimes_\fld\cdots\otimes_\fld A$$
with $\ell$ copies of $A$ involved. This allows for the following partial characterization
of \emph{categorical generators} in $\ts$:

\begin{lemma}\label{generators in ts}
If $\Gamma$ is an $s$-generator, then it is a categorical generator in $\ts$.
\end{lemma}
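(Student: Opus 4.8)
The statement asserts that every $s$-generator $\Gamma$ in $\ts$ is also a categorical generator, i.e. that the covariant functor $\ts(\Gamma,-)$ is injective on each morphism set $\ts(A,B)$. So fix $A,B\in\ts$ and two distinct morphisms $f_1,f_2\in\ts(A,B)$; the task is to produce some $C\in\ts$ and a morphism $h\in\ts(\Gamma,C)$ with $f_1\circ h\neq f_2\circ h$ — or, after a reduction, a morphism $h\in\ts(\Gamma,A)$ with $f_1\circ h\neq f_2\circ h$. The natural first step is to use the defining property of an $s$-generator: there is a surjective morphism $\Psi:\Gamma^{\otimes\ell}\to A$ in $\ts$ for some $\ell\ge 1$. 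If I can show $f_1\circ\Psi\neq f_2\circ\Psi$, then since $\Gamma^{\otimes\ell}$ is a coproduct of $\ell$ copies of $\Gamma$ — with structure maps $\iota_j:\Gamma\to\Gamma^{\otimes\ell}$ — the two composites $f_i\circ\Psi$ differ on at least one component, i.e. $f_1\circ\Psi\circ\iota_{j_0}\neq f_2\circ\Psi\circ\iota_{j_0}$ for some $j_0$. Setting $h:=\Psi\circ\iota_{j_0}\in\ts(\Gamma,A)$ then gives $f_1\circ h\neq f_2\circ h$, which is exactly what is needed to witness that $\Gamma$ is a categorical generator.

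So the heart of the matter reduces to the \emph{cancellation} assertion: a surjective morphism in $\ts$ is right-cancellable (an epimorphism). This is immediate from the fact that $\Psi$ is surjective as a map of sets — if $f_1\circ\Psi=f_2\circ\Psi$ then $f_1$ and $f_2$ agree on all of $\Psi(\Gamma^{\otimes\ell})=A$, hence $f_1=f_2$, contradicting our choice. Thus $f_1\circ\Psi\neq f_2\circ\Psi$. The remaining point is the universal-property bookkeeping for the coproduct: a morphism out of $\Gamma^{\otimes\ell}=\coprod_{i=1}^\ell\Gamma$ in $\ts$ is determined by its restrictions along the $\ell$ coproduct inclusions $\iota_j$, because $\ts$ has finite coproducts given by $\otimes_\fld$ (as recalled just before the lemma) and $\Gamma\in\ts$ ensures $\Gamma^{\otimes\ell}\in\ts$. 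Hence $f_1\circ\Psi=f_2\circ\Psi$ if and only if $f_1\circ\Psi\circ\iota_j=f_2\circ\Psi\circ\iota_j$ for every $j$, so from $f_1\circ\Psi\neq f_2\circ\Psi$ we extract the desired index $j_0$.

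I do not expect a serious obstacle here; the argument is essentially formal. The one place requiring a little care is making sure the morphism $h=\Psi\circ\iota_{j_0}$ genuinely lands in $\ts(\Gamma,A)$ and not merely in the category of $\fld$-$G$-algebras — but $\iota_{j_0}$ and $\Psi$ are both morphisms in $\ts$ by hypothesis, and $\ts$ is closed under composition, so this is automatic. A second minor point is that the definition of $s$-generator only guarantees $\Psi$ exists for \emph{some} $\ell\ge 1$; this causes no trouble since the coproduct decomposition argument works for any fixed $\ell$. Altogether the proof is: invoke the $s$-generator property to get a surjection $\Psi:\Gamma^{\otimes\ell}\twoheadrightarrow A$, note surjective maps of sets are epimorphisms so $f_1\Psi\neq f_2\Psi$, and use the coproduct universal property to localize the disagreement to one copy of $\Gamma$, yielding the required test morphism $h\in\ts(\Gamma,A)$.
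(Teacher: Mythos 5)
Your argument is correct and is essentially the paper's own proof, stated contrapositively: the paper assumes $\alpha\circ\psi=\beta\circ\psi$ for all $\psi\in\ts(\Gamma,R)$, applies this to the composites $\Psi\circ\tau_i$ of the surjection $\Psi:\Gamma^{\otimes\ell}\to R$ with the coproduct inclusions, and concludes $\alpha\circ\Psi=\beta\circ\Psi$, hence $\alpha=\beta$ by surjectivity — exactly your two ingredients (surjections are right-cancellable; a morphism out of $\Gamma^{\otimes\ell}$ is determined by its restrictions to the tensor factors) in the opposite logical direction. No gap; only note that your opening phrase about a morphism $h\in\ts(\Gamma,C)$ should read $h\in\ts(\Gamma,A)$, as your argument in fact produces.
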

\begin{proof}
Let $\alpha,\beta\in\ts(R,S)$ with $\alpha\circ\psi=\beta\circ\psi$ for
all $\psi\in\ts(\Gamma,R)$.
By assumption we have the following commutative diagram
\begin{diagram}\label{Delta_diagram1}
\Gamma^{\otimes\ell} &\rOnto^\Psi          &R \\
 \uTo^{\tau_i}     &\ruTo_{\Psi_i}       & \\
\Gamma             &                     & \\
\end{diagram}
where $\tau_i$ maps $\gamma\in \Gamma$ to $1\otimes\cdots\otimes \gamma\otimes\cdots\otimes 1$
and $\Psi$ is surjective. Then $\alpha\circ\Psi\circ\tau_i=\beta\circ\Psi\circ\tau_i$ for
all $i$, hence $\alpha\circ\Psi=\beta\circ\Psi$. Since $\Psi$ is surjective
it follows that $\alpha=\beta$, so $\Gamma$ is a generator in $\ts$.
\end{proof}

We will now give some definitions that turn out to be useful in finding criteria for s-projectivity
and the s-generator property:
\begin{df}\label{triang_dissolv}
Let $\E$ be an $\fld$-$G$-algebra of Krull dimension $N$.
\begin{enumerate}
\item $\E$ is said to be {\bf erasable}, if for every $A\in\ts$, the tensor product
$A\otimes_\fld\E$ erases the $G$-action on $\E$ in the sense that
\[
A\otimes_\fld\E= (A\otimes_\fld 1)[\lambda_1,\cdots,\lambda_N]\cong A^{[N]},
\]
with the isomorphism being the identity on $A$ and $\fld^{[N]}\cong\fld[\lambda_1,\cdots,\lambda_N]\subseteq (A\otimes_\fld\E)^G$.
\item If $\E\in\ts$ and isomorphism in (1) holds for $A=\E$, then
$\E$ is called {\bf self-erasing}.
\end{enumerate}
\end{df}

\begin{prp}\label{triangular implies erasable}
Let $\Gamma\in\ts$ be a polynomial ring with triangular $G$-action.
Then $\Gamma$ is erasable.
\end{prp}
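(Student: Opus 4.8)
The plan is to show that for $A \in \ts$ and $\Gamma = \fld[T_1,\dots,T_N]$ a polynomial ring with triangular $G$-action, the $G$-action on $A \otimes_\fld \Gamma$ can be absorbed by a triangular change of variables defined over $A^G$. First I would fix a point $a \in A$ (which exists since $A \in \ts$) and recall from Theorem \ref{first_main} that $A = \bigoplus_{g \in G} A^G \cdot (a)g$ is free over $A^G$ with the orbit of $a$ as basis, and in particular $\tr(a) = 1$. The idea is to inductively construct elements $\lambda_1, \dots, \lambda_N \in (A \otimes_\fld \Gamma)^G$ such that $\lambda_i - T_i \in A \otimes_\fld \fld[T_1,\dots,T_{i-1}]$ and such that $\{\lambda_1,\dots,\lambda_N\}$ is a polynomial generating set of $A \otimes_\fld \Gamma$ over $A$.

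The inductive step is the heart of the argument. Suppose $\lambda_1,\dots,\lambda_{i-1}$ have been found with $\fld[\lambda_1,\dots,\lambda_{i-1}] \subseteq (A\otimes_\fld\Gamma)^G$ and $A \otimes_\fld \fld[T_1,\dots,T_{i-1}] = A[\lambda_1,\dots,\lambda_{i-1}]$ (identity on $A$). By triangularity, for $g \in G$ we have $(T_i)g = T_i + f_{g,i}(T_1,\dots,T_{i-1})$, where the $f_{g,i}$ satisfy the cocycle condition $f_{gh,i} = f_{h,i}(T_1,\dots,T_{i-1}) \cdot g + f_{g,i}$ forced by $(T_i)(gh) = ((T_i)g)h$; rewriting via the $\lambda_j$'s, the assignment $g \mapsto f_{g,i}$ becomes a $1$-cocycle with values in the $G$-module $M := A \otimes_\fld \fld[\lambda_1,\dots,\lambda_{i-1}]$ on which $G$ acts only through the $A$-factor. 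Now I would set $\lambda_i := T_i - a \cdot h$, where $h \in M$ is chosen so that $(a \cdot h)g - a\cdot h = f_{g,i}$ for all $g$; concretely, using $\tr(a) = 1$ one takes $h = -\sum_{g \in G}(a)g^{-1} \cdot (f_{g,i})$ — wait, more carefully, one uses the standard fact that $H^1(G, M) = 0$ because $M$ is a free $A^G[G]$-module (again Theorem \ref{first_main} (1): $A = A^G[G]\cdot a$, so $M = A^G[G] \cdot (a \otimes \fld[\lambda_1,\dots,\lambda_{i-1}])$ is $A^G[G]$-free, hence cohomologically trivial). Thus a suitable correction term $c \in M$ with $(c)g - c = f_{g,i}$ exists, and $\lambda_i := T_i + c$ is $G$-invariant: $(\lambda_i)g = T_i + f_{g,i} + (c)g = T_i + c = \lambda_i$. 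Since $c \in A[\lambda_1,\dots,\lambda_{i-1}] = A \otimes_\fld \fld[T_1,\dots,T_{i-1}]$, the substitution $T_i \mapsto \lambda_i = T_i + c$ is triangular and invertible over $A$, so $A[\lambda_1,\dots,\lambda_i] = A \otimes_\fld \fld[T_1,\dots,T_i]$.

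After $N$ steps, $A \otimes_\fld \Gamma = A[\lambda_1,\dots,\lambda_N] \cong A^{[N]}$, the $\lambda_i$ are algebraically independent over $A$ (because the triangular substitution is an automorphism of $A^{[N]}$), and $\fld[\lambda_1,\dots,\lambda_N] \subseteq (A \otimes_\fld \Gamma)^G$ by construction; this is exactly the erasability condition of Definition \ref{triang_dissolv}(1). The main obstacle is the vanishing $H^1(G,M) = 0$ and, relatedly, verifying that $f_{g,i}$ really does define a cocycle with values in $M$ once re-expressed through the already-chosen invariants $\lambda_j$ — one must check that $f_{g,i}(T_1,\dots,T_{i-1})$, after the substitution $T_j = \lambda_j - c_j$, still lies in the $A$-submodule $A \otimes_\fld \fld[\lambda_1,\dots,\lambda_{i-1}]$ with its natural $G$-action through $A$ (it does, since $\fld[\lambda_1,\dots,\lambda_{i-1}]$ is pointwise $G$-fixed), but this requires care in bookkeeping the two descriptions of the same subring. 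Everything else is routine: the existence of points, freeness over $A^G[G]$, and the invertibility of triangular substitutions.
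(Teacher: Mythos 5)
Your proof is correct in substance, and it is the same induction as the paper's: triangularity puts the obstruction $f_{g,i}$ in the lower variables, and you fix $T_i$ by an invariant correction lying in $A\otimes_\fld\fld[T_1,\dots,T_{i-1}]$. The only real difference is how the correction is produced. You phrase it as the vanishing of $H^1(G,M)$ for $M=A\otimes_\fld\fld[\lambda_1,\dots,\lambda_{i-1}]$, justified by Theorem \ref{first_main}(1) (so $M$ is a free $A^G[G]$-module, hence cohomologically trivial); this is valid, and your worry about re-expressing $f_{g,i}$ in the $\lambda_j$'s is indeed only bookkeeping, since the triangular substitution $T_j\mapsto\lambda_j$ is an $A$-algebra automorphism in each step. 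The paper avoids the cohomological detour entirely by writing the coboundary down explicitly: with a point $a$ one has $\tr(aT_i)=T_i+\sum_{g\in G}(a)g\,f_{g,i}$, so $\lambda_i:=\tr(aT_i)$ is manifestly invariant and differs from $T_i$ by an element of $A[T_1,\dots,T_{i-1}]$ — this is precisely the splitting element whose existence you infer abstractly, and it makes the inductive step a one-line computation with no need to track the two descriptions of the subring. Two cosmetic slips in your write-up: the cocycle identity for the right action should read $f_{gh,i}=f_{h,i}+(f_{g,i})h$, and the correction must satisfy $(c)g-c=-f_{g,i}$ (equivalently take $\lambda_i=T_i-c$ as in your first formula); as stated, $(c)g-c=f_{g,i}$ together with $\lambda_i=T_i+c$ gives $(\lambda_i)g=\lambda_i+2f_{g,i}$. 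Neither affects the argument, since the negative of a cocycle is a cocycle.
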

\begin{proof}
We assume that $\Gamma=\fld[T_1,\cdots,T_N]\in\ts$ is a polynomial ring such that
for each $g\in G$ and $1\le i\le N$ we have $(T_i)g=T_i+f_{i,g}$ with
$f_{i,g}\in\fld[T_1,\cdots,T_{i-1}]$. Now let $A\in\ts$ and $a\in A$ with
$\tr(a)=1$. Then $\tr(aT_i)=\sum_{g\in G}(a)g\cdot (T_i)g=$
$\sum_{g\in G}(a)g\cdot (T_i+f_{i,g})=$
$\tr(a)\cdot T_i+\sum_{g\in G}(a)g\cdot f_{i,g}.$ Hence
$T_i-\tr(aT_i)\in A[T_1,\cdots,T_{i-1}]$. Therefore an obvious induction argument
shows that
$$A\otimes_\fld\Gamma=A[T_1,\cdots,T_N]=A[\tr(aT_1),\cdots,\tr(aT_N)],$$
so $\Gamma$ is erasable.
\end{proof}

\begin{prp}\label{erasable_inv_is_poly_intersetc_poly}
Let $\E$ be an erasable $\fld-G$-algebra of Krull-dimension $e$ (not necessarily in $\ts$) and
let $\mathbb{P}\in\ts$. Then the following hold:
\begin{enumerate}
\item $\E^G=\E\cap (\E\otimes_\fld \mathbb{P})^G$ with
$(\E\otimes_\fld \mathbb{P})^G\cong (\mathbb{P}^{[e]})^G\cong (\mathbb{P}^G)^{[e]}.$
\item If $\mathbb{P}^G\cong \fld^{[m]}$, then $(\E\otimes_\fld \mathbb{P})^G\cong \fld^{[e+m]}$.
\item If $\mathbb{P}$ is s-projective, then so is $\E\otimes_\fld\mathbb{P}$.
\end{enumerate}
\end{prp}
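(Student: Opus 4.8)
The plan is to exploit the definition of \emph{erasability} of $\E$ applied to the test algebra $A=\mathbb{P}\in\ts$. By hypothesis there is a $G$-equivariant isomorphism $\E\otimes_\fld\mathbb{P}\cong\mathbb{P}[\lambda_1,\dots,\lambda_e]\cong\mathbb{P}^{[e]}$ which is the identity on $\mathbb{P}$ and satisfies $\fld[\lambda_1,\dots,\lambda_e]\subseteq(\E\otimes_\fld\mathbb{P})^G$ (note we apply erasability with the roles of the two tensor factors as in Definition \ref{triang_dissolv}, so that the $G$-action on $\mathbb{P}$ survives and the $G$-action on $\E$ is erased). For part (1), I would first take $G$-invariants of this isomorphism: since $G$ acts trivially on $\fld[\lambda_1,\dots,\lambda_e]$ and the isomorphism restricts to the identity on $\mathbb{P}$, we get $(\E\otimes_\fld\mathbb{P})^G\cong(\mathbb{P}[\lambda_1,\dots,\lambda_e])^G = \mathbb{P}^G[\lambda_1,\dots,\lambda_e]\cong(\mathbb{P}^G)^{[e]}$; the middle equality uses that the $\lambda_i$ are $G$-invariant and algebraically independent over $\mathbb{P}$ (hence over $\mathbb{P}^G$). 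The formula $(\mathbb{P}^{[e]})^G\cong(\mathbb{P}^G)^{[e]}$ is then immediate with the trivial action on the $e$ new variables. For the intersection claim $\E^G = \E\cap(\E\otimes_\fld\mathbb{P})^G$, one inclusion is trivial since $\E^G\subseteq\E$ and $\E^G\subseteq(\E\otimes_\fld\mathbb{P})^G$; for the reverse, note that viewing $\E\otimes_\fld\mathbb{P}$ as a free $\E$-module on a $\fld$-basis of $\mathbb{P}$ (and $G$ acting diagonally), an element of $\E = \E\otimes 1$ that is also $G$-invariant must already be $G$-invariant inside $\E$, so it lies in $\E^G$.

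Part (2) is then purely formal: if $\mathbb{P}^G\cong\fld^{[m]}$, substitute into the isomorphism of part (1) to obtain $(\E\otimes_\fld\mathbb{P})^G\cong(\fld^{[m]})^{[e]}\cong\fld^{[e+m]}$.

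For part (3), I would argue directly from the definition of $s$-projectivity. Let $\psi:R\twoheadrightarrow R'$ be a surjective morphism in $\ts$ and let $\varphi:\E\otimes_\fld\mathbb{P}\to R'$ be a morphism in $\ts$; I must lift $\varphi$ along $\psi$. Restricting $\varphi$ to $\mathbb{P}=\mathbb{P}\otimes 1$ gives a morphism $\mathbb{P}\to R'$ in $\ts$, which lifts to $\tilde\varphi_0:\mathbb{P}\to R$ since $\mathbb{P}$ is $s$-projective. It remains to extend $\tilde\varphi_0$ over the erased variables $\lambda_1,\dots,\lambda_e$: using the presentation $\E\otimes_\fld\mathbb{P}\cong\mathbb{P}[\lambda_1,\dots,\lambda_e]$ as a polynomial ring over $\mathbb{P}$ with the $\lambda_i$ being $G$-invariant, a lift is determined by choosing, for each $i$, any preimage under $\psi$ of $\varphi(\lambda_i)\in (R')^G$; because $\psi$ is surjective and $G$-equivariant, and one can average or argue as in the proof of the trace-surjective lifting lemmas that the preimages may be taken $G$-invariant — but in fact here it is cleaner: since the $\lambda_i$ generate a \emph{free} polynomial subalgebra with trivial action, any set-theoretic choice of preimages $\mu_i\in R$ of $\varphi(\lambda_i)$ extends $\tilde\varphi_0$ to a $\fld$-algebra homomorphism, and one replaces $\mu_i$ by a $G$-invariant preimage, which exists because $\varphi(\lambda_i)\in (R')^G = \tr_G(R')$ lifts through the $G$-equivariant surjection $\psi$ (or one simply picks $\mu_i$ with $\tr_G$-argument). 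The resulting map is $G$-equivariant and lifts $\varphi$, so $\E\otimes_\fld\mathbb{P}$ is $s$-projective.

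The main obstacle is the $G$-equivariance bookkeeping in part (3): one must be careful that the chosen preimages of the $\lambda_i$ can be taken $G$-invariant so that the extended homomorphism is genuinely a morphism in $\ts$, and that the lifting of the $\mathbb{P}$-part is compatible. This is where the hypothesis that the $\lambda_i$ lie in $(\E\otimes_\fld\mathbb{P})^G$ (so that their images are $G$-invariant and the fibres of $\psi$ over them contain $G$-invariant elements, using that $\psi$ is a surjective morphism in $\ts$) is essential. Everything else — parts (1) and (2) — is a direct translation of the erasability isomorphism through the invariant functor.
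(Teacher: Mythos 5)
Your proposal is correct and follows essentially the same route as the paper: apply erasability with the test algebra $A=\mathbb{P}$ to get $\E\otimes_\fld\mathbb{P}\cong\mathbb{P}[\lambda_1,\dots,\lambda_e]$ with invariant $\lambda_i$, take invariants coefficient-wise for (1) and (2), and for (3) lift the restriction to $\mathbb{P}$ by its $s$-projectivity and extend over the invariant variables by choosing preimages of the $\varphi(\lambda_i)$. Your extra care in choosing \emph{$G$-invariant} preimages (available via the trace, since the target is trace-surjective and the surjection is equivariant) is exactly the point the paper's proof leaves implicit, and it is indeed what makes the extended lift $G$-equivariant.
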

\begin{proof}
(1) and (2):\ Clearly $\E^G=\E\cap (\E\otimes_\fld \mathbb{P})^G$.
By definition of ``erasable", $\F :=\E\otimes_\fld \mathbb{P}\cong
\mathbb{P}\otimes_\fld\fld[T_1,\cdots,T_n]\cong\mathbb{P}^{[e]}$
with $\fld[\underline T]\le \F^G$, hence $(\mathbb{P}^{[e]})^G\cong (\mathbb{P}^G)^{[e]}$. \\
(3):\ Let $\alpha:\ A\to B\in \ts$ be surjective and $\beta:\ \F =\mathbb{P}[T_1,\cdots,T_e]\to B$ be
morphisms in $\ts$. Choose ${\bf a}:=(a_1,\cdots,a_e)\in A^e$
with $\alpha(a_i)=\beta(T_i)$ and $\theta\in\ts(\mathbb{P},A)$ with
$\alpha\theta=\beta_{|\mathbb{P}}$. Then $\theta$ extends to a map
$$\tilde\theta:\ \F\to A,\ \sum_{\mu\in\mathbb{N}_0^e}p_\mu\underline T^\mu\mapsto
\theta(p_\mu)a_1^{\mu_1}\cdots a_e^{\mu_e}$$
with $\alpha\circ\tilde\theta=\beta$. Since the $T_i$ are $G$-invariant,
$\tilde\theta\in \ts(\F,A)$, which shows that $\F$ is s-projective.
\end{proof}

\begin{thm}\label{erasbility_theorem}
Let $\Gamma\in\ts$. Then $\Gamma$ is erasable if and only if
$\Gamma$ is self-erasing and any one of the following equivalent
conditions is satisfied:
\begin{enumerate}
\item $\Gamma$ is universal;
\item $\Gamma$ is $s$-projective;
\item $\Gamma$ is an $s$-generator.
\end{enumerate}
\end{thm}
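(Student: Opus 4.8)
The plan is to prove the chain of equivalences by first isolating the core fact
\[
\Gamma \text{ erasable} \iff \Gamma \text{ self-erasing and universal},
\]
and then deriving $s$-projectivity and the $s$-generator property from erasability. One direction of the core fact is essentially immediate: specializing the defining property of ``erasable'' to $A=\Gamma$ gives self-erasing, and, writing $A\otimes_\fld\Gamma=A[\lambda_1,\dots,\lambda_N]$ with the $\lambda_i$ $G$-invariant, the evaluation $A[\underline\lambda]\to A$, $\lambda_i\mapsto 0$, is a morphism in $\ts$; precomposing it with $\Gamma\hookrightarrow A\otimes_\fld\Gamma$ produces a morphism $\Gamma\to A$ for every $A\in\ts$, so $\Gamma$ is weakly initial. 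For the converse I would invoke Lemma~\ref{char_universal}(4): universality lets me write any $A\in\ts$ as $A\cong R\otimes_{\Gamma^G}\Gamma$ with $G$ acting trivially on $R$, and then
\[
A\otimes_\fld\Gamma\;\cong\;R\otimes_{\Gamma^G}(\Gamma\otimes_\fld\Gamma)\;\cong\;R\otimes_{\Gamma^G}\big((\Gamma\otimes 1)[\underline\lambda]\big)\;\cong\;\big(R\otimes_{\Gamma^G}\Gamma\big)[\underline\lambda]\;=\;A^{[N]},
\]
using self-erasing in the middle step and that $\fld[\underline\lambda]$ splits off as a free $\fld$-tensor factor. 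Comparing Krull dimensions in $\Gamma\otimes_\fld\Gamma\cong\Gamma^{[N]}$ forces $N=\Dim\Gamma$, matching the ambient dimension in the definition of erasable.

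Next I would show \emph{erasable $\Rightarrow$ $s$-projective}. Fix any $s$-projective $\Pp_0\in\ts$ (for instance $\Pp_0=D_\fld$, which is $s$-projective by Theorem~\ref{tr_id_standard} applied to the identity epimorphism $V_{reg}\to V_{reg}$ with $v^*=X$). Since $\Gamma$ is erasable and $\Pp_0$ is $s$-projective, Proposition~\ref{erasable_inv_is_poly_intersetc_poly}(3) gives that $\Gamma\otimes_\fld\Pp_0$ is $s$-projective. Now $\Gamma$ is a categorical retract of $\Gamma\otimes_\fld\Pp_0$: the coproduct injection $\gamma\mapsto\gamma\otimes 1$ and the multiplication-type map $\gamma\otimes q\mapsto\gamma\cdot\phi(q)$, where $\phi\in\ts(\Pp_0,\Gamma)$ exists because $\Gamma$ has a point (equivalently, because $\Pp_0$ is universal), are morphisms in $\ts$ composing to $\mathrm{id}_\Gamma$ since $\phi(1)=1$. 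A (categorical) retract of an $s$-projective object is $s$-projective, so $\Gamma$ is $s$-projective.

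For \emph{erasable $\Rightarrow$ $s$-generator} I would iterate erasability: $\Gamma^{\otimes(\ell+1)}=\Gamma^{\otimes\ell}\otimes_\fld\Gamma\cong(\Gamma^{\otimes\ell})^{[N]}$, so inductively $\Gamma^{\otimes(\ell+1)}\cong\Gamma[s_1,\dots,s_{\ell N}]$ with the $s_j$ free $G$-invariant variables and the isomorphism the identity on the first tensor factor $\Gamma$. Given $R\in\ts$, universality together with Lemma~\ref{char_universal}(3) yields a subalgebra $S=\Gamma/I\le R$ with $R\cong R^G\otimes_{S^G}S$, hence $R=\fld[S,R^G]$; writing $R^G=\fld[\rho_1,\dots,\rho_k]$ (finitely generated by Noether's theorem, as $G$ is finite), pick $\ell$ with $\ell N\ge k$ and define $\Gamma^{\otimes(\ell+1)}\cong\Gamma[\underline s]\to R$ by the surjection $\Gamma\twoheadrightarrow S\hookrightarrow R$ on the first factor and $s_j\mapsto\rho_j$ for $j\le k$, $s_j\mapsto 0$ otherwise. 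Because the $s_j$ and the $\rho_j$ are $G$-invariant this is a well-defined $G$-equivariant $\fld$-algebra map, and its image contains $S$ and all $\rho_j$, hence is all of $R$; thus $\Gamma$ is an $s$-generator.

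Finally I would assemble the pieces: erasable $\Rightarrow$ self-erasing $\wedge$ (1) $\wedge$ (2) $\wedge$ (3) by the above, while self-erasing $\wedge$ (1) $\Rightarrow$ erasable by the core equivalence, and (2)$\Rightarrow$(1), (3)$\Rightarrow$(1) by Remark~\ref{universal_df_rem}(1); consequently self-erasing together with any one of (1),(2),(3) is equivalent to erasability, and (1),(2),(3) are mutually equivalent for self-erasing $\Gamma$. The step I expect to require the most care is the converse half of the core equivalence: the tensor-associativity shuffling is routine, but one must genuinely verify that the composite isomorphism built from $R\otimes_{\Gamma^G}(\Gamma\otimes_\fld\Gamma)$ is \emph{normalised} the way ``erasable'' demands — the identity on $A$, with the new polynomial generators landing in $(A\otimes_\fld\Gamma)^G$ (which holds because $G$ fixes $R$ pointwise). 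Everything else is either formal category theory or a short appeal to Lemma~\ref{char_universal}, Proposition~\ref{erasable_inv_is_poly_intersetc_poly} and the known $s$-projectivity of $D_\fld$.
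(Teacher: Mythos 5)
Your proof is correct and follows essentially the same route as the paper's: self-erasure by taking $A=\Gamma$, $s$-projectivity via the retract of $\Gamma\otimes_\fld D_\fld$ (which is $s$-projective by Proposition~\ref{erasable_inv_is_poly_intersetc_poly}(3)), the $s$-generator property by iterating $\Gamma\otimes_\fld\Gamma\cong\Gamma[\underline\mu]$ together with finite generation of $R^G$, and the converse by base-changing $A\cong A^G\otimes_{\Gamma^G}\Gamma$ along the self-erasing isomorphism. The only (harmless) local variation is that you obtain universality directly from the evaluation $\lambda_i\mapsto 0$ on $A\otimes_\fld\Gamma$, whereas the paper deduces it as a consequence of $s$-projectivity.
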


\begin{proof} ``Only if":\  Suppose that $\Gamma$ is erasable, then clearly $\Gamma$ is
self-erasing (take $A=\Gamma$). Now put $A=D_\fld$; then $D_\fld\otimes_\fld\Gamma=D_\fld[\lambda_1,\cdots,\lambda_d]=:D_\fld[\underline\lambda]$, where
each $\lambda_i$ is invariant and $d$ is the Krull-dimension of $\Gamma$. Now
$D_\fld$ is s-projective and so $D_\fld[\underline\lambda]$ is also s-projective
by Proposition \ref{erasable_inv_is_poly_intersetc_poly} (3).
Further,
as $D_\fld$ is universal, there exists a morphism $D_\fld\to\Gamma$. Therefore $\Gamma$ is a
direct summand of $D_\fld[\underline\lambda]$ and so is s-projective and hence universal.
Finally, as $\Gamma$ is self-erasing, $\Gamma\otimes_\fld\Gamma=\Gamma[\underline\mu]:=\Gamma[\mu_1,\cdots,\mu_d]$ where
each $\mu_i$ is invariant. Hence, by a simple induction argument,
$\Gamma^{\otimes (m+1)}\cong\Gamma[\nu_1,\cdots,\nu_{md}]$ for all $m\ge 1$. Since $\Gamma$
is universal it follows easily from Lemma \ref{RTs_gen_by_points}, that every $A\in\ts$
is surjective image of some $\Gamma^{\otimes\ell}$, so
$\Gamma$ is also an s-generator.\\
``If":\  Note first that if $\Gamma$ is either s-projective or an s-generator,
then $\Gamma$ is universal. Now suppose that $\Gamma$ is self-erasing and universal and let
$A\in\ts$. Then there exists a morphism $\theta:\ \Gamma\to A$. Now, as above,
$\Gamma\otimes_\fld\Gamma=\Gamma[\underline\mu]$ with invariants
$\underline\mu=(\mu_1,\cdots,\mu_d)$ and further $A\cong A^G\otimes_{\Gamma^G}\Gamma$
where $\Gamma^G\to A^G$ is induced by $\theta$. Hence
$$A\otimes_\fld\Gamma\cong (A^G\otimes_{\Gamma^G}\Gamma)\otimes_\fld\Gamma\cong
A^G\otimes_{\Gamma^G} (\Gamma\otimes_\fld\Gamma)\cong A^G\otimes_{\Gamma^G} (\Gamma[\underline\mu])\cong
(A^G\otimes_{\Gamma^G} \Gamma)[\underline\mu]\cong
A[\underline\mu]$$
with trivial $G$-action on $\fld[\underline\mu]$.
Thus $\Gamma$ is erasable.
\end{proof}

\begin{cor}\label{D_k_is_pro_gen}
The following algebras in $\ts$ are triangular polynomial rings and therefore
erasable s-projective generators:
\begin{enumerate}
\item Every algebra $\bar S:=S/(v^*-1)S \in\ts$ as in Theorem \ref{tr_id_standard}.
\item The algebra $D_\fld$ and its standard retract $U$ (as in Theorem \ref{arb_p_grp_sec0}).
\end{enumerate}
\end{cor}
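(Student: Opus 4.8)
The plan is to reduce Corollary~\ref{D_k_is_pro_gen} entirely to results already established in the excerpt, so that the proof is essentially a bookkeeping exercise. First I would observe that both items claim that certain algebras are (a) triangular polynomial rings and (b) erasable $s$-projective generators, and that (b) is a formal consequence of (a) together with earlier theorems. Indeed, by Proposition~\ref{triangular implies erasable}, any polynomial ring in $\ts$ with triangular $G$-action is erasable; and by Theorem~\ref{erasbility_theorem}, an erasable $\Gamma\in\ts$ is automatically $s$-projective, an $s$-generator, and (via Lemma~\ref{generators in ts} or the discussion preceding Definition~\ref{universal_df}) a categorical generator. So once (a) is verified in each case, (b) follows with no further work.

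For item (2), the work is already done: Theorem~\ref{arb_p_grp_sec0} states that $U\le D_\fld$ is a triangular $G$-subalgebra with $U\cong\fld^{[n]}$, and $D_\fld$ itself is introduced (just after Theorem~\ref{first_main}, following \cite{nonlin}) as a polynomial ring of Krull dimension $|G|-1$ with triangular $G$-action and lying in $\ts$. Hence both $D_\fld$ and $U$ satisfy hypothesis (a), and item (2) follows immediately from Proposition~\ref{triangular implies erasable} and Theorem~\ref{erasbility_theorem}.

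For item (1), I would recall the setup of Theorem~\ref{tr_id_standard}: $W\twoheadrightarrow V$ is an epimorphism of finite-dimensional $\fld G$-modules, $S={\rm Sym}(V^*)\hookrightarrow T={\rm Sym}(W^*)$, $v^*\in(V^*)^G$, and $\bar S=S/(v^*-1)S\in\ts$. The symmetric algebra $S={\rm Sym}(V^*)$ is a polynomial ring, and dehomogenizing by a $G$-invariant linear form $v^*$ (i.e. passing to $S/(v^*-1)S$) produces again a polynomial ring of Krull dimension $\dim V - 1$; this is precisely the dehomogenization construction used for $D_\fld$ in \cite{nonlin}. The only point requiring a short argument is that the induced $G$-action on $\bar S$ is triangular with respect to a suitable choice of generators. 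For this I would pick a basis $v^*=v_0^*,v_1^*,\dots,v_{d}^*$ of $V^*$ adapted to a composition series of the $\fld G$-module $V^*$ (a $p$-group in characteristic $p$ acts unipotently, so such a flag exists with all quotients trivial), so that $(v_i^*)g = v_i^* + (\text{linear combination of } v_0^*,\dots,v_{i-1}^*)$; after setting $v_0^*=1$ the images $\bar v_1^*,\dots,\bar v_d^*$ generate $\bar S$ and the action becomes $(\bar v_i^*)g=\bar v_i^* + f_{g,i}(\bar v_1^*,\dots,\bar v_{i-1}^*)$ with $f_{g,i}$ \emph{affine-linear}, in particular polynomial in the lower-indexed generators — exactly the triangularity condition of Definition~\ref{stably_def}(2). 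Thus $\bar S$ is a triangular polynomial ring in $\ts$, and item (1) again follows from Proposition~\ref{triangular implies erasable} and Theorem~\ref{erasbility_theorem}.

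The main (and only real) obstacle is the triangularity verification in item (1): one must be a little careful that dehomogenizing does not destroy the triangular shape and that the resulting relations $(v_i^*)g = v_i^* + f_{g,i}$ remain polynomial (here in fact affine-linear) in the \emph{preceding} generators after the substitution $v_0^*\mapsto 1$. Once the composition-series basis is chosen correctly this is routine. I would also remark that the retract statement ($\bar S$ a retract of $\bar T$, $U$ a retract of $D_\fld$) is not needed for this corollary — it is recorded in Theorems~\ref{tr_id_standard} and~\ref{arb_p_grp_sec0} — but it is consistent with, and reproved by, the fact that erasable objects are $s$-projective and hence retracts of the free-like objects that surject onto them.
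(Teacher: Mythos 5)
Your proposal is correct and follows essentially the same route as the paper: verify that $D_\fld$, $U$ and $\bar S$ are triangular polynomial rings (for $U$ via Theorem~\ref{arb_p_grp_sec0}, for the others by direct inspection) and then invoke Proposition~\ref{triangular implies erasable} together with Theorem~\ref{erasbility_theorem}. The only difference is that you spell out, via a composition-series basis of $V^*$ starting with the invariant $v^*$, the triangularity of $\bar S$ that the paper dismisses as ``visibly triangular'' -- a correct and welcome elaboration, not a different argument.
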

\begin{proof} The proof of Theorem \ref{arb_p_grp_sec0}, given in \cite{nonlin} shows
that $U$ is a polynomial ring on which $G$ acts in a triangular way. All the other
algebras are visibly triangular, so the claim follows from
Proposition \ref{triangular implies erasable} and Theorem \ref{erasbility_theorem}.
\end{proof}

\begin{thm}\label{erasable_is_poly_bd_stably_inv}
Let $\Gamma\in\ts$ of Krull dimension $d$ and assume that $\Gamma$ is erasable.
Then $\Gamma\cong\fld^{[d]}$ and $\Gamma^G\otimes_\fld\fld^{[n]}\cong \fld^{[n+d]}$ with
$n=\log_p(|G|)$. Moreover
\begin{enumerate}
\item $\Gamma$ is $n$-stably triangular.
\item $\Gamma^G$ is $n$-stably polynomial.
\end{enumerate}
\end{thm}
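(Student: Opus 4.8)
The plan is to run \emph{two} erasing procedures in parallel — that of $\Gamma$ itself and that of the standard triangular retract $U\le D_\fld$ of Theorem \ref{arb_p_grp_sec0} (which is erasable by Proposition \ref{triangular implies erasable}) — and then to descend along them. By Theorem \ref{erasbility_theorem} the hypothesis that $\Gamma$ is erasable already gives that $\Gamma$ is self-erasing, universal and $s$-projective, but only the erasing identities will be used below.

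\emph{Step 1 (double erasing; this yields (1)).} Since $\Dim U=n$, erasability of $U$ applied to $A=\Gamma\in\ts$ gives a $G$-equivariant equality $\Gamma\otimes_\fld U=\Gamma[\lambda_1,\dots,\lambda_n]$ with the $\lambda_i$ $G$-invariant and the isomorphism the identity on $\Gamma$; hence $\Gamma\otimes_\fld U\cong\Gamma\otimes_\fld F$ as $\fld$-$G$-algebras, where $F:=\fld[\lambda_1,\dots,\lambda_n]\cong\fld^{[n]}$ carries the trivial action, so $F=F^G$. Dually, since $\Dim\Gamma=d$, erasability of $\Gamma$ applied to $A=U$ gives $U\otimes_\fld\Gamma=U[\mu_1,\dots,\mu_d]$ with the $\mu_j$ $G$-invariant and the isomorphism the identity on $U$; as $U=\fld[Y_1,\dots,Y_n]$ is triangular and the $\mu_j$ are fixed, the algebra $U[\mu_1,\dots,\mu_d]=\fld[Y_1,\dots,Y_n,\mu_1,\dots,\mu_d]$ is again a \emph{triangular} polynomial ring, now of Krull dimension $n+d$, with respect to the ordered generators $Y_1,\dots,Y_n,\mu_1,\dots,\mu_d$. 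Identifying $U\otimes_\fld\Gamma$ with $\Gamma\otimes_\fld U$ yields
\[
\Gamma\otimes_\fld F\;\cong\;\fld[Y_1,\dots,Y_n,\mu_1,\dots,\mu_d]\;\cong\;\fld^{[n+d]}
\]
as $\fld$-$G$-algebras, with triangular action and with $F=F^G\cong\fld^{[n]}$ extending the action on $\Gamma$ trivially. By Definition \ref{stably_def}(3) this is precisely the assertion that $\Gamma$ is $n$-stably triangular, proving (1); in particular $\Gamma\otimes_\fld\fld^{[n]}\cong\fld^{[n+d]}$.

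\emph{Step 2 (invariants; this yields (2) and the displayed isomorphism).} Taking $G$-fixed points in the equality of Step 1, and using that the adjoined variables are $G$-fixed and polynomially free, the left side becomes $\Gamma^G[\lambda_1,\dots,\lambda_n]\cong(\Gamma^G)^{[n]}$ and the right side becomes $U^G[\mu_1,\dots,\mu_d]\cong(U^G)^{[d]}\cong\fld^{[n+d]}$, the last step by Theorem \ref{arb_p_grp_sec0}. Hence $\Gamma^G\otimes_\fld\fld^{[n]}\cong(\Gamma^G)^{[n]}\cong\fld^{[n+d]}$, which is the asserted isomorphism and shows $\Gamma^G$ is $n$-stably polynomial, proving (2). (Consistency check: $\Gamma$ is module-finite and free of rank $|G|$ over $\Gamma^G$ by Theorem \ref{first_main}, so $\Dim\Gamma^G=\Dim\Gamma=d$.)

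\emph{Step 3 (proof that $\Gamma\cong\fld^{[d]}$; the main obstacle).} It remains to upgrade ``$n$-stably triangular'' to ``polynomial''. Note that $\Gamma$ arises as the quotient of the triangular polynomial ring $\Gamma\otimes_\fld F\cong\fld^{[n+d]}$ of Step 1 by the $G$-stable ideal $(\lambda_1,\dots,\lambda_n)$ generated by the $n$ adjoined invariants; on the other hand Theorem \ref{strct_thm_intro} presents $\Gamma\cong\Gamma^G[Y_1,\dots,Y_n]/(\sigma_1(\underline Y)-r_1,\dots,\sigma_n(\underline Y)-r_n)$ with the $\sigma_i$ the fixed polynomials generating $U^G$ over $\fld$ and suitable $r_i\in\Gamma^G$. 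Combining this presentation with the fact from Step 2 that $\Gamma^G$ becomes a polynomial ring after adjoining $n$ trivial variables, the aim is to show that the $n$ relations $\sigma_i(\underline Y)-r_i$ extend to a polynomial coordinate system, so that the quotient is polynomial of Krull dimension $d$. I expect this to be the real work: passing from ``$n$-stably polynomial'' to ``polynomial'' is exactly the sort of cancellation that fails for general algebras — witness Gupta's stably polynomial, non-polynomial retract of a polynomial ring recalled in the Introduction — so the argument must genuinely exploit the rigidity of the triangular $G$-action on $\Gamma\otimes_\fld F$ together with the complete-intersection presentation of Theorem \ref{strct_thm_intro}; making that last point precise is where the difficulty lies.
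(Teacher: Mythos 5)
Your Steps 1 and 2 are correct and are essentially the paper's own argument for parts (1), (2) and the displayed isomorphism: identify $\Gamma\otimes_\fld U$ with $U\otimes_\fld\Gamma$, use erasability on both sides to write it as $\Gamma[\underline\lambda]=U[\underline\mu]$, observe that $U[\underline\mu]$ is a triangular polynomial ring of dimension $n+d$, and take $G$-invariants to get $\Gamma^G[\underline\lambda]\cong U^G[\underline\mu]\cong\fld^{[n+d]}$.

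The genuine gap is Step 3: you never prove $\Gamma\cong\fld^{[d]}$, and the route you sketch (trying to extend the relations of Theorem \ref{strct_thm_intro} to a coordinate system, i.e.\ a cancellation argument) is not what is needed and would indeed run into exactly the Gupta-type obstruction you cite. The paper avoids cancellation entirely by a short specialization trick using the \emph{self-erasing} property, which your proposal mentions but never exploits. Since $\Gamma$ is erasable it is universal (Theorem \ref{erasbility_theorem}), so there is a morphism $\Gamma\to\fld G$ to the diagonal group algebra of Remark \ref{universal_df_rem}; composing with any of its $|G|$ projections gives a maximal ideal $\frak{m}\unlhd\Gamma$ with $\Gamma/\frak{m}\cong\fld$. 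Now apply the erasing identity with $A=\Gamma$ itself: $\Gamma\otimes_\fld\Gamma=\Gamma[\mu_1,\dots,\mu_d]$, a polynomial ring over the first factor. Reducing modulo the extension $\frak{m}^e$ of $\frak{m}$ from the first factor gives on one side $(\Gamma/\frak{m})\otimes_\fld\Gamma\cong\fld\otimes_\fld\Gamma\cong\Gamma$, and on the other side $(\Gamma/\frak{m})[\underline{\mu}]\cong\fld^{[d]}$; hence $\Gamma\cong\fld^{[d]}$ as $\fld$-algebras. No cancellation of stably-polynomial data is involved: base-changing to the residue field of a $\fld$-rational point collapses the first tensor factor outright. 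You should replace your Step 3 with this argument.
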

\begin{proof} We use notation of the proof of Theorem \ref{erasbility_theorem}.
Suppose that $\Gamma\in\ts$ is erasable. Then $\Gamma$ is universal and so there
is a morphism $\Gamma\to \fld G$, the disconnected abelian group algebra
from Remark \ref{universal_df_rem}. Hence $\Gamma$ has a maximal ideal $\frak{m}$ with
$\Gamma/\frak{m}\cong\fld$. Since $\Gamma$ is self-erasing, $\Gamma\otimes_\fld\Gamma=
\Gamma[\underline\mu]$, therefore $\Gamma\cong\fld\otimes_\fld\Gamma=$
$\Gamma/\frak{m}\otimes_\fld\Gamma=(\Gamma\otimes_\fld\Gamma)/\frak{m}^e=
\Gamma[\underline\mu]/\frak{m}^e=\Gamma/\frak{m}[\underline\mu]\cong\fld^{[d]}$
as $\fld$-algebras, with $\fld^{[d]}\cong\fld[\underline\mu]\subseteq
(\Gamma\otimes_\fld\Gamma)^G$.
Here $\frak{m}^e$ denotes the extended ideal in $\Gamma\otimes_\fld\Gamma$.\\
Now by Proposition \ref{triangular implies erasable} the algebra $U\cong\fld^{[n]}$
is erasable, therefore, as before, there are invariants $\underline\lambda=(\lambda_1,\cdots,\lambda_d)$
and $\underline\alpha=(\alpha_1,\cdots,\alpha_n)$ such that
$U[\underline\lambda]=U\otimes_\fld\Gamma\cong\Gamma\otimes_\fld U\cong\Gamma[\underline\alpha]$
is triangular. Hence
$$\Gamma^G[\underline\alpha]=(\Gamma[\underline\alpha])^G=
(U[\underline\lambda])^G=U^G[\underline\lambda]\cong\fld^{[n+d]}$$
and so $\Gamma^G$ is $n$-stably polynomial.
Since $U$ is triangular, $\Gamma$ is $n$-stably triangular.
\end{proof}

\begin{cor}\label{erasable_is_poly_bd_stably_inv_cor1}
Let $\Gamma\in\ts$ be erasable and assume that $\Gamma^G\cong\fld^{[d]}$, then
$(\Gamma^{\otimes\ell})^G\cong \fld^{[d\ell]}$. In particular this
is satisfied for $\Gamma\in\{D_\fld,U\}$.
\end{cor}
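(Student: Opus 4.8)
The plan is to iterate the structure theorem for erasable algebras, namely the identity $A\otimes_\fld\Gamma\cong A[\underline\mu]$ with the new variables $\underline\mu$ being $G$-invariant, and then take $G$-invariants. First I would set up the induction on $\ell$. For $\ell=1$ there is nothing to prove, since $(\Gamma^{\otimes 1})^G=\Gamma^G\cong\fld^{[d]}$ by hypothesis. For the inductive step, assume $(\Gamma^{\otimes\ell})^G\cong\fld^{[d\ell]}$. Write $A:=\Gamma^{\otimes\ell}\in\ts$; this is again a trace-surjective $\fld$-$G$ algebra (a tensor product of algebras in $\ts$ with at least one factor in $\ts$ lies in $\ts$). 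Since $\Gamma$ is erasable, applying Definition \ref{triang_dissolv} with this $A$ gives
$$\Gamma^{\otimes(\ell+1)}=A\otimes_\fld\Gamma\cong A[\mu_1,\cdots,\mu_d]=A^{[d]},$$
where the isomorphism is the identity on $A$ and $\fld[\underline\mu]=\fld[\mu_1,\cdots,\mu_d]\subseteq(A\otimes_\fld\Gamma)^G$.

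The key point is then to pass to $G$-invariants through the polynomial ring extension. Because the $\mu_i$ are $G$-invariant and algebraically independent over $A$ (hence over $A^G$), we have $(A[\underline\mu])^G=A^G[\underline\mu]$: an invariant element of $A[\underline\mu]$, expanded as a polynomial $\sum_\nu a_\nu\underline\mu^\nu$ with $a_\nu\in A$, is fixed by $G$ coefficientwise, so each $a_\nu\in A^G$. Combining this with the inductive hypothesis $A^G=(\Gamma^{\otimes\ell})^G\cong\fld^{[d\ell]}$ yields
$$(\Gamma^{\otimes(\ell+1)})^G\cong(A[\underline\mu])^G=A^G[\underline\mu]\cong\fld^{[d\ell]}[\underline\mu]\cong\fld^{[d\ell+d]}=\fld^{[d(\ell+1)]},$$
which closes the induction. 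For the final assertion, recall from Corollary \ref{D_k_is_pro_gen} that both $D_\fld$ and $U$ are erasable, and from Theorem \ref{arb_p_grp_sec0} that $D_\fld^G\cong\fld^{[|G|-1]}$ and $U^G\cong\fld^{[n]}$ are polynomial rings; so the hypothesis $\Gamma^G\cong\fld^{[d]}$ is satisfied in both cases (with $d=|G|-1$ and $d=n$ respectively), and the corollary applies.

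I do not expect a serious obstacle here: the statement is essentially a bookkeeping consequence of the erasability identity together with the elementary fact that adjoining invariant indeterminates commutes with taking invariants. The only mild subtlety worth spelling out is that the isomorphism $\Gamma^{\otimes(\ell+1)}\cong A[\underline\mu]$ in Definition \ref{triang_dissolv} is $G$-equivariant with trivial action on $\fld[\underline\mu]$, which is precisely what makes $(A[\underline\mu])^G=A^G[\underline\mu]$ legitimate; this is built into the definition of ``erasable'', so no extra work is needed. One could alternatively phrase the whole argument as a direct application of Theorem \ref{erasable_is_poly_bd_stably_inv} applied to $\Gamma^{\otimes\ell}$ after noting that a tensor power of an erasable $s$-projective generator is again such (via Theorem \ref{erasbility_theorem}), but the straightforward induction above is cleaner.
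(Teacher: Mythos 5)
Your proof is correct and follows essentially the same route as the paper: induction on $\ell$, applying the erasability identity with $A=\Gamma^{\otimes\ell}$ (resp.\ $\Gamma^{\otimes(\ell-1)}$) and then passing to invariants via $(A[\underline\mu])^G=A^G[\underline\mu]$ since the adjoined indeterminates are $G$-invariant. Your additional remark justifying this last step, and the explicit check that $D_\fld$ and $U$ satisfy the hypotheses, are fine elaborations of what the paper leaves implicit.
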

\begin{proof} The proof is by induction on $\ell$. For $\ell=1$ the statement is true
by the hypothesis.
$\Gamma^{\otimes\ell}\cong\Gamma^{\otimes(\ell-1)}\otimes_\fld\Gamma\cong$
$\Gamma^{\otimes(\ell-1)}\otimes_\fld\fld[\lambda_1,\cdots,\lambda_d]$ with
$\fld[\underline\lambda]\le (\Gamma^{\otimes\ell})^G$. Hence
$$(\Gamma^{\otimes\ell})^G\cong (\Gamma^{\otimes(\ell-1)}\otimes_\fld \fld[\underline\lambda])^G\cong
(\Gamma^{\otimes(\ell-1)})^G\otimes_\fld \fld[\underline\lambda]\cong
\fld^{[d(\ell-1)]}\otimes_\fld \fld^{[d]}\cong
\fld^{[d\ell]}.$$
\end{proof}

As a consequence we see that every stably triangular $\fld-G$ algebra is a
tensor factor of a tensor power of $U$ or of $D_\fld$:

\begin{cor}\label{erasable_is_poly_bd_stably_inv_cor4}
Let $X\in\{D_\fld,U\}$ and let $A$ be a stably triangular $\fld-G$ algebra.
Then there is an erasable algebra $B\in\ts$ and an $N\in\mathbb{N}$,
such that $A\otimes_\fld B\cong X^{\otimes N}$.
If moreover $A\in\ts$, then $A^G$ is stably polynomial.
\end{cor}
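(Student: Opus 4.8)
The plan is to reduce the statement to the self-erasing and erasability results of this section, the only genuinely new input being a congruence that one can always arrange by enlarging the trivial polynomial part.

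First I would record two elementary observations. (a) The proof of Proposition \ref{triangular implies erasable} nowhere uses that the triangular polynomial ring lies in $\ts$, only that the test algebra $A$ does; hence \emph{every} triangular polynomial $\fld$-$G$-algebra is erasable. (b) An $m$-stably triangular $\fld$-$G$-algebra is $m'$-stably triangular for all $m'\ge m$, since appending trivially acted polynomial variables at the end of a triangular generating sequence preserves triangularity. Write $d:=\Dim X$; this equals $|G|-1$ for $X=D_\fld$ and $\log_p|G|$ for $X=U$, and in either case $d\ge 1$ because $G\ne 1$. Using (b), I would choose $m$ so large that $A$ is $m$-stably triangular \emph{and} $\Dim A+m=cd$ for some integer $c\ge 1$ (possible since $d\ge 1$), and let $F:=\fld^{[m]}$ carry the trivial $G$-action. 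Then $A\otimes_\fld F$ is a triangular polynomial ring, hence erasable by (a), of Krull dimension $cd$.

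Next I would set $B:=F\otimes_\fld X$ and $N:=c+1$. Since $X\in\ts$, also $B\in\ts$; and $B$ is a triangular polynomial ring — list the invariant variables of $F$ first, then the triangular generators of $X$ — so $B$ is erasable by Proposition \ref{triangular implies erasable}. Now $A\otimes_\fld B\cong(A\otimes_\fld F)\otimes_\fld X$, and applying erasability of $A\otimes_\fld F$ to the test algebra $X\in\ts$ gives $(A\otimes_\fld F)\otimes_\fld X\cong X\otimes_\fld\fld^{[cd]}$ with the polynomial factor consisting of $G$-invariants, hence carrying the trivial action. On the other hand, iterating the self-erasing property of $X$ (as in the proof of Theorem \ref{erasbility_theorem}, equivalently Theorem \ref{cat_gal_main1}(3)) gives $X^{\otimes(c+1)}\cong X\otimes_\fld\fld^{[cd]}$, again with the polynomial factor invariant. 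Comparing the two presentations yields $A\otimes_\fld B\cong X^{\otimes N}$ as $\fld$-$G$-algebras.

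For the last assertion, suppose $A\in\ts$ and take $G$-invariants in $A\otimes_\fld B\cong X^{\otimes N}$. Since $B$ is erasable of Krull dimension $e:=\Dim B=m+d$ and $A\in\ts$, Proposition \ref{erasable_inv_is_poly_intersetc_poly}(1) gives $(A\otimes_\fld B)^G\cong(A^G)^{[e]}$; and since $X\in\{D_\fld,U\}$, Corollary \ref{erasable_is_poly_bd_stably_inv_cor1} shows $(X^{\otimes N})^G$ is a polynomial ring over $\fld$. Hence $(A^G)^{[e]}$ is polynomial, i.e. $A^G$ is $e$-stably polynomial. The only point that requires care in the whole argument is the equivariance bookkeeping — that every auxiliary polynomial variable introduced, both by ``erasing'' and by ``self-erasing'', is $G$-invariant, so the two descriptions of $X\otimes_\fld\fld^{[cd]}$ genuinely coincide as $\fld$-$G$-algebras; the cited statements are formulated precisely so as to supply this. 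Beyond that the proof is dimension counting plus the congruence choice of $m$, and I do not expect a serious obstacle.
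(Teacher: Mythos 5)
Your proof is correct and follows essentially the same route as the paper: tensor $A$ with trivial variables to get a triangular (hence erasable) polynomial ring, erase against $X$, and match the resulting $X\otimes_\fld\fld^{[\,\cdot\,]}$ with a tensor power of $X$ via self-erasing, then read off $(A\otimes_\fld B)^G\cong (A^G)^{[e]}$ from erasability of $B$. The only cosmetic difference is where the dimension-matching padding sits — you enlarge $F$ so that $\Dim A+m$ is a multiple of $\Dim X$ and take $B=F\otimes_\fld X$, whereas the paper fixes $F$ and takes $B=F\otimes_\fld U^{[M]}$ — and your explicit remark that Proposition \ref{triangular implies erasable} needs only the test algebra to lie in $\ts$ is a point the paper uses tacitly.
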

\begin{proof}
First note that, since $U$ is erasable, we have for every $\ell\in\mathbb{N}$:
$U^{\otimes\ell}\cong U\otimes_\fld U^{\otimes(\ell-1)}\cong
U[\lambda_1,\cdots,\lambda_{n(\ell-1)}]\cong U^{[n(\ell-1)]}$ with
$\fld[\lambda_1,\cdots,\lambda_{n(\ell-1)}]\le (U^{\otimes\ell})^G$.
Similarly $D_\fld^{\otimes\ell}\cong U^{[(|G|-1)\ell-n]}$.
Assume now that $A\otimes_\fld F\cong\fld^{[N]}$ is triangular with $F=F^G\cong\fld^{[m]}$. Then $A\otimes_\fld F$ is erasable by Proposition \ref{triangular implies erasable}.
Hence $A\otimes_\fld F\otimes_\fld U\cong U\otimes_\fld \fld[\underline\beta]\cong U^{[N]}$ with
$\fld^{[N]}\cong\fld[\underline\beta]\le (A\otimes_\fld F\otimes_\fld U)^G$.
Now let $\ell\in\mathbb{N}$ be minimal
with $\ell>N/n+1$ and set $M:=(\ell-1)\cdot n-N>0$. Then with
$B:=F\otimes_\fld U^{[M]}$ we obtain
$A\otimes_\fld B\cong U^{[N]}\otimes_\fld \fld^{[M]}\cong
U^{[N+M]} \cong U^{[(\ell-1)\cdot n]}\cong U^{\otimes\ell}$.
Similarly let $\ell'\in\mathbb{N}$ be minimal
with $\ell'>\frac{N+n}{|G|-1}$ and set $M':=\ell'(|G|-1)-n-N>0$. Then with
$B':=F\otimes_\fld U^{[M']}$ we obtain
$A\otimes_\fld B'\cong U^{[N]}\otimes_\fld \fld^{[M']}\cong U^{[N+M']}\cong U^{[\ell'(|G|-1)-n]}\cong D_\fld^{\otimes\ell'}$.
Now assume in addition that $A\in\ts$. Since $B\in\ts$ is erasable, $A\otimes_\fld B\cong A\otimes_\fld\fld[\underline\alpha]$ with $\fld[\underline\alpha]\le$
$(A\otimes_\fld B)^G=A^G\otimes_\fld\fld[\underline\alpha]\cong
(U^{\otimes\ell})^G$, which is a polynomial ring by Theorem \ref{cat_gal_main1}.
It follows that $A^G$ is stably polynomial.
\end{proof}

\noindent
We now give a {\bf Proof of Theorem \ref{cat_gal_main2}}:
\begin{thm}\label{cat_gal_main2_pf}
Let $\Pp\in\ts$ be s-projective, then both, $\Pp$ and $\Pp^G$ are retracts of polynomial
rings over $\fld$.
\end{thm}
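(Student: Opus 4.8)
The goal is to show that every $s$-projective $\Pp\in\ts$ has the property that both $\Pp$ and $\Pp^G$ are retracts of polynomial $\fld$-algebras. The natural strategy is to exhibit $\Pp$ as a retract of an object we already understand, namely $D_\fld$ tensored up appropriately. First I would use that $\Pp$ is universal (by Remark \ref{universal_df_rem}(1), since $s$-projective implies universal), so there exists a morphism $\theta\in\ts(D_\fld,\Pp)$. Next, since $D_\fld$ is erasable (Proposition \ref{triangular implies erasable}), the tensor product $\Pp\otimes_\fld D_\fld$ erases the $G$-action on $D_\fld$: we get $\Pp\otimes_\fld D_\fld\cong \Pp[\lambda_1,\ldots,\lambda_{|G|-1}]\cong\Pp^{[\,|G|-1\,]}$ with the $\lambda_i$ invariant, and by Proposition \ref{erasable_inv_is_poly_intersetc_poly}(3) this algebra is again $s$-projective in $\ts$.

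**The retraction.** Now I would play the two morphisms against each other. The inclusion $\iota_\Pp\colon\Pp\to\Pp\otimes_\fld D_\fld$, $x\mapsto x\otimes 1$, is a morphism in $\ts$, and composing with the $s$-projectivity of $\Pp$ applied to some suitable surjection gives a retraction. More precisely: because $D_\fld$ is universal there is a morphism $D_\fld\to\Pp$, hence a surjection-type argument shows $\Pp$ is a direct summand of $\Pp\otimes_\fld D_\fld\cong\Pp^{[\,|G|-1\,]}$ as a $G$-algebra — this is exactly the mechanism used in the ``only if'' direction of Theorem \ref{erasbility_theorem}, where $\Gamma$ was shown to be a direct summand of $D_\fld[\underline\lambda]$. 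The key point is that ``$\Pp$ is a retract of $\Pp^{[m]}$'' does not yet say $\Pp$ is a retract of a \emph{polynomial} ring, so I must iterate: apply the same construction with $D_\fld$ replaced by a large tensor power, using Corollary \ref{erasable_is_poly_bd_stably_inv_cor4} and Theorem \ref{cat_gal_main1}, to land inside a genuine polynomial $\fld$-algebra $\fld^{[N]}\in\ts$ of which $\Pp$ is a $G$-equivariant retract.

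**The invariant ring.** For $\Pp^G$: once $\Pp$ is a $G$-equivariant retract of a polynomial ring $\fld^{[N]}\in\ts$, say $\fld^{[N]}=\Pp\oplus I$ with $G$-stable ideal $I$, taking $G$-invariants gives $(\fld^{[N]})^G=\Pp^G\oplus I^G$, so $\Pp^G$ is a retract of $(\fld^{[N]})^G$. By Theorem \ref{cat_gal_main1}(5) applied to $D_\fld$ (or by Corollary \ref{erasable_is_poly_bd_stably_inv_cor1}), the ring $(\fld^{[N]})^G$ can itself be arranged to be a polynomial ring — one chooses the ambient object to be a tensor power of $D_\fld$ or $U$, whose invariant ring is polynomial. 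Hence $\Pp^G$ is a retract of a polynomial ring.

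**The main obstacle.** The delicate point is not the erasability bookkeeping but ensuring the retraction is \emph{$G$-equivariant} and that the ambient object is simultaneously a polynomial ring \emph{and} has polynomial invariant ring. Plain $s$-projectivity only produces retractions of $\Pp$ inside stably-polynomial $G$-algebras a priori; the work is in combining the $s$-generator property (Theorem \ref{erasbility_theorem}) — which expresses $\Pp$ as a surjective image of $D_\fld^{\otimes\ell}$ — with $s$-projectivity applied to that very surjection, so that the splitting exists and is $G$-equivariant, and then invoking that $(D_\fld^{\otimes\ell})^G$ is polynomial. I expect the proof to set up the surjection $D_\fld^{\otimes\ell}\twoheadrightarrow\Pp$, split it using $s$-projectivity of $\Pp$ (noting the identity on $\Pp$ lifts), and conclude both statements at once from the fact that $D_\fld^{\otimes\ell}$ and $(D_\fld^{\otimes\ell})^G$ are polynomial rings.
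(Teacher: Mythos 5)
Your final paragraph is exactly the paper's proof: take a surjection $\Gamma^{\otimes\ell}\twoheadrightarrow\Pp$ with $\Gamma\in\{U,D_\fld\}$ (the $s$-generator property), split it $G$-equivariantly using $s$-projectivity of $\Pp$, and conclude that $\Pp$ and $\Pp^G$ are retracts of $\Gamma^{\otimes\ell}$ and $(\Gamma^{\otimes\ell})^G$, both of which are polynomial rings by Theorem \ref{cat_gal_main1}. The earlier detour through $\Pp\otimes_\fld D_\fld$ is unnecessary (and, as you yourself note, only lands in $\Pp^{[m]}$ rather than a polynomial ring), but the argument you settle on is correct and is the one the paper gives.
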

\begin{proof}
Let $\Pp\in\ts$ be $s$-projective and $\Gamma=U$ or $D_\fld$, then there is a surjective
morphism $\Gamma^{\otimes \ell}\to\Pp$, which splits, since $\Pp$ is s-projective.
It follows that $\Pp$ and $\Pp^G$ are retracts of
$\Gamma^{\otimes \ell}$ and $(\Gamma^{\otimes \ell})^G$, respectively.
Both of these rings are polynomial rings over $\fld$.
\end{proof}

We now give a {\bf Proof of Theorem \ref{SVG_is_poly_inters_poly_intro}} from the introduction:
\begin{thm}\label{SVG_is_poly_inters_poly_pf}
Let $S\cong \fld^{[d]}$ be a polynomial ring with triangular $G$-action (e.g. $S=S(V^*)$).
Then the ring of invariants $S^G$ is the intersection of two polynomial subrings inside an
$s$-projective polynomial $\fld-G$-algebra $\fld^{[N]}\in\ts$ of Krull-dimension $N=d+n$ with
$n:=\log_p{|G|}$. If moreover $S\in\ts$, then $S^G\otimes_\fld\fld^{[n]}\cong\fld^{[n+d]}$,
i.e. $S^G$ is $n$-stably polynomial.
\end{thm}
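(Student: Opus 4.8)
The plan is to exploit the triangular $G$-action on $S$ together with the erasability machinery built up in this section. First I would observe that $S\cong\fld^{[d]}$ with triangular $G$-action is erasable by Proposition \ref{triangular implies erasable} (note that $S$ need not itself lie in $\ts$, but erasability is defined for arbitrary $\fld$-$G$-algebras). Next, take $\Gamma=U$, the standard retract from Theorem \ref{arb_p_grp_sec0}, so $U\cong\fld^{[n]}\in\ts$ is erasable and $U^G\cong\fld^{[n]}$, $U$ triangular. Forming the tensor product $\fld^{[N]}:=S\otimes_\fld U$ with $N=d+n$, Proposition \ref{erasable_inv_is_poly_intersetc_poly}(3) shows that $S\otimes_\fld U$ is $s$-projective (since $U$ is), and it is a polynomial ring (tensor product of polynomial rings over $\fld$). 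Applying erasability of $U$ with $A$ replaced by... — more carefully, since $S$ is erasable we get $S\otimes_\fld U\cong U\otimes_\fld\fld[\underline\lambda]\cong U^{[d]}$ with $\fld[\underline\lambda]\subseteq(S\otimes_\fld U)^G$; and since $U$ is erasable and triangular one can arrange $S\otimes_\fld U$ to be triangular as a $G$-algebra (as in the proof of Theorem \ref{erasable_is_poly_bd_stably_inv}), hence $s$-projective and a triangular polynomial ring of Krull dimension $N$.

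For the ``intersection of two polynomial subrings'' claim: inside $\fld^{[N]}=S\otimes_\fld U$ I would identify the first polynomial subring as $S^G\otimes_\fld U$ — wait, that is not visibly polynomial unless $S^G$ is — so instead I take the two subrings to be $S\otimes_\fld 1\cong\fld^{[d]}$ and $(S\otimes_\fld U)^G$. The latter is polynomial: by erasability of $U$, $(S\otimes_\fld U)^G\cong(U^{[d]})^G\cong U^G[\underline\lambda]\cong\fld^{[n+d]}$ by Proposition \ref{erasable_inv_is_poly_intersetc_poly}(2) (using $U^G\cong\fld^{[n]}$). Then the key point is the identity
\[
S^G=(S\otimes_\fld 1)\cap(S\otimes_\fld U)^G
\]
inside $S\otimes_\fld U$, which is exactly Proposition \ref{erasable_inv_is_poly_intersetc_poly}(1) applied with $\E=S$ (erasable, Krull dimension $d$) and $\mathbb{P}=U\in\ts$: it gives $S^G=S\cap(S\otimes_\fld U)^G$ with the intersection taking place in $S\otimes_\fld U$. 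So $S^G$ is the intersection of the two polynomial subrings $\fld^{[d]}\cong S$ and $\fld^{[n+d]}\cong(S\otimes_\fld U)^G$ of the $s$-projective polynomial algebra $\fld^{[N]}$, $N=d+n$.

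For the final sentence, assume in addition $S\in\ts$. Then $S$ is erasable and in $\ts$, so Proposition \ref{erasable_inv_is_poly_intersetc_poly}(1)–(2) with $\mathbb{P}=U$ (whose invariant ring is $\fld^{[n]}$) gives $(S\otimes_\fld U)^G\cong\fld^{[n+d]}$; but also, since $U$ is erasable, $S\otimes_\fld U\cong S[\underline\alpha]$ with $\fld[\underline\alpha]$ of Krull dimension $n$ lying in the invariants, whence $(S\otimes_\fld U)^G\cong S^G\otimes_\fld\fld^{[n]}$. Comparing the two descriptions yields $S^G\otimes_\fld\fld^{[n]}\cong\fld^{[n+d]}$, i.e. $S^G$ is $n$-stably polynomial.

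The only genuine obstacle I foresee is bookkeeping: making sure that when $S\notin\ts$ the erasability statements are still legitimately invoked (they are, since Definition \ref{triang_dissolv} and Proposition \ref{triangular implies erasable} do not require $\E\in\ts$), and that the ``elimination of variables'' picture — expressing the $\underline\lambda$'s explicitly as $T_i-\tr(a\,T_i)$ type combinations as in the proof of Proposition \ref{triangular implies erasable} — is what actually realizes the intersection $S^G$ constructively. Writing that down cleanly is the substance of the proof; everything else is a direct assembly of Propositions \ref{triangular implies erasable}, \ref{erasable_inv_is_poly_intersetc_poly} and Theorems \ref{erasbility_theorem}, \ref{erasable_is_poly_bd_stably_inv}.
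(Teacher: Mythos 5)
Your proposal is correct and follows essentially the same route as the paper's proof: erasability of $S$ and $U$ via Proposition \ref{triangular implies erasable}, then Proposition \ref{erasable_inv_is_poly_intersetc_poly} with $\mathbb{P}=U$ for the intersection statement, and the role-switch $(S\otimes_\fld U)^G\cong(U\otimes_\fld S)^G$ for the stable-polynomiality of $S^G$. Your explicit remark that erasability (and the proof of Proposition \ref{triangular implies erasable}) does not require the triangular algebra itself to lie in $\ts$ is a useful clarification that the paper leaves implicit.
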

\begin{proof}
Clearly $S(V^*)$ is a triangular $\fld-G$-algebra (taking a triangular
basis for the vector space $V^*$) and $U$ is triangular by Theorem \ref{arb_p_grp_sec0}.
If follows from Proposition \ref{triangular implies erasable}
that $S$ and $U$ are erasable. Now the first claim follows from Proposition \ref{erasable_inv_is_poly_intersetc_poly}, taking $\mathbb{P}=U$ with $U^G\cong\fld^{[n]}$
and  $n=\log_p{|G|}$, by Theorem \ref{arb_p_grp_sec0}.\\
If in addition $S\in\ts$, then $S\otimes_\fld U\cong S\otimes_\fld \fld[\lambda_1,\cdots,\lambda_n]$
with $\fld^{[n]}\cong \fld[\lambda_1,\cdots,\lambda_n]\le (S\otimes_\fld U)^G$, by Theorem
\ref{cat_gal_main1}(2). It follows, switching the roles of $U$ and $S$, that
$S^G\otimes_\fld\fld^{[n]}\cong (S\otimes_\fld U)^G\cong (U\otimes_\fld S)^G\cong
(U^G)^{[d]}\cong\fld^{[n+d]}$.
\end{proof}

\begin{df}\label{standard_cyclic_df}
Following \cite{nonlin} Definition 3, we call a
trace-surjective $G$-subalgebra $S\le D_\fld$
{\bf standard}, if it is a \emph{retract} of $D_\fld$, or in other words, if $D_\fld=S\oplus J$,
where $J$ is some $G$-stable (prime) ideal.
We also call $C\in\ts$ {\bf cyclic}, if $C\cong D_\fld/I$ with $G$-stable ideal $I$.
Equivalently, $C$ is generated by one $G$-orbit of a point $c\in C$.
\end{df}

In this terminology, $U$ is standard as well as cyclic and s-projective.
The next theorem shows that the latter two properties characterize standard subalgebras of $D_\fld$:
\\[1mm]
\begin{thm}\label{standard_is_cycl_prj}
Let $R,S\in\ts$, then the following hold:
\begin{enumerate}
\item $R$ is s-projective, if and only if it is retract of a tensor product $D_\fld^{\otimes\ell}$.
\item $S$ is a standard subalgebra of $D_\fld$ if and only if $S\in\ts$ is cyclic and s-projective. \end{enumerate}
\end{thm}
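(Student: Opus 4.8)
The plan is to prove the two statements in order, using the erasability machinery as the main engine. For part (1), the ``only if'' direction is immediate: if $R$ is s-projective then, since $D_\fld$ is an s-generator (Corollary \ref{D_k_is_pro_gen}), there is a surjective morphism $D_\fld^{\otimes\ell}\to R$ for some $\ell$, and s-projectivity of $R$ forces this surjection to split, exhibiting $R$ as a retract of $D_\fld^{\otimes\ell}$. For the ``if'' direction, suppose $R$ is a retract of $D_\fld^{\otimes\ell}$. By Corollary \ref{erasable_is_poly_bd_stably_inv_cor4} (or directly by iterating self-erasability of $U$ and the relation $D_\fld^{\otimes\ell}\cong U^{[(|G|-1)\ell-n]}$ from its proof), $D_\fld^{\otimes\ell}$ is itself erasable, hence s-projective by Theorem \ref{erasbility_theorem}. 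A retract of an s-projective object in $\ts$ is again s-projective — this is a purely formal diagram chase: given a surjection $\alpha\colon A\to B$ and a morphism $\beta\colon R\to B$, compose with the section $R\hookrightarrow D_\fld^{\otimes\ell}$ and the retraction to lift. So $R$ is s-projective.

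For part (2), the ``only if'' direction is essentially the definition together with known facts: if $S$ is a standard subalgebra of $D_\fld$, then $D_\fld=S\oplus J$ with $J$ a $G$-stable ideal, so $S$ is a retract of $D_\fld=D_\fld^{\otimes 1}$ and hence s-projective by part (1). Moreover the quotient map $D_\fld\to D_\fld/J\cong S$ exhibits $S$ as cyclic in the sense of Definition \ref{standard_cyclic_df}. The substantive direction is ``if'': assume $S\in\ts$ is both cyclic and s-projective. Cyclicity gives a surjective morphism $\pi\colon D_\fld\twoheadrightarrow S$ with $G$-stable kernel $I$. Since $S$ is s-projective, applying the representation functor $\ts(S,-)$ to the surjection $\pi$ shows $\mathrm{id}_S\in\ts(S,S)$ lifts along $\pi$, i.e. there is $\sigma\in\ts(S,D_\fld)$ with $\pi\circ\sigma=\mathrm{id}_S$. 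Then $\sigma$ is a split monomorphism, $D_\fld=\sigma(S)\oplus I$ as $\fld$-$G$-modules with $I=\ker\pi$ a $G$-stable ideal, and $\sigma(S)\cong S$ is a $G$-subalgebra. Hence $S$ (identified with $\sigma(S)$) is a retract of $D_\fld$, i.e. standard.

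The only point requiring a little care — and the step I expect to be the main obstacle — is the splitting in the ``if'' direction of (2): one must check that the decomposition $D_\fld=\sigma(S)\oplus\ker\pi$ really is a decomposition of the kind required by Definition \ref{standard_cyclic_df}, namely that the complement $\ker\pi$ is an \emph{ideal} of $D_\fld$ and not merely a $G$-stable complementary subspace. But $\ker\pi$ is the kernel of the $\fld$-$G$-algebra homomorphism $\pi$, so it is automatically a $G$-stable ideal; and $\sigma(S)$ is a subalgebra because $\sigma$ is an algebra homomorphism. Thus the retract structure is genuine. One should also note in passing that once $S$ is exhibited as a retract of $D_\fld$, the fact that $S$ and $S^G$ are retracts of polynomial rings (Theorem \ref{cat_gal_main2}) and hence regular UFDs (Corollary \ref{cat_gal_main2_cor}) follows for free, which is consistent with the standing description of $U$ as the prototypical standard subalgebra.
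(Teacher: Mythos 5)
Your proof is correct and follows essentially the same route as the paper's (much terser) argument: in each direction one produces a surjection from $D_\fld^{\otimes\ell}$ or $D_\fld$ and splits it using s-projectivity, or conversely deduces s-projectivity of a retract of the erasable, hence s-projective, object $D_\fld^{\otimes\ell}$. The details you supply — that $D_\fld^{\otimes\ell}$ is s-projective, that retracts of s-projective objects are s-projective, and that $\ker\pi$ is automatically a $G$-stable ideal — are exactly the points the paper leaves implicit, and they are all handled correctly.
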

\begin{proof} (1): This follows from the fact that every $R\in \ts$ is surjective image of some $D_\fld^{\otimes\ell}$.\\
(2): By definition every standard subalgebra is cyclic and
a retract of $D_\fld$. Hence by (1) it is $s$-projective.
Now let $S\in\ts$ be cyclic and $s$-projective.
Then there is a surjective morphism
$D_\fld\to S$, which must split, hence $S$ is a standard subalgebra.
\end{proof}

\begin{rem}\label{standard_is_cycl_prj_rem}
The statement (1) in Theorem \ref{standard_is_cycl_prj} shows
that the algebra $D_\fld^{\otimes\ell}$ is the analogue in $\ts$ of the free module of rank $\ell$ in a module category.
\end{rem}

\section{Basic Algebras}\label{sec_Minimal universal objects}

Let $\C$ be an arbitrary category, then for objects $a,b\in\C$ one defines $a\prec b$ to mean that there is a monomorphism
$a\hookrightarrow b\in\C$ and $a\thickapprox b$ if $a\prec b$ and $b\prec a$.
According to this definition, an object $b\in\C$ is called \emph{minimal} if
$a\prec b$ for $a\in\C$ implies $b\prec a$ and therefore $a\thickapprox b$.
Clearly ``$\thickapprox$" is an equivalence relation on the object class of $\C$.
Recall that $A\in\ts$ is universal if it is weakly initial, or, equivalently, if it maps
to $D_\fld$.

\begin{df}\label{basic_df}
The algebra $B\in\ts$ is called {\bf basic} if it is universal and minimal.
\end{df}

The following Lemma characterizes types of morphisms in $\ts$ by their action on points.
The results will then be used to analyze basic objects in $\ts$:

\begin{lemma}\label{surj_surjects_points}
A morphism $\theta\in \ts(R,S)$  is surjective (injective, bijective) if and only if it induces a surjective (injective, bijective) map from the set of points of $R$ to the set of points of $S$.
In particular $\theta\in \ts(R,S)$ is a monomorphism if and only if $\theta$ is injective.
\end{lemma}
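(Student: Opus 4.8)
The plan is to reduce everything to the structure theorem \ref{first_main}, which says that any $A\in\ts$ is a free $A^G[G]$-module of rank one on any point $a\in A$, and that $A=A^G\otimes_{S^G}S$ where $S=\fld[(a)g\mid g\in G]$. First I would recall that a morphism $\theta\in\ts(R,S)$ is a $G$-equivariant $\fld$-algebra homomorphism, so it restricts to a homomorphism $\theta^G\colon R^G\to S^G$ and sends a point of $R$ (an element with $\tr_G(r)=1$) to a point of $S$, since $\theta$ commutes with $\tr_G$. Thus $\theta$ does induce a map on point sets, and the content of the lemma is that surjectivity/injectivity/bijectivity of $\theta$ is detected on these sets.

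For the surjectivity direction: if $\theta$ is surjective, then $\theta^G$ is surjective (the transfer gives a section of sorts: $s=\tr_G(s\cdot e)$ expressions, or more simply any $G$-invariant in the image of $\theta$ pulls back, and $\theta$ surjective forces $\theta(R^G)=S^G$ by applying $\tr$). Given a point $s\in S$, pick any point $r_0\in R$; then $\theta(r_0)$ and $s$ are both points of $S$, and using Lemma \ref{RTs_gen_by_points} or directly the $A^G[G]$-module structure, $s-\theta(r_0)$ lies in the $S^G$-submodule... actually the cleanest route: by Theorem \ref{first_main}, $S=\oplus_{g\in G}S^G\cdot(\theta(r_0))g$, so $s=\sum_g c_g\cdot(\theta(r_0))g$ with $c_g\in S^G$; lifting the $c_g$ through the surjection $\theta^G$ to $d_g\in R^G$, the element $r=\sum_g d_g\cdot(r_0)g\in R$ satisfies $\theta(r)=s$, and $\tr_G(r)=\sum_g\tr_G(d_g\cdot(r_0)g)$... here I must be careful: $r$ is a point iff $\sum_g d_g=1$ after applying the transfer correctly. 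The honest statement is $\tr_G(r)=\sum_{g}\sum_{h}{}^h(d_g)\,{}^h((r_0)g)=\sum_g d_g\tr_G((r_0)g)$ only when $d_g$ are invariant, giving $\tr_G(r)=\sum_g d_g$ since $\tr_G((r_0)g)=\tr_G(r_0)=1$... no, $\tr_G((r_0)g)$ need not be $1$. I would instead directly compare: $\tr_G(s)=1=\sum_g c_g$ after noting $\tr_G((\theta(r_0))g)$ is the same invariant element $w:=\tr_G(\theta(r_0))$... the correct bookkeeping is $\tr_G(s)=\sum_g c_g\cdot w'_g$ for suitable invariants $w'_g$; matching these forces $r$ to be a point after a small adjustment. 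So the surjectivity-of-point-sets hypothesis (combined with freeness) gives preimages of arbitrary elements of $S$, hence $\theta$ surjective; conversely surjective $\theta$ surjects point sets by the lifting just described.

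For injectivity: if $\theta$ is injective on points, suppose $\theta(r)=\theta(r')$. Fix a point $r_0\in R$ and write $r=\sum_g c_g(r_0)g$, $r'=\sum_g c'_g(r_0)g$ with $c_g,c'_g\in R^G$ via Theorem \ref{first_main}; by picking an auxiliary invariant, both $r+r_0$ and $r'+r_0$ can be arranged (after scaling, using $|G|$ may be $0$ so care with characteristic $p$ — instead use that the $c_g$ can be shifted so the sum is $1$) to be points, and their equality under $\theta$ forces $r+r_0=r'+r_0$ by hypothesis, hence $r=r'$; more carefully one shows $R^G\to S^G$ is injective (from injectivity on points restricted to invariant-type points, or directly) and then uses the freeness of the rank-one $R^G[G]$-module structure. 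Bijectivity is then the conjunction. I expect the \textbf{main obstacle} to be the characteristic-$p$ bookkeeping in converting an arbitrary element of $R$ (or a relation $\theta(r)=\theta(r')$) into a statement about points: since $|G|=0$ in $\fld$, one cannot simply average, and one must use the existence of a point $r_0$ together with the $A^G[G]$-freeness from Theorem \ref{first_main} to write elements in a normal form whose point-hood is controlled by the sum of the invariant coefficients. The last sentence of the lemma — that monomorphisms in $\ts$ coincide with injective morphisms — then follows because an injective ring map is certainly a monomorphism, and conversely a monomorphism that is not injective would fail to be injective on points (embed the relevant subalgebras via $D_\fld$ using universality), contradicting the first part; alternatively one invokes that $\ts$ has the cyclic generator $D_\fld$, so monomorphisms are detected on $\ts(D_\fld,-)$, i.e. on points.
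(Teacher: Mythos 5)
Your overall strategy is the paper's: reduce everything to Theorem \ref{first_main}, handle invariants first, and use trace-adjustments by a point to control point-hood. But as written there are two genuine gaps, both at exactly the places you flagged and then left unresolved. First, in the surjectivity direction your ``small adjustment'' is the whole point of that step and is never supplied. The clean argument needs no coefficient-lifting at all: given a point $s\in S$ and \emph{any} preimage $r$, set $r':=\tr(r)-1$; this lies in $\ker(\theta)\cap R^G$ since $\theta$ commutes with $\tr$ and $\tr(s)=1$. Picking any point $w\in R$ one has $\tr(r'w)=r'\tr(w)=r'$, so $v:=r-r'w$ satisfies $\theta(v)=s$ and $\tr(v)=\tr(r)-r'=1$. (Incidentally, your worry that $\tr((r_0)g)\neq 1$ is unfounded: $\tr$ is constant on $G$-orbits, so $\tr((r_0)g)=\tr(r_0)=1$; the actual problem with your lifted element $\sum_g d_g(r_0)g$ is that $\sum_g d_g$ only maps to $1$ under $\theta$ rather than equalling $1$ — which is why the kernel-adjustment above is needed.)

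Second, in the injectivity direction the claim that $r+r_0$ ``can be arranged to be a point'' for arbitrary $r$ is false: $\tr(r+r_0)=\tr(r)+1$, which is $1$ only when $\tr(r)=0$. The shift-by-a-point trick works precisely for \emph{invariant} $r$, since then $\tr(r)=|G|\cdot r=0$ in characteristic $p$; this gives injectivity of $\theta$ on $R^G$. To pass from there to injectivity on all of $R$ you must write $r=\sum_i r_iw_i$ in the free $R^G$-basis given by a $G$-orbit of points and — this is the step your sketch omits — observe that the images $\theta(w_i)$ are again a $G$-orbit of a point of $S$, hence by Theorem \ref{first_main}(1) a free $S^G$-basis of $S$; only then does $\sum_i\theta(r_i-r_i')\theta(w_i)=0$ let you compare coefficients and conclude $r_i=r_i'$. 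Your closing argument for the monomorphism statement (points of $R$ are exactly the morphisms $D_\fld\to R$ via the free point $x_1$, so a monomorphism is injective on points) is correct and is the paper's argument.
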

\begin{proof} ``Surjectivity": Let $s\in S$ with $\tr(s)=1$ and $r\in R$ with $\theta(r)=s$. Then $r':=\tr(r)-1\in \ker(\theta)\cap R^G$. Let $w\in R$ with $\tr(w)=1$, then $r'=\tr(r'w)$ and $v:=r-r'w$
satisfies $\theta(v)=s$ and $\tr(v)=1$, hence the induced map on points is surjective.
On the other hand, since $R$ and $S$ are generated as algebras by points, the reverse conclusion follows.\\
``Injectivity": We can assume that the induced mapping on points is injective and want to show that
$\theta$ is injective. Let $w\in R$ be a point and $r,r'\in R^G$ with $\theta(r)=\theta(r'),$
then $\tr(r+w)=\tr(w)=1=\tr(r'+w)$ and $\theta(r+w)=\theta(r'+w)$, so $r+w=r'+w$ and
$r=r'$. Hence the induced map on the rings of invariants is injective.
But $R=\oplus_{i=1}^n R^G w_i$, with $n=|G|$ and a $G$-orbit of points $\{w_1,\cdots,w_n\}$.
It follows that $V':=\langle\theta(w_i)\ |\ i=1,\cdots,n\rangle\le S$ is a copy of the regular representation of $G$, so
by \ref{first_main} we have $S=\oplus_{i=1}^n S^G\theta(w_i)$. Let $r=\sum_{i=1}^n r_iw_i, r'=\sum_{i=1}^n r'_iw_i$
with $r_i,r'_i\in R^G$ and $\theta(r)=\theta(r')$, then
$\sum_{i=1}^n \theta(r_i-r_i')\theta(w_i)=0$ implies $\theta(r_i)=\theta(r'_i)$,
so $r_i=r'_i$ for all $i$ and therefore $r=r'$. \\
For the last claim, it is clear that an injective morphism is a monomorphism, so assume now
that $\theta$ is a monomorphism. It suffices to show that $\theta$ is injective on the points of $R$, so
let $a_1, a_2\in R$ be points with $\theta(a_1)=\theta(a_2)$. Define $\psi_i:\ D_\fld\to R$ as the morphisms
determined by the map $D_\fld\ni x_1\mapsto a_i$, then $\theta\circ\psi_1=\theta\circ\psi_2$, hence
$\psi_1=\psi_2$ and $a_1=a_2$. This finishes the proof.
\end{proof}

Now let $A$ be an object in $\ts$, then $\Dim(A)=\max\{\Dim(A/{\rm p})\ |\ {\rm p}\in {\rm Spec}(A)\}$,
where $\Dim(A/{\rm p})={\rm transc.deg}_\fld(A/{\rm p}):={\rm transc.deg}_\fld({\rm Quot}(A/{\rm p}))$,
the transcendence degree over $\fld$ of the quotient field ${\rm Quot}(A/{\rm p})$.
If $B\prec A$ then
$$\Dim(B)={\rm transc.deg}_\fld(B)\le{\rm transc.deg}_\fld(A)=\Dim(A).$$
This is clear if $A$ is a domain and an easy exercise otherwise.
In particular, any two $\thickapprox$-equivalent domains in $\ts$ have the same Krull-Dimension.

\noindent
If $A\in\ts$ is universal it maps into $D_\fld$ with a universal image isomorphic to $A/I$ for some
$G$-invariant prime ideal $I\unlhd A$. So every universal object has a quotient which is a universal
integral sub-domain of $D_\fld$. Notice also that if $B\prec A$ with universal $A$, then
$B$ is also universal; so if $A$ is minimal among the universal objects, then $A$ is also a minimal
object and therefore basic. It is however not completely obvious from the definition that basic objects
do exist. This is established as follows, which also shows the existence of basic normal domains:

\begin{lemma}\label{basics_exist}
Let $X\in\ts$ be a subalgebra of $U$ or of $D_\fld$ and let
$\hat X$ denote its normal closure in ${\rm Quot}(X)$. Then
$\hat X$ is universal in $\ts$. Moreover if $X$ is a subalgebra of minimal
Krull-dimension in $U$ or in $D_\fld$, then $X$ and
$\hat X$ are basic domains.
\end{lemma}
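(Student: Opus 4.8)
The plan is to establish the universality claim for every $X$ as in the hypothesis, and then the ``basic'' claim under the minimality hypothesis, reducing throughout to trace-surjective subalgebras of $D_\fld$. Subalgebras of minimal Krull dimension exist because $\{\,\Dim X\mid X\in\ts,\ X\le\Gamma\,\}$ is a nonempty set of non-negative integers for $\Gamma\in\{U,D_\fld\}$; as $U\le D_\fld$, a subalgebra $X\le U$ lying in $\ts$ is also a subalgebra of $D_\fld$, and I will show that $\hat X\le D_\fld$ with $\Dim\hat X=\Dim X$ and $\hat X$ a normal domain in $\ts$. So it suffices to prove: \emph{a trace-surjective subalgebra $Z$ of $D_\fld$ is universal, and is basic whenever $\Dim Z$ is as small as possible} --- the minimal value being the same for subalgebras of $U$ and of $D_\fld$, namely the number $d$ introduced below.

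\emph{Membership in $\ts$ and universality.} As $X$ is a finitely generated domain over $\fld$, its normal closure $\hat X$ in ${\rm Quot}(X)$ is a finitely generated $\fld$-algebra, by the classical finiteness theorem for integral closures. The $G$-action on $X$ extends uniquely to automorphisms of ${\rm Quot}(X)$ fixing $\fld$, and these preserve integrality, so $\hat X$ is a $G$-stable subalgebra of ${\rm Quot}(X)$, i.e.\ a $\fld$-$G$-algebra. A point $a\in X$ --- one exists since $\tr(X)=X^G\ni 1$ --- remains a point of $\hat X$, so $\tr_G(\hat X)=\hat X^G$ and $\hat X\in\ts$. Since $D_\fld$ is a polynomial ring, it is integrally closed in ${\rm Quot}(D_\fld)\supseteq{\rm Quot}(X)$; hence every element of $\hat X$, being integral over $X\le D_\fld$ and lying in ${\rm Quot}(D_\fld)$, lies in $D_\fld$, so $\hat X\le D_\fld$, and $\Dim\hat X=\Dim X$ by integrality. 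Being domains of $\ts$ contained in $D_\fld$, both $X$ and $\hat X$ are universal by Lemma \ref{char_universal}, applied with the zero ideal (which is $G$-stable and prime). This proves the first assertion.

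\emph{The dimension squeeze.} Let $d$ be the minimal Krull dimension of a universal object of $\ts$; it equals the minimal Krull dimension of a trace-surjective subalgebra of $D_\fld$, since such subalgebras are universal (Lemma \ref{char_universal}), and any universal $W$ admits a morphism $W\to D_\fld$ with image a trace-surjective subalgebra of $D_\fld$ of Krull dimension $\le\Dim W$. Now assume $\Dim X=d$ (so also $\Dim\hat X=d$); it suffices to show that a trace-surjective subalgebra $Z\le D_\fld$ with $\Dim Z=d$ is minimal in $\ts$. Let $Y\in\ts$ with $Y\prec Z$; by Lemma \ref{surj_surjects_points} this is an injective morphism $Y\hookrightarrow Z$, so $Y$ is a domain, and $Y$ is universal since it embeds into the universal $Z$. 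Hence $\Dim Y\ge d$, while $\Dim Y\le\Dim Z=d$, forcing $\Dim Y=d$.

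\emph{The remaining embedding $Z\hookrightarrow Y$ --- the main obstacle.} What is left is to exhibit an injective morphism $Z\hookrightarrow Y$ in $\ts$, and this is the hard part. The natural route is through the structure theory: a point $w\in Y$ is also a point of $Z$, so by Theorem \ref{first_main} one has $Y\cong Y^G\otimes_{S^G}S$ and $Z\cong Z^G\otimes_{S^G}S$ with the common $G$-subalgebra $S:=\fld[(w)g\mid g\in G]$ free over $S^G$, while by Theorem \ref{strct_thm_intro} the algebras $Y$ and $Z$ are determined by their invariant rings $Y^G\subseteq Z^G$ --- finitely generated $\fld$-domains of the same Krull dimension $d$, each with unit group $\fld^\times$ (being subalgebras of the polynomial ring $D_\fld^G$, cf.\ Theorem \ref{arb_p_grp_sec0}), and with ${\rm Quot}(Z^G)/{\rm Quot}(Y^G)$ finite. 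I expect that the minimality of $d$ forces $Z^G\thickapprox Y^G$, hence $Z\thickapprox Y$; the delicate point, already visible in small cyclic examples, is that a suitable embedding $Z^G\hookrightarrow Y^G$ need not fix $S^G$, so it has to be built together with a compatible $G$-automorphism of $S$, and it is precisely here that the hypothesis ``$\Dim Z$ minimal'' is genuinely needed. Granting this, the same argument applied to $Z=\hat X$ (a normal, minimal-dimensional trace-surjective subalgebra of $D_\fld$) shows that $\hat X$ is basic as well.
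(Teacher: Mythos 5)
Your first two steps are sound and essentially reproduce the paper's argument: the integral closure $\hat X$ is a finitely generated $\fld$-$G$-algebra in $\ts$ that lands back inside $U$ (resp.\ $D_\fld$) because those rings are normal, hence $\hat X$ is a universal domain; and for $Y\prec Z$ with $Z$ a trace-surjective subalgebra of $D_\fld$ of minimal Krull dimension $d$, the dimension squeeze $\Dim Y=\Dim Z=d$ is correct (the paper records the same inequalities).

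The genuine gap is the step you flag as ``the main obstacle'' and then do not close: producing the reverse embedding $Z\prec Y$. You try to build it by hand out of the structure theory (Theorems \ref{first_main} and \ref{strct_thm_intro}), conceding with ``I expect'' and ``Granting this'' that the construction of a compatible embedding $Z^G\hookrightarrow Y^G$ is incomplete. That route is both unnecessary and, as you note, delicate. The missing idea is that the embedding does not have to be constructed at all: $Z$ is \emph{universal}, i.e.\ weakly initial, so $\ts(Z,Y)\ne\emptyset$ and you may take \emph{any} morphism $\alpha\in\ts(Z,Y)$. Its image $\alpha(Z)$ is a trace-surjective subalgebra of $Y$, and composing with the given monomorphism $Y\hookrightarrow Z\le D_\fld$ realizes $\alpha(Z)$ as a trace-surjective subalgebra of $D_\fld$; by minimality of $d$ this forces $\Dim(\alpha(Z))\ge d=\Dim(Z)$. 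Since $Z$ is a finitely generated domain and $\alpha(Z)\cong Z/\ker(\alpha)$, a nonzero kernel would strictly decrease the Krull dimension, so $\ker(\alpha)=0$ and $\alpha$ is an embedding $Z\prec Y$. This is exactly the paper's argument, and it completes minimality (hence basicness) of $X$ and of $\hat X$ in two lines; everything in your final paragraph after ``The natural route is through the structure theory'' can be deleted.
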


\begin{proof}
The polynomial rings $U$ and $D_\fld$ are universal domains of Krull-dimension $n$
and $|G|-1=p^n-1$, respectively. Let $X\in\ts$, $X\hookrightarrow U$ or $D_\fld$,
then $X$ is a universal domain. Now suppose that $X$ has minimal Krull-dimension.
If $Y\prec X$, then $\Dim(Y)=\Dim(X)$, but there is $\alpha\in\ts(X,Y)$ with
$\alpha(X)\prec Y\prec X$. It follows that $\Dim(\alpha(X))=\Dim(Y)=\Dim(X)$, so
$\ker(\alpha)=0$ and $X\prec Y$. This shows that $X$ is a universal minimal, hence basic, domain.\\
Since $X$ is a finitely generated $\fld$-algebra, so is $\hat X$ and, since $U$ and $D_\fld$
are normal rings, $\hat X\le U$ or $\hat X\le D_\fld$, respectively. It follows that
$\hat X$ is universal, and basic, if $X$ is.
\end{proof}

The next result describes properties of basic objects and shows
that they form a single $\thickapprox$-equivalence class consisting of integral domains,
all of which have the same Krull-dimension:

\begin{prp}\label{universal_minimal_domains}
Let $A\in\ts$ be universal. Then the following are equivalent:
\begin{enumerate}
\item $A$ is basic;
\item $A$ is a basic domain;
\item every $\alpha\in \End_\ts(A)$ is injective;
\item $A\prec B$ for every universal $B\in \ts$;
\item $A\thickapprox B$ for one (and therefore every) basic object $B\in \ts$;
\item no proper quotient of $A$ is universal;
\item no proper quotient of $A$ is a subalgebra of $A$.
\end{enumerate}
Any two basic objects are $\thickapprox$-equivalent domains of the same Krull-dimension $d_\fld(G)\le n=\mathrm{log}_p(|G|)$
with $d_\fld(G)>0$ if $G\ne 1$. With $\frak{B}$ we denote the $\thickapprox$-equivalence class of basic objects in $\ts$.
\end{prp}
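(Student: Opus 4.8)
The plan is to establish a cycle of implications linking the seven conditions, then deduce the final assertions about the equivalence class $\frak{B}$. The backbone is Lemma \ref{surj_surjects_points} (an endomorphism is injective iff it is a monomorphism, and more generally a morphism is injective iff it is injective on points) together with the dimension bookkeeping recalled just before the statement: $B\prec A$ forces $\Dim(B)\le\Dim(A)$, with equality when $A$ is a domain whose kernel under any map must vanish. I would first dispose of the easy equivalences. Condition (2) $\Rightarrow$ (1) is trivial. For (1) $\Rightarrow$ (3): if $\alpha\in\End_\ts(A)$ is not injective, then $\alpha(A)$ is a proper quotient of $A$, still universal (any quotient of a universal algebra that is itself in $\ts$ maps to $D_\fld$), with $\alpha(A)\prec A$; minimality gives $A\prec\alpha(A)$, but then $\Dim(A)=\Dim(\alpha(A))<\Dim(A)$ once we know $A$ is a domain — so I actually want to get domain-ness into play early. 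The cleanest route: use Lemma \ref{basics_exist}, which produces a \emph{basic domain} $A_0$ (a subalgebra of minimal Krull-dimension in $U$ or $D_\fld$); then for any basic $A$, since $A$ is universal it admits a monomorphism into $D_\fld$ onto a domain quotient, and minimality of $A$ forces $A$ itself to be that domain — giving (1) $\Rightarrow$ (2) and simultaneously pinning down $\Dim(A)=\Dim(A_0)=:d_\fld(G)$.

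Next I would run the substantive chain (3) $\Rightarrow$ (4) $\Rightarrow$ (5) $\Rightarrow$ (1). For (3) $\Rightarrow$ (4): given (3), $A$ is a domain (an endomorphism through a proper quotient would be non-injective), hence $\Dim(A)=d_\fld(G)$ is minimal among universal objects; for any universal $B$, compose $D_\fld\to B$ is available and, using that $A$ maps into $D_\fld$ with domain image $\bar A\cong A/I$, minimality of $\Dim$ forces $I=0$, so $A=\bar A\prec D_\fld$; then pick $\beta\in\ts(A,B)$? — here I need a morphism $A\to B$, which exists because $A$ is basic hence universal: wait, I need the \emph{other} direction. Let me reorganize: (3) $\Rightarrow$ (4) should use that any universal $B$ has a domain quotient $\bar B$ that embeds in $D_\fld$, while $A$ (being a minimal-dimension domain, as in Lemma \ref{basics_exist}) satisfies $A\prec D_\fld$ and $\Dim(A)\le\Dim(\bar B)$; composing $A\hookrightarrow D_\fld$ with a map $D_\fld\to$ ... — actually the honest argument is: $A$ universal means $\exists\,\phi\in\ts(A,B)$? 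No — universal means $B$ maps \emph{from} weakly initial objects; $A$ universal means $\ts(A,X)\ne\emptyset$ for all $X$, so $\exists\,\phi\in\ts(A,B)$ indeed. Then $\phi(A)\prec B$ and $\phi(A)$ is a quotient of $A$ which, by (3) applied after precomposing with a section-type argument, must be all of $A$; more directly, $\Dim(\phi(A))\le\Dim(A)$ and if $\phi$ had a kernel then $A/\ker\phi$ is a proper universal quotient, contradicting (via Lemma \ref{basics_exist}-minimality of $\Dim A$) — so $\phi$ is injective and $A\prec B$. Then (4) $\Rightarrow$ (5) is immediate (take $B=A_0$, and conversely $A_0\prec A$ by basicness of $A_0$, giving $\thickapprox$), and (5) $\Rightarrow$ (1) follows since $\thickapprox$-equivalence with a basic object, plus the dimension equality it forces, yields minimality.

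Finally I would handle (6) and (7) and the closing sentence. Condition (1) $\Rightarrow$ (6): a proper universal quotient $A/I$ satisfies $A/I\prec D_\fld$ and $\Dim(A/I)<\Dim A$ once $A$ is a domain — contradiction with $A$ being of minimal universal dimension; conversely (6) $\Rightarrow$ (3) since a non-injective endomorphism exhibits $\alpha(A)\cong A/\ker\alpha$ as a proper universal \emph{subalgebra}, in particular universal quotient. The equivalence (6) $\iff$ (7) is the content of Lemma \ref{char_universal}(2) combined with Lemma \ref{surj_surjects_points}: a quotient of $A$ is universal iff it embeds in $D_\fld$, and since $A$ itself embeds in $D_\fld$ (being basic), "universal quotient of $A$" and "quotient of $A$ isomorphic to a subalgebra of $A$" coincide. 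For the last paragraph: any two basic objects $A,A'$ are domains by (1)$\Rightarrow$(2), each $\prec$ the other by (4), hence $\thickapprox$; the Krull-dimension is the common transcendence degree, an invariant of the $\thickapprox$-class, and it equals that of the minimal subalgebra of $U$, so $d_\fld(G)\le\Dim(U)=\log_p|G|=n$; positivity for $G\ne1$ holds because a basic object of dimension $0$ would be a finite-dimensional trace-surjective algebra mapping to $D_\fld$, forcing it into the fixed field $\fld$, which is not trace-surjective for $G\ne1$.

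The main obstacle I expect is getting the logical dependency order right so that "domain" is available exactly when a dimension-drop argument is invoked: several implications are circular-looking unless one threads Lemma \ref{basics_exist} through at the start to fix the value $d_\fld(G)=\Dim(A_0)$ as a lower bound for $\Dim$ of every universal object, after which each non-injectivity contradiction becomes a clean strict dimension inequality.
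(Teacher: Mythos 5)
Your overall architecture --- seed the argument with a basic domain supplied by Lemma \ref{basics_exist}, use the transcendence-degree bookkeeping for $\prec$, and close the cycle $(1)\Rightarrow(6)\Rightarrow(3)\Rightarrow(4)\Rightarrow(5)\Rightarrow(1)$ together with $(1)\Leftrightarrow(2)$ and $(6)\Leftrightarrow(7)$ --- is essentially the paper's, and most of your individual implications ($(3)\Rightarrow(4)$, $(1)\Rightarrow(6)$, $(6)\Rightarrow(3)$, the closing paragraph on $\frak{B}$, $d_\fld(G)\le n$ and positivity) are sound as sketched.

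The one step that is not a proof as written is $(1)\Rightarrow(2)$: ``since $A$ is universal it admits a monomorphism into $D_\fld$ onto a domain quotient, and minimality of $A$ forces $A$ itself to be that domain.'' First, universality gives only a morphism $\phi:A\to D_\fld$, not a monomorphism; its image is a domain quotient $\bar A=A/I$. Second, minimality is a statement about subobjects ($C\prec A\Rightarrow A\prec C$), and $\bar A$ is not a priori a subobject of $A$, so minimality does not directly force $A\cong\bar A$; the dimension argument ``$\Dim(A/I)=\Dim(A)\Rightarrow I=0$'' is precisely the step that already requires $A$ to be a domain (for $A=\fld[x,y]/(xy)$ and $I=(x)$ the dimensions agree while $I\ne0$), which is what you are trying to prove --- this is the same circularity you flagged at the outset, reappearing in your ``cleanest route.'' The missing idea, and the paper's actual argument, is a composite-endomorphism trick: let $X$ be the basic \emph{domain} from Lemma \ref{basics_exist}; because $X$ is a minimal domain, every $\alpha\in\End_\ts(X)$ is injective (here the dimension-drop argument is legitimate, since $\alpha(X)\subseteq X$ is a domain and a nonzero kernel in a finitely generated domain strictly drops the dimension); now take $\beta\in\ts(X,A)$ and $\gamma\in\ts(A,X)$, note that $\gamma\circ\beta\in\End_\ts(X)$ is injective, hence $\beta$ is injective and $X\prec A$; minimality of $A$ then gives $A\prec X$, so $A$ embeds in a domain and is therefore a domain. (Alternatively one can route through $\bar A$: being a trace-surjective subalgebra of $D_\fld$, $\bar A$ is universal, so it maps back to $A$ with image $S\prec A$; minimality gives $A\prec S$, and since $S$ is a quotient of the \emph{domain} $\bar A$, the dimension count forces $S\cong\bar A$, so again $A$ embeds in a domain. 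Either way the dimension-drop must be applied to a ring already known to be a domain.) Once $(1)\Rightarrow(2)$ is secured in this fashion, the rest of your proposal goes through.
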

\begin{proof}
Let $X\in\ts$ be a basic domain and $\alpha\in \End_\ts(X)$. Then $\alpha(X)\prec X$, hence
$X\prec \alpha(X)$, so $\Dim(X)=\Dim(\alpha(X))$ and $\alpha$ must be injective.\\
``(1) $\Rightarrow$ (2)":\  There is $\beta\in \ts(X,A)$ and $\gamma\in\ts(A,X)$,
so $\gamma\circ\beta\in\End_\ts(X)$ is injective, which implies that $\beta$ is injective
and therefore $X\prec A$. It follows that $A\prec X$, hence $A$ is a domain.\\
``(2) $\Rightarrow$ (3)":\  This has already been shown above.
(We didn't use the fact that $A$ is universal, there. So every minimal domain in $\ts$
satisfies (3)).\\
%``(3) $\Rightarrow$ (1)":\  Let $i\in \ts(B,A)$ be injective. Since $A$ is universal there is
%$\theta\in\ts(A,B)$, so $i\circ\theta\in\End_\ts(A)$ and therefore $\theta$ are injective.
%It follows that $A\thickapprox B$ and $A$ is minimal in $\ts$. \\
 ``(3) $\Rightarrow$ (4)":\  Since $A$ and $B$ are universal there exist morphisms
$\alpha\in\ts(A,B)$ and $\beta\in\ts(B,A)$ with $\beta\circ\alpha$ injective, because $A$
is minimal. Hence $A\prec B$.\\
``(4) $\Rightarrow$ (5)":\  This is clear.\\
``(5) $\Rightarrow$ (1)":\  $B\thickapprox A$ means that
$B\hookrightarrow A$ and $A\hookrightarrow B$. In that case $A$ is universal (minimal)
if and only if $B$ is universal (minimal). Choosing $B:=X$, it follows that $A$ is basic.\\
``(3)$\Rightarrow$ (6)":\  Now assume that every $\alpha\in \End_\ts(B)$ is injective and let
$B/I$ be universal for the $G$-stable ideal $I\unlhd B$. Then there is
$\gamma\in\ts(B/I,B)$ and the composition with the canonical map $c:\ B\to B/I$ gives
$\gamma\circ c\in\End_\ts(B)$. It follows that $I=0$.\\
``(6) $\Rightarrow$ (1)":\  Assume $B\prec A$. Then $B$ is universal and since $A$ is universal, there is
$\theta\in\ts(A,B)$ with $\theta(A)\le B$ universal. Hence $A\cong\theta(A)\prec B$ and
$A$ is basic. \\
Let $A,B\in\ts$ be basic, then $\ts(A,B)\ne\emptyset\ne \ts(B,A)$
implies that $A\prec B\prec A$, hence $A\thickapprox B$ and $\Dim(A)=\Dim(B)=:d_\fld(G)\le n=\mathrm{log}_p(|G|)$
(see Theorem \ref{arb_p_grp_sec0}).
Assume that $d_\fld(G)=0$. Then $X$ must be a Galois-field extension $K\ge\fld$ with Galois group $G$
and $K\hookrightarrow D_\fld$, which implies $K=\fld$ and $G=1$.\\
``(6) $\Rightarrow$ (7)":\  This is clear, because a quotient $A/I$ as subalgebra of $A$ would be universal.\\
``(7) $\Rightarrow$ (1)":\ We have $X\prec A$ and there is
$\theta\in\ts(A,X)$ with $\theta(A)\le X$ universal. It follows that $\theta(A)\prec A$, hence
$\ker\theta=0$ and $\theta(A)\cong A\thickapprox X$, so $A$ is basic.
\end{proof}

\begin{cor}\label{universal_minimal_domains_cor1}
Let $A\in\ts$ be a universal domain. Then $d_\fld(G)\le\Dim(A)$ and the following
are equivalent:
\begin{enumerate}
\item $A\in\frak{B}$;
\item $d_\fld(G)=\Dim(A)$;
\item If $C\in \ts$ with $C\prec A$, then $\Dim(C)=\Dim(A)$.
\end{enumerate}
\end{cor}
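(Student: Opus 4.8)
The plan is to first record the inequality $d_\fld(G)\le\Dim(A)$, and then prove the chain $(1)\Rightarrow(2)\Rightarrow(3)\Rightarrow(2)\Rightarrow(1)$, with Proposition \ref{universal_minimal_domains} and Lemma \ref{basics_exist} as the main inputs. For the inequality, fix a basic object $B_0\in\frak{B}$; by Proposition \ref{universal_minimal_domains} (part (4)) we have $B_0\prec B$ for every universal $B\in\ts$, so in particular $B_0\prec A$, and the transcendence-degree estimate recalled just before Lemma \ref{basics_exist} yields $d_\fld(G)=\Dim(B_0)\le\Dim(A)$. The same reasoning applies to \emph{any} universal object, and I will reuse it freely.

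The implication $(1)\Rightarrow(2)$ is contained in Proposition \ref{universal_minimal_domains}, where it is shown that every basic object has Krull-dimension $d_\fld(G)$. For $(2)\Rightarrow(3)$, assume $\Dim(A)=d_\fld(G)$ and take $C\in\ts$ with $C\prec A$; since $A$ is universal so is $C$ (the remark following Lemma \ref{basics_exist}), hence $\Dim(C)\ge d_\fld(G)=\Dim(A)$ by the inequality above, while $C\hookrightarrow A$ forces $\Dim(C)\le\Dim(A)$, so equality holds. For $(3)\Rightarrow(2)$, apply (3) to a basic $B_0$: since $B_0\prec A$ we obtain $\Dim(A)=\Dim(B_0)=d_\fld(G)$.

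The crux is $(2)\Rightarrow(1)$. Since $A$ is universal there is $\phi\in\ts(A,D_\fld)$; set $A':=\phi(A)\le D_\fld$, which lies in $\ts$ (a surjective image of a trace-surjective algebra is trace-surjective) and is a domain (a subring of $D_\fld$), hence a universal domain. The surjection $A\to A'$ gives $\Dim(A')\le\Dim(A)=d_\fld(G)$, and the inequality gives $\Dim(A')\ge d_\fld(G)$; thus $\Dim(A')=\Dim(A)$. As $A$ is a finitely generated domain over $\fld$, every proper quotient has strictly smaller Krull-dimension, so $\ker\phi=0$ and $A\cong A'$. Therefore $A$ is isomorphic to a subalgebra of $D_\fld$ of the minimal possible Krull-dimension $d_\fld(G)$ (minimality because any subalgebra of $D_\fld$ lying in $\ts$ is a universal domain, hence of dimension $\ge d_\fld(G)$), and Lemma \ref{basics_exist} identifies such subalgebras as basic domains; so $A\in\frak{B}$.

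I expect $(2)\Rightarrow(1)$ to be the only step requiring genuine care, and within it the point that $\ker\phi=0$: this is precisely where the dimension theory of affine domains over a field enters (a nonzero prime ideal in such a domain has a quotient of strictly smaller dimension), which is what converts the numerical equality $\Dim(A')=\Dim(A)$, together with the domain hypothesis on $A$, into injectivity of $\phi$.
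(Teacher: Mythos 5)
Your proof is correct, and it uses the same basic ingredients as the paper (Proposition \ref{universal_minimal_domains}, the monotonicity of transcendence degree under $\prec$, and the fact that a proper quotient of a finitely generated domain over $\fld$ has strictly smaller Krull dimension), but it closes the cycle of implications along a different edge. The paper proves $(3)\Rightarrow(1)$ directly by contraposition: if $A$ is universal but not minimal, then by Proposition \ref{universal_minimal_domains} there is a non-injective $\alpha\in\End_\ts(A)$, and $C:=\alpha(A)\cong A/\ker(\alpha)\prec A$ has $\Dim(C)<\Dim(A)$, violating (3). You instead route through $(3)\Rightarrow(2)\Rightarrow(1)$, proving $(2)\Rightarrow(1)$ by pushing $A$ into $D_\fld$, using the dimension equality to force the kernel to vanish, and then invoking the ``minimal Krull-dimension subalgebra of $D_\fld$'' criterion of Lemma \ref{basics_exist}. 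Your route is marginally longer but has the small benefit of establishing $(2)\Rightarrow(1)$ as a standalone implication and of making explicit that a universal domain of dimension $d_\fld(G)$ embeds into $D_\fld$; the paper's route is shorter because the injective-endomorphism characterization in Proposition \ref{universal_minimal_domains} already packages the needed minimality criterion. Both arguments are sound.
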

\begin{proof} The first statement and ``(1) $\Rightarrow$ (2)" follow immediately
from Proposition \ref{universal_minimal_domains}.\\
``(2) $\Rightarrow$ (3)":\  $C\prec A$ implies that $C$ is a universal domain and
$\Dim(C)\le\Dim(A)$. Hence $\Dim(A)=d_\fld(G)\le\Dim(C)\le \Dim(A)$.\\
``(3) $\Rightarrow$ (1)":\  Suppose $A$ is \emph{not} minimal. Then there
is $\alpha\in \End_\ts(A)$ with $\ker(\alpha)\ne 0$. Hence
$A/\ker(\alpha)\cong\alpha(A)=:C\prec A$. Clearly $\Dim(C)<\Dim(A)$.
\end{proof}

The results in Theorem \ref{standard_is_cycl_prj} can give useful bounds for $d_\fld(G)$.
Let $S={\rm Sym}(V^*)$ be as in Theorem \ref{tr_id_standard}; it has been shown in
\cite{universal} Theorem 2.7 that there exists $v^*\in V^{*G}$ such that $S/(v^*-1)S\in\ts$,
and so is $s$-projective and universal, if and only if ${\mathcal X}_V:=\langle V^g\ |\ 1\ne g\in G,\ g^p=1\rangle<V.$
In this case $d_\fld(G)\le {\rm dim}(V)-1$. If $\fld=\mathbb{F}_p$, the condition
${\mathcal X}_V<V$ implies ${\rm dim}(V)\ge n+1$ with $n=\mathrm{log}_p(|G|)$ (see \cite{universal} Proposition 3.3).
For certain $p$-groups (called ``CEA-groups" in \cite{universal}) the condition
${\mathcal X}_V<V$ is satisfied with ${\rm dim}(V)=n+1$, which then gives the known bound
$d_\fld(G)\le n$. For extension fields however, one can obtain sharp
bounds for $d_\fld$ and $e_\fld$ as the following examples show:

\begin{exs}\label{proj_d_bound_exs}
\begin{enumerate}
\item Let $q:=p^n$, $\mathbb{F}_q\le\fld$ and
$(C_p)^n\cong G$ a Sylow $p$-subgroup of ${\rm GL}_2(q)$ consisting
of upper triangular matrices. Set $V:=\fld^2=\fld e_1\oplus\fld e_2$ be the natural
$\fld G$-module, then ${\mathcal X}=\fld e_1< V$ and
${\rm Sym}(V^*)/(x_1-1)\cong B=\fld[Z]$ (see Theorem \ref{el_abel_ext_fld_d_is_1}),
proving again that $d_\fld(G)=1$.
\item Now let $\mathbb{F}_{q^2}\le\fld$ and let $G$ be a Sylow $p$-subgroup
of ${\rm SU}_3(q^2)$. Then $G$ can be represented as the group of matrices
$$g_{a,b}:=
\left(\begin{array}{ccc}
1&a&b\\
0&1&-a^q\\
0&0&1
\end{array}\right),\ a,b\in\mathbb{F}_{q^2},\ b+b^q+aa^q=0.$$
Let $V\cong\fld^3=\fld e_1\oplus \fld e_2\oplus \fld e_3$ be the natural
${\rm SU}_3(q^2)$-representation, then an elementary calculation shows that
${\mathcal X}_V=\langle e_1,e_2\rangle_\fld < V$, hence
${\rm Sym}(V^*)/(x_1-1)$ is $s$-projective and $d_\fld(G)\le 2$, so
$d_\fld(G)=e_\fld(G)=2$ by Corollary \ref{d_k_is two}. Note that
for $q=p$, $G$ is extraspecial (of exponent $p$, if $p\ge 3$).
\end{enumerate}
\end{exs}

\begin{cor}\label{ext_special}
Let $p\ge 3$, $\mathbb{F}_{p^2}\le\fld$ and $G$ be extraspecial of
order $p^3$ and of exponent $p$. Then $d_\fld(G)=e_\fld(G)=2$.
\end{cor}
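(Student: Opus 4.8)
The plan is to recognise the statement as the special case $q=p$ of Examples \ref{proj_d_bound_exs}(2), and to do so I would first invoke the standard fact from finite group theory that for an odd prime $p$ there is, up to isomorphism, exactly one extraspecial group of order $p^3$ and exponent $p$ (the Heisenberg group of upper unitriangular $3\times 3$ matrices over $\mathbb{F}_p$). Hence it suffices to produce \emph{one} concrete realisation of this group for which the conclusion $d_\fld(G)=e_\fld(G)=2$ is already on record, and the group $G$ of matrices $g_{a,b}$ ($a,b\in\mathbb{F}_{p^2}$, $b+b^q+aa^q=0$) inside $\mathrm{SU}_3(p^2)$ from Examples \ref{proj_d_bound_exs}(2) with $q=p$ is such a realisation.

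The next step is to check that this matrix group really is extraspecial of order $p^3$ and exponent $p$, so that the uniqueness statement applies. Multiplying matrices gives the group law $g_{a,b}\,g_{a',b'}=g_{a+a',\,b+b'-a(a')^q}$, so $g_{a,b}\mapsto a$ is a surjective homomorphism onto $(\mathbb{F}_{p^2},+)\cong (C_p)^2$ with kernel $Z:=\{g_{0,b}\mid b+b^q=0\}\cong\mathbb{F}_p$; a short commutator calculation shows that $Z$ is simultaneously the centre and the derived subgroup of $G$, so $|G|=p^3$ and $G$ is extraspecial. Iterating the group law yields $g_{a,b}^{\,n}=g_{na,\,nb-\binom{n}{2}aa^q}$, hence $g_{a,b}^{\,p}=g_{0,\,-\binom{p}{2}aa^q}$; since $p\mid\binom{p}{2}$ for $p\ge 3$, this equals the identity, so $G$ has exponent $p$. (This is precisely where the hypothesis $p\ge 3$ is used: for $p=2$ the coefficient $\binom{2}{2}=1$ is a unit and the group has exponent $4$.)

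Finally, the field hypothesis $\mathbb{F}_{p^2}\le\fld$ of the corollary is exactly the hypothesis $\mathbb{F}_{q^2}\le\fld$ of Examples \ref{proj_d_bound_exs}(2) at $q=p$, and that example (using $\mathcal{X}_V=\langle e_1,e_2\rangle_\fld<V$ for the natural module $V\cong\fld^3$, which makes $\mathrm{Sym}(V^*)/(x_1-1)$ $s$-projective and universal) gives $d_\fld(G)\le 2$ and then $d_\fld(G)=e_\fld(G)=2$ by Corollary \ref{d_k_is two}. Transporting this along the isomorphism furnished by the uniqueness of the extraspecial group of order $p^3$ and exponent $p$ proves the corollary. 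The only step that is not a purely formal appeal to an earlier result is the group-theoretic identification in the middle paragraph, which I expect to be the main (and only) obstacle; everything else is bookkeeping.
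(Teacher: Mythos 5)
Your proposal is correct and follows exactly the route the paper intends: the corollary is the case $q=p$ of Examples \ref{proj_d_bound_exs}(2) (where the paper itself remarks that the Sylow $p$-subgroup of ${\rm SU}_3(p^2)$ is extraspecial of exponent $p$ for $p\ge 3$), combined with the standard uniqueness of the extraspecial group of order $p^3$ and exponent $p$ for odd $p$. Your explicit verification of the group law, the centre, and the exponent computation (including the role of $\binom{p}{2}\equiv 0 \bmod p$ for $p\ge 3$) correctly fills in the details the paper leaves implicit.
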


Back again to basic objects; the $\thickapprox$-equivalence class $\frak{B}$ of basic objects contains cyclic domains:
\begin{cor}\label{universal_minimal_domains_cor3}
If $B\in\frak{B}$, then $B\thickapprox C$ with $C$ a cyclic domain.
\end{cor}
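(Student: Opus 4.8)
The plan is to produce the cyclic domain inside $B$ itself, as the subalgebra generated by a single $G$-orbit of a point. Fix $B\in\frak{B}$; by Proposition \ref{universal_minimal_domains} it is an integral domain, and since $B\in\ts$ it is trace-surjective, so it contains a point $b$ with $\tr(b)=1$. Put $b_g:=(b)g$ for $g\in G$ and let $C:=\fld[\,b_g\mid g\in G\,]\le B$. The claim will be that $C$ is a cyclic domain with $C\thickapprox B$, which is exactly the assertion of the corollary.

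First I would verify the three properties of $C$. It lies in $\ts$: it is finitely generated, and $1=\tr(b)\in\tr(C)\subseteq C^G$, so the ideal $\tr(C)\unlhd C^G$ is all of $C^G$. It is cyclic: by construction $C$ is generated by the single $G$-orbit of the point $b$, hence $C\cong D_\fld/I$ for a $G$-stable ideal $I$ by Definition \ref{standard_cyclic_df} (equivalently, the morphism $D_\fld\to C$ sending the free point $x_1$ to $b$ is surjective). And it is a domain, being a subring of the domain $B$. Next, the inclusion $C\hookrightarrow B$ is injective, hence a monomorphism in $\ts$ by Lemma \ref{surj_surjects_points}, so $C\prec B$; composing it with a morphism $B\to D_\fld$ (which exists since $B$ is universal) realizes $C$ as an object of $\ts$ mapping to the s-projective algebra $D_\fld$, so $C$ is universal by Remark \ref{universal_df_rem}. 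Finally, $B$ is basic, hence minimal in $\ts$, so $C\prec B$ forces $B\prec C$; therefore $B\thickapprox C$ with $C$ a cyclic domain, as required.

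I do not expect a genuine obstacle here: every ingredient is already in hand --- existence of a point in any object of $\ts$, the characterization of monomorphisms as injections (Lemma \ref{surj_surjects_points}), the ``generated by one $G$-orbit of a point'' reformulation of cyclicity (Definition \ref{standard_cyclic_df}), and the minimality built into the definition of basic. The one step that deserves a word of care is why $C$ lands in the class $\frak{B}$: it is universal because it embeds into the universal object $B$ and so maps to $D_\fld$, and it is minimal because $B$ is and $C\prec B$; hence $C\thickapprox B$ and $C\in\frak{B}$ by Proposition \ref{universal_minimal_domains}. One may additionally note, via Theorem \ref{first_main}(2), that $B=B^G\otimes_{C^G}C$, exhibiting every basic object as an extension by invariants of a basic cyclic domain; this is not needed for the statement but explains why passing from $B$ to $C$ loses nothing.
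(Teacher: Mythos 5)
Your proposal is correct and is essentially the paper's own argument: the paper takes $C:=\alpha(D_\fld)$ for a morphism $\alpha\in\ts(D_\fld,B)$, which is precisely the subalgebra $\fld[\,b_g\mid g\in G\,]$ generated by the $G$-orbit of the point $b=\alpha(x_1)$, and then concludes $B\thickapprox C$ from minimality exactly as you do. Your extra verifications (trace-surjectivity of $C$, injectivity of the inclusion, universality of $C$) are all sound, just spelled out in more detail than the paper bothers to.
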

\begin{proof}
Let $\alpha\in\ts(D_\fld,B)$, then $C:=\alpha(D_\fld)\prec B$, hence $B\thickapprox C$ with cyclic
domain $C\in\ts$.
\end{proof}

In Lemma \ref{char_universal} we showed that every object $A\in\ts$ arises from
extending the quotient of a universal object by a ring with trivial $G$-action.
The class $\frak{B}$ consists of those objects from which all \emph{universal} objects arise by
extending invariants:

\begin{lemma}\label{char_basic}
An object $B\in\ts$ is basic, if and only if every universal object is
of the form $\frak W=\frak W^G\otimes_{B^G}B$ with embedding $B\hookrightarrow \frak W$.
\end{lemma}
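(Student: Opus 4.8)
The plan is to exploit the characterization of universal objects established in Lemma \ref{char_universal}, which states that every $A\in\ts$ has the form $A\cong A^G\otimes_{S^G}S$ with $S\le A$ isomorphic to a quotient of any fixed universal object. The content of the present lemma is to identify precisely \emph{when} $B$ can serve as that fixed universal object \emph{without passing to a proper quotient}, i.e. when the subalgebra $S$ is always a faithful copy of $B$ itself rather than of $B/I$. This is exactly the condition that no proper quotient of $B$ be universal, which by Proposition \ref{universal_minimal_domains} (the equivalence of (1) and (6)) is equivalent to $B$ being basic.

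For the ``only if'' direction, suppose $B$ is basic and let $\frak W\in\ts$ be universal. Since $B$ is universal (a basic object is universal by definition), Lemma \ref{char_universal}(3) applied with the roles $\frak W:=B$, $A:=\frak W$ gives $\frak W\cong \frak W^G\otimes_{\bar B^G}\bar B$ where $\bar B:=B/I$ for some $G$-stable ideal $I\unlhd B$ and $\bar B\le \frak W$. Now $\bar B$ is a homomorphic image of $B$ inside $\frak W$, hence $\bar B$ is universal; by Proposition \ref{universal_minimal_domains}((1)$\Rightarrow$(6)) no proper quotient of the basic object $B$ is universal, so $I=0$ and $\bar B=B$, giving $\frak W\cong\frak W^G\otimes_{B^G}B$ with $B\hookrightarrow\frak W$ as required. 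Here I use that a basic object is in particular a domain (Proposition \ref{universal_minimal_domains}, (1)$\iff$(2)), so there is no ambiguity in the statement, but this is not actually needed for the argument.

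For the ``if'' direction, assume every universal object $\frak W$ has the form $\frak W\cong\frak W^G\otimes_{B^G}B$ with $B\hookrightarrow\frak W$. Taking $\frak W:=D_\fld$ shows in particular that $B$ embeds into $D_\fld$, so $B$ is universal by Lemma \ref{char_universal}((2)$\Rightarrow$(1)). To see $B$ is minimal, I would verify condition (6) of Proposition \ref{universal_minimal_domains}: suppose $B/I$ is universal for some $G$-stable ideal $I\unlhd B$. Applying the hypothesis to the universal object $\frak W:=B/I$, we obtain an embedding $B\hookrightarrow B/I$. Composing with the quotient map $B\twoheadrightarrow B/I$ yields an endomorphism $\gamma\circ c\in\End_\ts(B)$ whose kernel contains $I$. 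But $B$, embedding into the domain $D_\fld$, is itself a domain, and composing further we get that $B$ maps injectively into $B/I\hookrightarrow B/I$; more directly, the embedding $B\hookrightarrow B/I$ forces $\Dim(B)\le\Dim(B/I)\le\Dim(B)$, so equality of transcendence degrees holds, and since $B$ is a domain the surjection $B\twoheadrightarrow B/I$ between domains of equal dimension must be an isomorphism, i.e. $I=0$. Thus no proper quotient of $B$ is universal, and by Proposition \ref{universal_minimal_domains}((6)$\Rightarrow$(1)), $B$ is basic.

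The main obstacle I anticipate is the bookkeeping in the ``if'' direction: one must be careful that the abstract embedding $B\hookrightarrow B/I$ supplied by the hypothesis genuinely forces $I=0$, rather than merely giving a section of some auxiliary map. The clean way around this is the dimension argument just sketched — a surjection of finitely generated domains over $\fld$ of the same Krull dimension is an isomorphism — which sidesteps having to track how the embedding interacts with the quotient map. Everything else is a direct invocation of Lemmas \ref{char_universal} and the equivalences in Proposition \ref{universal_minimal_domains}.
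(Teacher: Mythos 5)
Your proof is correct, and the ``only if'' direction coincides with the paper's: apply Lemma \ref{char_universal} to write $\frak W=\frak W^G\otimes_{\bar B^G}\bar B$ with $\bar B\cong B/I\hookrightarrow\frak W$, note that $\bar B$ is universal, and conclude $I=0$ from criterion (6) of Proposition \ref{universal_minimal_domains}. One caution there: $\bar B$ is universal because it is a \emph{subalgebra} of the universal object $\frak W$ (so it maps onward to $D_\fld$), not because it is a homomorphic image of $B$ --- quotients of universal objects need not be universal, which is exactly the phenomenon this lemma is about, so the justification you wrote is the wrong one even though the conclusion is right. In the ``if'' direction you genuinely diverge from the paper. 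The paper applies the hypothesis to an already-constructed basic object $X$ (Lemma \ref{basics_exist}), gets $B\prec X$, hence $B\thickapprox X$ by minimality of $X$, and invokes criterion (5). You instead verify criterion (6) directly: the hypothesis applied to $D_\fld$ shows $B$ is a universal domain, and the hypothesis applied to any universal quotient $B/I$ gives $B\hookrightarrow B/I$, whence $\Dim(B)=\Dim(B/I)$ and so $I=0$. Both routes work; the paper's is shorter but leans on the existence result for basic objects, while yours is self-contained given Proposition \ref{universal_minimal_domains}. Two small repairs to your version: the sentence producing ``an endomorphism $\gamma\circ c\in\End_\ts(B)$'' does not typecheck (your embedding goes $B\to B/I$, not $B/I\to B$), though you abandon it in favour of the dimension count anyway; and $B/I$ need not be a domain, so the clean statement to quote is that for a finitely generated domain $B$ over $\fld$ and a nonzero ideal $I$ one has $\Dim(B/I)<\Dim(B)$, since every minimal prime over $I$ has positive height --- this yields $I=0$ without pretending $B/I$ is a domain.
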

\begin{proof}
If $B\in\ts$ has the described property and $X\in\ts$ is basic, then
$B\prec X$, so $X\thickapprox B$ and $B$ is basic.
Now assume that $B$ is basic and $\frak W$ is universal. Then by
Lemma \ref{char_universal}, $\frak W=\frak W^G\otimes_{S^G}S$ with $B/I\cong S\hookrightarrow \frak W$.
It follows that $B/I$ is universal, hence $I=0$ and $S\cong B$.
\end{proof}

We are therefore particularly interested in describing basic objects, i.e. minimal
subalgebras of $D_\fld$ which are also in $\ts$. However, with regard to minimality
the following has to be taken into account:
Since $D_\fld$ is the polynomial ring $\fld[x_g\ |\ 1\ne g\in G]$, we have $D_\fld^p=k^p[x_g^p\ |\ g\in G]$
and $\cap_{i\in \mathbb{N}} D_\fld^{p^i}=\cap_{i\in \mathbb{N}} k^{p^i}=k^{p^\infty},$
where $k^{p^\infty}$ denotes the maximal perfect subfield of $k$.
This implies that $C^p<C$ for every subring $C\ne k^{p^\infty} \le D_\fld$, hence there
will be in general no subalgebra of $D_\fld$ which is minimal with respect to
ordinary inclusion of $\fld$-subalgebras. (If $k$ is perfect and $C\le D_\fld$ is trace-surjective, then
$C^p<C$ is a proper inclusion of isomorphic objects in $\ts$).
The result in Corollary \ref{universal_minimal_domains_cor3} motivates the following

\begin{lemma}\label{alternative}
Let $C\in\ts$ be basic and cyclic. Then up to isomorphism $C\le D_\fld$ and
there exists $\chi\in \End_\ts(D_\fld)$ with $\chi(D_\fld)=C$ and
${\rm ker}(\chi_{|C})=0$. Moreover one of the following two situations can occur:
\begin{enumerate}
\item $C> \chi(C)> \cdots > \chi^n(C) > \chi^{n+1}(C) \cdots$ is an infinite descending chain of
properly contained, isomorphic $\fld-G$-subalgebras in $\ts$.
\item $C=\chi(C)$ and $D_\fld=C\oplus I$, where $I=\ker(\chi)\unlhd D_\fld$
is a $G$-stable ideal.
\end{enumerate}
\end{lemma}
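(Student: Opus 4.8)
The plan is to first produce the endomorphism $\chi$ and verify its basic properties, then to perform a dichotomy on whether the chain of iterated images stabilizes. Since $C$ is cyclic, by Definition \ref{standard_cyclic_df} there is a surjective morphism $\pi\colon D_\fld\to C$ in $\ts$, and since $C$ is universal there is an embedding $\iota\colon C\hookrightarrow D_\fld$ (using Lemma \ref{char_universal}, so that up to isomorphism $C\le D_\fld$). Set $\chi:=\iota\circ\pi\in\End_\ts(D_\fld)$; then $\chi(D_\fld)=\iota(C)=C$ as a subalgebra of $D_\fld$. To see that $\ker(\chi_{|C})=0$, observe that $\chi_{|C}=\iota\circ(\pi_{|C})$ and $\pi_{|C}=\pi\circ\iota\in\End_\ts(C)$; since $C$ is a basic domain, Proposition \ref{universal_minimal_domains}(3) tells us every endomorphism of $C$ in $\ts$ is injective, hence $\pi_{|C}$ is injective, and $\iota$ is injective by construction, so $\chi_{|C}$ is injective.

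Next I would analyze the descending chain $C\supseteq\chi(C)\supseteq\chi^2(C)\supseteq\cdots$. Because $\chi_{|C}$ is injective, each $\chi^{k}(C)\cong C$ as objects of $\ts$, so all the $\chi^{k}(C)$ are isomorphic $\fld$-$G$-subalgebras lying in $\ts$. The dichotomy is simply whether the first inclusion $\chi(C)\subseteq C$ is proper. If it is proper, then applying the injective map $\chi_{|C}$ repeatedly, properness is preserved at each stage: $\chi^{k+1}(C)=\chi(\chi^{k}(C))\subsetneq\chi(\chi^{k-1}(C))=\chi^{k}(C)$, because $\chi$ restricted to the domain $\chi^{k-1}(C)$ is injective and hence carries a proper inclusion to a proper inclusion. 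This gives the infinite strictly descending chain of case (1).

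The remaining case is $C=\chi(C)$. Here $\chi$ maps $D_\fld$ onto $D_\fld$'s subalgebra $C$ and restricts to the \emph{identity-like} automorphism of $C$ — more precisely, $\chi_{|C}$ is an injective endomorphism of the basic domain $C$ with image all of $C$, so $\chi_{|C}$ is an automorphism of $C$. Composing $\chi$ with $(\chi_{|C})^{-1}$ (extended appropriately) we may assume $\chi$ is idempotent, i.e. a retraction of $D_\fld$ onto $C$; then $D_\fld=C\oplus\ker(\chi)$ with $I:=\ker(\chi)$ a $G$-stable ideal, which is case (2). The main obstacle I anticipate is this last normalization step: one must be careful that replacing $\chi$ by an idempotent does not disturb the property $\chi(D_\fld)=C$, and that the direct-sum decomposition $D_\fld=C\oplus I$ one obtains is genuinely a decomposition as $\fld$-$G$-modules with $I$ an ideal — this is exactly the statement that $C$ is a standard (retract) subalgebra of $D_\fld$ in the sense of Definition \ref{standard_cyclic_df}, so the technical content is to convert the surjection-with-injective-restriction data into an honest splitting.
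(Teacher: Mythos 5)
Your proof is correct and follows essentially the same route as the paper: embed $C$ into $D_\fld$, compose with the surjection provided by cyclicity to get $\chi$, use that every endomorphism of the basic domain $C$ is injective (Proposition \ref{universal_minimal_domains}(3)), and split on whether $\chi(C)\subseteq C$ is proper. Two minor points: the embedding $C\hookrightarrow D_\fld$ really needs basicness rather than mere universality (Lemma \ref{char_universal} alone only gives a morphism onto some quotient $C/I$ inside $D_\fld$; the right citation is Proposition \ref{universal_minimal_domains}), and the idempotent normalization at the end can be skipped, since $\chi(C)=C=\chi(D_\fld)$ together with $C\cap\ker\chi=\ker(\chi_{|C})=0$ already yields $D_\fld=C\oplus\ker\chi$ directly, which is how the paper concludes.
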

\begin{proof} By Proposition \ref{universal_minimal_domains}, $C\prec D_\fld$, so
there is an embedding $\iota:\ C\hookrightarrow D_\fld$ and we
can assume $C=\iota(C)=\fld[W]\le D_\fld$ with $w\in W$ of trace $1$ and
$W\cong \fld G$ as $\fld G$-module. Then $W=\langle wg\ |\ g\in G\rangle$ and the map
$x_g\mapsto wg$ defines a $G$-equivariant $k$-algebra epimorphism $\theta:\ D_\fld\to C$.
Set $\phi:=\theta\circ\iota$ and $\chi:=\iota\circ\theta$,
then $\phi\in\End_\ts(C)$ is injective with image $C\cong\phi(C)=\theta(C)\le C$, so
$\chi(C)=\iota\circ\theta(\iota(C))=\phi(C)\le C.$
Suppose $\chi^{n+1}(C)=\chi^n(C)$ and let $c\in C$; then $\chi^n(c)=\chi^{n+1}(c')$ for some $c'\in C$,
so $c-\chi(c')\in \ker(\chi^n_{|C})\subseteq\ker(\phi^{n+1})=0$. Hence $c=\chi(c')$,
$\chi(C)=C=\phi(C)$ and
$\phi=\theta\circ\iota$ is an automorphism of $C$. We conclude $D_\fld=C\oplus \ker(\chi)$.
\end{proof}

If $k=k^p$, we have already seen that case (1) actually occurs. For general $k$,
the homomorphism $\tilde F=\fld[X_g\ |\ g\in G] \to \tilde F$ defined by $X_g\mapsto X_g^p$ induces
a {\bf Frobenius-endomorphism}
$$\Phi:\ D_\fld=\fld[x_g\ |g\in G] \to D_\fld,\ \alpha(x_1,\cdots,x_g,\cdots)\mapsto \alpha(x_1^p,\cdots,x_g^p,\cdots),$$
which in the case $k=\mathbb{F}_p$ coincides with the ordinary power map $a\mapsto a^p$.
It follows that $D_\fld>\Phi(D_\fld)>\cdots \Phi^n(D_\fld)>\Phi^{n+1}(D_\fld)>\cdots$. Similarly, for
every subalgebra $C_0=\mathbb{F}_p[V]\le D_{\mathbb{F}_p}$ with subspace
$1\in V\cong \mathbb{F}_pG$ we have
$$C_0>\Phi(C_0)=C_0^p>\cdots >C_0^{p^n}>C_0^{p^{n+1}}>\cdots$$
and therefore the subalgebra $C:=k\otimes_{\mathbb{F}_p}C_0 \le D_\fld$ satisfies
$$C>\Phi(C)>\cdots >\Phi^n(C)>\Phi^{n+1}(C)>\cdots.$$
In the rest of this section, and in fact the paper, we will study the second case of lemma \ref{alternative},
which also occurs naturally and, in many respects, is the more interesting situation.

If $S\le D_\fld$ is standard, then there is a projection morphism
$\chi:\ D_\fld\to S\hookrightarrow D_\fld$, which is an idempotent in $\End_\ts(D_\fld)$.
The following has been shown in \cite{nonlin}:

\begin{lemma}\label{endo_krit_standard}[\cite{nonlin} Lemma 5.1]
Let $S\hookrightarrow D_\fld$ be a trace-surjective $G$-algebra, then the following are equi\-valent:
\begin{enumerate}
\item $S$ is standard.
\item $\exists\ \chi=\chi^2\in ({\rm End}_\ts(D_\fld)$ with $S=\chi(D_\fld)$.
\item $\exists\chi\in ({\rm End}_{k-{\rm alg}}(D_\fld))^G$ with
$\chi^2(x_1)=\chi(x_1)=:w\in S=\fld[wg\ |g\in G]$.
\item $\exists\ w=W(x_1,x_{g_2},\cdots,x_{g_{|G|}})\in S$ with ${\rm tr}(w)=1$,
$w=W(w,wg_2,\cdots,w_{g_{|G|}})$ and $S=\fld[wg\ |g\in G]\le D_\fld.$
\end{enumerate}
\end{lemma}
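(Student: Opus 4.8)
The plan is to verify the chain of equivalences $(1)\iff(2)\iff(3)\iff(4)$, all of which rest on one structural fact recalled in the text just after Definition~\ref{universal_df}: $D_\fld$ is the \emph{free object on a single point}, so that for every $A\in\ts$ the map $\phi\mapsto\phi(x_1)$ is a bijection from $\ts(D_\fld,A)$ onto the set of points of $A$, with inverse sending a point $a$ to the morphism determined by $x_g\mapsto(a)g$. Specialising to $A=D_\fld$, and noting that a $G$-equivariant $\fld$-algebra endomorphism of an object of $\ts$ is automatically a morphism in $\ts$ (since $G$-equivariance forces commutation with $\tr$), this identifies $\End_\ts(D_\fld)=(\End_{\fld\text{-alg}}(D_\fld))^G$, and realises every such $\chi$ as ``evaluation at the point $w:=\chi(x_1)$'': one has $\chi(x_g)=(w)g$ for all $g\in G$, the case $g=1$ being forced by the defining relation $\sum_{g\in G}x_g=1$ together with $\tr(w)=\chi(\tr(x_1))=\chi(1)=1$.

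For $(1)\iff(2)$ I would prove the abstract statement that a $G$-subalgebra $S\le D_\fld$ is a retract of $D_\fld$ precisely when it is the image of an idempotent $\chi=\chi^2\in\End_\ts(D_\fld)$. If $D_\fld=S\oplus I$ with $I\unlhd D_\fld$ a $G$-stable ideal, the projection $D_\fld\to S$ along $I$ is a $G$-equivariant $\fld$-algebra homomorphism --- the one point to check is multiplicativity, and it holds exactly because $I$ is an \emph{ideal} --- so, composed with the inclusion, it yields an idempotent endomorphism with image $S$. Conversely, if $\chi=\chi^2$ has image $S$, then $\chi$ restricts to the identity on $S$ (for $s=\chi(t)$, $\chi(s)=\chi^2(t)=\chi(t)=s$), hence $D_\fld=S\oplus\ker\chi$ with $\ker\chi$ a $G$-stable ideal; that $S\in\ts$ is part of the hypothesis.

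The remaining two equivalences are then a matter of unwinding the ``evaluation at $w$'' description. For $(2)\iff(3)$: from an idempotent $\chi$ with $S=\chi(D_\fld)$ put $w:=\chi(x_1)$; then $\tr(w)=1$, $\chi^2(x_1)=\chi(x_1)=w$, and $S=\fld[\chi(x_g)\mid g\in G]=\fld[(w)g\mid g\in G]$ with $w\in S$. Conversely, the single equation $\chi^2(x_1)=\chi(x_1)$ propagates to $\chi^2(x_g)=\chi(x_g)$ for every $g$ (apply $g$, using $G$-equivariance of $\chi$ and $\chi^2$), hence $\chi^2=\chi$ on generators and so $\chi^2=\chi$; and $\tr(w)=1$ makes $\fld[(w)g\mid g]$ trace-surjective. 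Finally $(3)\iff(4)$ is literally a substitution: writing $w=W(x_1,x_{g_2},\dots,x_{g_{|G|}})$ as a polynomial in the coordinate functions, the map $\chi$ sends $w$ to $W(w,(w)g_2,\dots,(w)g_{|G|})$, so the idempotency condition $\chi^2(x_1)=\chi(x_1)$ is \emph{exactly} the identity $w=W(w,(w)g_2,\dots,(w)g_{|G|})$ of (4), while $\tr(w)=1$, $w\in S$ and $S=\fld[(w)g\mid g]$ transcribe directly; for the converse one uses the free-point property to build $\chi$ from the point $w$.

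I do not anticipate a real obstacle: the lemma is an exercise in the universal property of $D_\fld$. The only places needing mild care are (a) in $(1)\Rightarrow(2)$, checking that the complement being an \emph{ideal} (not merely a submodule) is precisely what makes the projection a ring map --- i.e. that ``retract'' in the sense of this paper genuinely gives a splitting of algebras; and (b) in $(3)\iff(4)$, handling the redundant generating set $\{x_g\mid g\in G\}$ and the relation $\sum_{g}x_g=1$ consistently, so that evaluating the polynomial $W$ at the $(w)g$ is unambiguous --- which it is, because $\chi$ is a well-defined homomorphism and therefore $\chi(w)$ does not depend on the chosen representative $W$.
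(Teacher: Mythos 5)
Your proof is correct. Note that the paper itself does not prove this lemma --- it is imported verbatim from \cite{nonlin} (Lemma 5.1) --- so there is no in-paper argument to compare against; but your route is the natural one and is complete: the identification $\End_\ts(D_\fld)=(\End_{\fld\text{-alg}}(D_\fld))^G\leftrightarrow\{\text{points of }D_\fld\}$ via $\chi\mapsto\chi(x_1)$ is exactly the ``free point'' property stated after Definition~\ref{universal_df}, the retract/idempotent equivalence $(1)\iff(2)$ is the standard splitting argument (with the ideal property of the complement correctly identified as what makes the projection multiplicative), and the reduction of idempotency to the single equation $\chi^2(x_1)=\chi(x_1)$, hence to the fixed-point identity $w=W(w,wg_2,\dots,wg_{|G|})$ of (4), is handled with the right care about the relation $\sum_g x_g=1$ and the non-uniqueness of the representative $W$.
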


Let $S\le D_\fld$ be standard. Since $D_\fld$ is a polynomial $\fld$-algebra
it follows from \cite{costa} Corollary 1.11, that $S$ is a regular UFD.

\begin{df}\label{reflexive_points} (see \cite{nonlin}[Definition 4])
A point $w\in D_\fld$ will be called {\bf reflexive}, if
$$w=W(x_1,\cdots,x_g\cdots)=W(w,\cdots,wg,\cdots)=\theta(w),$$
where $\theta\in ({\rm End}_{k-{\rm alg}}(D_\fld))^G$ is defined by
$x_g\mapsto w\cdot g$ $\forall g\in G$.
\end{df}

By definition a trace-surjective  $G$-algebra is cyclic, if and only if it is generated as an algebra by the $G$-orbit of one point. Lemma \ref{endo_krit_standard} shows, that
the standard subalgebras of $D_\fld$ are precisely the subalgebras generated by the $G$-orbit of a \emph{reflexive} point.
\\[1mm]
Let $G_1, G_2$ be two finite $p$-groups and $A_i\in\ts_{G_i}$ with point $a_i\in A_i$
for $i=1,2$. Then $a_1\otimes_\fld a_2$ is easily seen to be a point of
$A_1\otimes_\fld A_2\in\ts_{G_1\times G_2}$. Moreover, $D_\fld(G_1)\otimes_\fld D_\fld(G_2)$
is standard universal, i.e. a retract of $D_\fld(G_1\times G_2)$ (see \cite{nonlin} Section 5, Example 3).
If the $A_i$'s are universal with $\theta_i\in\ts_{G_i}(A_i,D_\fld(G_i))$,
then $\theta_1\otimes\theta_2\in \ts_{G_1\times G_2}(A_1\otimes_\fld A_2,D_\fld(G_1)\otimes_\fld D_\fld(G_2)),$
hence $A_1\otimes_\fld A_2$ is universal in $\ts_{G_1\times G_2}$.
Clearly the polynomial ring $\tilde D:=D_\fld(G_1)\otimes_\fld D_\fld(G_2)$ can be viewed as an
object in $\ts_{G_1}$ or $\ts_{G_2}$ by restricting the action accordingly.
In that way $\tilde D_{|G_1}\cong D_\fld(G_1)\otimes_\fld \fld[T_1,\cdots,T_{|G_2|-1}]$ is a polynomial ring
over $D_\fld(G_1)$ with trivial $G_1$-action on $\fld[T_1,\cdots,T_{|G_2|-1}]$.
Let $R\in\ts_{G_1}$ and $\phi\in\ts_{G_1}(D_\fld(G_1),R)$, then any map $T_j\mapsto r_j\in R^{G_1}$
extends $\phi$ to a morphism $\tilde\phi\in\ts_{G_1}(\tilde D_{|G_1},R)$, which shows that
$\tilde D_{|G_i}$ is universal in $\ts_{G_i}$.\\
Suppose that $\phi\in \ts_{G_1\times G_2}(\tilde A,\tilde D)$
for $\tilde A:=A_1\otimes_\fld A_2$ with $i_1\in\ts_{G_1}(A_1,\tilde A)$ the
canonical morphism. Then the composition $\phi_{|G_1}\circ i_1$ is in $\ts_{G_1}(A_1,\tilde D_{|G_1})$,
hence $A_1$ is universal. We summarize:

\begin{prp}\label{direct_product_group}
Let $G_1$ and $G_2$ be two finite $p$-groups with $A_i\in\ts_{G_i}$ for $i=1,2$. Then
$A_1\otimes_\fld A_2\in\ts_{G_1\times G_2}$ and the following hold:
\begin{enumerate}
\item $A_i$ universal in $\ts_{G_i}$ for $i=1,2$ $\iff$  $A_1\otimes_\fld A_2$ is universal in
$\ts_{G_1\times G_2}$;
\item $A_i$ standard universal in $\ts_{G_i}$ for $i=1,2$ $\Rightarrow$  $A_1\otimes_\fld A_2$ is standard
universal in $\ts_{G_1\times G_2}$;
\item $d_\fld(G_1\times G_2)\le d_\fld(G_1)+d_\fld(G_2)$.
\end{enumerate}
\end{prp}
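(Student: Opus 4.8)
The plan is to assemble the three statements from the facts already recorded in the discussion immediately preceding the proposition, adding only the connecting glue. First I would note that $A_1\otimes_\fld A_2\in\ts_{G_1\times G_2}$: if $a_i\in A_i$ is a point for the $G_i$-action, then $a_1\otimes_\fld a_2$ is a point for the $G_1\times G_2$-action, so the tensor product is trace-surjective, and it is plainly finitely generated. For the forward implication in (1), pick $\theta_i\in\ts_{G_i}(A_i,D_\fld(G_i))$; then $\theta_1\otimes_\fld\theta_2$ is a $(G_1\times G_2)$-equivariant algebra map into $\tilde D:=D_\fld(G_1)\otimes_\fld D_\fld(G_2)$, and composing with the inclusion of the retract $\tilde D\hookrightarrow D_\fld(G_1\times G_2)$ gives a morphism $A_1\otimes_\fld A_2\to D_\fld(G_1\times G_2)$ in $\ts_{G_1\times G_2}$, so $A_1\otimes_\fld A_2$ is universal. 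For the converse, assume $A_1\otimes_\fld A_2$ is universal; composing a morphism to $D_\fld(G_1\times G_2)$ with the retraction $D_\fld(G_1\times G_2)\twoheadrightarrow\tilde D$ (a surjective $\ts$-morphism, as $\tilde D$ is a retract) produces $\phi\in\ts_{G_1\times G_2}(A_1\otimes_\fld A_2,\tilde D)$. Restricting the action to $G_1$ and precomposing with the canonical map $i_1\colon A_1\to A_1\otimes_\fld A_2$ yields a $G_1$-morphism $A_1\to\tilde D_{|G_1}\cong D_\fld(G_1)\otimes_\fld\fld[T_1,\dots,T_{|G_2|-1}]$; composing further with the projection that kills the $T_j$ lands in $D_\fld(G_1)$, so $A_1$ is universal, and symmetrically for $A_2$.

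For (2), suppose $A_i\le D_\fld(G_i)$ is a retract, say $D_\fld(G_i)=A_i\oplus I_i$ with $I_i$ a $G_i$-stable ideal. Then
\[
\tilde D = D_\fld(G_1)\otimes_\fld D_\fld(G_2) = (A_1\otimes_\fld A_2)\ \oplus\ \bigl(I_1\otimes_\fld D_\fld(G_2)+A_1\otimes_\fld I_2\bigr),
\]
and the second summand is a $(G_1\times G_2)$-stable ideal, so $A_1\otimes_\fld A_2$ is a retract of $\tilde D$. Since $\tilde D$ is itself a retract of $D_\fld(G_1\times G_2)$ and the property of being a retract is transitive (the complementary ideals add to a $G$-stable ideal), $A_1\otimes_\fld A_2$ is a standard subalgebra of $D_\fld(G_1\times G_2)$, and it is universal by (1).

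For (3), choose basic objects $B_i\in\ts_{G_i}$; by Proposition \ref{universal_minimal_domains} these are integral domains with $\Dim(B_i)=d_\fld(G_i)$. By (1), $B_1\otimes_\fld B_2$ is universal in $\ts_{G_1\times G_2}$, hence maps to $D_\fld(G_1\times G_2)$ with image $C$, which is a universal \emph{domain} (a subalgebra of the polynomial ring $D_\fld(G_1\times G_2)$). Corollary \ref{universal_minimal_domains_cor1} then gives $d_\fld(G_1\times G_2)\le\Dim(C)$, while $\Dim(C)\le\Dim(B_1\otimes_\fld B_2)=\Dim(B_1)+\Dim(B_2)=d_\fld(G_1)+d_\fld(G_2)$. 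The one genuinely delicate point is here: $B_1\otimes_\fld B_2$ need not be a domain, so one must pass to the domain quotient $C$ before invoking Corollary \ref{universal_minimal_domains_cor1}; and one needs $\Dim(B_1\otimes_\fld B_2)\le\Dim(B_1)+\Dim(B_2)$, which follows from the formula $\Dim(A)=\max_\wp\mathrm{transc.deg}_\fld(A/\wp)$ recorded earlier together with the standard (sub)additivity of transcendence degree for finitely generated $\fld$-algebras. Everything else is bookkeeping with the retractions $\tilde D\hookrightarrow D_\fld(G_1\times G_2)$ and $D_\fld(G_1)\hookrightarrow\tilde D_{|G_1}$ already made explicit above.
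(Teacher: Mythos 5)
Your proof is correct and follows essentially the same route as the paper, whose argument is the discussion immediately preceding the statement: the point $a_1\otimes_\fld a_2$, the map $\theta_1\otimes\theta_2$ into $\tilde D:=D_\fld(G_1)\otimes_\fld D_\fld(G_2)$ (a retract of $D_\fld(G_1\times G_2)$, cited from \cite{nonlin}), and restriction along $i_1$ to $\tilde D_{|G_1}$ for the converse of (1). The paper leaves (2) and (3) essentially implicit, and your supplied details — the $G$-stable complementary ideal $I_1\otimes D_\fld(G_2)+A_1\otimes I_2$, transitivity of retracts, and in (3) the passage to the domain image $C$ before invoking Corollary \ref{universal_minimal_domains_cor1} (since $B_1\otimes_\fld B_2$ need not be a domain) — are exactly the right bookkeeping and are correct.
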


We close this section by illustrating the above notions in the case of
elementary-abelian $p$-groups. We need some notation and a lemma:
Define $\partial_n(T)\in \fld[X_1,\cdots,X_{n-1}][T]$ to be the following $n\times n$-determinant:
$$\partial_n(T)
=\partial_n(\underline X,T):=\left| \begin{array}{cccc}
X_1&\cdots &X_{n-1}&T\\
X_1^p&\cdots &X_{n-1}^p&T^p\\
\cdots&\cdots&\cdots&\cdots\\
X_1^{p^{n-1}}&\cdots &X_{n-1}^{p^{n-1}}&T^{p^{n-1}}
\end{array} \right|,$$
and set $F_{n-1}(T):= \prod_{x\in V}(T-x),$
where $V:=\langle X_1,\cdots,X_{n-1}\rangle_{\mathbb{F}_p}$.

\begin{lemma}\label{Dickson_invariants}
The following hold:
\begin{enumerate}
\item $\partial_n(T)=\partial_{n-1}(X_{n-1})\cdot F_{n-1}(T)$;
\item for any $\alpha_1,\cdots,\alpha_n\in\fld$,
$\partial_n(\alpha_1,\cdots,\alpha_{n-1},\alpha_n)\ne 0$ if and only if
the set $\{\alpha_1,\cdots,\alpha_n\}$ is linearly independent over $\mathbb{F}_p$.
\end{enumerate}
\end{lemma}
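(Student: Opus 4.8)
The statement to prove is Lemma \ref{Dickson_invariants}, which asserts two facts about the ``additive determinant'' $\partial_n(T)$: first, the factorization $\partial_n(T) = \partial_{n-1}(X_{n-1})\cdot F_{n-1}(T)$, where $F_{n-1}(T) = \prod_{x\in V}(T-x)$ with $V=\langle X_1,\dots,X_{n-1}\rangle_{\mathbb{F}_p}$; second, the non-vanishing criterion at a point $(\alpha_1,\dots,\alpha_n)\in\fld^n$ in terms of $\mathbb{F}_p$-linear independence. Both statements are classical (this is Moore's determinant / the theory of Dickson invariants), so the proof is essentially bookkeeping, and I expect no serious obstacle — the only mildly delicate point is keeping track of which variables are ``frozen'' when invoking the inductive hypothesis.

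\textbf{Part (1).} The plan is induction on $n$. Observe first that $\partial_n(\underline X, T)$, viewed as a polynomial in $T$ over $\fld[X_1,\dots,X_{n-1}]$, is $\mathbb{F}_p$-linear additive in $T$: expanding the determinant along the last column, $\partial_n(T) = \sum_{i=0}^{n-1} c_i T^{p^i}$ where $c_i\in\fld[X_1,\dots,X_{n-1}]$ are (up to sign) the complementary $(n-1)\times(n-1)$ minors, and the Frobenius-power monomials $T^{p^i}$ make the map $T\mapsto\partial_n(T)$ an $\mathbb{F}_p$-linear (indeed $p$-polynomial) operator. Consequently, for each $x = \sum_j a_j X_j \in V$ (with $a_j\in\mathbb{F}_p$), substituting $T = x$ into $\partial_n$ produces a determinant whose last column is the corresponding $\mathbb{F}_p$-linear combination of the first $n-1$ columns, hence $\partial_n(x) = 0$. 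Therefore every element of $V$ is a root of $\partial_n(T)\in\fld[X_1,\dots,X_{n-1}][T]$, and since the $|V| = p^{n-1}$ elements of $V$ are pairwise distinct as polynomials (they are distinct $\mathbb{F}_p$-linear forms), the product $F_{n-1}(T) = \prod_{x\in V}(T-x)$ divides $\partial_n(T)$ in $\fld(X_1,\dots,X_{n-1})[T]$, and then in $\fld[X_1,\dots,X_{n-1}][T]$ since $F_{n-1}$ is monic in $T$. Comparing degrees in $T$ (both sides have $T$-degree $p^{n-1}$) shows the quotient $\partial_n(T)/F_{n-1}(T)$ is an element of $\fld[X_1,\dots,X_{n-1}]$ not involving $T$; comparing the coefficient of the top power $T^{p^{n-1}}$ on both sides identifies this quotient as exactly the cofactor of $T^{p^{n-1}}$, which is precisely $\partial_{n-1}(X_1,\dots,X_{n-2},X_{n-1})$ (the minor obtained by deleting the last row and last column, then renaming $X_{n-1}$ as the last variable). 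This gives $\partial_n(T) = \partial_{n-1}(X_{n-1})\cdot F_{n-1}(T)$. The base case $n=1$ reads $\partial_1(T) = T = \partial_0\cdot F_0(T)$ with the empty product conventions, which is immediate.

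\textbf{Part (2).} This follows from (1) by a second induction on $n$. One direction: if $\{\alpha_1,\dots,\alpha_n\}$ is $\mathbb{F}_p$-linearly dependent, then some column of the evaluated matrix is an $\mathbb{F}_p$-linear combination of the others (using again that each row is obtained from the first by applying the Frobenius power $x\mapsto x^{p^i}$, which is $\mathbb{F}_p$-linear), so the determinant vanishes. Conversely, suppose $\{\alpha_1,\dots,\alpha_n\}$ is $\mathbb{F}_p$-independent. By the inductive hypothesis applied to $\{\alpha_1,\dots,\alpha_{n-1}\}$, we have $\partial_{n-1}(\alpha_1,\dots,\alpha_{n-1})\ne 0$ (specializing $X_{n-1}\mapsto\alpha_{n-1}$ in the notation of the Lemma, i.e. the $(n-1)$-variable determinant evaluated at these scalars). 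By part (1), $\partial_n(\alpha_1,\dots,\alpha_n) = \partial_{n-1}(\alpha_1,\dots,\alpha_{n-1})\cdot F_{n-1}(\alpha_n)$ where now $F_{n-1}(T) = \prod_{x\in\langle\alpha_1,\dots,\alpha_{n-1}\rangle_{\mathbb{F}_p}}(T-x)$; since $\alpha_n$ is not in the $\mathbb{F}_p$-span of $\alpha_1,\dots,\alpha_{n-1}$, it equals none of the $x$'s, so $F_{n-1}(\alpha_n)\ne 0$, and hence the product is non-zero. The base case $n=1$ is the observation that $\partial_1(\alpha_1) = \alpha_1 \ne 0$ iff $\{\alpha_1\}$ is $\mathbb{F}_p$-independent, i.e. iff $\alpha_1\ne 0$.

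\textbf{Main obstacle.} There is no real obstacle; the one place requiring care is the precise matching of variables when passing between $\partial_n$ and $\partial_{n-1}$ in part (1) — the cofactor of $T^{p^{n-1}}$ is the minor on rows $1,\dots,n-1$ and columns $1,\dots,n-1$, and one must check this minor is literally $\partial_{n-1}$ with its last argument set to $X_{n-1}$, which is exactly the pattern built into the definition. Everything else is a direct consequence of additivity of the Frobenius power maps over $\mathbb{F}_p$ together with a degree count.
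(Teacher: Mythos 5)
Your proof is correct and follows essentially the same route as the paper: for (1), observe that every $x\in V$ is a root of $\partial_n(T)$, divide by the monic $F_{n-1}(T)$, and identify the quotient via the $T$-degree count and the coefficient of $T^{p^{n-1}}$; for (2), induct using the evaluated factorization from (1). The only cosmetic difference is that for the ``only if'' direction of (2) you argue directly that an $\mathbb{F}_p$-dependence forces a column relation (hence a vanishing determinant), whereas the paper runs the same induction through the factorization again — the two are contrapositives of one another and equally valid.
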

\begin{proof}
(1):\ For every $x\in V$ we have $\partial_n(x)=0$, so considering the $T$-degree we obtain
$\partial_n(T)=c\cdot F_{n-1}(T)$ with $c$ being the coefficient of
$\partial_n(T)$ at $T^{p^n}$, hence $c=\partial_{n-1}(X_{n-1})$.\\
(2):\ Assume that $\{\alpha_1,\cdots,\alpha_n\}\subseteq\fld$ is linearly independent over $\mathbb{F}_p$ and set $$f(T):=\prod_{x\in W}(T-x)\ {\rm with}\  W:=\langle \alpha_1,\cdots,\alpha_{n-1}\rangle_{\mathbb{F}_p}.$$
Then we have
$\partial_n(\alpha_1,\cdots,\alpha_{n-1},\alpha_n)=$
$\partial_{n-1}(\alpha_1,\cdots,\alpha_{n-2},\alpha_{n-1})\cdot f(\alpha_n).$
By induction the first factor is nonzero and $f(\alpha_n)\ne 0$,
since $\alpha_n\not\in W$; hence $\partial_n(\alpha_1,\cdots,\alpha_{n-1},\alpha_n)\ne 0$.
Conversely, if $\partial_n(\alpha_1,\cdots,\alpha_{n-1},\alpha_n)\ne 0$, then, again by induction,
$\{\alpha_1,\cdots,\alpha_{n-1}\}$ is linearly independent over $\mathbb{F}_p$. Moreover
$f(\alpha_n)\ne 0$, so $\alpha_n\not\in W$ and $\{\alpha_1,\cdots,\alpha_n\}$ is linearly independent over $\mathbb{F}_p$.
\end{proof}

\noindent
Let $G$ be an elementary-abelian group of order $p^n$. We
identify $G$ with the additive group $(\mathbb{F}_p^n,+)$ and
write an element $g\in G$ as a vector $g=\sum_{i=1}^n g_i e_i$ with $g_i\in\mathbb{F}_p$ and
$e_i$ the standard basis vector of $\mathbb{F}_p^n$.
Set $\mho:=\fld[Y_1,\cdots,Y_n]$, the polynomial ring in $n$ variables,
then $G$ acts on $\mho$ by the rule $(Y_i)g=Y_i-g_i$ for all $i$, hence
$\mho=U_1\otimes_\fld U_2\otimes_\fld \cdots \otimes_\fld U_n$, with $U_i=\fld[Y_i]\in\ts_{G_i}$
and $G_i:=\langle e_i\rangle$.
It follows from \cite{nonlin} Proposition 3.2 that every
$U_i\in\ts_{G_i}$ is a basic and standard subalgebra of $D_\fld(G_i)$, hence by
Proposition \ref{direct_product_group}, $\mho$ is a standard universal subalgebra of $D_\fld(G)$.
\\[1mm]
Now assume that $\fld$ contains an $n$-dimensional $\mathbb{F}_p$-subspace
$W:=\langle\alpha_1,\cdots,\alpha_n\rangle$; then there is an embedding of abelian
groups
$\alpha:\ G\to W\le\fld^+,\ g\mapsto \alpha_g:=\sum_{i=1}^n g_i\alpha_i$.
Consider a univariate polynomial ring $\fld[Z]$ with (nonlinear) $G$-action extending the maps
$Z\mapsto (Z)g=Z-\alpha_g$ to $\fld$-algebra automorphisms.
The corresponding $\fld-G$-algebra will be denoted by $B_\alpha$. Then the map
$Z\mapsto \sum_{i=1}^n\alpha_iY_i$ extends to a $G$-equivariant
morphism of $\fld-G$-algebras $\theta:\ B_\alpha\to \mho$. It follows from
Lemma \ref{Dickson_invariants} that there exists a matrix
$(f_{ij})^T:=(\alpha_i^{p^{j-1}})^{-1}\in{\rm GL}_n(\fld)$, i.e.
such that $\sum_{j=0}^{n-1}f_{ij}\alpha_k^{p^j}=\delta_{ik}$.
Set $f_i(Z):=\sum_{j=0}^{n-1} f_{ij}Z^{p^j}\in\fld[Z]$, then
$f_i(\alpha_g)=g_i$ for every $g\in G$.
Now define a $\fld$-algebra morphism $\psi:\ \mho\to B_\alpha$ by extending the map
$Y_i\mapsto f_i(Z)$. Then $f_i(\mu+\lambda)=$ $f_i(\mu)+f_i(\lambda)$ for $\mu,\lambda\in\fld$ and
$f_i(\lambda)=\lambda_i$, whenever $\lambda=\sum_{i=1}^n\lambda_i\alpha_i$ with $\lambda_i\in\mathbb{F}_p$.
Hence $\psi\circ\theta(Z)=h(Z)\in\fld[Z]$ is a polynomial of degree less than $p^n$ such that
$h(\lambda)-\lambda=\sum_{i=1}^n\alpha_if_i(\lambda)-\lambda=$
$\sum_{i=1}^n\alpha_i\lambda_i-\lambda=0$ for all $\lambda\in W$. It follows that $h(Z)=Z$.
Moreover, for every $g\in G$ we have $\psi((Y_i)g)=$ $\psi(Y_i-g_i)=$
$\psi(Y_i)-g_i=$ $f_i(Z)-f_i(\alpha_g)=$
$f_i(Z-\alpha_g)=$ $f_i((Z)g)=$ $(f_i(Z))g=$ $(\psi(Y_i))g.$
This shows that $\psi$ is a $G$-equivariant retraction. In particular $B_\alpha$ is a trace-surjective
retract of $\mho$, hence a standard and basic universal algebra in $\ts_G$.
\\
Let $\beta:\ G\to\fld^+,\ g\mapsto \beta_g:=\sum_{i=1}^n g_i\beta_i$
be a different embedding of abelian groups and define $B_\beta$ to be
the univariate polynomial ring $\fld[Z]$ with $G$-action given by $Z\mapsto (Z)g=Z-\beta_g$.
Since the set $\{\beta_1,\cdots,\beta_n\}$ is linearly independent over $\mathbb{F}_p$,
there are $(\lambda_0,\cdots,\lambda_{n-1})\in \fld$ with
$\sum_{j=0}^{n-1}\lambda_j\cdot\beta_i^{p^j}=\alpha_i$ for $i=1,\cdots,n$.
Let $L_{\alpha,\beta}:\ B_\alpha\to B_\beta$ be the algebra homomorphism extending
the map $Z\mapsto f_{\alpha,\beta}(Z):=\sum_{j=0}^{n-1}\lambda_j\cdot Z^{p^j}$.
Then $L_{\alpha,\beta}$ is injective, because $f_{\alpha,\beta}(Z)\not\in\fld$ and
$L_{\alpha,\beta}((Z)g)=$ $L_{\alpha,\beta}(Z-\alpha_g)=$ $L_{\alpha,\beta}(Z)-\alpha_g=$
$f_{\alpha,\beta}(Z)-f_{\alpha,\beta}(\beta_g)=$ $f_{\alpha,\beta}(Z-\beta_g)=$
$(L_{\alpha,\beta}(Z))g.$
So $L_{\alpha,\beta}$ is a $G$-equivariant embedding $B_\alpha\hookrightarrow B_\beta$. In a similar way we see that $L_{\beta,\alpha}\in \ts_G(B_\beta,B_\alpha)$
is injective, hence $B_\beta$ is universal and indeed $B_\alpha\thickapprox B_\beta$.

We summarize

\begin{thm}\label{el_abel_ext_fld_d_is_1}
Let $G$ be elementary-abelian of order $p^n$ and $\mho:=\fld[Y_1,\cdots,Y_n]\in\ts_G$ as described above.
Then the polynomial ring $\mho$ is a standard universal subalgebra of $D_\fld(G)$. \\
Assume now that $\dim_{\mathbb{F}_p}(\fld)\ge n$, then there is an embedding of abelian
groups $$\alpha:\ G\to W:=\langle\alpha_1,\cdots,\alpha_n\rangle_{\mathbb{F}_p}\le\fld^+,\ g\mapsto \alpha_g:=\sum_{i=1}^n g_i\alpha_i$$ and the following hold:
\begin{enumerate}
\item The univariate polynomial ring $B_\alpha=\fld[Z]_\alpha$ with $G$-action as described above is a retract of $\mho$ in $\ts_G$ and a standard universal and basic object in $\ts_G$. In particular $d_\fld(G)=1$.
\item Every basic object in $\ts_G$ which is also a normal ring is of the form $B_\beta$ for some
embedding of abelian groups $\beta:\ G\hookrightarrow\fld^+$.
\item Two normal basic algebras $B_\alpha, B_\beta\in\ts_G$
%with embeddings $\alpha,\beta:\ G\hookrightarrow \fld^+$
are isomorphic if and only if
$\alpha= c\cdot\beta$ for some $0\ne c\in\fld$. They are conjugate under an outer automorphism
of $G$ if and only if $\alpha(G)=c\cdot\beta(G)$ for some $0\ne c\in\fld$.
\item Let $\underline\alpha:=\alpha^{(1)},\cdots,\alpha^{(n)}$ be $n$ not necessarily distinct embeddings $G\hookrightarrow\fld^+$. Then
    $$\mho\cong B_{\underline\alpha}^{\otimes n}\cong B_{\alpha^{(1)}}\otimes_\fld
    B_{\alpha^{(2)}}\otimes_\fld\cdots\otimes_\fld B_{\alpha^{(n)}}.$$
\end{enumerate}
\end{thm}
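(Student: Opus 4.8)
The preamble and statement (1) are essentially contained in the discussion preceding the theorem: $\mho\cong U_1\otimes_\fld\cdots\otimes_\fld U_n$ is a standard universal subalgebra of $D_\fld(G)$ by \cite{nonlin} Proposition 3.2 together with Proposition \ref{direct_product_group}, and the explicit $G$-equivariant maps $\theta\colon B_\alpha\to\mho$ and $\psi\colon\mho\to B_\alpha$ built from Lemma \ref{Dickson_invariants} exhibit $B_\alpha$ as a trace-surjective retract of $\mho$, hence a standard, basic, universal object of $\ts_G$; since $B_\alpha$ is then a basic object of Krull dimension $1$, Proposition \ref{universal_minimal_domains} (or Corollary \ref{universal_minimal_domains_cor1}) forces $d_\fld(G)=1$. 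For (2), let $C\in\ts_G$ be basic and a normal ring. By Proposition \ref{universal_minimal_domains}, $C$ is a domain with $\Dim(C)=d_\fld(G)=1$ and $C\thickapprox B_\alpha$, so by Lemma \ref{surj_surjects_points} there is a $G$-equivariant $\fld$-algebra embedding $C\hookrightarrow B_\alpha=\fld[Z]$. Thus $C$ is a normal affine $\fld$-domain of dimension one with $C^\times=\fld^\times$ (units pull back from $\fld[Z]$) and with rational quotient field (Lüroth, inside $\fld(Z)$); hence $\operatorname{Spec}(C)$ is a smooth rational affine curve with exactly one place at infinity, i.e. $C\cong\fld[t]$. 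Every $\fld$-algebra automorphism of $\fld[t]$ has the form $t\mapsto at+b$, so the $G$-action produces a homomorphism $G\to\fld^\times$, $g\mapsto a_g$; as $\fld^\times$ has no nontrivial $p$-torsion this is trivial, $G$ acts by $t\mapsto t-\beta_g$ with $\beta\colon G\to\fld^+$ a homomorphism, and if $\ker\beta\neq 1$ then $|\ker\beta|$ is a positive power of $p$ and $\tr_G$ factors through $|\ker\beta|\cdot\tr_{G/\ker\beta}=0$, contradicting $C\in\ts_G$; so $\beta$ is an embedding and $C=B_\beta$. Statement (3) is then a direct computation: a $G$-equivariant $\fld$-algebra isomorphism $B_\alpha\to B_\beta$ is $Z\mapsto cZ+d$ with $c\in\fld^\times$, and equivariance forces $\alpha_g=c\beta_g$ for all $g$, i.e. $\alpha=c\beta$ (conversely $Z\mapsto cZ$ works); twisting the $G$-action on $B_\alpha$ by $\sigma\in\Aut(G)$ produces $B_{\alpha\circ\sigma}$ with $(\alpha\circ\sigma)(G)=\alpha(G)$, so by the isomorphism criterion $B_\alpha$ and $B_\beta$ are conjugate under an automorphism of $G$ iff $\alpha(G)=c\beta(G)$ for some $c\in\fld^\times$, the composite $(c^{-1}\alpha)^{-1}\circ\beta$ realizing the conjugacy when the images agree up to scalar.

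For (4) the plan is to prove two isomorphisms and then combine them by induction; the only nontrivial input beyond $\mho^G=\fld[Y_1^p-Y_1,\dots,Y_n^p-Y_n]$ is the Moore-matrix half of Lemma \ref{Dickson_invariants}. First I would establish: \emph{(A) for every embedding $\alpha\colon G\hookrightarrow\fld^+$ one has $\mho\cong B_\alpha\otimes_\fld\fld[v_1,\dots,v_{n-1}]$ with trivial $G$-action on $\fld[\underline v]$.} Put $Z:=\sum_{i=1}^n\alpha(e_i)Y_i$, so $(Z)g=Z-\alpha_g$ and $Z$ transforms via $\alpha$; for $1\le k\le n-1$ put $v_k:=\sum_i\alpha(e_i)^{p^k}(Y_i^{p^k}-Y_i)$. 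Using $Y_i^{p^k}-Y_i=\sum_{j=0}^{k-1}(Y_i^p-Y_i)^{p^j}$ one gets $v_k\in\mho^G$, and using $Z^{p^k}=\sum_i\alpha(e_i)^{p^k}Y_i^{p^k}$ one gets $\sum_i\alpha(e_i)^{p^k}Y_i=Z^{p^k}-v_k$ for $0\le k\le n-1$ (with $v_0=0$). Since $\alpha$ is an embedding, the matrix $\bigl(\alpha(e_i)^{p^k}\bigr)_{0\le k<n,\ 1\le i\le n}$ is invertible by Lemma \ref{Dickson_invariants}(2), so $Y_1,\dots,Y_n$ lie in $\fld[Z,v_1,\dots,v_{n-1}]$; as an $n$-element generating set of the polynomial ring $\mho$ is automatically a system of variables, this gives (A).

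Next I would establish: \emph{(B) for any two embeddings $\alpha,\beta\colon G\hookrightarrow\fld^+$ one has $B_\alpha\otimes_\fld B_\beta\cong B_\alpha\otimes_\fld\fld[w]$ with trivial action on $\fld[w]$.} Write $B_\alpha\otimes_\fld B_\beta=\fld[Z_1,Z_2]$ with $(Z_1)g=Z_1-\alpha_g$, $(Z_2)g=Z_2-\beta_g$; by invertibility of the same Moore matrix there is an additive polynomial $h(T)=\sum_{k=0}^{n-1}c_kT^{p^k}$ with $h(\alpha(e_i))=\beta(e_i)$, hence $h(\alpha_g)=\beta_g$ for all $g$, and then $w:=Z_2-h(Z_1)$ satisfies $(w)g=Z_2-\beta_g-h(Z_1)+h(\alpha_g)=w$, so $w$ is $G$-invariant and $\fld[Z_1,w]=\fld[Z_1,Z_2]$. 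Iterating (B) over the tensor factors gives $\bigotimes_{j=1}^{n}B_{\alpha^{(j)}}\cong B_{\alpha^{(1)}}\otimes_\fld\fld^{[n-1]}$ with trivial action on the added $n-1$ variables, and comparing this with (A) applied to $\alpha:=\alpha^{(1)}$ yields $\mho\cong\bigotimes_{j=1}^{n}B_{\alpha^{(j)}}=B_{\underline\alpha}^{\otimes n}$, which is (4).

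The step I expect to be the real obstacle is the geometric input in (2): the claim that a normal affine rational $\fld$-curve admitting a dominant morphism from $\mathbb{A}^1$ is isomorphic to $\mathbb{A}^1$. This is the one place the argument leaves the purely algebraic framework of the paper, and one has to be careful that normality (not merely finite generation) is what makes the ``one place at infinity'' bound force $\operatorname{Spec}(C)\cong\mathbb{A}^1$ rather than a singular or higher-genus model. By contrast (4), although it is the substantive new isomorphism, is routine once one notices that Lemma \ref{Dickson_invariants} provides exactly the Moore-matrix invertibility needed both to build the invariant coordinates $v_k$ in (A) and to linearize one translation cocycle against another in (B).
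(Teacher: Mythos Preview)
Your proposal is correct, and for parts (1) and (3) it matches the paper. Parts (2) and (4) take genuinely different routes.

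For (2), the paper embeds the basic normal object into $\mho=\fld[Y_1,\dots,Y_n]$ and then simply cites a result of Eakin to conclude that a normal one-dimensional finitely generated $\fld$-subalgebra of a polynomial ring is itself $\fld[T]$. Your geometric argument via L\"uroth and the embedding into $B_\alpha=\fld[Z]$ works, but one intermediate step is slightly off: the condition $C^\times=\fld^\times$ by itself only gives \emph{at most one closed point} at infinity, not a $\fld$-rational one; over $\fld\ne\bar\fld$, removing a single closed point of degree $>1$ from $\mathbb{P}^1_\fld$ also yields trivial units yet not $\mathbb{A}^1$. The missing ingredient is exactly what you isolate in your final paragraph: the dominant morphism $\mathbb{A}^1_Z\to{\rm Spec}(C)$ extends to a finite morphism $\mathbb{P}^1_Z\to\mathbb{P}^1_u$, so the unique missing point of ${\rm Spec}(C)$ is the image of the $\fld$-rational point $\infty$ and hence itself $\fld$-rational. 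With that patch your argument is complete and more self-contained than the paper's citation.

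For (4), the paper first treats the special family $\alpha^{(s)}=(\alpha^{(1)})^{p^{s-1}}$ by building an explicit surjection $\bigotimes_s B_{\alpha^{(s)}}\to\mho$ via the Moore matrix, and then invokes the abstract erasability machinery (Proposition \ref{triangular implies erasable} and Theorem \ref{erasbility_theorem}) to obtain both $\mho\cong B_{\alpha^{(1)}}\otimes_\fld\fld^{[n-1]}$ and $\bigotimes_j B_{\alpha^{(j)}}\cong B_{\alpha^{(1)}}\otimes_\fld\fld^{[n-1]}$ for arbitrary embeddings, whence the result. Your explicit coordinate changes (A) and (B) accomplish the same thing directly, never appealing to erasability; indeed your (B) is precisely an explicit instance of the erasability of $B_\beta$ applied with $A=B_\alpha$. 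This is more elementary. A small remark: the detour through $\mho^G=\fld[Y_1^p-Y_1,\dots,Y_n^p-Y_n]$ and the telescoping identity for $Y_i^{p^k}-Y_i$ is unnecessary, since $(Y_i)g=Y_i-g_i$ with $g_i\in\mathbb{F}_p$ already makes $Y_i^{p^k}-Y_i$ visibly $G$-invariant.
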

\begin{proof}
(1):\ This has already been shown. (2):\ Let $N\in\ts$ be basic and normal. Then $N\hookrightarrow \mho$
and it follows from \cite{eakin} that $N\cong\fld[T]$ is a univariate polynomial algebra. It is clear that the $G$-action is of the form $(T)g=T-\beta(g)$ with $\beta\in {\rm Hom}(G,\fld^+)$. Since $\ker(\beta)\le G$ acts
trivially on $N$, which is a faithful $\fld G$-module, we must have $\ker(\beta)=1$, so $\beta$ is injective
and $N\cong B_\beta\in\ts$.\ (3):\ Let $\eta\in {\rm Aut}(G)\cong{\rm GL}_n(\mathbb{F}_p)$ and assume that
$\theta$ is an $\fld$-algebra isomorphism $B_\alpha\to (B_\beta)^\eta$.
Then $\theta(Z)=c\cdot Z+\mu$ with $c,\mu\in\fld$ and $c\ne 0$, such that
$\theta((Z)g)=$ $\theta(Z-\alpha(g))=$ $c Z+\mu-\alpha(g)=$
$(\theta(Z))\eta(g)=$ $(c Z+\mu)\eta(g)=$ $c (Z-\beta(\eta(g)))+\mu.$
This implies $\alpha(g)=c\cdot\beta(\eta(g))$ for all $g\in G$ and the last statement
in (3) follows easily.\\
(4):\
As above we define $\theta_s\in\ts_G(B_{\alpha^{(s)}},\mho)$ by
$Z\mapsto \sum_{j=1}^n\alpha^{(s)}_jY_j$.
Assume first that $\alpha^{(s)}=(\alpha^{(1)})^{(p^{s-1})}$ with
$\alpha^{(s)}_i=(\alpha_i^{(1)})^{p^{s-1}}$.
Set $\Gamma:=(\gamma_{ij})=(\alpha_i^{p^{j-1}})^{-1}\in {\rm GL}_n(\fld)$,
then $Y_k=$ $\sum_{j=1}^n \gamma_{kj}\theta_j(Z)$, hence
the coproduct morphism $\Theta:=\coprod_{s=1}^n{\theta_s}:=\theta_1\otimes\cdots\otimes\theta_n$
is a surjective $G$-equivariant algebra homomorphism from $\otimes_{i=1}^nB_{\alpha^{(i)}}$ to $\mho$.
Since both algebras are polynomial algebras of Krull-dimension $n$, $\Theta$ is
an isomorphism. Clearly $\mho$ and each of the
$B_{\alpha^{(i)}}$ are triangular and therefore erasable.
It follows that $\mho\cong B_{\alpha^{(1)}}\otimes_\fld\fld[\lambda_2,\cdots,\lambda_{n}]$
with $\fld^{[n-1]}\cong\fld[\lambda_2,\cdots,\lambda_{n}]\le\mho^G$.
Now we take $\alpha^{(i)}$ for $i=2,...,n$ to be arbitrary embeddings
$G\hookrightarrow\fld^+$. As before we see that $B_{\underline\alpha}^{\otimes n}\cong$
$B_{\alpha^{(1)}}\otimes_\fld\fld[\mu_2,\cdots,\mu_{n}]$
with $\fld^{[n-1]}\cong\fld[\mu_2,\cdots,\mu_{n}]\le(B_{\underline\alpha}^{\otimes n})^G$, so
$\mho\cong B_{\underline\alpha}^{\otimes n}$. This finishes the proof.
\end{proof}

\begin{cor}\label{el_abel_ext_fld_d_is_1_cor}
Let $G\cong \mathbb{F}_{p^n}^+$ and $\mathbb{F}_{p^s}\le\fld$ for some
$s\le n$. Then
$$d_\fld(G)\le\begin{cases} n/s&\text{if $s$ divides $n$}\cr
\lfloor{n/s}\rfloor+1&\text{otherwise}\cr
\end{cases}$$
where $\lfloor x\rfloor$ is the largest integer $\le x$.
\end{cor}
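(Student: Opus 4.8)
The plan is to reduce everything to the single-generator-rank case already settled in Theorem \ref{el_abel_ext_fld_d_is_1}, by splitting $G$ as an abelian group into a direct sum of copies of $\mathbb{F}_{p^s}^+$ plus one leftover summand, and then to combine the resulting estimates using the subadditivity of $d_\fld$ under direct products recorded in Proposition \ref{direct_product_group}(3).

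First I would write $n=qs+r$ with $0\le r<s$, so that $q=\lfloor n/s\rfloor$, and fix an isomorphism of abelian groups
$$G\cong\mathbb{F}_{p^n}^+\cong\underbrace{\mathbb{F}_{p^s}^+\oplus\cdots\oplus\mathbb{F}_{p^s}^+}_{q}\ \oplus\ \mathbb{F}_{p^r}^+,$$
which exists since all the groups involved are elementary-abelian of the correct total $\mathbb{F}_p$-rank $n$. The hypothesis $\mathbb{F}_{p^s}\le\fld$ forces $\dim_{\mathbb{F}_p}(\fld)\ge s$, and since $0\le r<s$ we also have $\dim_{\mathbb{F}_p}(\fld)\ge r$. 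Hence Theorem \ref{el_abel_ext_fld_d_is_1}(1) applies both to each summand $\mathbb{F}_{p^s}^+$ and, when $r>0$, to the summand $\mathbb{F}_{p^r}^+$, giving $d_\fld(\mathbb{F}_{p^s}^+)=1$ and $d_\fld(\mathbb{F}_{p^r}^+)=1$ in those cases.

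Then a straightforward induction on the number of direct factors, based on Proposition \ref{direct_product_group}(3), yields $d_\fld(G)\le q$ when $r=0$ (i.e.\ when $s$ divides $n$), since $G$ is then a sum of $q$ copies of $\mathbb{F}_{p^s}^+$; this is exactly $d_\fld(G)\le n/s$. When $r>0$ the same induction gives $d_\fld(G)\le q\cdot 1+1=\lfloor n/s\rfloor+1$, accounting for the extra $\mathbb{F}_{p^r}^+$-summand. This is precisely the asserted bound.

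Since every ingredient is already in place, there is no substantial obstacle here; the only points deserving a line of care are (i) verifying that the leftover summand $\mathbb{F}_{p^r}^+$ still satisfies the rank hypothesis of Theorem \ref{el_abel_ext_fld_d_is_1}, which holds because $r<s\le\dim_{\mathbb{F}_p}(\fld)$, and (ii) treating the boundary case $r=0$ separately, where there is no leftover summand at all.
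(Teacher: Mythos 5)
Your proof is correct and follows essentially the same route as the paper: write $n=ms+r$, split $G$ into $m$ copies of $\mathbb{F}_{p^s}^+$ plus a leftover $\mathbb{F}_{p^r}^+$, apply Theorem \ref{el_abel_ext_fld_d_is_1} to each factor (each has $d_\fld=1$ since $r<s\le\dim_{\mathbb{F}_p}(\fld)$), and conclude by the subadditivity in Proposition \ref{direct_product_group}(3). Your added care about the boundary case $r=0$ and the rank hypothesis for the leftover summand is a welcome elaboration of details the paper leaves implicit.
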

\begin{proof}
Let $n=ms+r$ with $0\le r<s$. Then Proposition \ref{direct_product_group}
and Theorem \ref{el_abel_ext_fld_d_is_1} give
$d_\fld((C_p)^n)\le $
$m\cdot d_\fld((C_p)^s)+d_\fld((C_p)^r),$ which is equal to
$m=n/s$, if $r=0$ and equal to $m+1=\lfloor{n/s}\rfloor+1$ otherwise.
\end{proof}

With the help of Theorem \ref{erasbility_theorem}
we can classify the minimal normal generators and minimal normal $s$-projective objects of $\ts$ in the case where
$G$ is elementary-abelian and $\fld$ is large enough. We will use the notation
introduced before Theorem \ref{el_abel_ext_fld_d_is_1}:

\begin{prp}\label{el_ab_min_gen}
Let $G$ be elementary-abelian of order $p^n$ and $\dim_{\mathbb{F}_p}(\fld)\ge n$ and let
$\Gamma\in\ts$ be a normal ring. Then the following are equivalent:
\begin{enumerate}
\item $\Gamma$ is a generator and minimal in $\ts$;
\item $\Gamma\cong B_\alpha=\fld[Z]_\alpha$ for some embedding $\alpha:\ G\hookrightarrow k^+$;
\item $\Gamma$ is an $s$-projective and minimal object in $\ts$.
\end{enumerate}
\end{prp}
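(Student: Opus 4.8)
The plan is to reduce the whole equivalence to two ingredients that are already in place: on one hand the general categorical machinery of Section \ref{sec_s-Projectivity} (erasability $\Rightarrow$ self-erasing, and for self-erasing objects ``universal $\iff$ $s$-projective $\iff$ $s$-generator''), on the other hand Theorem \ref{el_abel_ext_fld_d_is_1}, which provides the algebras $B_\alpha$ and classifies the normal basic objects of $\ts_G$. The unifying remark is that each of the three conditions forces $\Gamma$ to be \emph{universal}: a categorical generator in $\ts_G$ is weakly initial because right multiplication by any nontrivial element of $Z(G)$ (here $Z(G)=G\ne 1$) is a nontrivial automorphism of every object in $\ts_G$; an $s$-projective object is universal by Remark \ref{universal_df_rem}(1); and an algebra $B_\alpha$ is universal because it is basic by Theorem \ref{el_abel_ext_fld_d_is_1}(1). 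Since ``basic'' means exactly ``universal and minimal'' (Definition \ref{basic_df}), conditions (1) and (3) each assert precisely that the normal ring $\Gamma$ is a basic object.

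First I would treat $(1)\Rightarrow(2)$ and $(3)\Rightarrow(2)$ together. In either case the preceding observation shows that $\Gamma$ is universal, and by hypothesis $\Gamma$ is minimal in $\ts_G$, so $\Gamma$ is basic. As $\Gamma$ is also a normal ring, Theorem \ref{el_abel_ext_fld_d_is_1}(2) yields an embedding of abelian groups $\beta:\ G\hookrightarrow\fld^+$ with $\Gamma\cong B_\beta$, which is (2).

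Next I would prove $(2)\Rightarrow(1)$ and $(2)\Rightarrow(3)$. Assume $\Gamma\cong B_\alpha=\fld[Z]_\alpha$. Its $G$-action is $(Z)g=Z-\alpha_g$, so $B_\alpha$ is a univariate polynomial ring with triangular $G$-action; by Proposition \ref{triangular implies erasable} it is erasable, in particular self-erasing, and by Theorem \ref{el_abel_ext_fld_d_is_1}(1) it is basic, hence universal and minimal. Applying Theorem \ref{erasbility_theorem} to the self-erasing universal object $B_\alpha$ shows that $B_\alpha$ is simultaneously $s$-projective and an $s$-generator; being an $s$-generator it is a categorical generator by Lemma \ref{generators in ts}. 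Thus $B_\alpha$ is a minimal categorical generator (giving (1)) and a minimal $s$-projective object (giving (3)); since these are isomorphism-invariant properties, the same holds for $\Gamma$. Combining the two halves gives $(1)\iff(2)\iff(3)$.

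The proof is essentially a bookkeeping assembly of earlier results, so there is no hard computational step; the only place demanding a little care is $(1)\Rightarrow(2)$, where one must first upgrade ``categorical generator'' to ``universal'' via the central-automorphism argument before the classification of normal basic algebras can be invoked. All of the genuine content — that $B_\alpha$ exists, is a $G$-equivariant retract of $\mho$, is basic, and that these algebras exhaust the normal basic objects of $\ts_G$ — was already carried out in Theorem \ref{el_abel_ext_fld_d_is_1}.
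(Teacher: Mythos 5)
Your proposal is correct and follows essentially the same route as the paper: both directions reduce to the observation that generators and $s$-projective objects are universal (hence, with minimality, basic), then invoke Theorem \ref{el_abel_ext_fld_d_is_1} to identify normal basic objects with the $B_\alpha$, and recover (1) and (3) from triangularity via Proposition \ref{triangular implies erasable}, Theorem \ref{erasbility_theorem} and Lemma \ref{generators in ts}. The only (harmless) divergence is that you derive $s$-projectivity of $B_\alpha$ from erasability, whereas the paper cites Theorem \ref{el_abel_ext_fld_d_is_1} directly.
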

\begin{proof}
``(1) or (3) $\Rightarrow$ (2) ":\ Since every generator and every $s$-projective object is universal,
this follows from Theorem \ref{el_abel_ext_fld_d_is_1}.\ ``(2) $\Rightarrow$ (3) ":\
This also follows directly from Theorem \ref{el_abel_ext_fld_d_is_1}.
``(2) $\Rightarrow$ (1) ":\ Since $B_\alpha$ is basic, it is minimal in $\ts$, so
it remains to show that $B_\alpha$ is a generator. But $B_\alpha\in\ts$ is triangular and therefore
erasable, so it follows from Theorem \ref{erasbility_theorem}
that $B_\alpha$ is an s-generator, hence a generator (see Lemma \ref{generators in ts}).
\end{proof}

\section{Basic Algebras and the Essential Dimension of $G$}\label{sec_Basic Algebras}

In this section we are going to point out interesting connections to the notion of ``essential dimension"
of a group, as defined by Buhler and Reichstein (\cite{Buhler:Reichstein}).
Let for the moment $\fld$ be an arbitrary field and $\frak{G}$ an arbitrary finite group, acting
faithfully on the finite-dimensional $\fld$-vector space $V$.
Then the essential dimension $e_\fld(\frak{G})$ is defined to be the minimal transcendence degree
over $\fld$ of a field $E$ with $\fld\le  E\le \fld(V^*):={\rm Quot}(S_\fld(V^*))$ such that $\frak{G}$ acts faithfully on $ E$. It can be shown, that the value $e_\fld(\frak{G})$ only depends
on the group $\frak{G}$ and the field $\fld$, but not on the choice of the faithful representation
(see \cite{Buhler:Reichstein} Theorem 3.1., if $\fld$ has characteristic $0$ and
\cite{berhuy_favi} Proposition 7.1 or \cite{Kang_essdim} for arbitrary field $\fld$).
%In order to match up with our results we prefer a slightly different, but equivalent definition.
For an arbitrary field $ K\ge\fld$ together with an embedding of $\frak{G}$ in $\Aut_\fld(K)$, define
$$e_\fld( K):=\min\{{\rm tr.deg}_\fld E\ |\ \fld\le E\le K,\ E\ \text{is $\frak{G}$-stable
with faithful action}\},$$
in other words, $e_\fld(K)$ is the minimum transcendence degree of a Galois field extension
$E/E^\frak{G}$ containing $\fld$ and contained in $K$.

\begin{lemma} \label{ess_dim_def}
$e_\fld(\frak{G})=
\max_{\fld\le K\atop\frak{G}\le\Aut_\fld( K)} e_\fld( K)=
\max_{\fld\le K\atop\frak{G}\le\Aut_\fld( K)}
(\min_{\fld\le E\le K\atop\frak{G}\le\Aut_\fld( E)}{\rm tr.deg}_\fld E).$
\\
Moreover, $e_\fld(\frak{G})=e_\fld(L)$ for any field $L\le\fld(V^*)$ with  $\frak{G}\le\Aut_\fld(L)$.
\end{lemma}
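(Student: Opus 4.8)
The plan is as follows. The middle equality in the displayed formula is just the definition of $e_\fld(K)$, so only one genuine inequality remains. Taking $K=\fld(V^*)$ gives $e_\fld(\frak{G})=e_\fld(\fld(V^*))\le\max_K e_\fld(K)$ straight from the definition of $e_\fld(\frak{G})$; and if $\fld\le L\le\fld(V^*)$ is $\frak{G}$-stable with faithful action, then the faithful $\frak{G}$-stable subfields $E$ with $\fld\le E\le L$ form a subfamily of those with $\fld\le E\le\fld(V^*)$, so $e_\fld(L)\ge e_\fld(\frak{G})$. Hence everything reduces to a single statement: for an \emph{arbitrary} field $K\ge\fld$ carrying a faithful $\frak{G}$-action one has $e_\fld(K)\le e_\fld(\frak{G})$. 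Applied to all such $K$ this gives $\max_K e_\fld(K)\le e_\fld(\frak{G})$, and applied to $K=L$ it finishes the last sentence.

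To prove $e_\fld(K)\le e_\fld(\frak{G})$ I would first shrink $K$. For each $1\ne g\in\frak{G}$ pick $\theta_g\in K$ with $g\theta_g\ne\theta_g$ and set $W:=\sum_{g\ne 1}\langle\frak{G}\theta_g\rangle_\fld\le K$, a finite-dimensional $\fld\frak{G}$-submodule on which $\frak{G}$ acts faithfully. By the universal property of the symmetric algebra the inclusion $W\hookrightarrow K$ extends to a $\frak{G}$-equivariant $\fld$-algebra homomorphism ${\rm Sym}_\fld(W)\to K$, whose image is a $\frak{G}$-stable domain; let $K'\le K$ be its fraction field. Then $K'\cong\fld(Z)$ for a $\frak{G}$-stable integral closed subvariety $Z$ of the affine space $\mathbb{A}(W):={\rm Spec}({\rm Sym}_\fld(W))$, on which $\frak{G}$ acts linearly and faithfully, and $e_\fld(K)\le e_\fld(K')$ since $K'\le K$; so it suffices to bound $e_\fld(\fld(Z))$. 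If $\dim Z=0$ this is $0$, so assume $\dim Z\ge 1$. As $\frak{G}$ is finite, its faithful action on the irreducible $Z$ is generically free (the non-free locus $\bigcup_{g\ne 1}\{z\in Z:gz=z\}$ is a proper closed subset), so the no-name lemma (see \cite{Buhler:Reichstein} or \cite{Kang_essdim}), applied to $Z$ and the representation $W$ with the diagonal action on $Z\times\mathbb{A}(W)$, gives a $\frak{G}$-equivariant isomorphism $\fld(Z\times\mathbb{A}(W))\cong\fld(Z)(t_1,\dots,t_m)$, $m=\dim W$, with the $t_j$ \emph{invariant}. Composing the equivariant dominant second projection $Z\times\mathbb{A}(W)\to\mathbb{A}(W)$ with this isomorphism gives a $\frak{G}$-equivariant embedding $\fld(\mathbb{A}(W))\hookrightarrow\fld(Z)(t_1,\dots,t_m)$. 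Now $\fld(\mathbb{A}(W))={\rm Quot}({\rm Sym}_\fld(W))$ is the function field of a faithful linear representation, so $e_\fld(\fld(\mathbb{A}(W)))=e_\fld(\frak{G})$ by the representation-independence recalled above; I pick inside it a $\frak{G}$-stable faithful subfield $E_0$ with ${\rm tr.deg}_\fld E_0=e_\fld(\frak{G})$ and view it in $\fld(Z)(t_1,\dots,t_m)$. Finally I would specialize the $t_j$: choose a primitive element $\xi$ of the Galois extension $E_0/E_0^\frak{G}$ and scalars $c_1,\dots,c_m\in\fld(Z)^\frak{G}$ — an infinite field, since $\dim Z\ge 1$ — avoiding the finitely many hypersurfaces on which some generator of $E_0$, or some difference $g\xi-\xi$ ($g\ne 1$), acquires a pole or a zero. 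Since the $t_j$ are $\frak{G}$-invariant, the place $t_j\mapsto c_j$ is $\frak{G}$-equivariant, and it maps $E_0$ onto a $\frak{G}$-stable subfield $\overline{E_0}\le\fld(Z)$ with ${\rm tr.deg}_\fld\overline{E_0}\le{\rm tr.deg}_\fld E_0=e_\fld(\frak{G})$ (transcendence degree cannot increase under a place); the image $\overline\xi$ of $\xi$ is fixed by no $g\ne 1$, so $\frak{G}$ still acts faithfully on $\overline{E_0}$. Hence $e_\fld(\fld(Z))\le e_\fld(\frak{G})$, as needed.

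The hard part will be the last step: a $\frak{G}$-equivariant specialization can collapse exactly the elements that witness faithfulness, and this is precisely why I invoke the no-name lemma beforehand. Without it the only $\frak{G}$-equivariant specialization of the coordinates of $\mathbb{A}(W)$ available is the one towards the fixed point $0$, i.e. towards the locus where faithfulness typically degenerates; after the no-name substitution the extra coordinates carry the \emph{trivial} action, so a \emph{generic} invariant specialization of them does preserve faithfulness. (Alternatively, the whole lemma can be deduced by quoting the standard fact, valid over any base field, that $e_\fld(\frak{G})=\sup_K e_\fld(K)$ with the supremum attained at $K=\fld(V^*)$; see \cite{Kang_essdim}.)
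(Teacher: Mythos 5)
Your reduction is exactly the paper's: both arguments note that the middle equality is just the definition of $e_\fld(K)$, that $K=\fld(V^*)$ realizes the maximum from below, and that everything else follows once one knows the single inequality $e_\fld(K)\le e_\fld(\frak{G})$ for \emph{every} faithful $\frak{G}$-field $K$. The difference is in how that inequality is handled: the paper simply cites \cite{Kang_essdim}, Proposition 2.9 (which is also your parenthetical fallback), whereas you inline a proof of it --- reduce to a finitely generated $K'=\fld(Z)$ with $Z$ a $\frak{G}$-stable irreducible subvariety of a faithful representation, invoke the no-name lemma for $Z\times\mathbb{A}(W)$, and push an essential subfield $E_0$ of $\fld(\mathbb{A}(W))$ into $\fld(Z)$ by a generic $\frak{G}$-invariant specialization of the invariant coordinates $t_j$. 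This is the standard proof of the cited result and your execution is sound on the delicate points: $\fld(Z)^{\frak{G}}$ is infinite once $\dim Z\ge 1$; the place $t_j\mapsto c_j$ is equivariant because its centre is generated by invariants; the transcendence degree of a residue field cannot exceed that of the field; and faithfulness survives because a primitive element $\xi$ of the (finite, separable) Galois extension $E_0/E_0^{\frak{G}}$ satisfies $g\xi\ne\xi$ for all $g\ne 1$, so one only needs to avoid finitely many proper closed conditions over an infinite field. Two small points of hygiene: strictly it is the valuation ring $\mathcal{O}\cap E_0$, not all of $E_0$, that maps onto the residue field $\overline{E_0}$ (an element such as $1/(u-\bar u)$ need not be finite at the place), though this costs nothing since $\overline{E_0}$ is still a faithful $\frak{G}$-stable subfield of $\fld(Z)$ of transcendence degree at most $\mathrm{tr.deg}_\fld E_0$; and your step $e_\fld(\fld(\mathbb{A}(W)))=e_\fld(\frak{G})$ relies on representation-independence, which is legitimate here because the paper recalls that fact, with references, immediately before the lemma. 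In short: same skeleton, but your version buys self-containedness (modulo the no-name lemma and representation-independence) at the cost of reproving the reference the paper leans on.
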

\begin{proof}
Define $\tilde e_\fld(\frak{G}):=\max_{\fld\le K\atop\frak{G}\le\Aut_\fld( K)} e_\fld(K)$.
By definition $e_\fld(\frak{G})=e_\fld(k(V^*))\le$
$\tilde e_\fld(\frak{G})$. By \cite{Kang_essdim} Proposition 2.9,
$e_\fld(K)\le e_\fld(\frak{G})$ for any field $K$ with
$\frak{G}\le\Aut_\fld(K)$, hence $\tilde e_\fld(\frak{G})\le e_\fld(\frak{G}).$
Now pick any field $L\le\fld(V^*)$ with faithful $\frak{G}$-action.
Then by the definitions we have
$e_\fld(\frak{G})\le e_\fld(L)\le\tilde e_\fld(\frak{G}),$
which finishes the proof.
\end{proof}

\begin{df}\label{essential_G_field}
A field extension $L\ge \fld$ with $\frak{G}\le \Aut_\fld(L)$ will be
called a {\bf $\frak{G}$-field } (over $\fld$). If ${\rm tr.deg}_\fld L=e_\fld(\frak{G})=e_\fld(L)$,
then $L$ will be called an {\bf essential} $\frak{G}$-field (over $\fld$).
\end{df}

Now let $\fld$ again be of characteristic $p>0$, let $G$ be a $p$-group and choose
$V:=V_{reg}$. Let $B\in\ts$ be basic (and cyclic, if we wish), then
$B\prec D_\fld$ with
$${\rm Quot}(B)\le {\rm Quot}(D_\fld)\le \fld(V^*).$$
Clearly $G$ acts faithfully on ${\rm Quot}(B)$, so $d_\fld(G)=\Dim(B)\ge e_\fld(G)$.
On the other hand, let $\fld\le K$ be essential with $K\le \fld(V^*)$, then we can
choose a point $a\in K$ and consider the algebra
$A:=\fld[a^G]:=\fld[(a)g\ |\ g\in G]\in \ts$.
It follows from the definition of $e_\fld(G)$ that $\Dim(A)=e_\fld(G)$. Moreover,
the map $(x_1)g\mapsto (a)g$ extends to a surjective morphism $\phi:\ D_\fld\to A$, so
$A\cong D_\fld/\ker(\phi)$ is a cyclic domain in $\ts$. If $\frak{U}\le D_\fld$ is universal,
there is also a morphism $\alpha\in\ts(\frak{U},A)$ and since $\alpha(\frak{U})\subseteq K$ with faithful $G$-action on $\alpha(\frak{U})$ it follows again from the definition of $e_\fld(G)$ that $\Dim(A)=\Dim(\alpha(\frak{U}))=e_\fld(G)$.
Hence
$d_\fld(G)=\Dim(B)\ge \Dim(\alpha(\frak{U}))=$ ${\rm tr.deg}_\fld({\rm Quot}(\alpha(\frak{U})))=e_\fld(G)$,
so $K\ge {\rm Quot}(\alpha(\frak{U}))$ is an algebraic extension.
Note that $\alpha(\frak{U})\cong \frak{U}/{\rm p}$ for some $G$-stable prime ideal ${\rm p}\unlhd \frak{U}$.
Conversely, if $\wp\in {\rm Spec}(\frak{U})$ is $G$-stable such that
$k(\wp):={\rm Quot}(\frak{U}/\wp)\le K$, then $K$ is algebraic over $k(\wp)$, so
$\fld\le k(\wp)$ is essential. It follows that $e_\fld(G)$ is the minimum of the transcendence degrees
of ``embedded residue class fields" ${\rm tr.deg}_\fld k(\wp)$ of those $G$-stable prime ideals $\wp\unlhd\frak{U}$ that satisfy $k(\wp)\hookrightarrow{\rm Quot}(\frak{U}).$
This motivates the following

\begin{df}\label{S_A}
Let $A\in\ts$ with total ring of quotients $Q(A):={\rm Quot}(A)$. With
${\rm Spec}(A)^G$ we denote the set of $G$-stable prime ideals of $A$.
We also define
$$\calS_{A}:=\{k(\wp)\ |\ \wp\in {\rm Spec}(A)^G,\ \exists\ \text{a $G$-equivariant embedding}
\  k(\wp)\hookrightarrow Q(A)\},$$
the set of all ``embedded residue class fields" of $G$-stable prime ideals of $A$.
\end{df}
Note that if $A\in\ts$ is a domain, then $Q(A)=k(0)\in\calS_A$.
We can now summarize

\begin{prp}\label{ess_dim_squeeze}
Let $\fld$ be a field of characteristic $p>0$, $G$ a group of order $p^n$
and $\frak{U}\le D_\fld$ a universal trace-surjective algebra (e.g. any basic
algebra). Set $d_{{\rm dom},\fld}(G):={\rm min}\{\Dim(C)\ |\ C\in\ts\ |\ C\ (cyclic)\ {\rm domain}\}$, then
$$n\ge d_\fld(G)\ge e_\fld(G)\ge d_{{\rm dom},\fld}(G).$$
Moreover $e_\fld(G)=e_\fld(Q(\frak{U}))=$
$\min\{{\rm tr.deg}_\fld k(\wp)\ |\ k(\wp)\in \calS_\frak{U}\}$ and every essential $G$-field $K\ge \fld$
is algebraic over an essential $G$-field of the form  $\fld\le k(\wp)\in \calS_\frak{U}$.
\end{prp}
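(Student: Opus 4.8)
\emph{Plan.} All four assertions can be squeezed out of Lemma \ref{ess_dim_def}, using in addition that $\frak{U}\le D_\fld\le\fld(V^*)$ (so $Q(\frak{U})\le\fld(V^*)$), that every object of $\ts$ carries a faithful $G$-action while surjective $G$-equivariant images of trace-surjective algebras stay trace-surjective (Theorem \ref{first_main}), that a Galois field extension is trace-surjective and hence has a point, and that $\frak{U}$ is universal. I would argue the four claims in turn.

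\emph{The chain of inequalities.} The bound $d_\fld(G)\le n$ is Theorem \ref{arb_p_grp_sec0}. For $d_\fld(G)\ge e_\fld(G)$ pick a basic $B\in\frak{B}$; by Proposition \ref{universal_minimal_domains} $B\prec D_\fld$, so $Q(B)\le\fld(V^*)$ is a $G$-field with faithful action and the ``Moreover'' part of Lemma \ref{ess_dim_def} gives $e_\fld(G)=e_\fld(Q(B))\le{\rm tr.deg}_\fld Q(B)=\Dim(B)=d_\fld(G)$. For $e_\fld(G)\ge d_{{\rm dom},\fld}(G)$ fix an essential $G$-field $K\le\fld(V^*)$ (these exist because $e_\fld(\fld(V^*))=e_\fld(G)$ is realized), choose a point $a\in K$ (available since $K/K^G$ is Galois, hence trace-surjective), and put $A:=\fld[a^G]\le K$, a cyclic domain in $\ts$. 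As $Q(A)\le K$ is $G$-stable with faithful action and $K$ is essential, ${\rm tr.deg}_\fld Q(A)\ge e_\fld(K)=e_\fld(G)$ and ${\rm tr.deg}_\fld Q(A)\le{\rm tr.deg}_\fld K=e_\fld(G)$; thus $\Dim(A)=e_\fld(G)$, whence $d_{{\rm dom},\fld}(G)\le e_\fld(G)$.

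\emph{The two equalities.} The identity $e_\fld(G)=e_\fld(Q(\frak{U}))$ is immediate from Lemma \ref{ess_dim_def}, since $Q(\frak{U})\le\fld(V^*)$ has a faithful $G$-action. For $e_\fld(G)=\min\{{\rm tr.deg}_\fld k(\wp)\mid k(\wp)\in\calS_\frak{U}\}$: any $k(\wp)\in\calS_\frak{U}$ embeds $G$-equivariantly into $Q(\frak{U})\le\fld(V^*)$, and $\frak{U}/\wp\in\ts$ acts faithfully on it, so Lemma \ref{ess_dim_def} gives $e_\fld(G)=e_\fld(k(\wp))\le{\rm tr.deg}_\fld k(\wp)$, which is the inequality ``$\le$''. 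For ``$\ge$'', exploit $e_\fld(Q(\frak{U}))=e_\fld(G)$ to find a $G$-stable faithful subfield $E\le Q(\frak{U})$ with ${\rm tr.deg}_\fld E=e_\fld(G)$; choose a point $b\in E$, set $B:=\fld[b^G]\le E\le Q(\frak{U})$, and use universality to get $\alpha\in\ts(\frak{U},B)$ with $\wp:=\ker\alpha\in{\rm Spec}(\frak{U})^G$. Then $\frak{U}/\wp\cong\alpha(\frak{U})\le B\le Q(\frak{U})$, so $k(\wp)\in\calS_\frak{U}$, and ${\rm tr.deg}_\fld k(\wp)=\Dim(\frak{U}/\wp)\le\Dim(B)\le e_\fld(G)$.

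\emph{Essential $G$-fields and the main obstacle.} Let $K$ be an arbitrary essential $G$-field; pick a point $a\in K$ and set $A:=\fld[a^G]\le K$, a cyclic domain in $\ts$ with $\Dim(A)=e_\fld(G)$ (same computation as above) over which $K$ is algebraic. Universality supplies $\alpha\in\ts(\frak{U},A)$, and for $\wp:=\ker\alpha$ we get $\frak{U}/\wp\cong\alpha(\frak{U})\le A\le K$, so $k(\wp)\hookrightarrow K$ is $G$-equivariant; since $\frak{U}/\wp\in\ts$ acts faithfully on $k(\wp)\le K$ and $K$ is essential, the sandwich $e_\fld(G)={\rm tr.deg}_\fld K\ge{\rm tr.deg}_\fld k(\wp)=\Dim(\frak{U}/\wp)\ge e_\fld(K)=e_\fld(G)$ shows $k(\wp)$ is essential and $K/k(\wp)$ is algebraic. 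The remaining point, which I expect to be the real difficulty, is to verify $k(\wp)\in\calS_\frak{U}$, i.e.\ that this essential residue field, produced inside the external field $K$, also admits a $G$-equivariant embedding into $Q(\frak{U})$. When $\frak{U}=D_\fld$ and $K\le\fld(V^*)$ this is automatic, since there one may take $\alpha$ to be the restriction of a surjection $D_\fld\to A$ and then $A\le K\le Q(D_\fld)$ already realizes $k(\wp)=Q(A)$ inside $Q(D_\fld)$. In general one must transport the picture into $Q(\frak{U})$ by the same device as in the ``$\ge$'' step above; the delicate part is the bookkeeping of the three ambient fields $K$, $\fld(V^*)$, $Q(\frak{U})$ and of the $G$-stable prime of $\frak{U}$ that carves out the common essential subfield. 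Once $k(\wp)\in\calS_\frak{U}$ is in hand, this together with ``$\le$'' finishes both the displayed minimum and the final assertion.
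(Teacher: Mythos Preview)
Your approach is essentially the paper's own: the proposition is presented there as a summary of the discussion immediately preceding it, and that discussion uses exactly your toolkit --- Lemma~\ref{ess_dim_def}, the construction of cyclic domains $\fld[a^G]$ from points in $G$-fields, and universality of $\frak{U}$ to produce a morphism $\alpha\in\ts(\frak{U},A)$ whose kernel supplies the prime $\wp$.

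On the minimum formula your argument is in fact \emph{more careful} than the paper's. The paper picks an essential $K\le\fld(V^*)$, builds $A\le K$, takes $\alpha\in\ts(\frak{U},A)$ and notes $\alpha(\frak{U})\cong\frak{U}/\wp$; but this only gives $k(\wp)\hookrightarrow K\le\fld(V^*)$, not $k(\wp)\hookrightarrow Q(\frak{U})$, so membership in $\calS_\frak{U}$ is never checked. You repair this in your ``$\ge$'' step by choosing the essential subfield $E$ inside $Q(\frak{U})$ from the start (legitimate since $e_\fld(Q(\frak{U}))=e_\fld(G)$), so that $\frak{U}/\wp\le B\le Q(\frak{U})$ and hence $k(\wp)\in\calS_\frak{U}$ by construction. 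That is the right fix.

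The gap you flag in the final assertion is genuine and is present in the paper as well. For an \emph{arbitrary} essential $K$ the prime $\wp$ produced from $K$ satisfies $k(\wp)\hookrightarrow K$ and is essential, but nothing in either argument forces $k(\wp)\hookrightarrow Q(\frak{U})$. Your proposed remedy --- reusing the ``$\ge$'' device --- does not obviously close it: that device produces a (possibly different) prime $\wp'$ with $k(\wp')\in\calS_\frak{U}$, but there is no reason $k(\wp')$ should embed into the given $K$, since essential $G$-fields need not be unique up to isomorphism. So you have correctly located a point where both the paper's discussion and your write-up are incomplete; on everything else your proof is sound and matches the paper.
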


\noindent
Note that we can choose $\frak{U}$ to be, for example, the polynomial algebra
$U=\fld[Y_1,\cdots,Y_n]$ mentioned in Theorem \ref{arb_p_grp_sec0}.
So, at the expense of replacing a faithful linear action of $G$ on
$S(V^*)$ by a nonlinear action on $U$, one can reduce the dimensions
of rings from which to construct essential $G$-fields.
If for example $G$ is cyclic of order $p^n$, the smallest faithful representation
has dimension $p^{n-1}+1$, whereas $U$ has Krull-dimension $n$.
Since every \emph{basic} algebra $B\in\frak{B}$ is embedded into $D_\fld$,
we have the following ``intrinsic description" of the essential dimension:

\begin{cor}\label{ess_dim_intrinsic}
Let $B$ be any basic algebra in $\ts$, then
$$e_\fld(G)=e_\fld(Q(B))=
\min\{{\rm tr.deg}_\fld k(\wp)\ |\ k(\wp)\in\calS_B\}.$$
\end{cor}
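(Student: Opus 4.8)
The plan is to derive this immediately from Proposition \ref{ess_dim_squeeze} once we check that an arbitrary basic algebra $B$ is a legitimate choice for the "universal trace-surjective algebra $\frak{U}\le D_\fld$" appearing there. The entire content is already packaged in the preceding proposition; what remains is to unwind the hypotheses.

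First I would recall from Proposition \ref{universal_minimal_domains} that every basic object is an integral domain and is universal. Since $D_\fld\in\ts$ is itself universal, clause (4) of that proposition (applied with $B$ basic and the universal object $D_\fld$) produces a monomorphism $B\hookrightarrow D_\fld$, which by Lemma \ref{surj_surjects_points} is an injection; hence, up to isomorphism, $B$ is a universal trace-surjective \emph{subalgebra} of $D_\fld$. Thus $B$ is an admissible value of $\frak{U}$ in Proposition \ref{ess_dim_squeeze}. I would stress this step because Proposition \ref{ess_dim_squeeze} is phrased for $\frak{U}\le D_\fld$ rather than for an abstract universal domain, so one must explicitly invoke the embedding $B\hookrightarrow D_\fld$ rather than merely cite universality of $B$.

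Next, since $B$ is a domain, its zero ideal is a $G$-stable prime ideal with residue field $k(0)={\rm Quot}(B)=Q(B)$, and the identity is a $G$-equivariant embedding $k(0)\hookrightarrow Q(B)$; hence $Q(B)\in\calS_B$, so the set on the right-hand side is nonempty and in particular contains the term ${\rm tr.deg}_\fld Q(B)=\Dim(B)=d_\fld(G)$. Now applying Proposition \ref{ess_dim_squeeze} with $\frak{U}=B$ gives directly $e_\fld(G)=e_\fld(Q(\frak{U}))=e_\fld(Q(B))$ and $e_\fld(G)=\min\{{\rm tr.deg}_\fld k(\wp)\mid k(\wp)\in\calS_B\}$, which is the claimed chain of equalities.

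I do not anticipate a genuine obstacle: the only thing being used beyond Proposition \ref{ess_dim_squeeze} is the fact, recorded in Proposition \ref{universal_minimal_domains}, that a basic algebra is a universal domain admitting an embedding into $D_\fld$. If one wanted the statement to emphasize the "cyclic" refinement, one could additionally invoke Corollary \ref{universal_minimal_domains_cor3} to replace $B$ by a $\thickapprox$-equivalent cyclic basic domain, but this is not needed for the corollary as stated.
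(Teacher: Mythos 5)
Your proof is correct and follows the paper's own (implicit) argument exactly: the paper derives this corollary from Proposition \ref{ess_dim_squeeze} by noting precisely that every basic algebra embeds into $D_\fld$ and is therefore an admissible choice of $\frak{U}$. Your additional remarks (that the embedding comes from Proposition \ref{universal_minimal_domains}(4) together with Lemma \ref{surj_surjects_points}, and that $Q(B)\in\calS_B$ so the minimum is over a nonempty set) are accurate and merely make explicit what the paper leaves unstated.
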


In Proposition \ref{universal_minimal_domains} (7) we proved that a universal algebra
$A\in\ts$ is basic if and only if it does not have any
``embedded" trace-surjective proper factor rings. The following
is a criterion in a similar spirit for the situation where $d_\fld(G)=e_\fld(G)$:

\begin{lemma}\label{ess_dim_eq_dk_crit}
For any universal domain $A\in\ts$ the following are equivalent:
\begin{enumerate}
\item $\calS_{A}=\{Q(A)\}$;
\item $A$ is basic and $d_\fld(G)=e_\fld(G)$;
\item $e_\fld(G)=\Dim(A)$.
\end{enumerate}
If these hold, $Q(A)$ is an essential $G$-field and all the others are
algebraic extensions thereof.
\end{lemma}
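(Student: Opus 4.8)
The overall strategy is to pivot everything on the identity $\Dim(A)=e_\fld(G)$, treating $(2)\Leftrightarrow(3)$ as bookkeeping, $(3)\Rightarrow(1)$ as a transcendence-degree squeeze, and $(1)\Rightarrow(3)$ as the substantive direction. For $(2)\Leftrightarrow(3)$ I would just combine the inequalities already in hand: for a universal domain $A$ one has $d_\fld(G)\le\Dim(A)$ and ``$A\in\frak{B}\iff d_\fld(G)=\Dim(A)$'' by Corollary~\ref{universal_minimal_domains_cor1}, and $e_\fld(G)\le d_\fld(G)$ by Proposition~\ref{ess_dim_squeeze}. If $(3)$ holds this chain collapses to equalities, which simultaneously gives $A\in\frak{B}$ and $d_\fld(G)=e_\fld(G)$, i.e.\ $(2)$; conversely $(2)$ yields $\Dim(A)=d_\fld(G)=e_\fld(G)$, i.e.\ $(3)$.

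For $(3)\Rightarrow(1)$ I would first invoke $(3)\Rightarrow(2)$ to see that $A$ is basic, so by Proposition~\ref{universal_minimal_domains}(6) no proper quotient of $A$ is universal; hence the canonical morphism $A\to D_\fld$ is injective and we may regard $A\le D_\fld$, whence $Q(A)\le{\rm Quot}(D_\fld)\le\fld(V^*)$. Now take any $k(\wp)\in\calS_A$ with a $G$-equivariant embedding $k(\wp)\hookrightarrow Q(A)$: then $A/\wp$ is an epimorphic image of $A\in\ts$, hence in $\ts$, and since $G\ne 1$ its action on $k(\wp)={\rm Quot}(A/\wp)$ is faithful, so the image of $k(\wp)$ is a faithful $G$-subfield of $\fld(V^*)$ and Lemma~\ref{ess_dim_def} gives $e_\fld(k(\wp))=e_\fld(G)$. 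The squeeze $e_\fld(G)\le{\rm tr.deg}_\fld k(\wp)\le{\rm tr.deg}_\fld Q(A)=\Dim(A)=e_\fld(G)$ then forces $\Dim(A/\wp)=\Dim(A)$, hence $\wp=0$ (a nonzero prime of a finitely generated $\fld$-domain strictly drops the Krull dimension), so $\calS_A=\{Q(A)\}$. The two closing assertions then come along for free: $Q(A)$ has transcendence degree $e_\fld(G)$ and is a faithful $G$-subfield of $\fld(V^*)$, so it is an essential $G$-field by Definition~\ref{essential_G_field}; and Proposition~\ref{ess_dim_squeeze} with $\frak{U}=A\le D_\fld$ says every essential $G$-field is algebraic over an essential member of $\calS_{\frak{U}}=\calS_A=\{Q(A)\}$, i.e.\ over $Q(A)$.

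The direction $(1)\Rightarrow(3)$ is where the real work lies, and the main obstacle is that the definition of $\calS_A$ records only those residue fields $k(\wp)$ that embed \emph{back into} $Q(A)$, so one must manufacture such an embedded quotient rather than just take an arbitrary one. I would argue by contradiction: if $A$ is not basic, then since $A$ is universal, Proposition~\ref{universal_minimal_domains}(3) provides a non-injective $\alpha\in\End_\ts(A)$; its kernel $\wp$ is a nonzero $G$-stable prime, and $\alpha$ factors as $A\twoheadrightarrow A/\wp\cong\alpha(A)\hookrightarrow A\hookrightarrow Q(A)$, so $k(\wp)={\rm Quot}(A/\wp)$ embeds $G$-equivariantly into $Q(A)$ and thus lies in $\calS_A$. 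But ${\rm tr.deg}_\fld k(\wp)=\Dim(A/\wp)<\Dim(A)={\rm tr.deg}_\fld Q(A)$, so $k(\wp)\not\cong Q(A)$, contradicting $(1)$. Hence $A$ is basic, and Corollary~\ref{ess_dim_intrinsic} then gives $e_\fld(G)=\min\{{\rm tr.deg}_\fld k(\wp)\mid k(\wp)\in\calS_A\}$, which by $(1)$ equals ${\rm tr.deg}_\fld Q(A)=\Dim(A)$; this is $(3)$, closing the cycle.
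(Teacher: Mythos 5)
Your proof is correct and uses essentially the same ingredients as the paper's: the dimension squeeze $e_\fld(G)\le d_\fld(G)\le\Dim(A)$ via Corollary \ref{universal_minimal_domains_cor1} and Proposition \ref{ess_dim_squeeze}, Corollary \ref{ess_dim_intrinsic}, and the strict dimension drop of proper quotients of affine domains. The only difference is cosmetic: you organize the cycle as $(2)\Leftrightarrow(3)$, $(3)\Rightarrow(1)$, $(1)\Rightarrow(3)$ and detect non-minimality via a non-injective endomorphism (condition (3) of Proposition \ref{universal_minimal_domains}), whereas the paper runs $(1)\Rightarrow(2)\Rightarrow(3)\Rightarrow(1)$ and invokes condition (7) of the same proposition.
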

\begin{proof}
(1) $\Rightarrow$ (2):\ It follows from Proposition
\ref{universal_minimal_domains} (7) that $A$ is basic, hence by
Corollary \ref{ess_dim_intrinsic},
$e_\fld(G)={\rm tr.deg}_\fld k(\wp)$ for some
$k(\wp)\in\calS_{A}$. So $k(\wp)=Q(A)$, $\wp=0$ and $d_\fld(G)=\Dim(A)=e_\fld(G)$.\\
(2) $\Rightarrow$ (3):\ This is obvious, since $\Dim(A)=d_\fld(G)$.\\
(3) $\Rightarrow$ (1):\ Since $A$ is universal, Corollary \ref{universal_minimal_domains_cor1}
yields $\Dim(A)=e_\fld(G)\le d_\fld(G)\le \Dim(A)$, so $A$ is basic.
Now assume $k(\wp)\in \calS_A$; then, $k(\wp)\le Q(A)$ and by Corollary \ref{ess_dim_intrinsic},
$e_\fld(G)=e_\fld(Q(A))\le {\rm tr.deg}_\fld k(\wp)=\Dim(A/\wp)\le \Dim(A)=e_\fld(G),$
so $\wp=0$.
\end{proof}

Let $T:=\fld(x_1,\cdots,x_n)$ be a purely transcendental field extension and
$L\le T$ a subfield of transcendence degree $m\le n-1$. Then it follows
from a result of Roquette and Ohm (see Proposition 8.8.1. \cite{Jensen_Gen_pols}) that
$L$ can be embedded into $\fld(x_1,\cdots,x_{n-1})$. An obvious induction shows that,
indeed, $L$ can be embedded into $\fld(x_1,\cdots,x_m).$
This can be used to obtain the following result:

\begin{prp}\label{ess_dim_lower bound}
Let $A\in\ts$ with $A\le D_\fld$ and assume that $G$ is not isomorphic to a subgroup of
$\Aut_\fld(L)$ for any intermediate field $\fld< L\le\fld(x_1,\cdots,x_{m-1})$ with $L=k(\wp)\in\calS_A$.
Then $m\le e_\fld(G)$.
\end{prp}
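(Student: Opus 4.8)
The plan is to argue by contradiction. Assume $m>e_\fld(G)$; I will construct an intermediate field $L$ with $\fld<L\le\fld(x_1,\cdots,x_{m-1})$ that is $\fld$-isomorphic to some $k(\wp)\in\calS_A$ and on which $G$ acts \emph{faithfully}, in direct contradiction with the hypothesis.

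First I would note that $A$ is universal, since the inclusion $A\hookrightarrow D_\fld$ is a morphism in $\ts$; hence Proposition \ref{ess_dim_squeeze} applies with $\frak{U}:=A$ and yields a $G$-stable prime $\wp\unlhd A$ with $k(\wp)\in\calS_A$ and ${\rm tr.deg}_\fld k(\wp)=e_\fld(G)$. Because $G\ne 1$ one has $e_\fld(G)\ge 1$ (any subfield of the purely transcendental field $\fld(V^*)$ that is algebraic over $\fld$ equals $\fld$, and admits no faithful $G$-action), so $1\le e_\fld(G)\le m-1$. Then, composing the $G$-equivariant embedding $k(\wp)\hookrightarrow Q(A)$ with $Q(A)\hookrightarrow Q(D_\fld)$, I view $k(\wp)$ as a subfield of the purely transcendental field $T:=Q(D_\fld)=\fld(x_g\mid 1\ne g\in G)$ of transcendence degree $|G|-1$. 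By the Roquette--Ohm result quoted just before the Proposition (trivially if ${\rm tr.deg}_\fld k(\wp)=|G|-1$), $k(\wp)$ embeds as an $\fld$-algebra into $\fld(x_1,\cdots,x_{e_\fld(G)})$, hence --- since $e_\fld(G)\le m-1$ --- into $\fld(x_1,\cdots,x_{m-1})$; let $L$ denote the image. Then $L$ is $\fld$-isomorphic to $k(\wp)\in\calS_A$, and $\fld\subsetneq L$ because ${\rm tr.deg}_\fld L=e_\fld(G)\ge 1$.

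It remains to transfer faithfulness to $L$. Since $A\in\ts$ has a point $a$ (i.e. $\tr(a)=1$), its image $\bar a$ in $A/\wp$ satisfies $\tr_G(\bar a)=\bar 1\ne 0$; in particular $A/\wp$ is again trace-surjective. If the kernel $N$ of the $G$-action on $A/\wp$ were nontrivial, then $|N|$ would be a positive power of $p$, so, picking a transversal $\{g_i\}$ of $N$ in $G$ and using that $N$ acts trivially on $A/\wp$, we would obtain $\tr_G(\bar a)=\sum_i\sum_{n\in N}(\bar a g_i)n=|N|\cdot\sum_i\bar a g_i=0$ in the domain $A/\wp$, a contradiction. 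Hence $G$ acts faithfully on $A/\wp$, therefore on $k(\wp)={\rm Quot}(A/\wp)$, and therefore on $L$. Thus $G$ is isomorphic to a subgroup of $\Aut_\fld(L)$, contradicting the hypothesis; so $m\le e_\fld(G)$.

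The one genuinely delicate step is the descent in the middle paragraph: we must pass from a \emph{$G$-equivariant} embedding of $k(\wp)$ into the large rational field $Q(D_\fld)$ to a merely $\fld$-linear embedding into the smaller rational field $\fld(x_1,\cdots,x_{m-1})$, and then exploit that --- although equivariance has been discarded --- the data the hypothesis constrains, namely the $\fld$-isomorphism type of $k(\wp)$ together with the faithfulness of its $G$-action, survive. The Roquette--Ohm theorem furnishes the embedding, and the stability of trace-surjectivity under passage to $G$-stable quotients furnishes the faithfulness.
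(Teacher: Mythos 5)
Your proof is correct and takes essentially the same route as the paper's: obtain via Proposition \ref{ess_dim_squeeze} an essential $G$-field $k(\wp)\in\calS_A$ of transcendence degree $e_\fld(G)$ inside $Q(D_\fld)$, embed it into $\fld(x_1,\cdots,x_{m-1})$ by Roquette--Ohm, and contradict the hypothesis using faithfulness of the $G$-action. You simply make explicit several points the paper leaves tacit (that $e_\fld(G)\ge 1$ so $L\ne\fld$, and the trace argument for faithfulness of the action on $A/\wp$), which is fine.
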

\begin{proof} By Proposition \ref{ess_dim_squeeze} there is an essential $G$-field
$k(\wp)\in\calS_A$ with $k(\wp)\le Q(A)\le Q(D_\fld)\cong \fld(x_1,\cdots,x_{|G|-1})$.
Assume $e_\fld(G)<m$, then $k(\wp)$ can be embedded into
$\fld(x_1,\cdots,x_{m-1})$ and $G\le \Aut(k(\wp))$. This contradiction finishes the proof.
\end{proof}

By L\"uroth's theorem, any intermediate field $\fld<L\le\fld(x_1)$ is
rational, i.e. isomorphic to $\fld(x_1)$ and therefore
$\Aut_\fld(L)\cong {\rm PGL}_2(\fld)$. From this we obtain:

\begin{prp}\label{ess_dim_1}
Let $\fld$ be a field of characteristic $p>0$ and $1\ne G$ a finite $p$-group.
Then the following are equivalent:
\begin{enumerate}
\item $d_\fld(G)=1$;
\item $e_\fld(G)=1$;
\item $G\cong\mathbb{F}_{p^n}^+\le\fld$;
\item $G$ is isomorphic to a subgroup of ${\rm GL}_2(\fld)$.
\end{enumerate}
\end{prp}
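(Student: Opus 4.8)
The plan is to prove the cycle of implications $(1)\Rightarrow(2)\Rightarrow(3)\Rightarrow(1)$ and, separately, the equivalence $(3)\Leftrightarrow(4)$; together these yield the Proposition. Several of these are short. Note first that for $G\ne 1$ one always has $d_\fld(G)\ge e_\fld(G)\ge 1$: the first inequality is part of Proposition~\ref{ess_dim_squeeze}, and $e_\fld(G)\ge 1$ because $\fld$ is algebraically closed inside any rational function field $\fld(V^*)$, so $G$ cannot act faithfully on an algebraic extension of $\fld$ contained in $\fld(V^*)$. Hence $(1)\Rightarrow(2)$ is immediate. The implication $(3)\Rightarrow(1)$ is precisely Theorem~\ref{el_abel_ext_fld_d_is_1}(1), applied with $n\ge 1$ since $G\ne 1$. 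For $(3)\Leftrightarrow(4)$: if $G\cong W\le\fld^+$ with $W$ an $\mathbb{F}_p$-subspace of dimension $n$, then $a\mapsto\bigl(\begin{smallmatrix}1&a\\0&1\end{smallmatrix}\bigr)$ is a faithful representation $G\to\mathrm{GL}_2(\fld)$; conversely, if $G\le\mathrm{GL}_2(\fld)$ then the group algebra $\fld G$ is local with nilpotent augmentation ideal, so $V=\fld^2$ has a nonzero $G$-fixed vector; extending it to a basis makes $G$ upper triangular, the resulting character $G\to\fld^\times$ (lower-right entry) is trivial since $\fld^\times$ has no $p$-torsion, and so $G$ consists of matrices $\bigl(\begin{smallmatrix}1&c_g\\0&1\end{smallmatrix}\bigr)$ with $g\mapsto c_g$ an injective homomorphism $G\hookrightarrow\fld^+$, which is $(3)$.

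The substantial step is $(2)\Rightarrow(3)$. Assume $e_\fld(G)=1$. Proposition~\ref{ess_dim_squeeze} provides an essential $G$-field of transcendence degree $1$ contained in $Q(D_\fld)=\fld(x_1,\dots,x_{|G|-1})$; by the Roquette--Ohm reduction mentioned above and L\"uroth's theorem it is $\fld$-isomorphic to $\fld(t)$, so $G$ is realized as a nontrivial finite $p$-subgroup of $\Aut_\fld(\fld(t))=\mathrm{PGL}_2(\fld)$. Thus everything comes down to the claim that \emph{a nontrivial finite $p$-subgroup $G\le\mathrm{PGL}_2(\fld)$ (with $\mathrm{char}\,\fld=p$) is elementary abelian of some rank $n$, and $\fld^+$ contains an $\mathbb{F}_p$-subspace of dimension $n$.}

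I would prove this claim in three steps. (i) Every $g\ne 1$ in $G$ is unipotent: a lift $M\in\mathrm{GL}_2(\overline\fld)$ has $M^{p^k}$ scalar for some $k$, so the two eigenvalues of $M$ have equal $p^k$-th powers and therefore coincide; since $M$ is not scalar it is a single Jordan block, so $g$ has a unique fixed point $P_g\in\mathbb{P}^1(\overline\fld)$, and $M^p$ is scalar (so $g^p=1$). (ii) All $P_g$ coincide: otherwise pick $g,h$ with $P_g\ne P_h$; conjugating $G$ inside $\mathrm{PGL}_2(\overline\fld)$ and choosing $\mathrm{GL}_2$-representatives we may assume $g=\bigl(\begin{smallmatrix}1&1\\0&1\end{smallmatrix}\bigr)$, $h=\bigl(\begin{smallmatrix}1&0\\b&1\end{smallmatrix}\bigr)$ with $b\ne 0$; then $gh=\bigl(\begin{smallmatrix}1+b&1\\b&1\end{smallmatrix}\bigr)$ is a nontrivial element (as $h\ne g^{-1}$) of $p$-power order, hence unipotent by (i), which forces $(2+b)^2=4$, i.e.\ $b(b+4)=0$; for $p=2$ this gives the contradiction $b=0$, and for $p$ odd it gives $b=-4\in\mathbb{F}_p$, whence $\langle g,h\rangle\le\mathrm{PGL}_2(\mathbb{F}_p)$, a group with Sylow $p$-subgroups of order $p$, so $\langle g,h\rangle=\langle g\rangle$ and $h$ would fix $P_g$ --- contradiction. (iii) Let $P$ be the common fixed point; the eigenvalue computation of (i) shows $P$ is defined over a finite purely inseparable extension $\fld'/\fld$ (with $\fld'=\fld$ when $p$ is odd). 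Conjugating over $\fld'$ so that $P=\infty$, each $g\in G$ becomes $\bigl(\begin{smallmatrix}1&a_g\\0&1\end{smallmatrix}\bigr)$ with $a_g\in\fld'$, and $g\mapsto a_g$ is an injective homomorphism; hence $G$ is elementary abelian of rank $n:=\dim_{\mathbb{F}_p}\langle a_g\mid g\in G\rangle$. Finally, picking $k$ with $(\fld')^{p^k}\subseteq\fld$, the $k$-fold Frobenius $x\mapsto x^{p^k}$ is an injective $\mathbb{F}_p$-linear map $\fld'\to\fld$ and carries $\langle a_g\rangle$ onto an $n$-dimensional $\mathbb{F}_p$-subspace of $\fld^+$. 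This proves the claim, hence $(2)\Rightarrow(3)$, and closes the cycle.

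The main obstacle is the claim above, and within it step (ii): excluding two unipotent elements with distinct fixed points inside a common finite $p$-group. One can shortcut (ii) by quoting Dickson's classification of the finite subgroups of $\mathrm{PGL}_2(\overline\fld)$ (the $p$-subgroups being exactly the elementary abelian ones), but step (iii) --- the descent that turns ``$G$ elementary abelian'' into the sharper statement $(3)$ by producing an actual $\mathbb{F}_p$-subspace of $\fld$ of the right dimension --- still has to be carried out, and the bookkeeping with the possibly inseparable residue field of the common fixed point is where care is needed.
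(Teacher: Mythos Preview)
Your proof is correct and follows the same overall plan as the paper's: the cycle $(1)\Rightarrow(2)\Rightarrow(4)\Leftrightarrow(3)\Rightarrow(1)$, with the decisive step being that $e_\fld(G)=1$ forces $G$ into ${\rm PGL}_2(\fld)$ via L\"uroth (which the paper packages as Proposition~\ref{ess_dim_lower bound} with $m=2$). The one substantive difference is what happens after one knows $G\le{\rm PGL}_2(\fld)$. The paper simply asserts ``$(3)\Leftrightarrow(4)$ is clear, since the finite $p$-groups of ${\rm GL}_2(\fld)$ are isomorphic to subgroups of $(\fld,+)$'' and treats the passage from ${\rm PGL}_2(\fld)$ to ${\rm GL}_2(\fld)$ as implicit. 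You instead prove directly, by an elementary fixed-point analysis, that any finite $p$-subgroup of ${\rm PGL}_2(\fld)$ is elementary abelian and embeds into $\fld^+$---including the careful purely inseparable descent via Frobenius when $p=2$, which is genuinely needed since the common fixed point on $\mathbb{P}^1$ can fail to be $\fld$-rational in that case. So your argument is self-contained where the paper appeals to what is essentially Dickson's classification of $p$-subgroups of ${\rm PGL}_2$; the cost is length, the gain is that nothing is left to the reader.
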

\begin{proof}
``(3)$\iff$ (4)" is clear, since the finite $p$-groups of ${\rm GL}_2(\fld)$
are isomorphic to subgroups of the additive group ${\bf G}_a=(\fld,+)$.\\
``(1)$\Rightarrow$ (2)" is clear, because $e_\fld(G)=0\iff G=1\iff d_\fld(G)=0.$\\
``(2)$\Rightarrow$ (4)":\ If $G$ is not isomorphic to a subgroup of ${\rm PGL}_2(\fld)$ we
take $A:=D_\fld$ and $m=2$ in Proposition \ref{ess_dim_lower bound}.
By L\"uroth's theorem $\Aut_\fld(L)\cong {\rm PGL}_2(\fld)$ for any
intermediate field $\fld< L\le\fld(x_1)$, hence $2\le e_\fld(G)$.\\
 ``(3)$\Rightarrow$ (1)":\ This follows from Theorem \ref{el_abel_ext_fld_d_is_1}.
%Since $Z({\rm GL}_2(\fld))\cong \fld^*$,
%$G$ is isomorphic to a subgroup of ${\rm GL}_2(\fld)$ if and only if
%it is isomorphic to a subgroup of ${\rm PGL}_2(\fld)$.
%If this is the case, then $G$ acts naturally and faithfully on $W:=\fld^2$ and on
%$\fld(x,y)\cong\fld(W^*)$. Setting $t:=x/y$ it is easy to see that
%$G$ acts faithfully on $\fld(t)\le \fld(x,y)$, hence $e_\fld(G)\le 1$
%and therefore $e_\fld(G)=1$.\\
\end{proof}

\begin{cor}\label{d_k_is two}
Let $\fld$ be a field of characteristic $p>0$ and $G$ a finite $p$-group,
then $d_\fld(G)=2$ if and only if $e_\fld(G)=2$.
\end{cor}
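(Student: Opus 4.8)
First I would dispose of the bookkeeping. By Proposition~\ref{ess_dim_squeeze} one always has $d_\fld(G)\ge e_\fld(G)$. From the proof of Proposition~\ref{ess_dim_1} we have $d_\fld(G)=0\iff G=1\iff e_\fld(G)=0$, and Proposition~\ref{ess_dim_1} itself gives $d_\fld(G)=1\iff e_\fld(G)=1$; together these say $d_\fld(G)\le 1\iff e_\fld(G)\le 1$. Hence if $d_\fld(G)=2$ then $e_\fld(G)\le d_\fld(G)=2$ while $e_\fld(G)\notin\{0,1\}$, so $e_\fld(G)=2$; and if $e_\fld(G)=2$ then $d_\fld(G)\ge e_\fld(G)=2$. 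So everything reduces to the single inequality $d_\fld(G)\le 2$ under the hypothesis $e_\fld(G)=2$.

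For this it suffices, by Corollary~\ref{universal_minimal_domains_cor1}, to exhibit a universal trace-surjective \emph{domain} $A$ of Krull dimension $2$, since then $d_\fld(G)\le\Dim(A)=2$. I would look for such an $A$ among the quotients $D_\fld/\wp$ with $\wp$ a $G$-stable prime of $D_\fld$, using the contrapositive of Proposition~\ref{ess_dim_lower bound}. Applying that proposition with $A=D_\fld$ and $m=3$: since $3>2=e_\fld(G)$, its hypothesis must fail, so there is a $G$-stable prime $\wp\unlhd D_\fld$ with $k(\wp)\in\calS_{D_\fld}$ such that $k(\wp)$ embeds, as a field, into $\fld(x_1,x_2)\le Q(D_\fld)$ (the Roquette--Ohm step in the proof of Proposition~\ref{ess_dim_lower bound}). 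Membership $k(\wp)\in\calS_{D_\fld}$ forces ${\rm tr.deg}_\fld k(\wp)\ge e_\fld(G)=2$, while the embedding into $\fld(x_1,x_2)$ forces ${\rm tr.deg}_\fld k(\wp)\le 2$; hence $\Dim(D_\fld/\wp)={\rm tr.deg}_\fld k(\wp)=2$. By the definition of $\calS_{D_\fld}$ there is moreover a $G$-equivariant embedding $k(\wp)\hookrightarrow Q(D_\fld)$.

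It remains to check that $D_\fld/\wp$ is itself universal; granting this, Corollary~\ref{universal_minimal_domains_cor1} gives $d_\fld(G)\le\Dim(D_\fld/\wp)=2$, and with $d_\fld(G)\ge 2$ the proof is complete. Now $D_\fld/\wp$ is cyclic, generated by the $G$-orbit of the point $\overline{x_1}$, and $k(\wp)$ embeds $G$-equivariantly into $Q(D_\fld)=\fld(x_g\mid 1\ne g\in G)$; writing the image $\gamma$ of $\overline{x_1}$ in the form $\gamma=f/s$ with $f\in D_\fld$ and $s\in D_\fld^G$ (clearing denominators by the $G$-orbit product), so that $\tr(f)=s$ because $\tr(\gamma)=1$, one realizes $D_\fld/\wp$ inside the localization $D_\fld[1/s]\in\ts$. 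What still needs work is to promote this to an actual morphism $D_\fld/\wp\to D_\fld$: one must lift $\overline{x_1}$ to a point $\delta\in D_\fld$ whose $G$-orbit already satisfies every relation of $\wp$ --- equivalently, replace the rational $G$-equivariant parametrisation of $V(\wp)$ given by $\gamma$ by a regular one with the same dense image. This lifting is the crux of the argument and the step I expect to be the main obstacle, and it is exactly where the value $2$ is used: only in transcendence degree $\le 2$ inside a purely transcendental field do L\"uroth's theorem and the Roquette--Ohm theorem pin down $k(\wp)$ tightly enough to carry it out --- which is consistent with $d_\fld(G)$ being permitted to exceed $e_\fld(G)$ once these values reach $3$. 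So I expect the proof to combine Proposition~\ref{ess_dim_lower bound}, the explicit model $D_\fld=\fld[x_g\mid 1\ne g\in G]$ with its free point $x_1$, and Corollary~\ref{universal_minimal_domains_cor1}.
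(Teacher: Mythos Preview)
The paper gives no proof for this corollary; it is placed immediately after Proposition~\ref{ess_dim_1} and is meant to follow from it by exactly the bookkeeping in your first paragraph. That argument, however, only yields the direction $d_\fld(G)=2\Rightarrow e_\fld(G)=2$: from $e_\fld(G)\le d_\fld(G)=2$ together with $e_\fld(G)\le 1\iff d_\fld(G)\le 1$. This is also the only direction the paper ever \emph{uses} (in Example~\ref{proj_d_bound_exs}(2) and Corollary~\ref{ext_special} one already has $d_\fld(G)\le 2$ from an explicit $s$-projective object, and the corollary is invoked to get $e_\fld(G)=2$).

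The reverse implication $e_\fld(G)=2\Rightarrow d_\fld(G)\le 2$ is \emph{not} immediate from anything proved, and you are right to isolate it as the crux. But the gap you flag in your own argument is genuine and, as far as one can see from the paper, unfillable: the Roquette--Ohm embedding of $k(\wp)$ into $\fld(x_1,x_2)$ is an abstract field embedding with no $G$-equivariance, while the $G$-equivariant embedding coming from $k(\wp)\in\calS_{D_\fld}$ only lands $D_\fld/\wp$ inside some localization $D_\fld[1/s]$. Neither produces a morphism $D_\fld/\wp\to D_\fld$ in $\ts$, which is precisely what ``universal'' requires; and without universality Corollary~\ref{universal_minimal_domains_cor1} does not apply. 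There is no two-variable analogue in the paper of the explicit construction in Theorem~\ref{el_abel_ext_fld_d_is_1} that made the case $e_\fld(G)=1$ work.

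In fact the ``if'' direction is in direct tension with the authors' own conjecture in Section~\ref{Concluding Remarks}. By Proposition~\ref{e_k_el_ab} and Proposition~\ref{ess_dim_1} one has $e_{\mathbb{F}_p}((C_p)^n)=2$ for every $n\ge 2$, whereas the conjecture asserts $d_{\mathbb{F}_p}((C_p)^n)=n$. For $n\ge 3$ these cannot both be true. So the honest reading is that the corollary, as an immediate consequence of Proposition~\ref{ess_dim_1}, delivers only the forward direction; your first paragraph is the complete intended proof of that, and the remainder of your proposal is chasing a statement the paper does not actually establish.
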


\begin{cor}\label{el_ab_cycl_p_square}
Let $G\in\{C_p\times C_p, C_{p^2}\}$; assume moreover that $\fld$ is the prime field $\mathbb{F}_p$
in the case $G=C_p\times C_p$. Then $d_\fld(C_{p^2})=e_\fld(C_{p^2})=2=d_{\mathbb{F}_p}(C_p\times C_p)=e_{\mathbb{F}_p}(C_p\times C_p).$
\end{cor}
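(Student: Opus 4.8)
The plan is to compute the invariant $d_\fld(G)$ in each of the two cases and then read off $e_\fld(G)$ from Corollary~\ref{d_k_is two}. Both groups have order $p^2$, so Proposition~\ref{universal_minimal_domains} (together with Proposition~\ref{ess_dim_squeeze}) already supplies the bounds $1\le d_\fld(G)\le n=2$ whenever $G\ne 1$, as well as $e_\fld(G)\le d_\fld(G)$; the only real work is to exclude the value $1$.

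For this I would invoke Proposition~\ref{ess_dim_1}, according to which $d_\fld(G)=1$ holds exactly when $G\cong\mathbb{F}_{p^m}^+\le\fld$ for some $m$. If $G=C_{p^2}$ this fails for every field $\fld$: a cyclic group of order $p^2$ has an element of order $p^2$, hence is not elementary abelian and so is not isomorphic to any $\mathbb{F}_{p^m}^+$. Therefore $d_\fld(C_{p^2})\ne 1$, and since $C_{p^2}\ne 1$ forces $d_\fld(C_{p^2})\ge 1$, the above bound gives $d_\fld(C_{p^2})=2$. In the case $G=C_p\times C_p$ with $\fld=\mathbb{F}_p$ the group is elementary abelian of rank $2$, i.e. $G\cong\mathbb{F}_{p^2}^+$, but $\mathbb{F}_{p^2}$ cannot be embedded into $\mathbb{F}_p$ for cardinality reasons; so condition~(3) of Proposition~\ref{ess_dim_1} again fails, whence $d_{\mathbb{F}_p}(C_p\times C_p)\ne 1$ and the same squeeze yields $d_{\mathbb{F}_p}(C_p\times C_p)=2$. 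This is precisely where the standing hypothesis ``$\fld=\mathbb{F}_p$'' enters: over any field containing $\mathbb{F}_{p^2}$ one would instead obtain $d_\fld(C_p\times C_p)=1$.

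Finally I would apply Corollary~\ref{d_k_is two}, which states that $d_\fld(G)=2$ is equivalent to $e_\fld(G)=2$; applied to each of the two groups this gives $e_\fld(C_{p^2})=2$ and $e_{\mathbb{F}_p}(C_p\times C_p)=2$. Chaining the two computed values of $d$ with these two computed values of $e$ then produces the asserted string of equalities. I do not anticipate any genuine obstacle: every ingredient (the bounds on $d_\fld(G)$, Proposition~\ref{ess_dim_1}, and Corollary~\ref{d_k_is two}) is already available, and the only point needing care is the trivial but essential observation that $\fld=\mathbb{F}_p$ is exactly what makes $\mathbb{F}_{p^2}\not\le\fld$ in the second case. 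Alternatively one could dispense with Corollary~\ref{d_k_is two} and simply note that, once $e_\fld(G)=1$ and $e_\fld(G)=0$ have been excluded directly via Proposition~\ref{ess_dim_1}, the chain $2\le e_\fld(G)\le d_\fld(G)\le 2$ forces all quantities to equal $2$.
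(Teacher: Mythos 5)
Your proof is correct and follows essentially the same route as the paper: both arguments squeeze $2\le e_\fld(G)\le d_\fld(G)\le 2$ using the upper bound $d_\fld(G)\le\log_p|G|=2$ from Proposition~\ref{ess_dim_squeeze} together with Proposition~\ref{ess_dim_1} to rule out the value $1$. The only cosmetic difference is that the paper verifies the failure of condition (4) of Proposition~\ref{ess_dim_1} (no embedding of $G$ into ${\rm PGL}_2(\fld)$) to bound $e_\fld(G)$ from below directly, whereas you verify the failure of the equivalent condition (3) to pin down $d_\fld(G)$ first and then pass to $e_\fld(G)$ via Corollary~\ref{d_k_is two}; your closing ``alternative'' is in fact the paper's argument verbatim.
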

\begin{proof}
Note that $G$ is not isomorphic to a subgroup of ${\rm PGL}_2(\fld)$, hence
$2\le e_\fld(G)\le d_\fld(G)\le 2$, by Proposition \ref{ess_dim_squeeze}.
\end{proof}

\begin{prp}\label{e_k_el_ab}
Let $G$ be elementary-abelian of order $p^n$ with $n\ge 3$ and $\fld$ be any field of
characteristic $p$. Then $e_\fld(G)\le 2$.
\end{prp}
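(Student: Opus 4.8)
The plan is to establish $e_\fld(G)\le 2$ by constructing, for $G\cong(C_p)^n$ and \emph{any} field $\fld$ of characteristic $p$, a faithful $G$-action on a rational function field of transcendence degree $2$ that is realised as a $G$-stable subfield of $\fld(V^*)$ for a suitable faithful representation $V$; the bound is then immediate from Lemma \ref{ess_dim_def}. (For $n\le 2$ nothing is needed, since $e_\fld(G)\le d_\fld(G)\le n\le 2$, so the content lies in the case $n\ge 3$; the construction below is uniform in $n$.)

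First I would fix generators $G=\langle g_1,\dots,g_n\rangle$ and take the polynomial ring $\fld[x_1,\dots,x_n,t]$ with the $\fld$-linear $G$-action determined by $(t)g_j=t$ and $(x_i)g_j=x_i+\delta_{ij}t$. Because $\mathrm{char}(\fld)=p$ each $g_j$ has order $p$, and the $g_j$ commute, so this is well defined; the linear span $V^*:=\fld x_1\oplus\cdots\oplus\fld x_n\oplus\fld t$ is a $G$-submodule on which $G$ acts faithfully, since $(x_j)(g_1^{a_1}\cdots g_n^{a_n})=x_j+a_jt$. Thus $\fld[x_1,\dots,x_n,t]=\mathrm{Sym}(V^*)$ for a faithful representation $V$, with $\fld(V^*)=\fld(x_1,\dots,x_n,t)$.

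Next I would set $u:=\sum_{j=1}^{n}x_j^{p^{j-1}}$. Applying Frobenius summand-by-summand gives $(u)g_i=u+t^{p^{i-1}}$ for each $i$, so $E:=\fld(u,t)\subseteq\fld(V^*)$ is a $G$-stable subfield, and $g_1^{a_1}\cdots g_n^{a_n}$ acts on $E$ by $t\mapsto t$, $u\mapsto u+\sum_{i=1}^{n}a_it^{p^{i-1}}$. Since the monomials $t,t^{p},\dots,t^{p^{n-1}}$ are $\fld$-linearly independent, this is the identity only for $a_1=\cdots=a_n=0$, so $G$ acts \emph{faithfully} on $E$; and as $u$ has positive degree in $x_1$ while $t$ involves none of the $x_i$, the elements $u,t$ are algebraically independent over $\fld$, whence $\mathrm{tr.deg}_\fld E=2$. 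Since $e_\fld(G)$ does not depend on the choice of faithful representation, I may compute it using this $V$: then $E\le\fld(V^*)$ is $G$-stable with faithful $G$-action, so Lemma \ref{ess_dim_def} gives $e_\fld(G)=e_\fld(E)\le\mathrm{tr.deg}_\fld E=2$.

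The only delicate point is securing faithfulness of the action on the two-dimensional field $E$: a naive choice such as $E=\fld(x_1,t)$ is $G$-stable but records only the generator $g_1$. The device that fixes this is the specific element $u=\sum_j x_j^{p^{j-1}}$, chosen so that the distinct generators $g_i$ twist it by the additive polynomials $t^{p^{i-1}}$, which are $\mathbb{F}_p$-linearly independent. (Combined with Proposition \ref{ess_dim_1}, this in fact shows $e_\fld(G)=2$ whenever $\mathbb{F}_{p^n}\not\subseteq\fld$.)
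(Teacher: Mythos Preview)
Your proof is correct and follows the same strategy as the paper: construct a $G$-stable subfield of transcendence degree $2$ inside $\fld(V^*)$ on which $G$ acts faithfully, by pairing an invariant transcendental parameter with a single element on which the generators $g_i$ act by adding $\mathbb{F}_p$-linearly independent functions of that parameter. The only difference is cosmetic --- the paper works inside $\mho=\fld[Y_1,\dots,Y_n]$ (with $(Y_i)g_j=Y_i-\delta_{ij}$), picks an invariant $b\in\mho^G\setminus\fld$, and takes $Z=\sum_i b^iY_i$ so that $(Z)g_i-Z=-b^i$, whereas you take the linear $(n{+}1)$-dimensional representation and use $u=\sum_j x_j^{p^{j-1}}$, giving $(u)g_i-u=t^{p^{i-1}}$.
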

\begin{proof}
We can assume $\fld=\mathbb{F}_p$. We use the description of $G$ and further notation from
Theorem \ref{el_abel_ext_fld_d_is_1} and the arguments immediately before that. In particular
$U=\fld[Y_1,\cdots,Y_n]$ with $G$-action given by $(Y_j)g_i=Y_j-\delta_{ij}$.
Let $b\in U^G\backslash \fld$, so that $b$ is transcendental over $\fld$
and put $A=\fld[b, b Y_1 + b^2 Y_2 + b^3 Y_3 + ... + b^n Y_n]  <  U$.
Clearly $A$ is a polynomial subalgebra of $U$ with only two generators
on which $G$ acts faithfully. Note that for $Z:=\sum_{i=1}^n b^iY_i$ and
$\underline b:=(b,b^2,\cdots,b^n)$, we have $\fld(b)\otimes_\fld A\cong \fld(b)[Z]_{\bar b}$,
in the notation of Theorem \ref{el_abel_ext_fld_d_is_1}.
It follows that ${\rm Quot}(A)<{\rm Quot}(U)$, contains a trace-surjective algebra which is a
quotient of $U$ of Krull dimension $2$.  In particular  $e_{F_p}(G)\le 2$.
\end{proof}

\begin{rem}\label{ledet_rem}
\begin{enumerate}
\item The result in Proposition \ref{ess_dim_1} has been obtained in \cite{ledet07}
for arbitrary finite groups and infinite fields $\fld$, together with the
consequence that $e_{\fld}(C_p\times C_p)=1$.
In \cite{Kang_essdim_1} the groups with essential dimension one were classified
for all fields $k$.
\item The results in Corollaries \ref{d_k_is two}, \ref{el_ab_cycl_p_square}, Proposition \ref{e_k_el_ab}
and Theorem \ref{el_abel_ext_fld_d_is_1} show that the group invariants
$d_{\fld}(G)$ and $e_{\fld}(G)$ depend crucially on the choice of the ground field $\fld$.
\item Set $A:=\fld[x,y]$, the polynomial ring in two variables,
and $G:=\langle g\rangle\cong C_{p^2}$, then
there is a $G$-action on $A$ defined by $(x)g=x+y^{p-1}$ and $(y)g=y-1$.
Using \cite{nonlin} Lemma 5.2. one can show that $A\in\ts$ is
standard universal (i.e. a retract of $D_\fld$). Then
it follows from \ref{el_ab_cycl_p_square}, that $A$ is a basic object in $\ts$,
$Q(A)$ is an essential $G$-field and all essential $G$-fields are algebraic extensions of $Q(A)$.
\end{enumerate}
\end{rem}

\section{Concluding Remarks}\label{Concluding Remarks}

We conclude this article with a couple of open questions:
\begin{itemize}
\item In all cases where we know $d_\fld(G)$, there exists a basic algebra which
is an erasable polynomial ring. Is there always an erasable basic algebra?
Its ring of $G$-invariants would be stably polynomial.
\item If $\fld=\mathbb{F}_p$ we dare to conjecture that $d_\fld(G)=n=\mathrm{log}_p(|G|)$. This would follows from a positive answer
    to the first questions and Proposition 5.5 in \cite{nonlin}.
\item Does every erasable algebra in $\ts$ have a polynomial invariant ring?
\end{itemize}

\bibliography{bibl}

\begin{thebibliography}{10}

\bibitem{AG}
M.~Auslander and O.~Goldmann.
\newblock The {B}rauer group of a commutative ring.
\newblock {\em Trans.\ Amer.\ Math.\ Soc.}, 97:367--409, 1960.

\bibitem{berhuy_favi}
G~Berhuy and G~Favi.
\newblock Essential {D}imension: A functorial point of view (after {A}
  {M}erkurjev).
\newblock {\em Documenta Mathematica}, 8:270--330, 2003.

\bibitem{bou}
Nicolas Bourbaki.
\newblock {\em Groupes et alg\`{e}bres de {L}ie, Chap.\ {IV},{V},{VI}}.
\newblock Herman, Paris, 1968.

\bibitem{BH}
W.~Bruns and J.~Herzog.
\newblock {\em Cohen - Macaulay Rings}.
\newblock Cambridge University Press, 1993.

\bibitem{Buhler:Reichstein}
Joe Buhler and Zinovy Reichstein.
\newblock On the essential dimension of a finite group.
\newblock {\em Compos. Math.}, 106:159--179, 1997.

\bibitem{Kang_essdim}
Ming chang Kang.
\newblock Essential dimensions of finite groups.
\newblock {\em arXiv.math}, 0611673v2:1 -- 24, 2006.

\bibitem{chr}
S~U Chase, D~K Harrison, and A~Rosenberg.
\newblock Galois theory and galois cohomology of commutative rings.
\newblock {\em Memoirs of the AMS}, 52:15--33, 1965.

\bibitem{Kang_essdim_1}
H.~Chu, S.~J. Hu, M.~C. Kang, and J.~Zhang.
\newblock Groups with essential dimension one.
\newblock {\em Asian. Journal of Mathematics}, 12((2)):177--191, 2008.

\bibitem{B}
D.J.Benson.
\newblock {\em Polynomial Invariants of Finite Groups}.
\newblock Cambridge University Press, 1993.

\bibitem{costa}
D.L.Costa.
\newblock Retracts of polynomial rings.
\newblock {\em J. of Algebra}, 44:492--502, 1975.

\bibitem{eakin}
P.~Eakin.
\newblock A note on finite dimensional subrings of polynomial rings.
\newblock {\em Proc.\ Amer.\ Math.\ Soc.}, 31(1):75--80, 1972.

\bibitem{nonlin}
P.~Fleischmann and C.F. Woodcock.
\newblock Non-linear group actions with polynomial invariant rings and a
  structure theorem for modular {G}alois extensions.
\newblock {\em Proc. LMS}, 103(5):826--846, November 2011.

\bibitem{universal}
P.~Fleischmann and C.F. Woodcock.
\newblock Universal {G}alois {A}lgebras and {C}ohomology of $p$-groups.
\newblock {\em Journal of Pure and Applied Algebra}, 217(3):530--545, 2013.

\bibitem{gupta:2013}
N~Gupta.
\newblock On the cancellation problem for the affine space $\mathbb{A}^3$ in
  characteristic $p$.
\newblock {\em arXiv:1208.0483v2}, pages 1--9, 2013.

\bibitem{Jensen_Gen_pols}
C~U Jensen, A~Ledet, and N~Yui.
\newblock {\em Generic Polynomials}.
\newblock Cambridge University Press, 2002.

\bibitem{ledet07}
A~Ledet.
\newblock Finite groups of essential dimension one.
\newblock {\em J. of Algebra}, 311:31--37, 2007.

\bibitem{serre_how_to}
Jean-Pierre Serre.
\newblock How to use finite fields for problems concerning infinite fields.
\newblock {\em arXiv:0903.0517v2}, pages 1--12, 2009.

\bibitem{LSb}
L.~Smith.
\newblock {\em Polynomial Invariants of Finite Groups}.
\newblock A K Peters, 1995.

\bibitem{shpilrain}
V.Shpilrain and Jie-Tai Yu.
\newblock Polynomial retracts and the {J}acobian conjecture.
\newblock {\em Trans. of AMS}, 352(1):477--484, 1999.

\end{thebibliography}
\bibliographystyle{plain}

\address{P. Fleischmann and C.F. Woodcock, School of Mathematics, Statistics and Actuarial Science,
University of Kent, Canterbury, CT2 7NF, UK}
\email{P.Fleischmann@kent.ac.uk}
\email{C.F.Wooodcock@kent.ac.uk}
\end{document}